\definecolor{maroon}{RGB}{144,0,32}
\newcolumntype{H}{>{\setbox0=\hbox\bgroup}c<{\egroup}@{}}
\chardef\bslash=`\\ 
\newtheorem{thm}{Theorem}[section]
\newtheorem{example}[thm]{Example}
\newtheorem{lem}[thm]{Lemma}
\newtheorem{rem}[thm]{Remark}
\newtheorem{defn}[thm]{Definition}
\theoremstyle{definition}
\theoremstyle{remark}
\numberwithin{equation}{section}
\newcommand{\bI}{\boldsymbol{I}}
\newcommand{\bT}{\mathbb T}
\newcommand{\cE}{\mathcal E}
\newcommand{\cT}{\mathcal{T}}
\newcommand{\cP}{{\rm P}}
\newcommand{\cM}{{\rm M}}
\newcommand{\hto}{H^2_0(\Omega)}
\newcommand{\vket}{von K\'{a}rm\'{a}n equations}
\newcommand{\integ}{\int_\Omega}
\newcommand{\fl}{\quad \text{for all}\:}
\newcommand{\half}{\frac{1}{2}}
\newcommand{\dx}{{\rm\,dx}}
\newcommand{\ds}{{\rm\,ds}}
\newcommand{\dg}{{\rm dG}}
\newcommand{\Poincare}{Poincar\'e }
\newcommand{\Holder}{H\"{o}lder~}
\newcommand{\smooth}{JI_{\rm M}}
\newcommand{\lamw}{\Lambda_{\rm W}}
\theoremstyle{definition}
\numberwithin{equation}{section}
\newcommand{\bV}{\text{\bf V}}
\newcommand{\bz}{\boldsymbol{z}}
\newcommand{\bv}{\boldsymbol{v}}
\newcommand{\bL}{\boldsymbol{L}}
\newcommand{\St}{P_2(\cT)}
\newcommand{\Stb}{{\boldsymbol{P}_2}(\cT)}
\newcommand{\ip}{{\rm IP}}
\newcommand{\trinl}{\ensuremath{|\!|\!|}}
\newcommand{\trinr}{\ensuremath{|\!|\!|}}
\newcommand{\dy}{{\rm\,dy}}
\DeclareMathOperator{\E}{\mathcal{E}}
\newcommand{\M}{\mathrm{M}}
\newcommand{\T}{\mathcal{T}}
\newcommand{\nonlin}{b}
\newcommand{\dislin}{a_h}
\newcommand{\rop}{R}
\newcommand{\sop}{S}
\newcommand{\lamr}{\Lambda_{\rm R}}
\newcommand{\lams}{\Lambda_{\rm S}}
\def \R{{{\Bbb R}}}
\def\R{\mathbb{R}}
\def\O{\Omega}
\def\bv{{\mathbf V}}
\def\pw{\rm {pw}}
\def\cE{{\mathcal{E}}}
\def\jump#1{\left[\hskip -3.5pt\left[#1\right]\hskip -3.5pt\right]}
\def\bchi{\boldsymbol{\chi}}
\def\bxi{\boldsymbol{\xi}}
\newcommand{\be}{\begin{equation}}
\newcommand{\ee}{\end{equation}}
\definecolor{violet}{rgb}{0.580,0.,0.827}
\newcounter{corr}
\definecolor{violet}{rgb}{0.580,0.,0.827}
\newcommand{\corr}[3]{\typeout{Warning : a correction remains in page
		\thepage}
	\stepcounter{corr}        
	{\color{red}\ifmmode\text{\,{\ensuremath{#1}}\,}\else{#1}\fi}
	{#2}
	{\color{violet} #3}}
\newcounter{changeto}
\newcommand{\changeto}[2]{\typeout{Warning : a correction remains in page
		\thepage}
	\stepcounter{changeto}        
	{\ifmmode\text{\,\sout{\ensuremath{#1}}\,}\else\sout{#1}\fi}
	{\color{red}#2}}
\title{Unified a priori analysis of four second-order FEM \\{for fourth-order quadratic semilinear problems}}
\author{Carsten Carstensen,\footnote{Department of Mathematics, 
		Humboldt-Universit\"{a}t zu Berlin, 10099 Berlin, Germany,  Distinguished Visiting Professor, Department of Mathematics, Indian Institute of 
Technology Bombay, Powai, Mumbai 400076, India.  cc@math.hu-berlin.de}        
		\quad
	Neela Nataraj,\footnote{Department of Mathematics, Indian Institute of Technology Bombay, Powai, Mumbai 400076, India. neela@math.iitb.ac.in}
	\quad
		Gopikrishnan C. Remesan,\footnote{Department of Mathematics, Università degli Studi di Milano, Milan, Italy 21022. gopikrishnan.chirappurathu@unimi.it}\quad
		Devika Shylaja \footnote{Department of Mathematics, Indian Institute of Space Science and Technology, Thiruvanathapuram 695547, India. devikas.pdf@iist.ac.in}
	}
\date{}
\begin{document}
	\maketitle
\abstract A unified  framework  for fourth-order semilinear problems with trilinear nonlinearity and { general sources} allows for quasi-best approximation with lowest-order finite element methods. This paper establishes the stability and a priori error control in the {piecewise} energy and weaker Sobolev norms  under minimal hypotheses.   Applications include the stream function vorticity formulation of the incompressible 2D Navier-Stokes equations and the von K\'{a}rm\'{a}n equations with Morley, discontinuous Galerkin, $C^0$ interior penalty, and weakly over-penalized symmetric interior penalty schemes. The proposed new discretizations consider  quasi-optimal smoothers for the source term and smoother-type modifications inside the nonlinear terms. 

	
\medskip
\noindent \textbf{Mathematics subject classification:} 
65N30,  65N12, 65N50.

\medskip 
\noindent \textbf{Keywords:} semilinear problems, nonsmooth data, a priori,  error control, {quasi-best approximation}, Navier-Stokes, von K\'{a}rm\'{a}n, Morley, discontinuous Galerkin, $C^0$ interior penalty, WOPSIP.

	\section{Introduction}\label{sec:introduction}
The abstract framework for fourth-order semilinear elliptic problems with trilinear nonlinearity in this paper allows a source term  { $F \in H^{-2}(\O)$}  in a bounded polygonal Lipschitz domain $\O$.  {It} simultaneously applies to {the} Morley finite element method (FEM) \cite{Ciarlet, CCGMNN_semilinear}, the discontinuous Galerkin (dG) FEM \cite{FengKarakashian:2007:CahnHilliard},  {the} $C^0$ interior penalty ($C^0$IP) method \cite{BS05}, and {the} weakly over-penalized symmetric interior penalty (WOPSIP) scheme \cite{BrenGudiSung10} for the  approximation of {a} regular  solution to a fourth-order semilinear problem with the biharmonic operator as the leading term.  
In comparison to \cite{CCGMNN_semilinear}, this article includes dG/$C^0$IP/WOPSIP schemes and more general source terms that allow single forces.  It thereby continues \cite{ccnnlower2022} for the linear biharmonic equation to semilinear problems and, for the {\em first} time,  establishes quasi-best approximation results for a discretisation by the Morley/\dg/$C^0$IP schemes with smoother-type modifications in the nonlinearities.

\medskip
A general source term {$F\in H^{-2}(\Omega)$} cannot  be immediately  evaluated at a possibly discontinuous test function $v_h \in V_h \not\subset H^2_0(\Omega)$  for
the nonconforming FEMs  of this paper.  The post-processing procedure in \cite{BS05} enables a new $C^0$IP method for right-hand sides in {$H^{-2}(\Omega)$}.  The articles~\cite{veeser1,veeser2,veeser3} employ a map $Q${,} referred to as a smoother{, that}  transforms a nonsmooth function $y_h$ to a smooth version $Qy_h${.}\:The  discrete schemes  are modified by replacing $F$ with $F\circ Q$ and the quasi-best approximation follows for Morley  and $C^0$IP schemes for linear problems in the energy norm. 
The quasi-optimal smoother $Q=JI_{\rm M}$ {in~\cite{ccnnlower2022}} for dG schemes{ is based on a } (generalised) Morley interpolation operator $I_\M$ and a companion operator $J$ from~\cite{aCCP,DG_Morley_Eigen}.  

\medskip \noindent 
 In addition to the smoother $Q$ in the right-hand side, this article introduces operators $R, S \in \{{\rm id}, I_\M, JI_\M \}$  in the trilinear form $\Gamma_{\pw}(Ru_h, Ru_h, Sv_h)$  that lead to {\it nine} new discretizations for each of the {four} discretization schemes (Morley/dG/$C^0$IP/WOPSIP) in two applications.   Here $R,S ={\rm id}$  means no smoother,  $I_\M$ is averaging in the Morley finite element space,  while $J I_\M$ is the quasi-optimal smoother.
 The simultaneous analysis applies to  the stream function vorticity formulation of the 2D Navier-Stokes equations \cite{BrezziRappazRaviart80, CN86,CN89} and \vket\,\cite{CiarletPlates,ng2} defined on a bounded polygonal Lipschitz domain $\Omega$ in the plane.  For $S=JI_\M$ and all $R \in \{{\rm id}, I_\M, JI_\M \}$, the Morley/dG/$C^0$IP schemes allow for the quasi-best approximation
\begin{equation}\label{eqn:best}
 \|u - u_h \|_{\widehat{X}} \le C_{\rm qo} \min_{x_h \in X_h} \|u - x_{h}\|_{\widehat{X}}.
\end{equation}
Duality arguments lead to optimal convergence rates in weaker Sobolev norm estimates for the discrete schemes with specific choices of $R$ in the trilinear form summarised in Table~\ref{tab:apriori}. The comparison results  suggest that,  amongst the lowest-order methods for fourth-order semilinear problems with trilinear nonlinearity,  the attractive Morley FEM is the {\it simplest}  discretization scheme  with optimal error estimates in (piecewise) energy and weaker Sobolev norms.

\medskip
\noindent For $F \in  H^{{-r}}(\Omega)$ with {{$2-\sigma \le r \le 2$
(with the index of elliptic regularity $\sigma _{\rm reg}>0$ and $\sigma:=\min\{\sigma_{\rm reg},1\}>0$ )}} and for the biharmonic, the 2D Navier-Stokes, and the \vket~with homogeneous Dirichlet boundary conditions, it is known   that the exact solution belongs to  $H^2_0(\Omega)\cap H^{4-r} (\Omega)$. 
	\vspace{0.05cm}

{
\begin{table}[h!]
{ 
\centering
	\begin{tabular}{|c|c|c|cc}
		\cline{1-3}
	\diagbox[innerleftsep=0.2cm,innerrightsep=1pt]{Method}{Results}	& quasi-best for $S=JI_{\rm M}$       &  $\|u-u_h\|_{{H^s(\mathcal{T})}}$        &  &  \\ \cline{1-3}
		Morley      & \multirow{2}{*}{ \eqref{eqn:best}} &       ${ O(h_{\rm max}^{\min\{4-2r,4-r-s\}})}$            &  &  \\ \cline{1-1} \cline{3-3}
		dG/$C^0$IP &                   & \multirow{2}{*}{
\begin{minipage}{6.2cm}
\centering
\vspace{0.1cm}
$O(h_{\rm max}^{2-r})  \text{ for }  R= {\rm id},$ \\
$O(h_{\rm max}^{\min\{4-2r,4-r-s\}}) \text{ for } {R\in \{I_\M, JI_\M\}}$ 
\end{minipage}
} &  &  \\ \cline{1-2}
		WOPSIP      &        \begin{minipage}{4cm} \centering perturbed \\
   Theorem~\ref{error control}.a \&  \ref{thm:aprio_weak_vke_WOPSIP}.a
  \end{minipage}&                   &  &  \\ \cline{1-3}
	\end{tabular}
 \captionsetup{width=0.85\textwidth}
{\small \caption{\small Summary  for Navier-Stokes and von K\'{a}rm\'{a}n eqn from Section~\ref{sec:nse} and~\ref{sec:vke} with 
 {{$F \in H^{-r}(\O)$ for $2-\sigma \le r,s \le 2$}} and $R,S \in \{{\rm id}, I_\M, JI_\M \}$ arbitrary unless otherwise specified.  \label{tab:apriori}
}}  }
\end{table}
}


\noindent
\medskip
\noindent
{\bf Organisation. }\:The remaining parts are organised as follows.\:Section~\ref{sec.stability} discusses an abstract discrete inf-sup condition for linearised problems.\:Section~\ref{sec:mainresults} {{introduces}} the main results {\bf(A)}-{\bf (C)} of this article. Section~\ref{sub:existence} discusses the quadratic convergence of Newton's scheme and the unique existence  of a local discrete solution $u_h$ that approximates  a regular root {{$u \in H^2_0(\Omega)$ for data $F \in H^{-2}(\O)$}}. Section \ref{sec:apriori} presents an abstract {a priori} error control in the piecewise energy norm with  a quasi-best approximation for $S = \smooth$ in \eqref{eqn:best}.\;Section~\ref{sec:goal-oriented} discusses the goal-oriented error control and derives an a priori error estimate in weaker Sobolev norms.\:There are at least two reasons for this abstract framework enfolded in Section~\ref{sec.stability}-\ref{sec:goal-oriented}.  First it minimizes the repetition of mathematical arguments in two important applications and four popular discrete schemes. Second, it provides a platform for further generalizations to more general smooth semilinear problems as it derives all the necessities for the leading terms in the Taylor expansion of a smooth semilinearity.\:Section \ref{sec:notations} presents preliminiaries, triangulations, discrete spaces,  the conforming companion, discrete norms and some auxiliary results on $I_\M$ and $J$. Section~\ref{sec:nse} and~\ref{sec:vke} apply  the abstract results to the stream function vorticity formulation of the 2D Navier-Stokes  and the \vket~for the Morley/dG/$C^0$IP/WOPSIP approximations. They contain
 comparison results and  convergence rates displayed in Table~\ref{tab:apriori}. 
\section{Stability}\label{sec.stability}
%

This section establishes an abstract discrete inf-sup condition  under the assumptions \eqref{cts_infsup}-\eqref{quasioptimalsmootherP},   \eqref{quasioptimalsmootherR}, \eqref{eqn:defn_of_beta0} 
	and~\ref{h1}-\ref{h4} stated below. This is a key step and has consequences for second-order elliptic problems (as in \cite[Section 2]{CCGMNN_semilinear}) and in this paper for the well-posedness of the discretization.  In comparison to \cite{CCGMNN_semilinear} that merely addresses nonconforming FEM,  the proof of the stability in this section applies to all the discrete schemes.
 Let $\widehat{X}$ (resp. ~$\widehat{Y}$) be a real Banach space with norm $\|\bullet\|_{\widehat{X}}$ (resp. $\|\bullet\|_{\widehat{Y}}$) and suppose $X$ and $X_h$ (resp. $Y$ and $Y_h$) are two complete linear subspaces of $\widehat{X}$ (resp. $\widehat{Y}$) with inherited norms $\|\bullet\|_{X}:=\big(\|\bullet\|_{\widehat{X}}\big)|_{X}$ and $\|\bullet\|_{X_h}:=\big(\|\bullet\|_{\widehat{X}}\big)|_{X_h}$ (resp. $\|\bullet\|_{Y}:=\big(\|\bullet\|_{\widehat{Y}}\big)|_{Y}$ and $\|\bullet\|_{Y_h}:=\big(\|\bullet\|_{\widehat{Y}}\big)|_{Y_h}$){; $X + X_h \subseteq \widehat{X}$ and $Y + Y_h \subseteq \widehat{Y}$.}

\begin{table}[h!]
{	\centering 
\begin{tabular}{|c|c||c|c|}
	\hline 
	bilinear form & domain & \begin{minipage}{4cm}
		associated operator
	\end{minipage} & operator norm \\ \hline \hline
$a_{\rm pw}$ & \begin{minipage}{2cm}
	\centering
	\vspace{5pt}
	$\widehat{X} \times \widehat{Y}$
	\vspace{5pt}
\end{minipage} & \begin{minipage}{4cm}
	\centering
--
\end{minipage} & -- \\ \hline 
$a := a_{\rm pw}\vert_{X \times Y}$ & $X \times Y$ & \begin{minipage}{4cm}
	\centering
	\vspace{5pt}
	$A \in L(X;Y^{\ast})$ \\
	$Ax = a(x,\bullet) \in Y^{\ast}$  \\	\vspace{5pt}
\end{minipage} & $\|A\| := \|A\|_{L(X;Y^\ast)}$ \\ \hline 
$a_h$ & $X_h \times Y_h$ & \begin{minipage}{4cm}
	\centering
	\vspace{5pt}
	$A_h \in L(X_h;Y_h^{\ast})$ \\
	$A_hx_h = a_h(x_h,\bullet) \in Y_h^{\ast}$  \\	\vspace{5pt}
\end{minipage} & -- \\ \hline 
$\widehat{\nonlin}$ &  \begin{minipage}{2cm}
	\centering
	\vspace{5pt}
	$\widehat{X} \times \widehat{Y}$
	\vspace{5pt}
\end{minipage} & \begin{minipage}{4cm}
	\centering
-
\end{minipage} & $\|\widehat{\nonlin} \| := \|\widehat{\nonlin} \|_{ \widehat{X} \times \widehat{Y}}$ \\ \hline 
$\nonlin := \widehat{\nonlin}\vert_{X \times Y}$ &  \begin{minipage}{2cm}
	\centering
	\vspace{5pt}
	$X \times Y$
	\vspace{5pt}
\end{minipage} & \begin{minipage}{4cm}
	\centering
	\vspace{5pt}
	$B \in L(X;Y^\ast)$ \\
	$Bx =\nonlin(x,\bullet) \in Y^{\ast}$  \\	\vspace{5pt}
\end{minipage} & $\|\nonlin \| := \|\nonlin\|_{ X \times Y }$  \\ \hline
\end{tabular}
\caption{Bilinear forms, operators, and norms}
\label{tab:bilinear_forms}}
\end{table}

Table~\ref{tab:bilinear_forms} summarizes the bounded bilinear forms and associated operators with norms. Let the linear operators
${A}\in L({X}; {Y}^*)$ and 
$A+B\in L(X;Y^*)$ be associated to the  
bilinear forms $a$ 
and $a+\nonlin$ and suppose $A$ and $A+B$ are invertible so that the inf-sup conditions 
\begin{gather} 
0 < \alpha :=\inf_{\substack{{x} \in {X} \\ \|{x}\|_{{X}}=1}} \sup_{\substack{{y}\in {Y}\\ \|{y}\|_{{Y}}=1}} a({x},{y})\;\;  \text{ and }\;\;  
0<\beta:=\inf_{\substack{x\in X\\ \|x\|_{X}=1}} \sup_{\substack{y\in Y\\ \|y\|_{Y}=1}}(a+\nonlin)(x,y)\label{cts_infsup}
\end{gather}
hold.  Assume that the linear operator $A_{h} : X_{h} \rightarrow Y_{h}^\ast$  is invertible and
\begin{align}
\label{dis_Ah_infsup}
0 < \alpha_0 \le \alpha_h  :=\inf_{\substack{{x}_h\in {X}_h \\ \|{x_h}\|_{{X_h}}=1}} \sup_{\substack{{y}_h\in {Y_h}\\ \|{y_h}\|_{{Y}_h}=1}}\dislin({x_h},{y_h})
\end{align}
holds for some universal constant $\alpha_0$. Let the linear operators $P \in L(X_h;X)$, $Q \in L(Y_h;Y)$,  $\rop \in L(X_h;\widehat{X})$, $S \in L(Y_h; \widehat{Y})$  and the constants $\Lambda_{\rm P},\Lambda_{\rm Q},  \lamr, \lams \ge 0$ satisfy
\begin{gather}
\| (1 - P)x_h \|_{\widehat{X}} \le \Lambda_{\rm P} \| x - x_h \|_{\widehat{X}} \fl x_h \in X_h \text{ and } x \in X,\label{quasioptimalsmootherP}\\
\| (1 - Q)y_h \|_{\widehat{Y}} \le \Lambda_{\rm Q} \| y - y_h \|_{\widehat{Y}} \fl y_h \in Y_h \text{ and } y \in Y,\label{quasioptimalsmootherQ}\\
\| (1 - \rop)x_h \|_{\widehat{X}} \le \lamr\| x - x_h \|_{\widehat{X}} \fl x_h \in X_h \text{ and } x \in X, \label{quasioptimalsmootherR}\\
\| (1 - \sop)y_h \|_{\widehat{Y}} \le \lams\| y - y_h \|_{\widehat{Y}} \fl y_h \in Y_h \text{ and } y \in Y\label{quasioptimalsmootherS}.
\end{gather}
Suppose the operator  $I_{{\rm X}_h} \in L(X ;X_h)$, 
the constants $\Lambda_1, \delta_2$, $\delta_3 \ge 0,$  the above bilinear forms $a,\,\dislin, \widehat{\nonlin}$, and the linear operator $A$ from Table~\ref{tab:bilinear_forms} satisfy, {for all $x_h \in X_h,\,y_h \in Y_h,\,x \in X,$ and $y \in Y$, } that

\medskip
\centerline{\begin{minipage}{12cm}
\centering
\begin{enumerate}[label= $\mathrm{\bf (H\arabic*)}$,ref=$\mathrm{\bf (H\arabic*)}$,leftmargin=\widthof{(C)}+3\labelsep]
	\item \label{h1} $	\dislin(x_h,y_h)-a(Px_h,Qy_h)\le \Lambda_{1}\| x_h - Px_h\|_{\widehat{X}} \| y_h \|_{Y_h},$
	\item \label{h3} 
	$\displaystyle \delta_2 := \sup_{\substack{x_h\in X_h\\ \|x_h\|_{X_h}=1}} \|(1 - I_{{\rm X}_h}) A^{-1}(\widehat{\nonlin}(\rop x_h,\bullet)|_{Y})\|_{\widehat{X}},$
	\item \label{h4}
	$\displaystyle\delta_3:=\sup_{\substack{x_h\in X_h\\ \|x_h\|_{X_h}=1}}\|\widehat{\nonlin}(\rop x_h,(Q-\sop)\bullet\big)\|_{ Y_h^*}$.
\end{enumerate}
\end{minipage}}
\noindent  {{In applications, we establish that $\delta_2$ and $\delta_3$ are sufficiently small.}}
 Given $\alpha$, $\beta$,  $\alpha_h$, {$\Lambda_{\rm P}$, $\Lambda_{1}$, $\lamr$, $\delta_{2}$, $\delta_{3}$ from above} and the norms $\| A\|$ and $\|\widehat{\nonlin}\|$ from Table \ref{tab:bilinear_forms}, define
\begin{align}\label{AsmpCondn}
\widehat{\beta} &:=\frac{{\beta}}{\Lambda_{\rm P}{\beta}+\|A\|\left(1+\Lambda_{\rm P}\left(1+{\alpha}^{-1}\|\widehat{\nonlin}\|(1 + \lamr)\right)   \right)},\\ \label{eqn:defn_of_beta0}
\beta_{0}& :=\alpha_h \widehat{\beta}-\delta_2(\|Q^*A\|(1+\Lambda_{\rm P})+\alpha_h+\Lambda_1 \Lambda_{\rm P})- \delta_3
\end{align}
{{with the adjoint $Q^*$ of $Q$.}}
 In all applications of this article, {$1/\alpha$, $1/\beta$, $1/\alpha_h$, $\Lambda_{\rm P}$, $\Lambda_{\rm Q}$, $\lamr$, $\lams$, $\Lambda_{1}$, {{ and $\|Q^*A\|$}} are bounded  from above by generic constants, while $\delta_2$ and $\delta_3$ are controlled in terms of the maximal mesh-size $h_{\rm max}$  of an  underlying triangulation and tend to zero as $h_{\rm max} \rightarrow 0$.  Hence, $\beta_0 > 0$ is positive for  sufficiently fine triangulations and even bounded away from zero,  $\beta_0 \gtrsim 1$.  (Here  $\beta_0 \gtrsim 1$ means $\beta_0  \geq C$ for some positive generic constant $C$.)~This enables the following discrete inf-sup condition.
\begin{thm}[discrete inf-sup condition]\label{dis_inf_sup_thm}
	Under the aforementioned notation, \eqref{cts_infsup}-\eqref{quasioptimalsmootherP},   \eqref{quasioptimalsmootherR}, \eqref{eqn:defn_of_beta0} 
	and~\ref{h1}-\ref{h4} imply the stability condition 
 \begin{align} \label{eqn:disinfsup}
	\beta_h:= \inf_{\substack{x_h\in X_h\\ \| x_h\|_{X_h}=1}}\sup_{\substack{y_h\in Y_h\\ \| y_h\|_{Y_h}=1}} 
	(\dislin(x_h,y_h) +\widehat{\nonlin}(\rop x_h,\sop y_h)) \ge \beta_{\rm 0}.
\end{align}
\end{thm}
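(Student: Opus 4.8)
The plan is to reduce the discrete inf-sup estimate \eqref{eqn:disinfsup} for the perturbed bilinear form $\dislin(\cdot,\cdot)+\widehat{\nonlin}(\rop\,\cdot,\sop\,\cdot)$ to a chain of three lower bounds, progressively peeling off the modifications. Fix $x_h\in X_h$ with $\|x_h\|_{X_h}=1$. \emph{Step 1 (transfer the continuous inf-sup for $a+\nonlin$ to the discrete level via $P,Q$).} Using \eqref{cts_infsup} for $a+\nonlin$, pick $y\in Y$ with $\|y\|_Y=1$ and $(a+\nonlin)(Px_h, y)\gtrsim \beta\|Px_h\|_X$; a Fortin-type argument is not available, so instead we estimate $a(Px_h,Qy_h)$ for a suitable $y_h\in Y_h$ approximating $y$ (for instance $y_h=I_{Y_h}y$ if such an operator is at hand, or directly $y_h$ realising the discrete sup) and control the defect $a(Px_h,y)-a(Px_h,Qy_h)$ through \eqref{quasioptimalsmootherQ} together with $\|A\|$. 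The term $\nonlin(Px_h,y)=\widehat{\nonlin}(Px_h,y)$ is absorbed using $\|\widehat{\nonlin}\|$, $\alpha^{-1}$ (to pass from $\|x_h\|_{X_h}$ to $\|Px_h\|_X$ via $A$), and the factor $(1+\lamr)$ coming from replacing $Px_h$ by $\rop x_h$ inside the nonlinearity through \eqref{quasioptimalsmootherR}. Bookkeeping of all these constants is exactly what produces the quotient defining $\widehat\beta$ in \eqref{AsmpCondn}: the numerator $\beta$ is the gain from the continuous inf-sup, and the denominator collects $\Lambda_{\rm P}$ (from \eqref{quasioptimalsmootherP}, needed to bound $\|x_h\|_{\widehat X}$ by $\|Px_h\|_{\widehat X}$ plus a quasi-optimal remainder), $\|A\|$, and the nonlinear contribution $\alpha^{-1}\|\widehat{\nonlin}\|(1+\lamr)$.

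\emph{Step 2 (replace $a(Px_h,Qy_h)$ by $\dislin(x_h,y_h)$).} This is the purpose of \ref{h1}: the consistency-type defect $\dislin(x_h,y_h)-a(Px_h,Qy_h)$ is bounded by $\Lambda_1\|x_h-Px_h\|_{\widehat X}\|y_h\|_{Y_h}$, and $\|x_h-Px_h\|_{\widehat X}$ is in turn controlled by $\Lambda_{\rm P}$ times a best-approximation quantity using \eqref{quasioptimalsmootherP}. Combined with the discrete inf-sup $\alpha_h$ for $\dislin$ in \eqref{dis_Ah_infsup}, Step~1 and Step~2 yield a lower bound of the form $\alpha_h\widehat\beta$ for $\sup_{y_h}\dislin(x_h,y_h)$, modulo the as-yet-unincorporated errors from $I_{X_h}$ and from $S$.

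\emph{Step 3 (put the smoother $S$ back into the nonlinear term and close).} We must finally bound from below $\dislin(x_h,y_h)+\widehat{\nonlin}(\rop x_h,\sop y_h)$. Write $\widehat{\nonlin}(\rop x_h,\sop y_h)=\widehat{\nonlin}(\rop x_h, Q y_h)-\widehat{\nonlin}(\rop x_h,(Q-\sop)y_h)$; the second piece is controlled by $\delta_3$ via \ref{h4}. For the first piece, insert $A^{-1}$: write $\widehat{\nonlin}(\rop x_h,\bullet)|_Y = a(A^{-1}(\widehat{\nonlin}(\rop x_h,\bullet)|_Y),\bullet)$ and split $A^{-1}(\cdots)=I_{X_h}A^{-1}(\cdots)+(1-I_{X_h})A^{-1}(\cdots)$; the second summand is exactly what \ref{h3} bounds by $\delta_2$, while the first is an element of $X_h$ that can be folded into the $\dislin$-term, at the cost of the operator norms appearing in \eqref{eqn:defn_of_beta0}, namely $\|Q^*A\|(1+\Lambda_{\rm P})$, $\alpha_h$, and $\Lambda_1\Lambda_{\rm P}$ — these are precisely the factors multiplying $\delta_2$ there. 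Assembling Steps~1--3 and taking the infimum over $x_h$ gives $\beta_h\ge \alpha_h\widehat\beta-\delta_2(\|Q^*A\|(1+\Lambda_{\rm P})+\alpha_h+\Lambda_1\Lambda_{\rm P})-\delta_3=\beta_0$, as claimed.

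\emph{Main obstacle.} The delicate point is Step~3: the $\delta_2$-term is not simply an additive perturbation of the nonlinearity but enters through the $A^{-1}$-substitution, so one has to be careful that the $X_h$-part $I_{X_h}A^{-1}(\widehat{\nonlin}(\rop x_h,\bullet)|_Y)$ is reinserted into the \emph{discrete} form $\dislin$ consistently (this is where $\Lambda_1$ and $\Lambda_{\rm P}$ reappear, via \ref{h1} applied to that element) and that the duality pairing $\langle \widehat{\nonlin}(\rop x_h,\bullet), Qy_h\rangle$ is rewritten using the adjoint $Q^*$ to expose $\|Q^*A\|$ rather than $\|A\|\,\|Q\|$. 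Getting the order of these substitutions right — first $S\to Q$, then $A^{-1}$-lift, then $I_{X_h}$-split, then reinsert into $\dislin$ — is what makes the constant $\beta_0$ come out in exactly the stated form; none of the individual estimates is hard, but the sequencing must match the definition \eqref{eqn:defn_of_beta0} precisely.
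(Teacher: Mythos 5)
Your Step~3 correctly identifies the engine of the proof: the lift $\xi := A^{-1}(\widehat{\nonlin}(\rop x_h,\bullet)|_Y)$, the split $\xi = I_{{\rm X}_h}\xi + (1-I_{{\rm X}_h})\xi$ controlled by $\delta_2$ via~\ref{h3}, the reinsertion of $\xi_h := I_{{\rm X}_h}\xi$ into $\dislin$ through~\ref{h1}, and the $S$-versus-$Q$ exchange via~\ref{h4}; the resulting constant matches \eqref{eqn:defn_of_beta0}. The gap is in Steps~1--2. You propose to realize the continuous inf-sup by picking $y\in Y$ with $(a+\nonlin)(Px_h,y)\gtrsim\beta\|Px_h\|_X$ and then approximating $y$ by some $y_h\in Y_h$, controlling the defect with \eqref{quasioptimalsmootherQ}. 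This cannot be carried out under the stated hypotheses: \eqref{quasioptimalsmootherQ} is not among the assumptions of the theorem (only \eqref{quasioptimalsmootherP} and \eqref{quasioptimalsmootherR} are), and even if it were, it only bounds $\|(1-Q)y_h\|_{\widehat Y}$ by $\|y-y_h\|_{\widehat Y}$ for a given pair; no hypothesis guarantees that a prescribed $y\in Y$ admits a good approximant $y_h\in Y_h$, so the defect $a(Px_h,y)-a(Px_h,Qy_h)$ is not controllable. Relatedly, your Step~2 claims a lower bound $\alpha_h\widehat\beta$ for $\sup_{y_h}\dislin(x_h,y_h)$, but that supremum is already $\ge\alpha_h$ by \eqref{dis_Ah_infsup} alone; $\widehat\beta$ plays no role there, and a test function chosen to nearly maximize $\dislin(x_h,\bullet)$ gives no control over the sign or size of $\widehat{\nonlin}(\rop x_h,\sop\,\bullet)$ at that same test function.

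The paper closes this gap without ever selecting a test function in $Y$: the second inf-sup condition in \eqref{cts_infsup} is used only as the operator lower bound $\beta\|Px_h\|_X\le\|A(Px_h+\eta)\|_{Y^*}\le\|A\|\,\|Px_h+\eta\|_X$ with $\eta:=A^{-1}(B\,Px_h)$, which after comparing $\eta$ with $\xi$ (via the first inf-sup constant $\alpha$ and \eqref{quasioptimalsmootherR}) and $Px_h$ with $x_h$ (via \eqref{quasioptimalsmootherP}) yields the norm bound $\widehat\beta\le\|x_h+\xi_h\|_{X_h}+\delta_2$ on the \emph{shifted trial element}. The single discrete test function $\phi_h$ is then produced by applying \eqref{dis_Ah_infsup} to $x_h+\xi_h$, so that $\dislin(x_h,\phi_h)$ and (an approximation of) $\widehat{\nonlin}(\rop x_h,\sop\phi_h)$ are evaluated at the \emph{same} $\phi_h$; this is also where the summand $\alpha_h\delta_2$ in \eqref{eqn:defn_of_beta0} originates. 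Your proposal is missing this shifted-element norm estimate, which is the actual link between $\beta$ and $\alpha_h$.
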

\noindent Before the proof of Theorem \ref{dis_inf_sup_thm} completes this section, some remarks on the particular choices of $R$ and $S$ are in order to motivate the {general description.} 

\begin{example}[quasi-optimal smoother $JI_\M$] \label{ex1}
\noindent   This paper follows \cite{ccnnlower2022} in the definition of the quasi-optimal smoother $P=Q=JI_\M$ in the applications with $X=Y=V=:H^2_0(\Omega)$ for the biharmonic operator $A$ and the linearisation $B$ of the trilinear form. Then \eqref{quasioptimalsmootherP}-\eqref{quasioptimalsmootherQ} follow in Subsection~\ref{sec:inter_comp} below; cf.  Definition \ref{def:morleyii} (resp.  Lemma~\ref{hctenrich}) for the definition of the Morley interpolation $I_{\rm M}$ (resp.  the companion operator $J$).
\end{example}
\begin{example}[no smoother in nonlinearity] 
 The natural choice in the setting of Example \ref{ex1} reads $R={\rm id}=S$   \cite{CCGMNN_semilinear}. Then $\Lambda_{\rm R}=0 =\Lambda_{\rm S} $ in \eqref{quasioptimalsmootherR}-\eqref{quasioptimalsmootherS}  and a priori error estimates will be available for the respective discrete energy norms. However, only a few optimal convergence results shall follow for the error in the piecewise weaker Sobolev norms, e.g., for the Morley scheme for the Navier-Stokes (Theorem~\ref{thm:apost_ns}.c) and for the von K\'{a}rm\'{a}n equations~(Theorem~\ref{thm:aprio_weak_vke}.b). 
\end{example}
\begin{example}[smoother in nonlinearity]
The choices $R=P$ and $S=Q$ lead to $\Lambda_{\rm R} = \Lambda_{\rm P}$ and $\Lambda_{\rm S}= \Lambda_{\rm Q}$ in \eqref{quasioptimalsmootherR}-\eqref{quasioptimalsmootherS}, while $\delta_3=0$ in {\bf (H3)}.  This allows for optimal a priori error estimates in the piecewise energy and in  weaker Sobolev norms and this is more than an academic exercise for a richer picture on the respective convergence properties; cf. \cite{ccnn2021} for exact convergence rates for the Morley FEM.  This is important for the analysis of quasi-orthogonality in the proof of optimal convergence rates of adaptive mesh-refining algorithms in \cite{adaptivemorley}. 
\end{example}
\begin{example}[simpler smoother in nonlinearity]
 The realisation of ${R=S=P=JI_\M}$ in the setting of Example~\ref{ex1} may lead to cumbersome implementations in the nonlinear terms and so the much cheaper choice $R=S=I_\M$ shall also be discussed in the applications below.
\end{example}
\begin{rem}[on {\bf (H1)}] The paper \cite{ccnnlower2022}  adopts \cite{veeser1}-\cite{veeser2} and extends those results to the dG scheme as a preliminary work on linear problems for this paper. The resulting abstract condition ${\bf (H1)}$ therein is a key property to analyze the linear terms simultaneously. 
\end{rem} 
\begin{rem}[comparison with \cite{CCGMNN_semilinear}]
The set of hypotheses for the discrete inf-sup condition in this article differs from those in~\cite{CCGMNN_semilinear}. 
This paper allows smoothers in the nonlinear terms and also applies to dG/$C^0$IP/WOPSIP schemes.  
\end{rem}
\begin{rem}[consequences of \eqref{quasioptimalsmootherP}-\eqref{quasioptimalsmootherS}]\label{rem.consequences} The estimates in  \eqref{quasioptimalsmootherP}-\eqref{quasioptimalsmootherS} give rise to a typical estimate utilised throughout the analysis in this paper.  For instance,   \eqref{quasioptimalsmootherP}  (resp.  \eqref{quasioptimalsmootherR}) and a triangle inequality show, for all
$x\in X$ and $x_h \in X_h$,  that 
\begin{align} \label{eqn:PR}
\|x- Px_h\|_{{X}} \le (1+\Lambda_{\rm P}) \|x-x_h\|_{\widehat{X}} \; \;  (\text{resp.  } \|x- Rx_h\|_{\widehat{X}} {\le} (1+\Lambda_{\rm R}) \|x-x_h\|_{\widehat{X}}). 
\end{align}
The analog  \eqref{quasioptimalsmootherQ} (resp.  \eqref{quasioptimalsmootherS}) leads, for all $y \in Y$ and $y_h \in Y_h$, to
\begin{align} \label{eqn:QS}
\|y- Qy_h\|_{{Y}} \le (1+\Lambda_{\rm Q}) \|y-y_h\|_{\widehat{Y}}\; \;(\text{resp. } 
\|y- Sy_h\|_{\widehat{Y}} \le (1+\Lambda_{\rm S}) \|y-y_h\|_{\widehat{Y}}).
\end{align}
\end{rem}
\noindent \textit{Proof of Theorem~\ref{dis_inf_sup_thm}.} The proof of Theorem~\ref{dis_inf_sup_thm} departs {as in}~\cite[Theorem 2.1]{CCGMNN_semilinear} for nonconforming schemes for any given  $x_h\in X_h$ with \textcolor{blue}{$\| x_h\|_{X_h}=1$}. 
Define $$x:=P x_h, \; \eta := A^{-1}(Bx), \; 
\: \xi := A^{-1}(\widehat{\nonlin}(\rop x_h,\bullet)|_Y)\in X,\;\text{and}\; \xi_h:=I_{{\rm X}_h}\xi \in X_h .$$
The  definitions of~$\xi \in X$ and $\xi_h \in X_h$ lead in~\ref{h3} to
\begin{align}
\|\xi - \xi_h \|_{\widehat{X}} \le \delta_2.
\label{eqn:psi_delta2}
\end{align}
\noindent {{The second inf-sup condition in \eqref{cts_infsup}}} and $A\eta=Bx \in Y^*$ result in
	\begin{equation*}
	\beta\|x\|_X\leq \|Ax+Bx\|_{Y^*}=\|A(x+\eta)\|_{Y^*}\leq \|A\|\|x+\eta\|_{X}
	\end{equation*} 
with the operator norm of $A$ in the last step. 
	This and  triangle inequalities imply
	\begin{align}
	(\beta/{\|A\|})\,\|x\|_X\leq\|x+\eta\|_{X}\leq{} \|x-x_h\|_{\widehat{X}}+\|x_h+\xi\|_{\widehat{X}}+\|\xi - \eta\|_X. \label{xbound}
	\end{align}
{The above definitions} of $\xi$ and $\eta$ {guarantee} $a(\xi -\eta,\bullet) = \widehat{\nonlin}(\rop x_h - x,\bullet)\vert_{Y} \in Y^{\ast}$. This, {\eqref{cts_infsup}}, and the norm 
	$\|\widehat{\nonlin}\|$ of the bilinear form $\widehat{\nonlin}$ show
	\begin{equation}\label{xi_eta_bdd}
\alpha	\|\xi-\eta\|_X \le \|\widehat{b}(x - \rop x_h, \bullet)\|_{Y^\ast}\leq \|\widehat{\nonlin}\|\|x-\rop x_h\|_{\widehat{X}} \le \| \widehat{\nonlin} \|(1 + \lamr) \|x - x_{h} \|_{\widehat{X}}
	\end{equation}
	with  \eqref{eqn:PR} 
 in the last step. Note that the definition $x =P x_h$ and~\eqref{quasioptimalsmootherP} imply 
	\begin{align}\label{A4app}
	\| x-x_h\|_{\widehat{X}}\leq{}& \Lambda_{\rm P}\| x_h+\xi\|_{\widehat{X}}.
	\end{align}
	The combination of \eqref{xbound}-\eqref{A4app} results in 
	\begin{align}\label{xfirst_bdd}
\|x\|_{X}\leq \|x_h+\xi\|_{\widehat{X}}(1  +\Lambda_{\rm P} (1 +{\alpha}^{-1}\|\widehat{\nonlin}\| (1 + \lamr)) ) \|A\|/\beta.
	\end{align}		
	A triangle inequality, \eqref{A4app}-\eqref{xfirst_bdd}, and the definition of $\widehat{\beta}$ in \eqref{AsmpCondn} lead to  
	\begin{align*}
	1=\|x_h\|_{X_h}&\leq\|x-x_h\|_{\widehat{X}}+\|x\|_X \le {\widehat{\beta}}^{-1}\|x_h + \xi \|_{\widehat{X}}.
	\end{align*}
This in  the first inequality below and a triangle inequality  plus \eqref{eqn:psi_delta2} show
	\begin{equation}\label{beta_bdd}
	\widehat{\beta} \leq \|x_h+\xi\|_{\widehat{X}} \le \|x_h + \xi_h \|_{X_h} +  \|\xi - \xi_h\|_{\widehat{X}} \le  \|x_h + \xi_h \|_{X_h}+\delta_2.
	\end{equation}
	{{The { condition} \eqref{dis_Ah_infsup}}} implies for $x_h +  \xi_h \in X_h$ and for any $\epsilon >0$, the existence of some $\phi_h \in Y_h$ such that $ \|\phi_h\|_{Y_h} \le 1+  {\epsilon}$ and $\alpha_h \|x_h + \xi_h\|_{X_h}  = \dislin(x_h + \xi_h,\phi_h).$  Elementary algebra shows 
	\begin{align} \label{eqn:split_axh}
	\hspace{-0.5cm} \alpha_h  \|x_h + \xi_h \|_{X_h}={} \dislin(x_h,\phi_h){+}\dislin( \xi_h,\phi_h){-}a(P\xi_h,Q\phi_h){+ }a(P\xi_h - \xi,Q\phi_h) {+}  a(\xi,Q\phi_h)
	\end{align}
and motivates the control of {the} terms below. 
	\noindent Hypothesis~\ref{h1} and~\eqref{quasioptimalsmootherP} imply
	\begin{align} \label{eqn:split_est1}
	a_h(\xi_h,\phi_h) - a(P\xi_h,Q\phi_h) \le \Lambda_{1} \Lambda_{\rm P} \|\xi  - \xi_h \|_{\widehat{X}} \|\phi_h\|_{Y_h}  \le  \Lambda_{1} \Lambda_{\rm P} \delta_{2} (1 +\epsilon)
	\end{align}
	with~\eqref{eqn:psi_delta2} and $\|\phi_h\|_{Y_h} \le 1 + \epsilon$ in the last step { above}.
The boundedness of $Q^{\ast}A \in L(X; Y_h^*)$,  $\|\phi_h\|_{Y_h} \le 1 + \epsilon$, ~\eqref{eqn:PR}, and \eqref{eqn:psi_delta2}} for $\| \xi - P \xi_h \|_{X} \le (1 + \Lambda_{\rm P}) \| \xi - \xi_h \|_{\widehat{X}} \le (1 + \Lambda_{\rm P}) \delta_2$ reveal 
	\begin{align}  \label{infsup.t2}
\hspace{-0.8cm}	a(P\xi_h-\xi,Q\phi_h)
 \le  \|Q^{\ast}A\|(1+\Lambda_{\rm P})\delta_2(1+\epsilon).
	\end{align}~\noeqref{eqn:split_est1,infsup.t2}
 The definition of $\xi$ shows that $a(\xi,Q\phi_h) =\widehat{\nonlin}(\rop x_h,Q\phi_h)$. 
 This,  $\|\phi_h\|_{Y_h} \le 1 + \epsilon$, and ~\ref{h4} imply
\begin{equation}\label{infsup.t3}
a(\xi,Q\phi_h) \le \widehat{\nonlin}(Rx_h,S\phi_h)+\delta_3(1+\epsilon).
\end{equation}
The combination of {{~\eqref{eqn:split_axh}- \eqref{infsup.t3}}} reads
\begin{align} \label{eqn:pstep}
\hspace{-0.1in} \alpha_h \|x_h + \xi_h\|_{X_h} &  \le \dislin(x_h,\phi_h)+ \widehat{\nonlin}(\rop x_h,\sop \phi_h) + \big((\|Q^{\ast}A\|(1+\Lambda_{\rm P})+\Lambda_{1}\Lambda_{\rm P})\delta_2+\delta_3\big)(1+\epsilon).
\end{align}
{{This, \eqref{beta_bdd},  and $\|\phi_h\|_{Y_h} \le 1+\epsilon$  imply
$\alpha_h  \widehat{\beta} \le{} (\big\|a_h(x_h,\bullet) +  \widehat{b}(R x_h,S \bullet) \|_{Y_h^*}
+(\|Q^{\ast}A\|(1+\Lambda_{\rm P})+\Lambda_{1}\Lambda_{\rm P})\delta_2+\delta_3\big)(1 + \epsilon)+\alpha_h \delta_2. $ This and }}	\eqref{eqn:defn_of_beta0} 
{demonstrate} 
	$\alpha_h \widehat{\beta} \le{} 
	{{(\|\dislin(x_h,\bullet) +  \widehat{\nonlin}(\rop x_h,\sop \bullet) \|_{Y_h^*}}}
	+ \alpha_h \widehat{\beta} - \beta_{0})(1 + \epsilon)	{{-\epsilon\alpha_h \delta}}.$
	At this point, we may choose  $\epsilon \searrow 0$ and obtain 
	\begin{align*}
\beta_{0}\leq  \|\dislin(x_h,\bullet)+  \widehat{\nonlin}(\rop x_h,\sop \bullet)\|_{Y_h^*}.
	\end{align*}
	Since $x_h \in X_h$ is arbitrary with $\|x_h \|_{X_h} = 1$, this {proves} the discrete inf-sup condition  { \eqref{eqn:disinfsup}}.  (In this section $Y_h$ is a closed subspace of the Banach space $\widehat{Y}$ and not necessarily reflexive.  In the sections below,  $Y_h$ is finite-dimensional and the above arguments apply immediately to {$\epsilon=0$.})
\qed


\section{Main results}\label{sec:mainresults}
This section introduces the continuous and discrete nonlinear problems, associated notations, and  states the main results of this article in {\bf (A)}-{\bf (C)} below.   The paper has two parts written  in abstract results of  Section \ref{sec.stability}, \ref{sub:existence}-\ref{sec:goal-oriented} and their applications in Section \ref{sec:nse}-\ref{sec:vke}.  In the first part,  the hypotheses~\ref{h1}-~\ref{h4}   in the setting of Section \ref{sec.stability} and the hypothesis \ref{h5} stated below  guarantee the existence and uniqueness of an approximate solution for the discrete problem,  feasibility of an iterated Newton scheme, and { an} a priori energy norm estimate in {\bf (A)}-{\bf(B)}.  An additional hypothesis \ref{h1hat}  enables a priori error estimates in weaker Sobolev norms stated in {\bf (C)}. The second part in Section \ref{sec:nse}-\ref{sec:vke} verifies the  abstract results for the 2D Navier-Stokes equations in the stream function vorticity formulation and for the \vket. 

\medskip
Adopt the notation on the Banach spaces $X$ and $Y$ (with $X_h, \widehat{X}$ and $Y_h, \widehat{Y}$) of the previous section and suppose 
that the quadratic function $N:X \rightarrow Y^*$ is 
\begin{equation}\label{eqccdefN}
N(x):= Ax + \Gamma(x,x,\bullet) - F(\bullet) \quad \text{for all } x \in X
\end{equation}
with a bounded linear operator $A \in L(X;Y^*)$,  a bounded trilinear form $\Gamma: X\times X\times Y\to \R$, and a linear form $F\in Y^*$.
Suppose there exists a bounded trilinear form $ \widehat{\Gamma}:\widehat{X}\times \widehat{X}\times \widehat{Y}\to \R$
with $\Gamma=\widehat{\Gamma}|_{X\times X\times Y}$, 
$\Gamma_h=\widehat{\Gamma}|_{X_h\times X_h\times Y_h}$, and  let
\begin{align}
\| \widehat{\Gamma}\|:=\| \widehat{\Gamma}\|_{\widehat{X}\times \widehat{X}\times \widehat{Y}}:=
\sup_{\substack{\widehat{x}\in \widehat{X}\\ \|\widehat{x}\|_{\widehat{X}}=1}}
\sup_{\substack{\widehat{\xi}\in \widehat{X}\\ \|\widehat{\xi}\|_{\widehat{X}}=1}}
\sup_{\substack{\widehat{y}\in \widehat{Y}\\ \|\widehat{y}\|_{\widehat{Y}}=1}}
\widehat{\Gamma}(\widehat{x},\widehat{\xi},\widehat{y})<\infty.
\label{eqn:norm_gamma}
\end{align}

\noindent The linearisation of $\widehat{\Gamma}$  at  $u \in X$  defines the
bilinear form $\widehat{\nonlin}:\widehat{X}\times \widehat{Y}\to \mathbb{R}$,
\begin{align}\label{star}
\widehat{\nonlin}(\bullet,\bullet)& := \widehat{\Gamma}(u,\bullet, \bullet)+\widehat{\Gamma}(\bullet,u,\bullet).
\end{align}
The boundedness of $ \widehat{\Gamma}(\bullet,\bullet,\bullet)$  applies to \eqref{star}  and provides 
$\|\widehat{\nonlin}\| \le 
2\| \widehat{\Gamma}\|\|u\|_X $. 
\begin{defn}[regular root]  A function $u \in X$ is a regular root to~\eqref{eqccdefN},  if  $u$ solves
	\begin{align}
	\label{eqn:p}
	N(u;y) = a(u,y) + \Gamma(u,u,y) -  F(y)= 0 \fl y \in Y 
	\end{align} and the Frech\'et derivative $DN(u)=:(a+\nonlin)(\bullet,\bullet)$ defines an isomorphism $A+B$ and in particular satisfies the 
	inf-sup condition \eqref{cts_infsup} for $b:=\widehat{b}|_{X \times Y}$ and $\widehat{b}$ from \eqref{star}.  \qed
\end{defn}  
\noindent 

\noindent Abbreviate $(a+b)(x,y):=a(x,y)+b(x,y)$ etc.  Several \emph{discrete problems} in this article are defined  for  different choices of $\rop$ and $\sop$  with  \eqref{quasioptimalsmootherR}-\eqref{quasioptimalsmootherS} to approximate the regular root $u$ to $N$. In the applications of Section \ref{sec:nse}-\ref{sec:vke},  $ \rop, \sop \in \{\mathrm{id},I_{\M},  JI_{\M}\}$ lead to {\it eight} new discrete nonlinearities.
Let $X_h$ and $Y_h$ be finite-dimensional spaces 
and let  
\begin{align} \label{eqn:defn_of_nh}
N_h(x_h) := \dislin(x_h,\bullet) + \widehat{\Gamma}(\rop x_h,\rop x_h,\sop\bullet)- F(Q\bullet) \in Y_h^*.
\end{align}
The discrete problem seeks a root $u_h \in X_h$ to $N_h$; in other words it seeks $u_h \in X_h$ that satisfies
\begin{align}
\label{eqn:dp}
N_h(u_h;y_h): ={}& \dislin(u_h,y_h)  + \widehat{\Gamma}(\rop u_h,\rop u_h,\sop y_h) - F(Qy_h) = 0 \text{ for all }y_h \in Y_h. 
\end{align}
The local discrete solution $u_h \in X_h$ depends on $R$ and $S$ (suppressed in the notation). 
{Suppose
\medskip
\centerline{\begin{minipage}{13cm}
		\begin{enumerate}[label= $\mathrm{\bf (H\arabic*)}$,ref=$\mathrm{\bf (H\arabic*)}$,leftmargin=\widthof{(C)}+3\labelsep]
			\setcounter{enumi}{3}
			\item \label{h5} $\exists x_h \in X_h$ such that  $\delta_4 := \|u- x_h\|_{\widehat{X}}<\beta_0/2 (1 + \lamr)  \|\widehat{\Gamma}\| \| R\| \|S\|$
		\end{enumerate}
\end{minipage}
}
so that, in particular,
\begin{equation} \label{eqn:beta1}
\beta_1:= \beta_0 -  2 (1 + \lamr)  \|\widehat{\Gamma}\| \| R\| \|S\|\delta_4 > 0. 
\end{equation}}
\noindent The non-negative  parameters $\Lambda_1, \delta_2,\delta_3,\,\delta_4$, $\beta$, and $\|\widehat{\nonlin}\|$ 
depend on the regular root $u$ to $N$
(suppressed in the notation).

\medskip
\noindent
{{The hypotheses \ref{h1}-\ref{h5} with sufficiently small $\delta_2,$ $\delta_3$, $\delta_4$ imply the results stated in {\bf (A)}-{\bf (B)} below for parameters}} $\epsilon_1,  \epsilon_2, \delta, \rho,$ $C_{\rm qo}>0$  and $0 <  \kappa <1$,  such that {\bf (A)}-{\bf (B)} hold for any underlying triangulation $\cT$ with maximum mesh-size $h_{\rm max} \le \delta$ {{in the applications of this article}}.

\begin{enumerate}[label= $\mathrm{\bf (\Alph*)}$,ref=$\mathrm{\bf (\Alph*)}$,leftmargin=\widthof{(C)}+3\labelsep]
	\item[{\bf (A)}] \label{main-a} {\em local existence of a discrete solution.} There exists a unique discrete solution $u_h\in 
		X_h $ to $N_h(u_h)=0$ in~\eqref{eqn:dp}  with $\| u-u_h\|_{\widehat{X}} \le \epsilon_1$.  For any  initial iterate  
		$v_h \in X_h$ with $\|u_h-v_h\|_{{X_h}}  \le \rho$,
		the Newton scheme converges
		quadratically to  $u_h$.
	\item[{\bf (B)}] \label{main-b} 
 {\em a priori error control in energy norm}.   The continuous (resp. discrete) solution $u \in X$ (resp. $u_h \in X_h$) with 
$\| u-u_h\|_{\widehat{X}} \le \epsilon_2:=\min \left\{ \epsilon_1, \frac{ \kappa  \beta_1}{ (1+ \Lambda_{\rm R})^2\|\sop\|  \|\widehat{\Gamma}\|} \right\}$ satisfies
$$\displaystyle 
			\|u - u_h \|_{\widehat{X}} \le C_{\rm qo} \min_{x_h \in X_h} \|u - x_{h}\|_{\widehat{X}} + {\beta^{-1}_1(1-\kappa)^{-1}}{\| \widehat{\Gamma}(u,u,(\sop-Q)\bullet) \|_{Y_h^\ast} }
	$$ with a lower bound $\beta_1$ of $\beta_h$ defined  in~\eqref{eqn:beta1}.  The quasi-best approximation result \eqref{eqn:best} 
 holds for $S=Q$.
	\item[{\bf (C)}] \label{main-c}  {\em a priori error control in weaker Sobolev norms}.  
\noindent In addition to~\ref{h1}--\ref{h5}, suppose the existence of $\Lambda_5>0$ such that, for all $x_h \in X_h$, $y_h \in Y_h$, $x \in X$, and $y \in Y$,

\centerline{\begin{minipage}{10cm}
	\begin{enumerate}[label= $\mathrm{\bf \widehat{(H\arabic*)}}$,ref=$\mathrm{\bf \widehat{(H\arabic*)}}$,leftmargin=\widthof{(C)}+3\labelsep]
	\setcounter{enumi}{0}
	\item \label{h1hat} $\dislin(x_h,y_h)-a(Px_h,Qy_h)\le \Lambda_{5}\| x - x_h\|_{\widehat{X}} \|y - y_h \|_{\widehat{Y}}$.
\end{enumerate}
\end{minipage}}
For any $G \in X^{\ast}$, if  $z \in Y$ solves the dual linearised problem $a(\bullet,z) + \nonlin(\bullet,z) = G(\bullet)$ in $X^*$,
then any $z_h \in Y_h$ satisfies
	\begin{align}
	& \| u - u_h \|_{X_{\rm s}}  \le \omega_1(||u||_{X},||u_h||_{X_h})  \| z - z_h \|_{\widehat{Y}}  \|u - u_h \|_{\widehat{X}}    + \omega_2(\|z_h\|_{Y_h})  \|u - u_{h} \|^2_{\widehat{X}} \nonumber \\
	& \quad + \|u_h - Pu_h\|_{X_{\rm s}}   + \widehat{\Gamma}(u,u,(S-Q)z_h) +
\widehat{\Gamma}(Ru_h,Ru_h,Q z_h) -{\Gamma}(Pu_h,Pu_h,Q z_h)
	\end{align}
with appropriate weights defined in \eqref{weights} below.  Here $X_{\rm s}$ is a Hilbert space with $X \subset X_{\rm s}$. 
\end{enumerate}
The abstract results  {\bf(A)}-{\bf (C)} are established in Theorems \ref{thm:existence}, \ref{thm:apriori}, and \ref{cor:lower}.  A summary of their consequences in the applications in Section \ref{sec:nse}-\ref{sec:vke} for a triangulation with sufficiently small maximal mesh-size $h_{\rm max}$ is displayed in Table \ref{tab:apriori}. 
\section{Existence and uniqueness of discrete solution} \label{sub:existence}
This section applies the Newton-Kantorovich convergence theorem 
 to establish {\bf (A)}.  Let  $u\in X$ be a regular root to $N$. 
 Let \eqref{quasioptimalsmootherP}, \eqref{quasioptimalsmootherR}, and~\ref{h1}-\ref{h5} hold with parameters $\Lambda_{\rm P},\,\lamr,\,\Lambda_{1},\,\delta_2,$ $\,\delta_3,$ $\,\delta_4 \ge 0$. Define  $L := 2   \| \widehat{\Gamma}\| \|R\|^2 \|S\|$, $m :=L/ \beta_1$, and  
		\begin{align}
		{\epsilon_0}& := \beta_1^{-1}\big{(}(\Lambda_{1}\Lambda_{\rm P} +  \|Q^*A\| (1 + \Lambda_{\rm P}) +(1 + \lamr) (\| \rop \| \|\sop\| \|x_h\|_{X_h}+\|Q\| \: \|u\|_{X})\|\widehat{\Gamma}\| \big) \delta_4\, \nonumber \\ 
&\qquad \qquad  \quad +\|x_h\|_{X_h}\delta_3/2\big{)}.
		 	\label{defn_delta}
		\end{align}
In this section (and in Section \ref{sec:apriori} below), $Q \in L(Y_h;Y)$ (resp. $S \in L(Y_h; \widehat{Y})$) is bounded, but \eqref{quasioptimalsmootherQ} (resp.  \eqref{quasioptimalsmootherS}) is not employed. 
\begin{thm}[existence and uniqueness of a discrete solution]\label{thm:existence}
				(i)  If $\epsilon_0 m \le 1/2$, then there exists a root $u_h \in X_h$ of $N_h$ with $\| u-u_h \|_{\widehat{X}} \le \epsilon_1 := \delta_4 +  (1-\sqrt{1-2 \epsilon_0 m })/m.  $

				\noindent (ii)  If $\epsilon_0 m < 1/2$,  then given any $v_h\in X_h$
				with $\|u_h - v_h\|_{{X_h}}\le \rho := (1+\sqrt{1-2 \epsilon_0 m})/ m>0$, the Newton scheme 
				with initial iterate  $v_h$ converges quadratically
				to the root  $u_h$ to $N_h$ in (i).

				\noindent (iii) If $ \epsilon_1 m \le  1/2$, then there exists at most one root $u_h$ to $N_h$ 
				with $\| u-u_h \|_{\widehat{X}}\le \epsilon_1$.
			
	\end{thm}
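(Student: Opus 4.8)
The plan is to apply the Newton--Kantorovich theorem to the map $N_h \colon X_h \to Y_h^\ast$ around the initial iterate $x_h$ from \ref{h5}. The three ingredients needed are: (a) an invertibility bound for the derivative $DN_h(x_h)$, i.e. $\|DN_h(x_h)^{-1}\|_{L(Y_h^\ast;X_h)} \le \beta_1^{-1}$; (b) a Lipschitz bound $\|DN_h(x_h) - DN_h(y_h)\| \le L\|x_h - y_h\|_{X_h}$ for the derivative (here $L := 2\|\widehat{\Gamma}\|\|R\|^2\|S\|$ is exactly the Lipschitz constant of the quadratic part, since $A_h$ contributes nothing to second derivatives); and (c) a residual bound $\|N_h(x_h)\|_{Y_h^\ast} \le \epsilon_0$. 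Once these are in place, the Newton--Kantorovich condition reads $m\epsilon_0 \le 1/2$ with $m = L/\beta_1$, and the classical statement delivers (i) the existence of a root $u_h$ in the closed ball of radius $(1-\sqrt{1-2m\epsilon_0})/m$ around $x_h$ — whence $\|u - u_h\|_{\widehat X} \le \|u - x_h\|_{\widehat X} + \|x_h - u_h\|_{X_h} \le \delta_4 + (1-\sqrt{1-2m\epsilon_0})/m = \epsilon_1$; (ii) quadratic convergence of Newton's iteration from any start within the larger ball of radius $\rho = (1+\sqrt{1-2m\epsilon_0})/m$ when the inequality is strict; and (iii) local uniqueness in the ball of radius $\epsilon_1$ provided $m\epsilon_1 \le 1/2$.

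The first step, proving (a), is where the work of Section~\ref{sec.stability} is used. The derivative $DN_h(x_h)$ acts as $y_h \mapsto a_h(y_h,\bullet) + \widehat{\Gamma}(Rx_h, Ry_h, S\bullet) + \widehat{\Gamma}(Ry_h, Rx_h, S\bullet)$. Theorem~\ref{dis_inf_sup_thm} gives the discrete inf-sup constant $\beta_h \ge \beta_0$ for the bilinear form $a_h(\bullet,\bullet) + \widehat\nonlin(R\bullet, S\bullet)$ obtained by linearising at the \emph{exact} root $u$ (so that $\widehat\nonlin = \widehat\Gamma(u,\bullet,\bullet) + \widehat\Gamma(\bullet,u,\bullet)$). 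I then perturb: the difference between $DN_h(x_h)$ and that linearised form, tested against $y_h, \phi_h$ with unit norms, is $\widehat\Gamma(R(x_h - u), Ry_h, S\phi_h) + \widehat\Gamma(Ry_h, R(x_h-u), S\phi_h)$, which is bounded by $2\|\widehat\Gamma\|\|R\|^2\|S\|\|x_h - u\|_{\widehat X} \le 2(1+\Lambda_{\rm R})\|\widehat\Gamma\|\|R\|\|S\|\delta_4$ using \eqref{eqn:PR} (for the $R$-part) or directly $\|R(x_h-u)\|_{\widehat X}\le \|R\|\,\|x_h-u\|_{\widehat X}$ with the definition of $\delta_4$ in \ref{h5}. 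Subtracting this perturbation from $\beta_0$ yields precisely $\beta_1 > 0$ from \eqref{eqn:beta1} as a lower inf-sup bound for $DN_h(x_h)$, hence the operator is invertible on the finite-dimensional space $X_h$ with $\|DN_h(x_h)^{-1}\| \le \beta_1^{-1}$.

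For (c), the residual $N_h(x_h) = a_h(x_h,\bullet) + \widehat\Gamma(Rx_h,Rx_h,S\bullet) - F(Q\bullet)$ is estimated by inserting the exact equation $N(u;\cdot) = 0$ and the relation $a(u,Qy_h) + \Gamma(u,u,Qy_h) - F(Qy_h) = 0$ (valid since $Qy_h \in Y$). Writing $N_h(x_h;y_h)$ as $\big(a_h(x_h,y_h) - a(Px_h,Qy_h)\big) + \big(a(Px_h,Qy_h) - a(u,Qy_h)\big) + \big(\widehat\Gamma(Rx_h,Rx_h,Sy_h) - \Gamma(u,u,Qy_h)\big)$, the first bracket is controlled by \ref{h1} and \eqref{quasioptimalsmootherP} (bounding $\|x_h - Px_h\|_{\widehat X} \le (1+\Lambda_{\rm P})\delta_4$, or more carefully $\le \Lambda_{\rm P}\delta_4$ via $\|x_h - Px_h\|_{\widehat X}\le \Lambda_{\rm P}\|u-x_h\|_{\widehat X}$), the second by $\|Q^\ast A\|(1+\Lambda_{\rm P})\delta_4$ after writing $Px_h - u$ and using \eqref{eqn:PR}, and the trilinear bracket is split as $\widehat\Gamma(Rx_h,Rx_h,(S-Q)y_h) + \big(\widehat\Gamma(Rx_h,Rx_h,Qy_h) - \Gamma(u,u,Qy_h)\big)$; the first of these is handled by \ref{h4} (producing the $\delta_3\|x_h\|_{X_h}$ contribution, halved because \ref{h4} is stated with a supremum over unit vectors), and the second telescopes into two terms each bounded by $(1+\Lambda_{\rm R})\|\widehat\Gamma\|$ times a factor $\|Rx_h - u\| \le \|R\|\delta_4$ or $\|Q\|\|u\|_X$ times $\|y_h\|_{Y_h}$, matching the messy constant displayed in \eqref{defn_delta}. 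Collecting all contributions gives $\|N_h(x_h)\|_{Y_h^\ast} \le \beta_1\epsilon_0$, i.e. $\|DN_h(x_h)^{-1}N_h(x_h)\|_{X_h} \le \epsilon_0$.

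The main obstacle is the careful bookkeeping in step (c): the residual estimate must reproduce \emph{exactly} the combination of terms in \eqref{defn_delta}, so the splitting has to be chosen so that each of \ref{h1}, \ref{h4}, \eqref{quasioptimalsmootherP}, \eqref{eqn:PR}, and the trilinear boundedness is invoked with the right test function and the right norm, and in particular the factors of $(1+\Lambda_{\rm R})$ versus $\Lambda_{\rm P}$ and the halving of $\delta_3$ must be tracked correctly. By contrast the invertibility bound (a) is a clean perturbation of Theorem~\ref{dis_inf_sup_thm}, the Lipschitz bound (b) is immediate from the definition of $L$ and trilinearity, and once (a)--(c) hold, parts (i)--(iii) are a verbatim quotation of the Newton--Kantorovich theorem with the radii $\epsilon_1, \rho$ read off from its statement.
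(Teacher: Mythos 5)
Your overall strategy is the paper's: Newton--Kantorovich applied to $N_h$ at the point $x_h$ from \ref{h5}, with invertibility of $DN_h(x_h)$ obtained by perturbing the discrete inf-sup condition of Theorem~\ref{dis_inf_sup_thm} by $2(1+\Lambda_{\rm R})\|\widehat{\Gamma}\|\|R\|\|S\|\delta_4$, the Lipschitz constant $L=2\|\widehat{\Gamma}\|\|R\|^2\|S\|$ read off from trilinearity, and the residual estimated against $N(u;Q\bullet)=0$. Two steps do not go through as written. First, in the residual estimate you propose to bound $\widehat{\Gamma}(Rx_h,Rx_h,(S-Q)y_h)$ by \ref{h4}; but \ref{h4} controls $\widehat{b}(Rx_h,(Q-S)y_h)=\widehat{\Gamma}(u,Rx_h,(Q-S)y_h)+\widehat{\Gamma}(Rx_h,u,(Q-S)y_h)$, which has the exact root $u$ in one slot, not $Rx_h$ in both, and your ``halving because of the supremum over unit vectors'' is not where the factor $1/2$ comes from. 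The paper instead expands \emph{twice} the difference of the quadratic terms via the identity
\begin{align}
2\big(\widehat{\Gamma}(Rx_h,Rx_h,Sy_h)-\widehat{\Gamma}(u,u,Qy_h)\big)
&=\widehat{\Gamma}(Rx_h-u,Rx_h,Sy_h)+\widehat{\Gamma}(Rx_h,Rx_h-u,Sy_h) \nonumber\\
&\quad+\widehat{\Gamma}(u,Rx_h-u,Qy_h)+\widehat{\Gamma}(Rx_h-u,u,Qy_h)-\widehat{b}(Rx_h,(Q-S)y_h),
\end{align}
so that \ref{h4} applies verbatim to the last term and the factor $2$ on the left produces the $\|x_h\|_{X_h}\delta_3/2$ in \eqref{defn_delta}. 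Your splitting can be repaired (the discrepancy terms are again $O(\delta_4)$ via the boundedness of $S$ and $Q$), but as written it does not reproduce the constant $\epsilon_0$ of \eqref{defn_delta} from which the radii $\epsilon_1$ and $\rho$ in the statement are built.

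Second, part (iii) is not a verbatim quotation of Kantorovich: the claim is uniqueness in $X_h\cap\overline{B(u,\epsilon_1)}$, a ball centred at the exact root $u$, whereas Kantorovich gives uniqueness only in $\overline{B(x_h,\rho)}$, centred at the initial iterate. One still has to show that a hypothetical second root $\widetilde u_h$ with $\|u-\widetilde u_h\|_{\widehat{X}}\le\epsilon_1$ lies in $\overline{B(x_h,\rho)}$; this is precisely where the hypothesis $\epsilon_1 m\le 1/2$ enters, via $\|x_h-\widetilde u_h\|_{\widehat{X}}\le\delta_4+\epsilon_1\le 2\epsilon_1\le 1/m\le\rho$ (the paper phrases this as a contradiction). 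Your sketch never uses the hypothesis of (iii), which is the signal that a step is missing. A minor omission: the degenerate case $\epsilon_0=0$, where $r_-$ collapses and $x_h=u$ is itself the discrete solution, deserves the one-line remark the paper gives it.
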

	The proof of Theorem~\ref{thm:existence} applies the well-known Newton-Kantorovich convergence theorem found, e.g., in \cite[Subsection~5.5]{MR1344684} for \( X= Y=\R^n\) and in \cite[Subsection~5.2]{MR816732} for Banach spaces. The notation is adapted to the present situation.
\begin{thm}[Kantorovich (1948)] \label{kantorovich}
	Assume the Frech\'et derivative $DN_h(x_h)$ of $N_h$ at some \(x_h\in X_h\) satisfies
	\begin{equation}\label{Kanto_Condn}
	\|D N_h(x_h)^{-1}\|_{L( Y_h^*; X_h)} \leq 1/\beta_1
	\quad\text{and}\quad
	\|D N_h(x_h)^{-1}N_h(x_h)\|_{X_h} \leq {\epsilon_0}.
	\end{equation}
	Suppose that \(D N_h\) is Lipschitz continuous with Lipschitz constant $L$
	and that $ 2 \epsilon_0 L \le \beta_1$.
	Then there exists a root \(u_h\in
	\overline{ B(x_1,r_-)} \) of  \(N_h\) in the closed ball around the first iterate \(x_1 := x_h - D N_h(x_h)^{-1}N_h(x_h)\) of radius $r_-:= (1-\sqrt{1-2 \epsilon_0 m })/m  - {\epsilon_0}$
	and this is the only root of $N_h$ in \(\overline{B(x_h,\rho)}\) with $\rho := (1+\sqrt{1-2 \epsilon_0 m})/ m$. If $2 \epsilon_0 L < \beta_1$,
	then 
	the Newton scheme with   initial iterate \(x_h\) leads to a sequence in   {\(B(x_h,\rho)\)}  
	that  converges R-quadratically to \(u_h\). \qed
\end{thm}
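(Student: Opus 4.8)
\textit{Proof proposal.} The plan is to prove the theorem by Kantorovich's classical \emph{majorant method}, which is precisely what produces the two radii $r_-$ and $\rho$ appearing in the statement. Write $x_0 := x_h$, define the Newton iterates $x_{k+1} := x_k - DN_h(x_k)^{-1}N_h(x_k)$ (whenever the inverse exists), and introduce the scalar quadratic majorant
\[ p(t) := \tfrac{L}{2}t^2 - \beta_1 t + \beta_1 \epsilon_0, \qquad t \ge 0, \]
together with its scalar Newton sequence $t_0 := 0$, $t_{k+1} := t_k - p(t_k)/p'(t_k)$. Since $p(0) = \beta_1\epsilon_0 \ge 0$, $p'(0) = -\beta_1 < 0$, and $p'' \equiv L > 0$, the hypothesis $2\epsilon_0 L \le \beta_1$ (equivalently $\epsilon_0 m \le 1/2$ with $m=L/\beta_1$) guarantees two real roots $t_* := (1-\sqrt{1-2\epsilon_0 m})/m \le t_{**} := (1+\sqrt{1-2\epsilon_0 m})/m = \rho$, and an elementary analysis of scalar Newton iteration on a convex quadratic shows that $(t_k)$ is well defined, strictly increasing, and converges monotonically to $t_*$, with $t_1 = \epsilon_0$ and the exact identity $p(t_{k+1}) = \tfrac{L}{2}(t_{k+1}-t_k)^2$.

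The heart of the argument is the \emph{majorization}: I would prove by induction on $k$ the two coupled invariants
\[ \|DN_h(x_k)^{-1}\|_{L(Y_h^*;X_h)} \le -1/p'(t_k) \quad\text{and}\quad \|x_{k+1} - x_k\|_{X_h} \le t_{k+1} - t_k, \]
which together yield $\|x_k - x_0\|_{X_h} \le t_k \le t_*$, so that every iterate stays in $\overline{B(x_0, t_*)}$ where the hypotheses apply. The base case is exactly \eqref{Kanto_Condn}, since $-1/p'(0) = 1/\beta_1$ and $t_1-t_0 = \epsilon_0$. For the inductive step I would use, first, the Taylor identity with integral remainder
\[ N_h(x_{k+1}) = \int_0^1 \big( DN_h(x_k + s(x_{k+1}-x_k)) - DN_h(x_k) \big)(x_{k+1}-x_k)\,ds, \]
in which the zeroth- and first-order terms cancel by the definition of the Newton step; the Lipschitz continuity of $DN_h$ then gives $\|N_h(x_{k+1})\|_{Y_h^*} \le \tfrac{L}{2}\|x_{k+1}-x_k\|^2 \le p(t_{k+1})$. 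Second, the Lipschitz bound $\|DN_h(x_{k+1}) - DN_h(x_k)\| \le L(t_{k+1}-t_k) = p'(t_{k+1}) - p'(t_k)$ combined with the Banach perturbation lemma (Neumann series) upgrades the inverse bound to $\|DN_h(x_{k+1})^{-1}\| \le -1/p'(t_{k+1})$. Composing the two yields $\|x_{k+2}-x_{k+1}\| \le p(t_{k+1})/(-p'(t_{k+1})) = t_{k+2}-t_{k+1}$, closing the induction. \emph{I expect this coupled induction — simultaneously propagating the residual bound and the invertibility bound, each of which is needed to establish the other at the next step — to be the main obstacle.} Here $N_h$ is quadratic by \eqref{eqn:defn_of_nh}, so $DN_h$ is affine with Lipschitz constant exactly $L$ and the remainder estimates hold with equality, which streamlines the verification.

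With the majorization in hand, convergence follows: $\sum_k (t_{k+1}-t_k) = t_* < \infty$ forces $(x_k)$ to be Cauchy in the finite-dimensional (hence complete) space $X_h$, so $x_k \to u_h$ with $\|u_h - x_0\| \le t_*$ and $\|u_h - x_1\| \le t_* - t_1 = t_* - \epsilon_0 = r_-$, i.e. $u_h \in \overline{B(x_1, r_-)}$. Continuity of $N_h$ with $\|N_h(x_k)\| \le p(t_k) \to p(t_*) = 0$ gives $N_h(u_h) = 0$. When the inequality is strict, $2\epsilon_0 L < \beta_1$, the root $t_*$ is simple, so $p'(t_*) < 0$ and $t_* - t_k$ decays quadratically; majorizing $\|u_h - x_k\| \le t_* - t_k$ transfers this to R-quadratic convergence of $(x_k)$ to $u_h$.

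For uniqueness in $\overline{B(x_0, \rho)}$, I would suppose $\widetilde u$ is a second root there and write $0 = N_h(\widetilde u) - N_h(u_h) = M(\widetilde u - u_h)$ with the averaged derivative $M := \int_0^1 DN_h((1-s)u_h + s\widetilde u)\,ds$. The Lipschitz bound estimates $\|M - DN_h(x_0)\| \le \tfrac{L}{2}(\|u_h - x_0\| + \|\widetilde u - x_0\|) \le \tfrac{L}{2}(t_* + \rho) = \beta_1$, using $t_* + \rho = 2/m = 2\beta_1/L$; in the strict case this bound is strict for points of the region, so the Banach lemma renders $M$ invertible and forces $\widetilde u = u_h$. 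The borderline case $2\epsilon_0 L = \beta_1$ (where $t_* = t_{**} = \rho$) is then recovered by the limiting monotone-majorant argument, letting the two scalar root sequences meet at the double root. This completes the proof.
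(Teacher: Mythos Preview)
The paper does not actually prove this theorem: it is stated as the classical Newton--Kantorovich convergence theorem, closed with a \qed, and attributed to the literature (the references cited just before the statement, for $X=Y=\mathbb{R}^n$ and for general Banach spaces). In other words, the paper treats Theorem~\ref{kantorovich} as a black box and only uses its conclusion in the proof of Theorem~\ref{thm:existence}.

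Your proposal, by contrast, supplies the standard majorant-method proof (scalar quadratic $p(t)=\tfrac{L}{2}t^2-\beta_1 t+\beta_1\epsilon_0$, coupled induction on the inverse-derivative bound and the step bound, Cauchy sequence, uniqueness via the averaged derivative). This is correct and is exactly the argument behind the cited references; your observation that $N_h$ is quadratic (so $DN_h$ is affine and the Taylor remainder is exact) is a nice simplification available in this concrete setting but not needed for the abstract statement. One small point: your uniqueness step gives $\|M-DN_h(x_0)\|\le \beta_1$ with equality possible on the boundary, so the Banach perturbation lemma does not immediately apply there; the clean way to close this in $\overline{B(x_0,\rho)}$ is to show that the Newton sequence from $x_0$ must converge to \emph{any} root in that ball (via the same majorization), forcing $\widetilde u=u_h$. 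But since the paper simply cites the result, this level of detail already goes well beyond what is expected.
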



\medskip

		\noindent{\em Proof of Theorem \ref{thm:existence}}. {\em Step 1 establishes \eqref{Kanto_Condn}.} 
		The bounded trilinear form $\widehat{\Gamma}$
		leads to the Frech\'et derivative $DN_h( x_h)\in L(X_h;Y_h^*)$ of $N_h$  from \eqref{eqn:defn_of_nh} evaluated at any $x_h \in X_h$ for all  $\xi_h\in X_h$, $\eta_h\in Y_h$ with
	\begin{align}
	 DN_h( x_h;\xi_h,\eta_h)= \dislin(\xi_h,\eta_h)+\widehat{\Gamma}(\rop x_h, \rop \xi_h,\sop \eta_h)
	+\widehat{\Gamma}( \rop\xi_h,\rop x_h,\sop \eta_h).
	\label{eqn:dhn}
	\end{align}
{For any $x_h^1, x_h^2, \xi_h \in X_h$  and $\eta_h \in Y_h$,~\eqref{eqn:dhn} implies the global Lipschitz continuity {of $DN_h$}  with Lipschitz constant 
$L:=  2 \| \widehat{\Gamma}\|  \| \rop \|^{2} \|\sop\|$,  and so
\[|DN_h(x_h^1;\xi_h,\eta_h) - DN_h(x_h^2 ;\xi_h,\eta_h)| \le  L \|x_h^1 - x_h^2\|_{X_h} \| \xi_h\|_{X_h} \|\eta_h\|_{Y_h}.\]

\medskip \noindent 
Recall  $x_h$ from {\bf (H4)} with $\delta_4 = \|u- x_h\|_{\widehat{X}}$. For this $x_h \in X_h$, \eqref{eqn:PR} leads to $\|u-Rx_h\|_{\widehat{X}} \le (1+\Lambda_{\rm R}) \delta_4$. {{This and the boundedness of $\widehat{\Gamma}(\bullet,\bullet,\bullet)$ show \[\widehat{\Gamma}(u-\rop x_h, \rop \xi_h,\sop \eta_h)+\widehat{\Gamma}( \rop \xi_h,u-\rop x_h,\sop \eta_h)\le 2\delta_4(1+\Lambda_{\rm R}) \|\widehat{\Gamma}\| \| \rop\| \|\sop\| \|\xi_h\|_{X_h}\|\eta_h\|_{Y_h}.\] The discrete inf-sup condition in 
		Theorem~\ref{dis_inf_sup_thm}, elementary algebra, and the above displayed estimate establish a positive inf-sup constant  }}
		\begin{align}
		\label{eqn:lower_betah}
		0< \beta_1 = \beta_0- 2 (1 + \lamr)  \|\widehat{\Gamma}\| \| R\| \|S\|\delta_4
\le  \inf_{\substack{\xi_h\in X_h\\ \|\xi_h\|_{X_h}=1}}
		\sup_{\substack{\eta_h\in Y_h\\ \|\eta_h\|_{Y_h}=1}}DN_h(x_h;\xi_h,\eta_h)
		\end{align}
		for the {discrete} bilinear form \eqref{eqn:dhn}. The inf-sup constant $\beta_1 > 0$ in~\eqref{eqn:lower_betah} is known to be (an upper bound of the) reciprocal of the operator norm of $DN_h(x_h)$ and that provides the first estimate in \eqref{Kanto_Condn}.  It also leads to 
		\begin{equation}\label{Kant_cond1}
		\|DN_h(x_h)^{-1}N_h(x_h)\|_{X_h}\leq \beta_1^{-1}\|N_h(x_h)\|_{Y_h^*}.
		\end{equation}
\noindent To establish the second inequality in  \eqref{Kanto_Condn}, for any $y_h\in Y_h$ with $\|y_h\|_{Y_h}=1$, set $y:=Q y_h\in Y.$ Since $N(u;y)=0$,~\eqref{eqn:p}-\eqref{eqn:defn_of_nh}  
reveal
\begin{align}
	\hspace{-1cm}	N_h(x_h;y_h)={}& N_h(x_h;y_h)-N(u;y)= \dislin( x_h,y_h)-a(u,y)
		+\widehat{\Gamma}( \rop x_h, \rop x_h, \sop y_h)-\Gamma(u,u,y).
		\label{nonlin_exp}
		\end{align}
The combination of~\ref{h1} and~\eqref{quasioptimalsmootherP} results in
		\begin{align}
		\dislin( x_h,y_h)- a(u,Qy_h) &= \dislin( x_h,y_h)- a(Px_h,Qy_h) - a(u -Px_h,Qy_h) \nonumber  \\
		&\le \Lambda_{1} \Lambda_{\rm P} \|u - x_h \|_{\widehat{X}} + \|Q^{\ast}A\|\|u - Px_{h}\|_{X} \nonumber 
		\end{align}
	with the operator norm  $\|Q^{\ast}A\| $ of $Q^\ast A$ in $L(X;Y_h^\ast)$ in the last step.   Utilize \eqref{eqn:PR} and~\ref{h5} to establish  {$\|u - Px_{h}\|_{X} \le (1 + \Lambda_{\rm P}) \delta_4 $}. This and the previous estimates imply
		\begin{align}
	\dislin( x_h,y_h)- a(u,Qy_h) \le   (\Lambda_{1}\Lambda_{\rm P} +  \|Q^*A\| (1 + \Lambda_{\rm P}) ) \delta_4.
		\end{align}
Elementary algebra and the boundedness of $\widehat{\Gamma}(\bullet,\bullet,\bullet)$, \eqref{quasioptimalsmootherR}, {and} {\ref{h4}-\ref{h5}} show 
		\begin{align*}
		2(\widehat{\Gamma}( \rop x_h, \rop x_h, \sop y_h) - \widehat{\Gamma}(u,u,y))
		& = \widehat{\Gamma}( \rop x_h-u,\rop x_h, \sop y_h) + \widehat{\Gamma}( \rop x_h,\rop x_h-u, \sop y_h) \\
		& \quad + \widehat{\Gamma}(u,\rop x_h-u,y)+\widehat{\Gamma}(\rop x_h-u,u,y) - \widehat{b}(Rx_h,(Q - S)y_h)
		\\
		& \leq 2\delta_4 (1 + \lamr)\left(\|\rop \| \|\sop\|  \|x_h\|_{X_h}+\|Q\| \: \|u\|_{X}\right)\|\widehat{\Gamma}\|\,+\delta_3 \|x_h \|_{X_h}.
		\label{eqn:gamma_delta3}
		\end{align*}
		A combination of the two above displayed estimates {in}~\eqref{nonlin_exp} reveals
		\begin{align}
		|N_h(x_h;y_h)|{ \le}&{}  (\Lambda_{1} \Lambda_{\rm P}+{}  \|Q^*A\| (1 {}+ {}\Lambda_{\rm P}) {+(1 {}+{}\lamr){}(\| \rop \|  \|\sop \|  \|x_h\|_{X_h}+\|Q\|  \|u\|_{X})\|\widehat{\Gamma}\|) \delta_4
+\frac{1}{2}\|x_h\|_{X_h}\delta_3}. 
		\end{align}
This implies $\|N_h(x_h)\|_{Y_h^*}\leq \beta_1{\epsilon_0} $
		with ${\epsilon_0}\ge 0$ from  \eqref{defn_delta}. The latter bound leads in \eqref{Kant_cond1} to the second condition 
in \eqref{Kanto_Condn}. 
		
		\noindent {\em Step 2 establishes the assertion (i) and (ii).}	
Since  $\epsilon_0 m \le 1/2$, $ r_-, \rho\ge 0$ is well-defined, $2 \epsilon_0 L \le \beta_1$, and hence Theorem \ref{kantorovich}  applies. 

We digress to discuss the degenerate case $\epsilon_0 = 0$ where~\eqref{defn_delta} implies $\delta_4 = 0$. An immediate consequence is that~\ref{h5} results in $u = x_h \in X_h$.  The proof of Step 1 remains valid and $N_h(x_h) = 0$ (since $\epsilon_0 = 0$) provides that $x_h = u$ is the discrete solution $u_h$. Observe that in this particular case,  the Newton iterates form the constant sequence $u=  x_h = x_1 = x_2 = \cdots$ and Theorem~\ref{Kanto_Condn} holds for the trivial choice $r_{-} = 0$.

Suppose $\epsilon_0 > 0$. For $\epsilon_0 m \le 1/2$,  Theorem~\ref{kantorovich} shows the existence of a root  $u_h$ to $N_h$ in $\overline{B(x_1,r_-)}$ that is the only root in $\overline{B(x_h,\rho)}$. This,  $ \|x_1-x_h\|_{X_h} \le \epsilon_0$, with $\epsilon_0$ from \eqref{defn_delta},  for  the Newton correction $x_1-x_h$ in the second inequality of 
		\eqref{Kanto_Condn}, and 
		triangle inequalities result in
		\begin{align}
		\|u-u_h\|_{\widehat{X}}\leq \|u-x_h\|_{\widehat{X}}+
		\|x_1-x_h\|_{X_h}
		+\|x_1-u_h\|_{X_h}\leq \delta_4+ (1-\sqrt{1-2 \epsilon_0 m })/m= \epsilon_1 \label{Newton_conv}.
		\end{align}
		This proves the existence of a discrete solution 
		$u_h$ in  $ X_h\cap \overline{B(u,\epsilon_1)}$  as asserted in $(i)$. Theorem~\ref{kantorovich} implies $(ii)$. \medskip
		
		\noindent {\em Step 3 establishes the assertion (iii).}  
		Recall from Theorem~\ref{kantorovich} that the limit $u_h\in \overline{B(x_1,r_-)}$  in $(i)$-$(ii)$ is the only discrete
		solution in $\overline{B(x_h,\rho)}$.
		Suppose there exists a second solution $\widetilde{u}_h\in X_h\cap \overline{B(u,\epsilon_1)}$ to  $N_h({{\widetilde{u}_h}})=0$.
		Since $u_h$ is unique in $\overline{B(x_h,\rho)}$, $\widetilde{u}_h$ lies outside $\overline{B(x_h,\rho)}$.  This and a triangle inequality show 
		\[
\dfrac{1}{m} \le (1 + \sqrt{1 - 2\epsilon_0 m })/m		=  \rho< \|  x_h- \widetilde{u}_h\|_{\widehat{X}}\le  \|  u- \widetilde{u}_h\|_{\widehat{X}}
		+\|  u- x_h\|_{\widehat{X}}\le \epsilon_1+\delta_4\le 2\epsilon_1\le \dfrac{1}{m}
		\]
		with  {$2m\epsilon_1  \le 1$}   in the last step. This contradiction concludes the proof of $(iii)$.
		\qed
{
\begin{rem}[error estimate]\label{error_estimate}
Recall $\delta_4$ from {\bf (H4)} and $\epsilon_0$ from \eqref{defn_delta}.  An algebraic manipulation in  \eqref{Newton_conv} reveals, for $\epsilon_0 m \le 1/2$, that 
\[ \|u-u_h\|_{\widehat{X}} \le \delta_4 + \frac{2 \epsilon_0}{1+\sqrt{1-2 \epsilon_0 m} } \le \delta_4 +2 \epsilon_0. \] 
In the  applications of Section \ref{sec:nse}-\ref{sec:vke},  this leads to the energy norm estimate. 
\end{rem}}
{{\begin{rem}[estimate on $\epsilon_1$]\label{rem_epsilons}
	In the applications, \eqref{defn_delta} leads to $\epsilon_0 \lesssim \delta_3+\delta_4$. This, the definition of $\epsilon_1$ in Theorem \ref{thm:existence}, \eqref{Newton_conv}, and Remark \ref{error_estimate} provide $\epsilon_1 \lesssim \delta_3+\delta_4$.
	
\end{rem}}}

	\section{A priori  error control} \label{sec:apriori}
	This section is devoted to a quasi-best approximation up to perturbations {\bf(B)}.
	Recall that the bounded bilinear form $a: X \times Y \rightarrow  \mathbb{R}$ satisfies~\eqref{cts_infsup},  the  trilinear form $\Gamma : X \times X \times Y \rightarrow \mathbb{R}$ is bounded,  and $F \in Y^{\ast}$. 
The assumptions on the discretization with $\dislin : X_h \times Y_h \rightarrow \mathbb{R}$ with  non-trivial finite-dimensional spaces $X_h$ and $Y_h$ of the same dimension $\mathrm{dim}(X_h) = \mathrm{dim}(Y_h) \in \mathbb{N}$ are encoded in the stability and quasi-optimality. The stability of $\dislin$ and \eqref{dis_Ah_infsup} {{mean}} $\alpha_{0} > 0$  and the quasi-optimality assumes $P \in L(X_h; X)$ with \eqref{quasioptimalsmootherP},  $R \in L(X_h; \widehat{X})$ with \eqref{quasioptimalsmootherR},  $S \in L(Y_h; \widehat{Y})$
,  and $Q \in L(Y_h; Y)$ (in this section, \eqref{quasioptimalsmootherQ} and~\eqref{quasioptimalsmootherS} are not employed).
Recall $\beta_1$ and $\epsilon_1$  from \eqref{eqn:beta1} and Theorem \ref{thm:existence}.
\begin{thm}[a priori error control] \label{thm:apriori}
	Let $u \in X$ be a regular root to~\eqref{eqn:p}, {let} $u_h \in X_h$ solve \eqref{eqn:dp},  {and suppose}\ref{h1},  \eqref{dis_Ah_infsup}{{-\eqref{quasioptimalsmootherP}}}, \eqref{quasioptimalsmootherR}, 
$\|u - u_h\|_{\widehat{X}} \le \epsilon_2:=\min \left\{ \epsilon_1, \frac{ \kappa  \beta_1}{ (1+ \Lambda_{\rm R})^2\|\sop\|  \|\widehat{\Gamma}\|} \right\}$, 
and $0 < \kappa <1$.  Then
			\begin{align} 
			\|u - u_h \|_{\widehat{X}} \le C_{\rm qo} \min_{x_h \in X_h} \|u - x_{h}\|_{\widehat{X}} + {\beta^{-1}_1(1-\kappa)^{-1}}{\| \widehat{\Gamma}(u,u,(\sop-Q)\bullet) \|_{Y_h^\ast} }
			\end{align}
{holds for $C_{\rm qo} = C_{\rm qo}'{\beta^{-1}_1(1-\kappa)^{-1}}(\beta_1 + 2 (1+\Lambda_{\rm R}) \|S\|\|\widehat{\Gamma}\|\|u\|_X )$} with $C_{\rm qo}':= 1 + \alpha_0^{-1}(\Lambda_{1} \Lambda_{\rm P} + \|Q^\ast A\|(1 + \Lambda_{\rm P}))$. 
		\end{thm}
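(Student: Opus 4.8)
The plan is to run the classical second-Strang/quasi-best argument for the linearised problem and then absorb the genuinely nonlinear contribution using the smallness of $\|u-u_h\|_{\widehat X}$. Fix an arbitrary $x_h\in X_h$ and set $e_h:=u_h-x_h\in X_h$. First I would use the discrete inf-sup constant $\beta_1$ from \eqref{eqn:lower_betah} for the bilinear form $DN_h(x_h;\bullet,\bullet)$ in \eqref{eqn:dhn}: pick $y_h\in Y_h$ with $\|y_h\|_{Y_h}=1$ realising (up to $\varepsilon\searrow0$, or exactly since $Y_h$ is finite-dimensional) the supremum so that $\beta_1\|e_h\|_{\widehat X}\le DN_h(x_h;e_h,y_h)$. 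The key algebraic identity is that $DN_h(x_h;e_h,y_h)=DN_h(x_h;u_h,y_h)-DN_h(x_h;x_h,y_h)$ and, because $\widehat\Gamma$ is trilinear, $DN_h(x_h;u_h,y_h)=N_h(u_h;y_h)+\big(\widehat\Gamma(Rx_h,Rx_h,Sy_h)-2\widehat\Gamma(Rx_h,Ru_h,Sy_h)+F(Qy_h)\big)$, and $N_h(u_h;y_h)=0$. Rearranging, $DN_h(x_h;e_h,y_h)=F(Qy_h)-a_h(x_h,y_h)-\widehat\Gamma(Ru_h,Ru_h,Sy_h)-\widehat\Gamma(R e_h,Ru_h,Sy_h)$ (one has to bookkeep the trilinear cross terms carefully; the point is that the quadratic-in-$e_h$ term drops out and only a term linear in $e_h$ with a coefficient $\propto\|u_h\|\cdot\|e_h\|$ survives, to be absorbed later).

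Next I would replace $u_h$ by $u$ inside the $\widehat\Gamma$-terms and $a_h(x_h,y_h)$ by $a(Px_h,Qy_h)$ using the continuous equation $N(u;Qy_h)=0$, i.e. $a(u,Qy_h)+\Gamma(u,u,Qy_h)=F(Qy_h)$. This yields $DN_h(x_h;e_h,y_h)=a(u-Px_h,Qy_h)+\big(a(Px_h,Qy_h)-a_h(x_h,y_h)\big)+\Gamma(u,u,Qy_h)-\widehat\Gamma(Ru_h,Ru_h,Sy_h)+(\text{term linear in }e_h)$. The consistency term $a(Px_h,Qy_h)-a_h(x_h,y_h)$ is bounded by $\Lambda_1\Lambda_{\mathrm P}\|u-x_h\|_{\widehat X}$ via \ref{h1} and \eqref{quasioptimalsmootherP}; the term $a(u-Px_h,Qy_h)$ is bounded by $\|Q^\ast A\|\,\|u-Px_h\|_X\le\|Q^\ast A\|(1+\Lambda_{\mathrm P})\|u-x_h\|_{\widehat X}$ via \eqref{eqn:PR}. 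For the nonlinear difference I split $\Gamma(u,u,Qy_h)-\widehat\Gamma(Ru_h,Ru_h,Sy_h)=\widehat\Gamma(u,u,(Q-S)y_h)+\big(\widehat\Gamma(u,u,Sy_h)-\widehat\Gamma(Ru_h,Ru_h,Sy_h)\big)$; the first piece is exactly the perturbation term $\widehat\Gamma(u,u,(S-Q)\bullet)$ appearing in the statement, and the second, written as $\widehat\Gamma(u-Ru_h,u,Sy_h)+\widehat\Gamma(Ru_h,u-Ru_h,Sy_h)$, is bounded by $(\|u\|_X+\|Ru_h\|_{\widehat X})\|S\|\|\widehat\Gamma\|\,\|u-Ru_h\|_{\widehat X}$, and $\|u-Ru_h\|_{\widehat X}\le(1+\Lambda_{\mathrm R})\|u-u_h\|_{\widehat X}$ by \eqref{quasioptimalsmootherR}/\eqref{eqn:PR}.

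Collecting, $\beta_1\|e_h\|_{\widehat X}\le \big(\Lambda_1\Lambda_{\mathrm P}+\|Q^\ast A\|(1+\Lambda_{\mathrm P})\big)\|u-x_h\|_{\widehat X}+\|\widehat\Gamma(u,u,(S-Q)\bullet)\|_{Y_h^\ast}+C_\ast\|u-u_h\|_{\widehat X}$ where $C_\ast$ collects the coefficients $\propto(1+\Lambda_{\mathrm R})\|S\|\|\widehat\Gamma\|$ times $(\|u\|_X+\|Ru_h\|_{\widehat X})$ plus the leftover linear-in-$e_h$ term. The crux — and the main obstacle — is that $C_\ast$ involves $\|u_h\|$ (through $\|Ru_h\|_{\widehat X}\le\|R\|\|u_h\|_{X_h}$), which must be controlled \emph{a priori}; here the hypothesis $\|u-u_h\|_{\widehat X}\le\epsilon_2$ is used twice: once to bound $\|u_h\|_{X_h}\le\|u\|_X+\epsilon_2$ (giving the claimed factor $\beta_1+2(1+\Lambda_{\mathrm R})\|S\|\|\widehat\Gamma\|\|u\|_X$ after one more crude estimate $\|u-Ru_h\|_{\widehat X}\le 2(1+\Lambda_{\mathrm R})\|u\|_X$-type bound), and once so that the term $C_\ast\|u-u_h\|_{\widehat X}$ — more precisely its portion with coefficient $(1+\Lambda_{\mathrm R})^2\|S\|\|\widehat\Gamma\|$, which is $\le\kappa\beta_1$ by the definition of $\epsilon_2$ — can be absorbed into the left-hand side, leaving a factor $(1-\kappa)^{-1}$. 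Then a triangle inequality $\|u-u_h\|_{\widehat X}\le\|u-x_h\|_{\widehat X}+\|e_h\|_{\widehat X}$ and taking the minimum over $x_h\in X_h$ finishes the proof with $C_{\mathrm{qo}}$ as stated. I would be careful about which coefficient exactly gets squeezed against $\kappa\beta_1$ versus which stays in the multiplicative constant; matching the precise form of $C_{\mathrm{qo}}$ in the theorem is the one place the bookkeeping has to be done exactly rather than crudely.
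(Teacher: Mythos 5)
Your strategy (discrete inf-sup for a linearised form, plus consistency, plus absorption of the nonlinearity) is in the right family, but it is not the paper's route and, as written, it has two genuine gaps. The paper instead introduces the auxiliary discrete solution $u_h^\ast$ of the \emph{linear} problem $a_h(u_h^\ast,\bullet)=G(Q\bullet)$ with $G=F-\Gamma(u,u,\bullet)$, controls $\|u-u_h^\ast\|_{\widehat X}$ by the linear quasi-optimality Lemma~\ref{lemma:quas_eq} (this is where \eqref{dis_Ah_infsup} and \ref{h1} enter and where $C_{\rm qo}'$ comes from), and then bounds $e_h=u_h^\ast-u_h$ by the inf-sup condition of Theorem~\ref{dis_inf_sup_thm} for $a_h(\bullet,\bullet)+\widehat b(R\bullet,S\bullet)$, i.e.\ the linearisation at the \emph{continuous} root $u$; the resulting decomposition $\widehat\Gamma(u-Ru_h,u-Ru_h,Sy_h)+\widehat\Gamma(u,Ru_h^\ast-u,Sy_h)+\widehat\Gamma(Ru_h^\ast-u,u,Sy_h)+\widehat\Gamma(u,u,(S-Q)y_h)$ cleanly separates a quadratic term in $\|u-u_h\|_{\widehat X}$ (absorbed via $\epsilon_2$) from linear terms in $\|u-u_h^\ast\|_{\widehat X}$, and this is exactly what produces the stated constant $C_{\rm qo}$.

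The concrete problems with your version are these. First, $\beta_1$ in \eqref{eqn:lower_betah} is \emph{not} an inf-sup constant for $DN_h(x_h;\bullet,\bullet)$ at an arbitrary $x_h\in X_h$: the derivation of \eqref{eqn:lower_betah} uses $\|u-Rx_h\|_{\widehat X}\le(1+\Lambda_{\rm R})\delta_4$, so it is only available for the $x_h$ of \ref{h5} (or any $x_h$ with $\|u-x_h\|_{\widehat X}\le\delta_4$); for $x_h$ far from $u$ the linearisation $DN_h(x_h)$ may well be singular. This is repairable by taking $x_h$ to be a (near-)best approximation, but you must say so. Second, your key algebraic claim is false: the quadratic term does not drop out. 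With $e_h=u_h-x_h$ and $N_h(u_h;y_h)=0$ one has, by trilinearity,
\begin{align}
DN_h(x_h;e_h,y_h)=N_h(u_h;y_h)-N_h(x_h;y_h)-\widehat\Gamma(Re_h,Re_h,Sy_h)
=-N_h(x_h;y_h)-\widehat\Gamma(Re_h,Re_h,Sy_h),
\end{align}
so a term genuinely quadratic in $e_h=u_h-x_h$ (not in $u-u_h$) survives and your displayed rearrangement of $DN_h(x_h;u_h,y_h)$ does not hold. This term can in principle be absorbed as well, since $\|e_h\|_{\widehat X}\le\|u-u_h\|_{\widehat X}+\|u-x_h\|_{\widehat X}\le\epsilon_2+\delta_4$, but that requires an additional smallness argument beyond the absorption you describe, and it changes the bookkeeping enough that the constant $C_{\rm qo}$ of the theorem would not come out in the stated form. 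As it stands the proposal does not constitute a proof of the theorem.
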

\noindent The theorem {{establishes}} a quasi-best approximation result \eqref{eqn:best} for $S=Q$. The proof utilizes a quasi-best approximation result from \cite{ccnnlower2022} for linear problems.
 \begin{lem}[quasi-best approximation for linear problem \cite{ccnnlower2022}]\label{lemma:quas_eq}
If $u^\ast \in X$ and $G(\bullet)=a(u^*,\bullet) \in Y^*$, $u_h^\ast \in X_h$ and $a_h(u_h^*,\bullet) =G(Q \bullet)\in Y_h^*$,  then  {{\eqref{dis_Ah_infsup}-}}\eqref{quasioptimalsmootherP} and {\bf (H1)} imply
\begin{align} 
	\mathbf{(QO)} \quad	\|u^\ast - u_h^\ast\|_{\widehat{X}} \le C_{\rm qo}' \inf_{x_h \in X_h} \|u^\ast - x_h \|_{\widehat{X}}. \label{eqn.quasiopt}
	\end{align}
\end{lem}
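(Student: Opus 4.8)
The plan is to prove \textbf{(QO)} by a generalised C\'ea argument, turning the error $\|u^\ast - u_h^\ast\|_{\widehat X}$ into a discrete residual via the stability \eqref{dis_Ah_infsup} and then estimating that residual through the nonconformity hypothesis \ref{h1} and the smoother bound \eqref{quasioptimalsmootherP}. First I would fix an arbitrary competitor $x_h \in X_h$ and split, by a triangle inequality,
\[
\|u^\ast - u_h^\ast\|_{\widehat X} \le \|u^\ast - x_h\|_{\widehat X} + \|x_h - u_h^\ast\|_{\widehat X},
\]
observing that $x_h - u_h^\ast \in X_h$, so that $\|x_h - u_h^\ast\|_{\widehat X} = \|x_h - u_h^\ast\|_{X_h}$. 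The whole task then reduces to controlling the discrete difference $\|x_h - u_h^\ast\|_{X_h}$ by $\|u^\ast - x_h\|_{\widehat X}$.

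For that second term I would apply \eqref{dis_Ah_infsup}; by homogeneity of $a_h$ in its first argument,
\[
\alpha_0 \|x_h - u_h^\ast\|_{X_h} \le \alpha_h \|x_h - u_h^\ast\|_{X_h} \le \sup_{\substack{y_h \in Y_h \\ \|y_h\|_{Y_h}=1}} a_h(x_h - u_h^\ast, y_h).
\]
Using the sup directly (rather than a maximiser) avoids any $\epsilon$-slack and needs no reflexivity of $Y_h$. For a unit test function $y_h$, the defining relation $a_h(u_h^\ast,\bullet) = G(Q\bullet) = a(u^\ast, Q\bullet)$ gives $a_h(x_h - u_h^\ast, y_h) = a_h(x_h, y_h) - a(u^\ast, Q y_h)$, and I would insert the intermediate term $a(P x_h, Q y_h)$ to split this residual as
\[
\big(a_h(x_h, y_h) - a(P x_h, Q y_h)\big) + a(P x_h - u^\ast, Q y_h).
\]
Hypothesis \ref{h1} bounds the first bracket by $\Lambda_1 \|x_h - P x_h\|_{\widehat X}\|y_h\|_{Y_h}$, while the adjoint identity $a(P x_h - u^\ast, Q y_h) = (Q^\ast A (P x_h - u^\ast))(y_h)$ bounds the second by $\|Q^\ast A\|\,\|u^\ast - P x_h\|_X\|y_h\|_{Y_h}$.

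The two factors appearing here are precisely what the smoother estimate supplies: \eqref{quasioptimalsmootherP} with $x = u^\ast$ yields $\|x_h - P x_h\|_{\widehat X} \le \Lambda_{\rm P}\|u^\ast - x_h\|_{\widehat X}$, and its consequence \eqref{eqn:PR} yields $\|u^\ast - P x_h\|_X \le (1+\Lambda_{\rm P})\|u^\ast - x_h\|_{\widehat X}$. Substituting these and $\|y_h\|_{Y_h}=1$ gives
\[
\alpha_0 \|x_h - u_h^\ast\|_{X_h} \le \big(\Lambda_1 \Lambda_{\rm P} + \|Q^\ast A\|(1+\Lambda_{\rm P})\big)\|u^\ast - x_h\|_{\widehat X}.
\]
Combining with the triangle inequality of the first step produces $\|u^\ast - u_h^\ast\|_{\widehat X} \le C_{\rm qo}'\|u^\ast - x_h\|_{\widehat X}$ with the stated $C_{\rm qo}' = 1 + \alpha_0^{-1}(\Lambda_1 \Lambda_{\rm P} + \|Q^\ast A\|(1+\Lambda_{\rm P}))$, and taking the infimum over $x_h \in X_h$ concludes the proof.

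I do not expect a genuine obstacle: the argument is the linear specialisation of the machinery already deployed in the proof of Theorem~\ref{dis_inf_sup_thm} (splitting off $a(Px_h,Qy_h)$, then invoking \ref{h1} and \eqref{quasioptimalsmootherP}). The only step demanding care is the bookkeeping of the smoother $Q$: one must route the conforming term through the adjoint $Q^\ast$ so that the operator $Q^\ast A \in L(X;Y_h^\ast)$ — and not $A$ itself — enters the constant, which is exactly what makes the definition of $C_{\rm qo}'$ consistent with the data of the hypotheses.
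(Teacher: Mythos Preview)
Your proof is correct and follows essentially the same approach as the paper: both apply the discrete inf-sup condition to $x_h - u_h^\ast$, insert the intermediate term $a(Px_h,Qy_h)$, invoke \ref{h1} and the operator norm $\|Q^\ast A\|$, and close with \eqref{quasioptimalsmootherP}--\eqref{eqn:PR} and a triangle inequality to obtain the identical constant $C_{\rm qo}'$. The only cosmetic difference is that you work with the supremum over unit $y_h$ whereas the paper picks a (near-)maximiser directly.
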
	
\begin{proof} This is indicated in \cite[Theorem 5.4.a]{ccnnlower2022} for Hilbert spaces and we give the proof for completeness. For any $x_h \in X_h$, the inf-sup condition \eqref{dis_Ah_infsup} leads for $e_h:=x_h-u_h^* \in X_h$ to some $\|y_h\|_{Y_h} \le 1$ such that 
\[ \alpha_0 \|e_h\|_{X_h}  \le a_h(x_h,y_h)-a_h(u_h^*,y_h). \] Since $a_h(u_h^*, y_h)= G(Qy_h)=a(u^*, Qy_h)$, this implies 
\[\alpha_0 \|e_h\|_{X_h} \le a_h(x_h,y_h) -a(Px_h, Qy_h) + a(Px_h -u^*, Qy_h) \le \Lambda_1 \|x_h-Px_h\|_{\widehat{X}} +\|Q^*A\| \|u^*-Px_h\|_{X}
\]
with {\bf (H1)},  the operator norm $\|Q^*A\|$ of $Q^*A=a(\bullet, Q\bullet)$, 
and $\|y_h\|_{Y_h} \le 1$ in the last step.  Recall \eqref{quasioptimalsmootherP} and
$\|u^*-Px_h\|_X  
 \le (1+\Lambda_{\rm P}) \|u^*-x_h\|_{\widehat{X}} $ from  \eqref{eqn:PR} to deduce
\[   \alpha_0 \|e_h\|_{X_h} \le (\Lambda_1 \Lambda_{\rm P} +(1+\Lambda_{\rm P}) \|Q^* A\|)\|u^*-x_h\|_{\widehat{X}}. \]
This and a triangle inequality $\|u^*-u_h^*\|_{\widehat{X}} \le \|e_h\|_{X_h} +\|u^*-x_h\|_{\widehat{X}}$ conclude the proof.
\end{proof}
\noindent{\it Proof of Theorem~\ref{thm:apriori}}.
Given a regular root  $u \in X$ to~\eqref{eqn:p}, $G(\bullet) := F(\bullet) - \Gamma (u,u,\bullet) \in Y^\ast$ is an appropriate right-hand side in the problem $a(u,\bullet)  = G(\bullet)$ with a discrete solution $u_{h}^\ast \in X_h$ to $\dislin(u_{h}^\ast,\bullet) = G(Q\bullet)$ in $Y_h$. {Lemma~\ref{lemma:quas_eq} implies~\eqref{eqn.quasiopt} with $u^\ast$ substituted by $u$, namely}
\begin{align} \label{eqn:quasi}
 \|u - u_{h}^\ast\|_{\widehat{X}} \le C_{\rm qo}'  \inf_{x_h \in X_h} \|u - x_{h}\|_{\widehat{X}} . 
\end{align}

\smallskip
\noindent Given the discrete solution $u_h \in X_h$ to~\eqref{eqn:dp} and the approximation $u_{h}^\ast \in X_h$ from above,  let $e_h := u_{h}^\ast - u_h \in X_h$. The stability of the discrete {problem from Theorem~\ref{dis_inf_sup_thm}} leads to the existence of some $y_h \in Y_h$ with norm $\|y_h\|_{Y_h} \le 1/\beta_h$ for $\beta_h \ge \beta_{0}$ from~\eqref{eqn:disinfsup} and 
\begin{align}
\|e_{h}\|_{X_h} = \dislin(e_h,y_h) + \widehat{\nonlin}(\rop e_h,\sop y_h) = \dislin(e_h,y_h) + \widehat{\Gamma}(u,Re_h,Sy_h) +  \widehat{\Gamma}(Re_h,u,Sy_h)  \nonumber
\end{align} 
with~\eqref{star} in the last step. 
 The definition of $u_h^*$, $G$, and \eqref{eqn:dp} show
$$\dislin(u_h^\ast,y_h) = F(Qy_h) -{\Gamma}(u,u,Qy_h) = \dislin(u_h,y_h) + \widehat{\Gamma}(\rop u_h,\rop u_h,\sop y_h) - {\Gamma}(u,u,Qy_h).$$
 The combination of the two previous displayed identities and elementary algebra show that 
\begin{align}
\|e_h \|_{X_h}  & = \widehat{\Gamma}(\rop u_h ,\rop u_h,\sop y_h) - \widehat{\Gamma}(u,u,Sy_h) + \widehat{\Gamma}(u,\rop e_h,\sop y_h) + \widehat{\Gamma}(\rop e_h,u,\sop y_h) + \widehat{\Gamma}(u,u,(S - Q)y_h)  \nonumber \\
&= \widehat{\Gamma}(u - Ru_h,u - Ru_h, Sy_h)+ \widehat{\Gamma}(u,Ru_h^\ast - u,Sy_h) + \widehat{\Gamma}(Ru_h^\ast - u,u,Sy_h) +\widehat{\Gamma}(u,u,(S - Q)y_h) \nonumber\\
&\le (\|S\| \| \widehat{\Gamma} \| \| u - Ru_h \|_{\widehat{X}}^2 + 2 \|u \|_{X} \| S\| \| \widehat{\Gamma} \| \| u - R u_h^*\|_{\widehat{X}} + \| \widehat{\Gamma}(u,u,(S-Q)\bullet) \|_{Y_{h^\ast}} )/\beta_h
\label{eqn:qh_in_gammah}
\end{align}
with {the} boundedness of $\widehat{\Gamma}(\bullet,\bullet,\bullet)$ and $\|y_h\|_{Y_h} \le 1/\beta_h$ in the last step.  This, $\|u - Ru_h \|_{\widehat{X}} \le (1 + \lamr)\| u - u_{h} \|_{\widehat{X}}$ (resp. $\|u - Ru_h^* \|_{\widehat{X}} \le (1 + \lamr)\| u - u_{h}^* \|_{\widehat{X}}$) from~\eqref{eqn:PR}, $\beta_1 \le \beta_h$,  and a triangle inequality show
\begin{align}
\beta _1 \|u - u_{h}\|_{\widehat{X}} \le{}& \left(\beta_1 +2 (1 + \lamr)\|\sop\|\| \widehat{\Gamma}\|\|u\|_{\widehat{X}} \right) \|u - u_{h}^\ast\|_{\widehat{X}} + \| \widehat{\Gamma}(u,u,(\sop-Q)\bullet) \|_{Y_h^\ast} \nonumber \\
&+ (1 + \lamr)^2 \|\sop \|\| \widehat{\Gamma}\| \|u - u_h \|_{\widehat{X}}^2.
\label{eqn:penul_est}
\end{align}
Recall the assumption on $\|u-u_h\|_{\widehat{X}} \le \epsilon_2$ to absorb the last term and obtain

\begin{align}
\|u - u_{h}\|_{\widehat{X}} \le \dfrac{(\beta_1 + 2 (1+ \Lambda_{\rm R}) \|S\| \| \widehat{\Gamma} \| \|u\|_{{X}}) \|u - u_{h}^\ast\|_{\widehat{X}} + \| \widehat{\Gamma}(u,u,(S-Q)\bullet) \|_{Y_h^\ast}}{\beta_1 - \epsilon_2 (1+\Lambda_{\rm R})^2 \|S\| \|\widehat{\Gamma}\|}.
\end{align}
This,  the definition of $\epsilon_2$, and \eqref{eqn:quasi} conclude the proof. 
\qed
{{
		\begin{rem}[estimate on $\epsilon_2$]
	The assumption of Theorem \ref{thm:apriori} and Remark \ref{rem_epsilons} reveal $\epsilon_2 \le \epsilon_1\lesssim \delta_3+\delta_4$ for the  applications of Section \ref{sec:nse}-\ref{sec:vke}.	\end{rem}}}


\section{Goal-oriented error control} \label{sec:goal-oriented}
This section proves an a priori error estimate in weaker Sobolev norms based on a  duality argument. 
Suppose $Y $ is reflexive throughout this section so that,  given any $G \in X^{\ast}$,  there exists a unique solution $z \in Y$ to the  dual linearised problem 
\begin{align}\label{dual_linear}
a(\bullet,z)+b(\bullet,z)=G(\bullet) \mbox{ in } X^*.
\end{align}
{Recall $N$ from~\eqref{eqccdefN}, $A$ and $B$ from Table \ref{tab:bilinear_forms} with~\eqref{star}, $P$, $Q$, $R$, and $S$   with~\eqref{quasioptimalsmootherP}--\eqref{quasioptimalsmootherS},} and  \ref{h1hat}  from Section~\ref{sec:mainresults}.  Since $u \in X$ is a regular root, the derivative $A+B \in {L(X;Y^\ast)}$  of $N$ evaluated at $u$ is a bijection and so is its dual operator 
$A^*+B^* \in L(Y; X^*)$. 

\medskip
%
%
\begin{thm}[goal-oriented error control] 
	\label{thm:lower}
Let $u \in X$ be a regular root to \eqref{eqn:p}  and let $u_{h} \in X_h$ (resp. $z \in Y$) solve~\eqref{eqn:dp} (resp.~\eqref{dual_linear}). Suppose~\ref{h1hat} and ~\eqref{quasioptimalsmootherP}-\eqref{quasioptimalsmootherS}.  
Then, any $G \in X^{\ast}$ and any $z_h \in Y_h$ satisfy
	\begin{align*}
	G(u-Pu_h) \le{}& 
  \omega_1(||u||_{X},||u_h||_{X_h})   \|u - u_h \|_{\widehat{X}}   \| z - z_h \|_{\widehat{Y}}  + \omega_2(\|z_h\|_{Y_h})  \|u - u_{h} \|^2_{\widehat{X}}  \nonumber \\
& \quad + \widehat{\Gamma}(u,u,(S-Q)z_h) +
\widehat{\Gamma}(Ru_h,Ru_h,Q z_h) -{\Gamma}(Pu_h,Pu_h,Qz_h)
	\end{align*}
with the weights
\begin{align} 
 \omega_1(||u||_{X},||u_h||_{X_h}) & := (1 + \Lambda_{\rm P})(1 + \Lambda_{\rm Q}) (\|A\| +2\|\Gamma\|\|u \|_{X}) 
 + \Lambda_{5} + (1+ \Lambda_{\rm R})(\Lambda_{\rm S} +\Lambda_{\rm Q})  \nonumber \\
& \times\| \widehat{\Gamma}\| (\|Ru_{h}\|_{\widehat{X}} + \|u\|_{X}), \quad
 \omega_2(\|z_h\|_{Y_h}) := \| {\Gamma} \| (1+\Lambda_{\rm P})^2 \|Q z_h\|_{Y}.  \qquad \label{weights}
\end{align}

\end{thm}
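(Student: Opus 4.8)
The plan is to run the standard duality identity ``$G$ applied to the error equals the bilinear form applied to the dual solution'' and then split the resulting discrete--continuous mismatch into a linear part controlled by \ref{h1hat} and a trilinear part controlled by the boundedness of $\widehat{\Gamma}$ together with the quasi-optimality estimates \eqref{eqn:PR}--\eqref{eqn:QS}. First I would write $G(u-Pu_h)=a(u-Pu_h,z)+b(u-Pu_h,z)$ from \eqref{dual_linear}, then insert $z_h\in Y_h$ and rewrite $a(Pu_h,z-z_h)$ and the $b$-contributions so that everything is expressed through quantities one can estimate. The governing identities are: the continuous equation $a(u,z)+\Gamma(u,u,z)+\Gamma(u,u,z)\text{-type terms}$ coming from \eqref{eqn:p} linearised (more precisely $a(u,\bullet)=F(\bullet)-\Gamma(u,u,\bullet)$ and the definition \eqref{star} of $b$), and the discrete equation $a_h(u_h,z_h)+\widehat{\Gamma}(Ru_h,Ru_h,Sz_h)=F(Qz_h)$ from \eqref{eqn:dp}. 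Subtracting these, with $G(u-Pu_h)=a(u,z)+b(u,z)-a(Pu_h,z)-b(Pu_h,z)$, produces: a term $a(Pu_h,z)-a_h(u_h,z_h)$ that I would split as $\big(a(Pu_h,z-z_h)\big)+\big(a(Pu_h,Qz_h)-a_h(u_h,z_h)\big)+\big(a(Pu_h, (z_h-Qz_h))\big)$; the first and third handled by boundedness of $a$ plus \eqref{eqn:PR} and \eqref{quasioptimalsmootherQ}, the middle one by \ref{h1hat} directly (this is exactly why \ref{h1hat} is the ``sharp'' $L^2$-type hypothesis rather than \ref{h1}). The remaining pieces are the trilinear mismatch $\widehat{\Gamma}$-terms and the source terms $F(Qz_h)$, which cancel against $F$ hidden in $a(u,z)$.

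Second I would organise the trilinear bookkeeping. After the cancellations the trilinear residual collapses to three groups: (a) $\widehat{\Gamma}(u,u,z)-\widehat{\Gamma}(u,u,Qz_h)$ and the analogous $b$-linearisation pieces at $u$, which combine with the $a$-terms to give the $\|u-u_h\|_{\widehat X}\|z-z_h\|_{\widehat Y}$ contribution with weight involving $\|A\|+2\|\Gamma\|\|u\|_X$ (the factor $(1+\Lambda_{\rm P})(1+\Lambda_{\rm Q})$ entering through \eqref{eqn:PR}, \eqref{eqn:QS}); (b) the genuinely quadratic-in-error term coming from $\Gamma(Pu_h,Pu_h,Qz_h)-\Gamma(u,u,Qz_h)$ rewritten via the algebraic identity $\Gamma(a,a,\cdot)-\Gamma(c,c,\cdot)=\Gamma(a-c,a,\cdot)+\Gamma(c,a-c,\cdot)$, where writing $a-c=u-Pu_h$ and using \eqref{eqn:PR} to pass to $\|u-u_h\|_{\widehat X}$ yields the $\omega_2$-weight $\|\Gamma\|(1+\Lambda_{\rm P})^2\|Qz_h\|_Y$; (c) the ``smoother defect'' terms $\widehat{\Gamma}(u,u,(S-Q)z_h)$ and $\widehat{\Gamma}(Ru_h,Ru_h,Qz_h)-\Gamma(Pu_h,Pu_h,Qz_h)$, which are precisely the three un-estimated terms left on the right-hand side of the statement — they are kept as-is because in the applications one chooses $S=Q$ (killing the first) and then handles the $R$ versus $P$ discrepancy by the elementary identity $\widehat{\Gamma}(Ru_h,Ru_h,\cdot)-\Gamma(Pu_h,Pu_h,\cdot)=\widehat{\Gamma}(Ru_h-Pu_h,Ru_h,\cdot)+\widehat{\Gamma}(Pu_h,Ru_h-Pu_h,\cdot)$ together with \eqref{quasioptimalsmootherP}--\eqref{quasioptimalsmootherS} giving $\|Ru_h-Pu_h\|_{\widehat X}\lesssim \|u-u_h\|_{\widehat X}$. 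That last step is where the mixed weight $(1+\Lambda_{\rm R})(\Lambda_{\rm S}+\Lambda_{\rm Q})\|\widehat{\Gamma}\|(\|Ru_h\|_{\widehat X}+\|u\|_X)$ in $\omega_1$ appears, and collecting it into the $\|u-u_h\|_{\widehat X}\|z-z_h\|_{\widehat Y}$ channel requires one more use of \eqref{eqn:QS} to trade $\|(\Lambda_{\rm S}+\Lambda_{\rm Q})z_h\|$ against $\|z-z_h\|_{\widehat Y}$.

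The main obstacle I anticipate is purely organisational rather than conceptual: keeping track of which test function ($z$, $z_h$, $Qz_h$, $Sz_h$) sits in which slot so that every difference $z-z_h$, $z-Qz_h$, $(S-Q)z_h$ gets routed to exactly one of the three estimate channels, and making sure the $b$-linearisation terms from \eqref{star} (there are two of them, $\widehat\Gamma(u,\cdot,\cdot)$ and $\widehat\Gamma(\cdot,u,\cdot)$) are not double-counted against the quadratic expansion of $\widehat\Gamma(Ru_h,Ru_h,\cdot)$. The cleanest route is to introduce $e:=u-u_h$ once and systematically expand $\widehat\Gamma(Ru_h,Ru_h,\cdot)=\widehat\Gamma(u,u,\cdot)-\widehat\Gamma(u,Re+(u-Ru_h-Re),\cdot)-\dots$; but to avoid the $(u-Ru_h)$ versus $Re$ confusion it is better to expand around $Ru_h$ directly, i.e. write $u=Ru_h+(u-Ru_h)$ and peel off the linear-in-$(u-Ru_h)$ terms, which are exactly the $\widehat b$-terms, leaving the quadratic remainder $\widehat\Gamma(u-Ru_h,u-Ru_h,\cdot)$ for the $\omega_2$-type bound. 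Everything else is triangle inequalities and the already-established estimates \eqref{eqn:PR}, \eqref{eqn:QS}; no new analytic input beyond \ref{h1hat} and reflexivity of $Y$ (needed only for the existence of $z$) is required.
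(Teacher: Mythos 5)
Your proposal follows essentially the same route as the paper's proof: start from $G(u-Pu_h)=(a+b)(u-Pu_h,z)$, insert $z_h$, use \ref{h1hat} for the consistency term $a_h(u_h,z_h)-a(Pu_h,Qz_h)$, bound $(a+b)(u-Pu_h,z-Qz_h)$ via \eqref{eqn:PR}--\eqref{eqn:QS}, isolate the quadratic remainder $\Gamma(u-Pu_h,u-Pu_h,Qz_h)$ for the $\omega_2$-bound, and retain the three smoother-defect terms verbatim. The one bookkeeping slip is the provenance of the mixed weight $(1+\Lambda_{\rm R})(\Lambda_{\rm S}+\Lambda_{\rm Q})\|\widehat{\Gamma}\|(\|Ru_h\|_{\widehat{X}}+\|u\|_{X})$ in $\omega_1$: it does not come from the $R$-versus-$P$ discrepancy $\widehat{\Gamma}(Ru_h,Ru_h,Qz_h)-\Gamma(Pu_h,Pu_h,Qz_h)$ (which is kept un-estimated in the statement), but from splitting $\widehat{\Gamma}(Ru_h,Ru_h,(S-Q)z_h)=\widehat{\Gamma}(Ru_h-u,Ru_h,(S-Q)z_h)+\widehat{\Gamma}(u,Ru_h-u,(S-Q)z_h)+\widehat{\Gamma}(u,u,(S-Q)z_h)$ and estimating the first two terms via $(S-Q)z_h=(S-I)z_h+(I-Q)z_h$.
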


	\begin{proof} 
Since $z \in Y$ solves  \eqref{dual_linear},  elementary algebra with \eqref{eqn:p}, \eqref{eqn:dp}, and  any $z_h \in Y_h$ lead to 
			\begin{align}
G(u-Pu_h)&=(a+\nonlin)(u-Pu_h,z) 
		=(a+\nonlin)(u-Pu_h,z-Qz_h)  +\nonlin(u-Pu_h,Qz_h) 
		\nonumber \\
& \qquad +\big(\dislin(u_h,z_h)  -a(Pu_h,Qz_h)\big)+ \widehat{\Gamma}(Ru_h,Ru_h,Sz_h)-\Gamma(u,u,Qz_h).  \label{est}
\end{align}
	The first term  $
(a+\nonlin)(u-Pu_h,z-Qz_h)$ on the right-hand side of \eqref{est}  is bounded by
		\begin{equation}
		\hspace{-0.03cm} (\|A\|+2\|\Gamma\|\|u \|_{X}) \| u-Pu_h \|_{X} \| z-Qz_h \|_{Y} 
		{\le}(\|A\|+2\|\Gamma\|\|u \|_{X}) (1+\Lambda_{\rm P})(1+\Lambda_{\rm Q}) \| u-u_h \|_{\widehat{X}} \| z-z_h \|_{\widehat{Y}}\label{first}
		\end{equation}
with~\eqref{eqn:PR}-\eqref{eqn:QS}  in the last step. 
The hypothesis~\ref{h1hat} controls the third  term on the right-hand side of \eqref{est},  namely
		\begin{align}
		\dislin(u_h,z_h) - a(Pu_h,Qz_h) &\le \Lambda_{5}\| u-u_h \|_{\widehat{X}} \| z-z_h \|_{\widehat{Y}}. 
\label{second}
		\end{align}
Elementary algebra with~\eqref{star} shows that the remaining terms $\widehat{\Gamma}(Ru_h,Ru_h,Sz_h)-\Gamma(u,u,Qz_h)+\nonlin(u-Pu_h,Qz_h) $  on the right-hand side of \eqref{est} can be re-written as 
		\begin{align}
		\hspace{-0.25cm}  \widehat{\Gamma}(Ru_h,Ru_h,(S-Q)z_h)
		+\widehat{\Gamma}(Ru_h,Ru_h,Qz_h) -{\Gamma}(Pu_h,Pu_h,Qz_h)
		+{\Gamma}(u-Pu_h,u-Pu_h,Qz_h).  \; \; \quad
		\label{eqn:non-linear_splitting}
		\end{align}
	
		\noindent  Elementary algebra with the first term on the right-hand side of~\eqref{eqn:non-linear_splitting} reveals
$$\widehat{\Gamma}(Ru_h, Ru_h,(S-Q)z_h) = \widehat{\Gamma}(Ru_h - u,Ru_h,(S-Q)z_h) + \widehat{\Gamma}(u,Ru_h - u,(S-Q)z_h) + \widehat{\Gamma}(u,u,(S-Q)z_h).$$ 
The boundedness of $\widehat{\Gamma}(\bullet,\bullet,\bullet)$, \eqref{quasioptimalsmootherQ},  \eqref{quasioptimalsmootherS}, and \eqref{eqn:PR}  show 
\begin{align*}
\widehat{\Gamma}(Ru_h - u,Ru_h,(S-Q)z_h)& = 
\widehat{\Gamma}(Ru_h - u,Ru_h,(S-I)z_h) +\widehat{\Gamma}(Ru_h - u,Ru_h,(I-Q)z_h) \\
& \le (\Lambda_{\rm S} +\Lambda_{\rm Q} ) 
 \| \widehat{\Gamma} \| (1+ \Lambda_{\rm R})\|Ru_{h}\|_{\widehat{X}} \|u - u_{h}\|_{\widehat{X}} \| z-z_h \|_{\widehat{Y}}  . \\
\widehat{\Gamma}(u,Ru_h - u,(S-Q)z_h) &\le  (\Lambda_{\rm S} +\Lambda_{\rm Q} ) 
 \| \widehat{\Gamma} \| (1+ \Lambda_{\rm R})\|u\|_{{X}} \|u - u_{h}\|_{\widehat{X}} \| z-z_h \|_{\widehat{Y}}  . 
\end{align*}

\noindent The boundedness of  ${\Gamma}(\bullet,\bullet,\bullet)$  and \eqref{eqn:PR} lead to 
$$		{\Gamma}(u-Pu_h,u-Pu_h,Qz_h) \le \| {\Gamma} \|(1+\Lambda_{\rm P})^2 \|u - u_{h}\|^2_{\widehat{X}} \|Qz_{h}\|_{Y}.$$
  A combination  of  the above estimates of the terms in \eqref{est} concludes the proof.
	\end{proof}

\noindent An abstract a priori estimate for error control in weaker Sobolev norms concludes this section.
\begin{thm}[a priori error estimate in weaker Sobolev norms] 
	\label{cor:lower}
	Let $X_{\rm s}$ be a Hilbert space with $X \subset X_{\rm s}$. Under the assumptions of Theorem~\ref{thm:lower},  any $z_h \in Y_h$ satisfies
	\begin{align}
	\| u - u_h \|_{X_{\rm s}} \le{}& \omega_1(||u||_{X},||u_h||_{X_h})  \|u - u_h \|_{\widehat{X}}   \| z - z_h \|_{\widehat{Y}}   + \omega_2(\|z_h\|_{Y_h})  \|u - u_{h} \|^2_{\widehat{X}} + \|u_h - Pu_h\|_{X_{\rm s}}  \nonumber \\
	& + \widehat{\Gamma}(u,u,(S-Q)z_h) +
\widehat{\Gamma}(Ru_h,Ru_h,Q z_h) -{\Gamma}(Pu_h,Pu_h,Q z_h).
	\end{align}
\end{thm}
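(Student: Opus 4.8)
The plan is to derive Theorem~\ref{cor:lower} from Theorem~\ref{thm:lower} by a standard duality/choice-of-goal argument: I pick the goal functional $G \in X^\ast$ that represents the error in the weaker norm. Concretely, since $X_{\rm s}$ is a Hilbert space, the Riesz representation of $u - u_h \in X_{\rm s}$ (note $u \in X \subset X_{\rm s}$ and, interpreting $u_h$ via a suitable inclusion $X_h \hookrightarrow X_{\rm s}$, also $u_h \in X_{\rm s}$) gives a functional whose action recovers $\|u-u_h\|_{X_{\rm s}}^2$. But the left-hand side of Theorem~\ref{thm:lower} is $G(u - Pu_h)$, not $G(u-u_h)$, so the natural move is to take $G \in X^\ast$ with $\|G\|_{X^\ast} \le 1$ (or rather the restriction to $X$ of the $X_{\rm s}$-Riesz functional, scaled) such that $G(u - Pu_h)$ is close to $\|u - Pu_h\|_{X_{\rm s}}$, and then to restore $\|u - u_h\|_{X_{\rm s}}$ by the triangle inequality $\|u - u_h\|_{X_{\rm s}} \le \|u - Pu_h\|_{X_{\rm s}} + \|Pu_h - u_h\|_{X_{\rm s}}$.

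First I would fix $G$ to be (the restriction to $X$ of) the norming functional for $u - Pu_h$ in the Hilbert space $X_{\rm s}$, so that $G(u - Pu_h) = \|u - Pu_h\|_{X_{\rm s}}$ and $\|G\|_{X_{\rm s}^\ast} = 1$; since $X \subset X_{\rm s}$ continuously, $G$ restricts to an element of $X^\ast$ and Theorem~\ref{thm:lower} applies with this $G$ and with the dual solution $z \in Y$ it determines. Second, I would invoke Theorem~\ref{thm:lower} verbatim to bound $G(u - Pu_h) = \|u - Pu_h\|_{X_{\rm s}}$ by the four-term right-hand side there (the $\omega_1$-term, the $\omega_2$-term, the $\widehat\Gamma(u,u,(S-Q)z_h)$ term, and the combination $\widehat\Gamma(Ru_h,Ru_h,Qz_h) - \Gamma(Pu_h,Pu_h,Qz_h)$). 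Third, I would add $\|u_h - Pu_h\|_{X_{\rm s}}$ to both sides via the triangle inequality mentioned above, producing exactly the stated estimate for $\|u - u_h\|_{X_{\rm s}}$; note $\|Pu_h - u_h\|_{X_{\rm s}} = \|u_h - Pu_h\|_{X_{\rm s}}$, matching the term displayed in the theorem.

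The only mild subtlety — and the step I would treat with a sentence of care rather than as routine — is the Hilbert-space normalization: one must observe that the weights $\omega_1, \omega_2$ and the $\widehat\Gamma$-terms in Theorem~\ref{thm:lower} are homogeneous of degree one in $G$ only through the dependence of $z$ on $G$, and since $\|G\|_{X_{\rm s}^\ast} = 1$ forces $\|z\|_Y \lesssim 1$, the bound is genuinely of the claimed form with $z$ the dual solution for this specific $G$; the estimate then holds for the particular $z, z_h$ appearing in the statement once $z$ is taken to solve \eqref{dual_linear} with that $G$ and $z_h \in Y_h$ is arbitrary. Because every analytic inequality needed is already packaged inside Theorem~\ref{thm:lower}, there is no real obstacle here: the ``hard part,'' such as it is, is purely bookkeeping — making sure the goal functional is chosen so that the left side becomes $\|u-Pu_h\|_{X_{\rm s}}$ and then spending one triangle inequality to convert to $\|u - u_h\|_{X_{\rm s}}$ at the cost of the explicit additive term $\|u_h - Pu_h\|_{X_{\rm s}}$.
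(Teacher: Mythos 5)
Your argument is correct and is essentially the paper's own proof: the paper likewise picks a norming functional $G \in X_{\rm s}^\ast \subset X^\ast$ with $\|G\|_{X_{\rm s}^\ast} \le 1$ and $G(u - Pu_h) = \|u - Pu_h\|_{X_{\rm s}}$ (via a corollary of Hahn--Banach, which in the Hilbert space $X_{\rm s}$ is just your Riesz-representation choice), applies Theorem~\ref{thm:lower}, and spends one triangle inequality to pass from $\|u - Pu_h\|_{X_{\rm s}}$ to $\|u - u_h\|_{X_{\rm s}}$ at the cost of the additive term $\|u_h - Pu_h\|_{X_{\rm s}}$. Your extra remarks on the dependence of $z$ on $G$ are sound but not needed beyond what the statement already encodes.
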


\begin{proof}
	Given $u - Pu_h \in X \subset X_{\rm s}$, a corollary of the Hahn-Banach extension theorem leads to some $G \in X_{\rm s}^\ast \subset X^\ast$ with norm $\|G\|_{X_{\rm s}^\ast} \le 1$ in $X_{\rm s}^\ast$ and 
	$
	G(u - Pu_h) = \|u - Pu_h \|_{X_{\rm s}}
	$
	{{\cite{brezis}}}. This, a triangle inequality, and Theorem~\ref{thm:lower} conclude the proof.
\end{proof}

\section[Auxiliary Results]{Auxiliary results for applications}\label{sec:notations}
\subsection{General notation}Standard notation of  Lebesgue and Sobolev spaces, 
their norms, and $L^2$ scalar products  applies throughout the paper
such as the abbreviation $\|\bullet\|$ for $\|\bullet\|_{L^2(\Omega)}$.
For real $s$, $H^s(\Omega)$ denotes the Sobolev space endowed with the  Sobolev-Slobodeckii semi-norm (resp. norm) $|\bullet|_{H^{\boldmath{s}}(\Omega)}$  (resp.  $\|\bullet\|_{H^{\boldmath{s}}(\Omega)}$ )
\cite{Grisvard}; $H^s(K):= H^s({\rm int}(K))$ abbreviates the Sobolev space with respect to the interior ${\rm int}(K)\ne \emptyset$ of a triangle $K$.  The closure of $D(\Omega)$ in $H^s(\Omega)$ is denoted  by $H^s_0(\Omega)$   and $H^{-s}(\Omega)$ is the dual of $H^s_0(\Omega)$. The semi-norm and norm in $W^{s,p} (\Omega)$,  $1 \le p \le \infty$, are denoted by $|\bullet|_{W^{s,p}(\Omega)}$ and $\|\bullet\|_{W^{s,p}(\Omega)}$. The Hilbert space $V := H_0^2(\Omega)$ is endowed with the energy norm 
$\displaystyle\trinl\bullet\trinr := |\bullet|_{H^2(\Omega)}$.  The product space $H^s(\O) \times H^s(\O)$ (resp. ~$L^p(\O) \times L^p(\O)$) is denoted by ${\bf H}^s(\Omega)$ (resp. ~${\bf L}^p(\Omega)$)   and  $\bv =: V \times V$.   The energy norm in the product space ${\bf H}^2(\Omega)$ is also denoted by  $\trinl\bullet\trinr$ and is $(\trinl \varphi_1 \trinr^2+\trinl \varphi_2\trinr^2)^{1/2}$   for all $\Phi=(\varphi_1,\varphi_2)\in {\bf H}^2(\O)$. The norm on ${\bf W}^{s,p}(\Omega)$ is denoted by $\|\bullet\|_{{\bf W}^{s,p}(\Omega)}$. Given any function $v \in L^2(\omega)$,  define the integral mean $ \fint_\omega v \dx:= {1/ |\omega| }\int_\omega v \dx$; {where $|\omega|$ denotes the area of $\omega$}. 
The notation $A \lesssim B$ (resp.  $A \gtrsim B$) abbreviates $A \leq CB$ (resp. $A \geq CB$)  for some positive generic constant $C$, 
which depends exclusively on $\Omega$ and the shape regularity of a triangulation $\T$;
 $A\approx B$ abbreviates $A\lesssim B \lesssim A$. 

\medskip

\noindent
{\bf{Triangulation. }}Let ${\cT}$ denote a shape regular triangulation of the {polygonal Lipschitz domain} $\Omega$ with boundary $\partial \Omega$ {into compact triangles} and $\mathbb{T}(\delta)$ be a set of uniformly shape-regular triangulations  $\cT$ with maximal mesh-size smaller than or equal to $\delta>0$.  {Given  $\cT \in \mathbb{T}$}, define the piecewise constant mesh function $h_{\cT}(x)=h_K={\rm diam}  (K)$ 
for all $x \in K \in \cT$, and set $h_{\rm max} :=\max_{K\in \cT}h_K$. The set of all interior vertices (resp. boundary vertices) of the triangulation $\cT$ is denoted by $\mathcal{V}(\Omega)$ (resp.  $\mathcal{V}(\partial\Omega)$) {and $\mathcal{V} := \mathcal{V}(\O) \cup \mathcal{V}(\partial \O)$}.  Let $\cE(\Omega)$ (resp. $\cE(\partial\Omega)$) denote the set of all interior edges (resp. boundary edges) in $\cT$. 
Define a piecewise constant edge-function on $\cE:=\cE(\Omega)\cup \cE(\partial\Omega)$ by $h_{\cE}|_E=h_E={\rm diam}(E)$ for any $E\in \cE$.  For a positive integer $m$,  define the Hilbert (resp.  Banach) space  $H^m(\cT) \equiv  \prod\limits_{K \in \cT} H^m(K)$ (resp.  $W^{m,p}(\cT)  \equiv  \prod\limits_{K \in \cT}  W^{m,p}(K)) $.  
The triple norm  $\trinl \bullet \trinr:=|\bullet|_{H^{m}(\Omega)}$ is the energy norm
 and  $\trinl \bullet \trinr_{\text{pw}}:=|\bullet|_{H^{m}(\cT)}:=\| D^m_\text{pw}\bullet\|$ 
 is its piecewise version with the piecewise partial derivatives  $D_\text{pw}^m$ of order $m\in  {\mathbb N}$. 
{For $1<s<2$, the piecewise Sobolev space $H^s(\T)$ is the product space  $\prod_{T\in\T} H^s(T)$  defined as $ \{ v_{\pw} \in L^2(\Omega): \forall T\in\T, \;   v_{\pw}|_T\in H^s(T)\}$ and is equipped with the Euclid norm of those contributions $ \|\bullet  \|_{H^s(T)}$ for all $T\in\T$. 
For  ${{s}}=1 +\nu$ with $0<\nu<1$,  the 2D Sobolev-Slobodeckii norm \cite{Grisvard}
of $f \in H^s(\Omega)$ reads $\|f\|_{H^{\boldmath{s}}(\Omega)}^2:=
\|f\|_{H^{1}(\Omega)}^2+
|f|_{H^{\nu}(\Omega)}^2$ and
\begin{align} \label{eq:sobslobo}
|f|_{H^{s}(\Omega)} &:= \left( 
\sum_{|\beta|=1} \int_\Omega  \int_\Omega 
\frac{|\partial^\beta f(x) - \partial^\beta f(y)|^2 }{|x-y|^{2 +2 \nu} } \dx \dy   \right)^{1/2}. 
\end{align}}
The piecewise version of the energy norm in $H^2(\cT)$ reads $\trinl \bullet \trinr_{\text{pw}}:=|\bullet|_{H^{2}(\cT)}:=\| D^2_\text{pw}\bullet\|$ with the  piecewise Hessian $D_\text{pw}^2$. { 
The  curl  of a scalar function $v$ is defined by   ${\rm Curl} \; v =\big(-\partial v /\partial y ,-\partial v /\partial x \big)^T$ and its piecewise version is denoted by  ${\rm Curl }_{\rm pw}$.} The seminorm (resp. norm) in $W^{m,p}(\cT)$
 is denoted by $|\bullet|_{W^{m,p}(\cT)}$ (resp.  $\|\bullet\|_{W^{m,p}(\cT)}$).  Define the jump $\jump{\varphi}_E:=\varphi|_{K_+}-\varphi|_{K_-}$ and the average $\langle \varphi \rangle _E:=\half\left(\varphi|_{K_+}+\varphi|_{K_-}\right)$ across the interior edge $E$ of $\varphi\in H^1(\cT)$ of the adjacent triangles  $K_+$ and $K_-$. Extend the definition of the jump and the average to an edge on boundary by $\jump{\varphi}_E:=\varphi|_E$ and $\langle \varphi\rangle_E:=\varphi|_E$ for $E\in \cE(\partial\Omega)$.  For any vector function, the jump and the average are understood component-wise. {Let $\Pi_k$ denote the $L^2(\Omega)$ orthogonal projection onto the piecewise polynomials $\displaystyle P_k(\cT):=\left\{v \in L^2(\Omega):\;\forall\,K  \in \cT,\;\;v|_{K}\in P_k(K) \right\}$ of degree at most $k \in \mathbb{N}_0$.} (The notation $\trinl \bullet \trinr_{\rm pw}$,  $\Pi_K$, and $V_h$ below hides the dependence on $\cT \in \mathbb{T}$.)
\vspace{-0.1in}
\subsection{Finite element function spaces and discrete norms} \label{sec:norms}
\noindent This section introduces the discrete spaces and norms for the Morley/dG/$C^0$IP/WOPSIP schemes. The Morley finite element space \cite{Ciarlet} reads
\begin{eqnarray*}
{\M}(\T):=\left\{ v_\M\in P_2(\T){{\Bigg |}}
{ \begin{aligned}
&\; v_\M \text{ is continuous at the vertices and its normal derivatives } 
 \nu_E\cdot D_{\rm pw}{ v_\M}  \text{ are } \\
& \text{  continuous at the midpoints of interior edges},\; v_\M \text{  vanishes at the vertices  }\\
& \text { of } \partial \Omega \text{ and }
\; \nu_E\cdot D_{\rm pw}{ v_\M} \text{  vanishes at the midpoints of boundary edges}
\end{aligned}}\right\}.
\end{eqnarray*}
The semi-scalar product $a_{\rm pw}$ is defined by the piecewise Hessian $D^2_{\rm pw}$, for all  $v_{\rm pw}, w_{\rm pw} \in H^2(\cT)$ as
\begin{align}
\hspace{-0.7cm} a_{\text{pw}}(v_{\rm pw},w_{\rm pw})
:={}& \int_\Omega D_{\rm pw}^2 v_{\rm pw}:D_{\rm pw}^2 w_{\rm pw}\dx. \label{eqccnerwandlast1234a} 
\end{align}
The bilinear form $a_{\rm pw}(\bullet,\bullet)$ induces a piecewise $H^2$ seminorm $\trinl \bullet \trinr_{\rm pw} = a_{\rm pw}(\bullet,\bullet)^{1/2}$ that is  a norm on $V+\M(\cT)$ \cite{ccnn2021}. The piecewise Hilbert space $ H^2(\cT)$ is endowed with a norm $\|\bullet \|_h$~\cite{ccnngal} defined by 
\begin{align} 
\|v_{\pw}\|_{h}^2 &:= \trinl v_{\pw} \trinr_{\pw}^2 + j_h(v_{\pw})^2 \text{ for all } v_{\pw} \in H^2(\cT), \\ \label{eqn:jh_defn}
j_h(v_{\pw})^2 & := \sum_{E \in \mathcal{E}} \sum_{z \in \mathcal{V}(E)} h_{E}^{-2} |\jump{v_{\rm pw}}_E(z)|^2 + \sum_{E \in \mathcal{E}} \left| \fint_{E} \jump{\partial v_{\pw}/\partial \nu_{E}}_E\,\mathrm{d}s \right|^2   
\end{align}
with the jumps $\jump{v_{\pw}}_{{E}}(z) = v_{\pw}\vert_{\omega(E)} (z)$ for $z \in \mathcal{V}(\partial \Omega)$; the edge-patch $\omega(E):=\text{\rm int}(K_+\cup K_-)$ of the interior edge 
$E=\partial K_+\cap\partial K_-\in\E(\Omega)$  is the interior of the union 
$K_+\cup K_-$ of the neighboring triangles $K_+$ and $K_-$, and $\jump{\frac{\partial v_{\pw}}{\partial \nu_{E}}}_{E} = \frac{\partial v_{\pw}}{\partial \nu_{E}} \vert_{E}$ for $E \in \mathcal{E}(\partial \Omega)$ at the boundary with jump partner zero owing to the homogeneous boundary conditions.


 \noindent For all  $v_{\rm pw}, w_{\rm pw} \in H^2(\cT)$ and  parameters $\sigma_1,\sigma_2>0$ (that will be chosen sufficiently large but fixed in applications),  define $c_\dg(\bullet, \bullet)$ and the mesh dependent dG norm 
$\|\bullet\|_{\dg}$   by 
\begin{align}
c_\dg(v_{\rm pw},w_{\rm pw}) & := {\sum_{E\in\cE}\frac{\sigma_1}{h_E^3}\int_E \jump{v_{\rm pw}}_E \jump{w_{\rm pw}}_E\ds}{}+{}
\sum_{E\in\cE} \frac{\sigma_2}{h_E}\int_E\jump{ \partial v_{\rm pw}/\partial  \nu_E}_E  \jump{ \partial w_{\rm pw}/\partial \nu_E}_E \ds, \quad \label{eqn:cdg} \\
\|v_{\rm pw}\|_{\dg}^2&:= \trinl v_{\rm pw} \trinr_{\rm pw}^2+c_\dg(v_{\rm pw},v_{\rm pw}). \label{dgnorm}
\end{align} 
The discrete space for the $C^0$IP scheme is $S^2_0(\cT) := P_2(\cT) \cap H^1_0(\Omega)$.  The restriction of $\|\bullet \|_{\rm dG}$ to $H^1_0(\Omega)$ with a stabilisation parameter  $\sigma_{\rm IP}>0$ defines the norm for the $C^0$IP scheme  below,
\begin{align}
c_{\rm IP}(v_{\rm pw},w_{\rm pw}):=\sum_{E \in \E}  \frac{\sigma_\ip}{h_E} \int_E \jump{{\partial v_{\rm pw}}/{\partial \nu_E}}  
\jump{{\partial w_{\rm pw}}/{\partial \nu_E}}{}{\rm ds}, 
\; \|v_{\rm pw}\|_{\rm IP}^2:=\trinl v_{\rm pw} \trinr_{\rm pw}^2 + c_{\rm IP}(v_{\rm pw},v_{\rm pw}).  \qquad \label{eqn:cip} 
\end{align}
\noindent  For all $v_{\rm pw}, w_{\rm pw}  \in H^2(\cT)$ the WOPSIP norm  $\|\bullet\|_{\rm P}$ is defined by 
\begin{align} 
c_{\rm P}(v_{\rm pw},w_{\rm pw}) & := \sum_{E \in \mathcal{E}} 
\sum_{z \in \mathcal{V}(E)} h_{E}^{-4} \; ( \;  \jump{v_{\rm pw}}_E(z))   
{\displaystyle (\jump{w_{\rm pw}}_E(z)) } \nonumber \\
& \qquad 
{+}\displaystyle \sum_{E \in \mathcal{E}} h_{E}^{-2}
\fint_{E} \jump{ {\partial v_{\rm pw}}/{\partial \nu_{\rm E}}}\ds 
\fint_{E} \jump{ {\partial w_{\rm pw}}/{\partial \nu_{\rm E}}} \ds, \label{eqn:cp} \\
 \|v_{\rm pw}\|_{\rm P}^2& :=\trinl v_{\rm pw} \trinr_{\rm pw}^2 +c_{\rm P}(v_{\rm pw} , v_{\rm pw} ).
\label{eqn:norm_wopsip}
\end{align}
The discrete space for dG/WOPSIP schemes is $P_2(\T)$. The discrete norms $\trinl \bullet \trinr_{\rm pw}$,   $\|\bullet \|_{\rm dG}$  and $ \|\bullet \|_{\rm IP}$ are all equivalent to $\|\bullet \|_{h}$  on $V+V_h$ for $V_h \in \{ \mathrm{M}(\cT), P_2(\cT),S^2_0(\cT) \}$.  In comparison to $j_h(\bullet)$,  the jump contribution in $\|\bullet \|_{\rm P}$ {{involves}} smaller negative powers of the mesh-size and so {$j_h(v_{\pw})^2 \lesssim c_{\rm P}(v_{\pw},v_{\pw})$ (with $h_E \le {\rm diam}(\O) \lesssim 1$); but} there is no equivalence of $\|\bullet\|_h$ with $\|\bullet\|_{\rm P}$ in $V+P_2(\cT)$. 
\begin{lem}[{Equivalence of norms \cite[Remark 9.2]{ccnnlower2022}}] \label{lem:equivalence}
	It holds $\|\bullet \|_{h} = \trinl \bullet \trinr_{\rm pw}$ on $V + \mathrm{M}(\cT)$, $\| \bullet \|_{h} \approx \|\bullet \|_{\rm dG} { \lesssim \|\bullet \|_{\rm P}}$ on $V + P_2(\cT)$, and $\| \bullet \|_{h} \approx \|\bullet \|_{\rm IP}$ on $V + S^2_0(\cT)$.
\end{lem}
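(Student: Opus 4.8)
The plan is to handle the three claims in turn, each time reducing to a statement about the jump seminorm $j_h$ from \eqref{eqn:jh_defn}. For the first identity on $V+\M(\cT)$ it suffices to prove $j_h\equiv 0$ there: write $v_{\rm pw}=w+v_\M$ with $w\in V=H^2_0(\Omega)$ and $v_\M\in\M(\cT)$, so that $w$, being $H^2$-conforming with homogeneous boundary values, has single-valued traces of $w$ and $\nabla w$ across interior edges and vanishing traces on $\partial\Omega$; hence $\jump{w}_E=0$ and $\jump{\partial w/\partial\nu_E}_E=0$ for all $E\in\cE$, and every jump functional sees only $v_\M$. The defining nodal conditions of $\M(\cT)$ (continuity of $v_\M$ at vertices, vanishing at boundary vertices) annihilate the first sum in \eqref{eqn:jh_defn}, while $\partial v_\M/\partial\nu_E$ is affine along $E$ and continuous at midpoints of interior edges (zero at boundary midpoints), so its jump across $E$ is affine and vanishes at the midpoint, giving $\fint_E\jump{\partial v_\M/\partial\nu_E}\ds=0$. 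Thus $j_h(v_{\rm pw})=0$ and $\|v_{\rm pw}\|_h=\trinl v_{\rm pw}\trinr_{\rm pw}$.

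For the other two equivalences I would use the same decomposition $v_{\rm pw}=w+v_h$ with $w\in V$ and $v_h\in V_h\in\{P_2(\cT),S^2_0(\cT)\}$: as above, $w$ contributes nothing to any edge-jump functional, so $\jump{v_{\rm pw}}_E=\jump{v_h}_E$ (a polynomial on $E$), $\jump{\partial v_{\rm pw}/\partial\nu_E}_E=\jump{\partial v_h/\partial\nu_E}_E$ (affine on $E$, since $\partial w/\partial\nu_E$ has matching traces), and $j_h$, $c_\dg$, $c_{\rm IP}$, $c_{\rm P}$ are all edge functionals of these polynomial jumps while the common leading term $\trinl v_{\rm pw}\trinr_{\rm pw}^2$ is untouched. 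It therefore suffices to compare the edge functionals, and here norm equivalence on the local finite-dimensional polynomial spaces together with affine scaling gives the pattern ``$L^2(E)$-jump-norm $\approx$ (matching $j_h$-functional) $+$ (oscillation)'', the oscillation being $h_E^{-2}|\jump{v_h}_E(m_E)|^2$ for the value jump ($m_E$ the midpoint) and $h_E^2$ times a squared tangential slope for the normal-derivative jump. In both cases the oscillation is driven by a second tangential derivative of $v_h$, i.e.\ an entry of the piecewise-constant Hessian $D^2v_h|_K$, so it is bounded by $h_E^2h_K^{-2}\|D^2v_h\|^2_{L^2(\omega(E))}\approx\|D^2v_h\|^2_{L^2(\omega(E))}$ via an inverse estimate and shape regularity; summing over $E$ (each triangle counted boundedly often) gives $c_\dg(v_h,v_h)\lesssim j_h(v_h)^2+\trinl v_h\trinr_{\rm pw}^2$ on $V+P_2(\cT)$ and $c_{\rm IP}(v_h,v_h)\lesssim j_h(v_h)^2+\trinl v_h\trinr_{\rm pw}^2$ on $V+S^2_0(\cT)$, while the reverse $j_h(v_h)^2\lesssim c_\dg(v_h,v_h)$ and $j_h(v_h)^2\lesssim c_{\rm IP}(v_h,v_h)$ follow immediately from $|\fint_Eg|^2\le h_E^{-1}\|g\|_{L^2(E)}^2$ and equivalence of nodal and $L^2(E)$ norms on $P_2(E)$. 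Adding back $\trinl v_{\rm pw}\trinr_{\rm pw}^2$ and absorbing $\trinl v_h\trinr_{\rm pw}^2$ then yields $\|\bullet\|_h\approx\|\bullet\|_\dg$ and $\|\bullet\|_h\approx\|\bullet\|_{\rm IP}$ on the respective spaces. For WOPSIP only $\|\bullet\|_h\lesssim\|\bullet\|_{\rm P}$ is asserted, and this is elementary: since $h_E\le{\rm diam}(\Omega)\lesssim1$ the $c_{\rm P}$-weights dominate the $j_h$-weights, so $j_h(v_{\rm pw})^2\lesssim c_{\rm P}(v_{\rm pw},v_{\rm pw})$ and $\|v_{\rm pw}\|_h\lesssim\|v_{\rm pw}\|_{\rm P}$; the intermediate $\|\bullet\|_\dg\lesssim\|\bullet\|_{\rm P}$ then follows from $\|\bullet\|_h\approx\|\bullet\|_\dg$, and there is no converse because $c_{\rm P}$ over-penalises.

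I expect the main obstacle to be precisely the absorption of $\trinl v_h\trinr_{\rm pw}^2$ in the previous paragraph: one must control the piecewise-Hessian energy of the finite element part $v_h$ by $\|v_{\rm pw}\|_h$, equivalently bound $\trinl w\trinr$ of the conforming part, with a constant independent of $\cT\in\mathbb{T}(\delta)$. A compactness argument (using that $\trinl\bullet\trinr_{\rm pw}$ is already a norm on $V+\M(\cT)$ and that $V\cap V_h$ is finite-dimensional) gives this only with a $\cT$-dependent constant; the uniform bound is obtained from the stability and approximation properties of the generalised Morley interpolation $I_\M$ and the conforming companion $J$ of Section~\ref{sec:notations}, together with broken Poincar\'e--Friedrichs inequalities, exactly along the lines of \cite{ccnngal,ccnnlower2022}. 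Everything else reduces to scaling, trace, and inverse estimates on a single triangle and its edge-patch, which I would not spell out.
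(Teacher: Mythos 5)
The paper does not prove this lemma itself --- it imports it from \cite[Remark~9.2]{ccnnlower2022} --- so your attempt is measured against the argument behind that citation. Your proof of the Morley identity ($j_h\equiv 0$ on $V+\M(\cT)$ by vertex continuity and the affine-with-zero-midpoint structure of the normal-derivative jumps), of the easy directions $j_h^2\lesssim c_{\rm dG}$ and $j_h^2\lesssim c_{\rm IP}$ via inverse estimates on $P_2(E)$ and Cauchy--Schwarz for the edge means, and of $\|\bullet\|_h\lesssim\|\bullet\|_{\rm P}$ from $h_E\lesssim 1$, are all correct. The gap is in the remaining direction $c_{\rm dG}(v_{\rm pw},v_{\rm pw})\lesssim\|v_{\rm pw}\|_h^2$ (and its $C^0$IP analogue). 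You bound the midpoint oscillation by $\trinl v_h\trinr_{\rm pw}^2$, where $v_h$ is the polynomial part of the splitting $v_{\rm pw}=w+v_h$, and then must absorb this, i.e.\ you need $\trinl v_h\trinr_{\rm pw}\lesssim\|w+v_h\|_h$ with a constant uniform in $\cT$. You flag this as the main obstacle and assert it follows from the stability of $I_\M$, $J$ and broken Poincar\'e inequalities. It does not: no such uniform bound exists. Take a fixed smooth $w\in H^2_0(\Omega)\setminus\{0\}$ and $v_h:=-I_\M w\in\M(\cT)\subset P_2(\cT)$. Then $v_{\rm pw}=(1-I_\M)w$ has $j_h(v_{\rm pw})=0$ and, by Lemma~\ref{lemma:interpoltion_IM}.c, $\|v_{\rm pw}\|_h=\trinl (1-I_\M)w\trinr_{\rm pw}=\|(1-\Pi_0)D^2w\|\to 0$ as $h_{\rm max}\to 0$, while $\trinl v_h\trinr_{\rm pw}=\|\Pi_0 D^2 w\|\to\trinl w\trinr>0$. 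So the constant in your absorption step necessarily blows up, and the route through $\|D^2_{\rm pw}v_h\|$ cannot be repaired.

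The inequality itself is true; the fix is to measure the oscillation against the whole function rather than against $v_h$. Split $v_{\rm pw}=(1-I_\M)v_{\rm pw}+I_\M v_{\rm pw}$ with the generalised Morley interpolation of Definition~\ref{def:morleyii}. For $e:=(1-I_\M)v_{\rm pw}$, elementwise trace inequalities give $c_{\rm dG}(e,e)\lesssim\sum_{m=0}^{2}\|h_{\cT}^{m-2}D^m_{\rm pw}e\|^2$, which is $\lesssim\|v_{\rm pw}\|_h^2$ by Lemma~\ref{lemma:interpoltion_IM}.a. For the Morley part $I_\M v_{\rm pw}$ the edge jumps vanish at the vertices and the normal-derivative jumps have zero edge means, so they are pure oscillation; there the inverse estimate you describe is legitimate and yields $c_{\rm dG}(I_\M v_{\rm pw},I_\M v_{\rm pw})\lesssim\trinl I_\M v_{\rm pw}\trinr_{\rm pw}^2\lesssim\|v_{\rm pw}\|_h^2$, again by Lemma~\ref{lemma:interpoltion_IM}.a and a triangle inequality. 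This closes the hard direction without ever invoking the false bound on $\trinl v_h\trinr_{\rm pw}$; the rest of your argument then goes through.
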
 
%
\subsection{Interpolation and Companion operators} \label{sec:inter_comp} 
The classical Morley interpolation operator $I_\M$ is generalized from $H^2_0(\Omega)$ to the piecewise $H^2$ functions by averaging in \cite{ccnnlower2022}.
\begin{defn}[{Morley interpolation~\cite[Definition 3.5]{ccnnlower2022}}]   \label{def:morleyii}
	Given any $v_{\pw} \in H^2(\cT)$, define $I_{\rm M}v_{\pw} := v_{\rm M} \in {\rm M}(\cT)$ by the degrees of freedom as follows.  For any interior vertex $z \in \mathcal{V}(\cT)$ with the set of attached triangles $\cT(z)$ of cardinality $|\cT(z)| \in \mathbb{N}$ and for any interior edge {{$E \in \mathcal{E}(\Omega)$}} with a mean value operator $\langle \bullet \rangle_{E}$
set 
	\begin{align}
	v_{\rm M}(z) := |\cT(z)|^{-1} \sum_{K \in \cT(z)} (v_{\pw}\vert_{K})(z)\;\;\text{ and } \fint_{E} \dfrac{\partial v_{\rm M}}{\partial \nu _{\rm E}}\,\mathrm{d}s := \fint_{E} \left\langle \dfrac{\partial v_{\rm pw}}{\partial \nu_{E}} \right\rangle\,\mathrm{d}s.
	\end{align} 
	The remaining degrees of freedom at vertices and edges on the boundary are set zero owing to the homogeneous boundary conditions.
\end{defn}



\begin{lem}[{interpolation {{error}} estimates \cite[Lemma 3.2, Theorem 4.3]{ccnnlower2022}}] \label{lemma:interpoltion_IM}
	Any $v_{\pw} \in H^2(\cT)$ and its Morley interpolation $I_{\rm M} v_{\pw} \in \mathrm{M}(\cT)$ satisfy 
		\begin{enumerate}[label= ${ (\alph*)}$,ref=$\mathrm{(\alph*)}$,leftmargin=\widthof{(C)}+3\labelsep]
			\item $\displaystyle 
			\sum_{m=0}^2     | h_{\cT}^{m-2}  (v_{\rm pw}- I_{\rm M} v_{\rm pw}) |_{H^m(\cT)}
			\lesssim \| (1-\Pi_0)D^2_{\rm pw} v_{\rm pw}\|
			+ j_h(v_{\rm pw}) { \lesssim} \| v_{\rm pw}\|_h;$ 
		\item $\displaystyle \sum_{m=0}^2 |h_{\cT}^{m-2} (v_{\pw} - I_{\rm M} v_{\pw} )|_{H^m(\cT)} \approx \min_{w_{\rm M} \in \mathrm{M}(\cT)} \|v_{\pw} - w_{\rm M} \|_h \approx \min_{w_{\rm M} \in {\rm M}(\cT)} \sum_{m=0}^2 |h_{\cT}^{m-2} (v_{\pw} - w_{\mathrm{M}})|_{H^m(\cT)};$\item  the integral mean property of the Hessian, $D^2_{\text{\rm pw}} I_\cM =\Pi_0 D^2 \text{ in } V;$
  \item  $\trinl v- I_{\rm M}v \trinr_{\pw} \lesssim h_{\rm max}^{t-2} \|v \|_{H^{t}(\Omega)}\;\text{ for all } v \in H^{t}(\Omega) \text{ with } 2 \le t \le 3.$ 
		\end{enumerate}
\end{lem}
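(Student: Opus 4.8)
The plan is to prove the four assertions in the order (c), (a), (b), (d): part~(c) is the cleanest and will be reused in~(d), while part~(a) is the technical core that drives both~(b) and the sharp form of~(d), and it is the step I expect to be the main obstacle.

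For~(c), I would first note that on $v\in V=\hto$ the generalised interpolation reduces to the classical Morley interpolation, so $(I_{\rm M}v)|_K\in P_2(K)$ has a constant Hessian on each $K\in\cT$. The key computation is $\int_K\partial^2_{ij}w\dx=\int_{\partial K}(\partial_i w)\nu_j\ds$: decomposing $\partial_i w$ on each edge $E$ into its normal and tangential components, integrating the tangential part by parts along $E$ (whose endpoint contributions are vertex values of $w$) and observing that the normal part contributes $|E|\,\nu_i\nu_j\fint_E\partial w/\partial\nu_E\ds$, one sees that $\int_K D^2 w$ depends on $w$ only through the Morley degrees of freedom (vertex values and edge normal-derivative means), which $I_{\rm M}v$ shares with $v$. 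Division by $|K|$ then yields $D^2_{\pw}I_{\rm M}v=\Pi_0 D^2 v$ on each $K$, hence in $V$.

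For~(a), I would split $v_{\pw}-I_{\rm M}v_{\pw}=(v_{\pw}-\widetilde{I}_{\rm M}v_{\pw})-(I_{\rm M}v_{\pw}-\widetilde{I}_{\rm M}v_{\pw})$, where $\widetilde{I}_{\rm M}$ denotes the element-wise (discontinuous) classical Morley interpolation. Since $\widetilde{I}_{\rm M}$ fixes $P_2(K)$ on each triangle, a Bramble--Hilbert/scaling argument — subtract an arbitrary quadratic $q$, then minimise, using $\min_{q\in P_2(K)}|v_{\pw}-q|_{H^2(K)}=\|(1-\Pi_0)D^2 v_{\pw}\|_{L^2(K)}$ — gives $\sum_{m=0}^2|h_K^{m-2}(v_{\pw}-\widetilde{I}_{\rm M}v_{\pw})|_{H^m(K)}\lesssim\|(1-\Pi_0)D^2 v_{\pw}\|_{L^2(K)}$. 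The correction $e:=I_{\rm M}v_{\pw}-\widetilde{I}_{\rm M}v_{\pw}\in P_2(\cT)$ has, on each $K$, vertex values that are (telescoping) combinations of the jumps $\jump{v_{\pw}}_E(z)$ at the vertices of $K$ and edge normal-derivative means proportional to $\fint_E\jump{\partial v_{\pw}/\partial\nu_E}\ds$; the scaling estimate $|q|_{H^m(K)}^2\lesssim h_K^{2-2m}\big(\sum_{z\in\mathcal{V}(K)}|q(z)|^2+h_K^2\sum_{E\subset\partial K}\big|\fint_E\partial q/\partial\nu_E\ds\big|^2\big)$ for $q\in P_2(K)$, together with $h_E\approx h_K$ and a re-indexing over edges, then converts these degrees of freedom into $\sum_{m=0}^2|h_{\cT}^{m-2}e|_{H^m(\cT)}\lesssim j_h(v_{\pw})$. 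A triangle inequality combines the two bounds, and the trailing $\lesssim\|v_{\pw}\|_h$ is immediate from $\|(1-\Pi_0)D^2_{\pw}v_{\pw}\|\le\trinl v_{\pw}\trinr_{\pw}\le\|v_{\pw}\|_h$ and $j_h(v_{\pw})\le\|v_{\pw}\|_h$.

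For~(b), I would exploit that $I_{\rm M}$ is a projection onto $\M(\cT)$: applying~(a) to $v_{\pw}-w_{\rm M}$ for arbitrary $w_{\rm M}\in\M(\cT)$ and taking the infimum bounds the left-hand side by $\min_{w_{\rm M}}\|v_{\pw}-w_{\rm M}\|_h$. For the reverse, $j_h(I_{\rm M}v_{\pw})=0$ (a Morley function is continuous at vertices, vanishes at boundary vertices, and has matching edge normal-derivative means), so $j_h(v_{\pw}-I_{\rm M}v_{\pw})=j_h(v_{\pw})$, and a trace/scaling bound $j_h(\phi)\lesssim\sum_{m=0}^2|h_{\cT}^{m-2}\phi|_{H^m(\cT)}$ valid for all $\phi\in H^2(\cT)$ gives $\|v_{\pw}-I_{\rm M}v_{\pw}\|_h\lesssim\sum_{m=0}^2|h_{\cT}^{m-2}(v_{\pw}-I_{\rm M}v_{\pw})|_{H^m(\cT)}$; this closes the first $\approx$, and the second follows because $I_{\rm M}v_{\pw}$ is an admissible competitor in $\min_{w_{\rm M}}\sum_m|h_{\cT}^{m-2}(v_{\pw}-w_{\rm M})|_{H^m(\cT)}$ while the converse inequality is $\|\phi\|_h\lesssim\sum_m|h_{\cT}^{m-2}\phi|_{H^m(\cT)}$ applied to $\phi=v_{\pw}-w_{\rm M}$. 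Finally, for~(d) with $v\in V\cap H^t(\Omega)$ (the case used in the applications), (c) gives $\trinl v-I_{\rm M}v\trinr_{\pw}^2=\|(1-\Pi_0)D^2 v\|^2=\sum_{K\in\cT}\|(1-\Pi_0)D^2 v\|_{L^2(K)}^2\lesssim\sum_{K}h_K^{2(t-2)}|v|_{H^t(K)}^2\le h_{\rm max}^{2(t-2)}|v|_{H^t(\Omega)}^2$, using the $L^2$-projection estimate $\|(1-\Pi_0)g\|_{L^2(K)}\lesssim h_K^{t-2}|g|_{H^{t-2}(K)}$ for $0\le t-2\le1$ with $g=D^2 v$; take square roots. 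The main obstacle is the correction estimate in part~(a) — getting the powers of $h_E$ right in the scaling and identifying the degrees of freedom of $e$ exactly with (multiples of) the jump functionals that make up $j_h(v_{\pw})$; the rest is routine bookkeeping around that.
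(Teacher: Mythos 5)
The paper does not prove this lemma at all: it is imported verbatim from the cited reference (\cite[Lemma 3.2, Theorem 4.3]{ccnnlower2022}), so there is no in-paper argument to compare against. Your blind proof is correct and is the standard route one would expect that reference to take. The decomposition $v_{\pw}-I_{\rm M}v_{\pw}=(v_{\pw}-\widetilde I_{\rm M}v_{\pw})-(I_{\rm M}v_{\pw}-\widetilde I_{\rm M}v_{\pw})$ with the element-wise Morley interpolation, the Bramble--Hilbert bound sharpened to $\|(1-\Pi_0)D^2v_{\pw}\|_{L^2(K)}$ by choosing $q$ with $D^2q=\Pi_0D^2v_{\pw}$ and vanishing lower-order moments, and the identification of the degrees of freedom of the correction $e$ with averaged vertex and normal-derivative jumps are all sound; the scaling $|q|_{H^m(K)}^2\lesssim h_K^{2-2m}\bigl(\sum_z|q(z)|^2+h_K^2\sum_E\bigl|\fint_E\partial q/\partial\nu_E\,{\rm d}s\bigr|^2\bigr)$ carries the correct powers. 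The projection/trace facts you use in (b) ($I_{\rm M}$ fixes $\M(\cT)$, $j_h$ vanishes on $\M(\cT)$, and $j_h(\phi)\lesssim\sum_m|h_\cT^{m-2}\phi|_{H^m(\cT)}$) all hold, and the boundary-integration-by-parts computation in (c) is the classical one.

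Two small points. First, in (d) you correctly observe that the estimate as literally stated for all $v\in H^t(\Omega)$ cannot hold without the homogeneous boundary conditions (the boundary degrees of freedom of $I_{\rm M}$ are set to zero, so e.g.\ $v\equiv 1$ is a counterexample); restricting to $v\in V\cap H^t(\Omega)$, as the paper in fact does everywhere it invokes (d) (cf.\ \eqref{eqn:regulardual}), is the right reading, and your reduction via (c) to $\|(1-\Pi_0)D^2v\|\lesssim h_{\rm max}^{t-2}|v|_{H^t(\Omega)}$ with the fractional Poincar\'e estimate on each $K$ and superadditivity of the Sobolev--Slobodeckii seminorm is complete. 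Second, for the first inequality in (b) you need $I_{\rm M}(v_{\pw}-w_{\rm M})=I_{\rm M}v_{\pw}-w_{\rm M}$, i.e.\ that the averaging in Definition~\ref{def:morleyii} reproduces Morley functions exactly; this holds because a Morley function is continuous at vertices and its normal derivative jump across an interior edge is affine with a zero at the midpoint, hence has zero integral mean --- worth stating explicitly, but it is implicit in your "projection" claim. No gaps.
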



\noindent Let $HCT(\cT)$ denote the Hsieh-Clough-Tocher finite element space{{~\cite[Chapter 6]{Ciarlet}}}. 
\begin{lem}[right-inverse 
\cite{DG_Morley_Eigen,ccnnlower2022,ccnn2021}] \label{hctenrich} 
	There {{exists}} a linear map $J:\cM(\cT) \rightarrow (HCT(\cT) + P_8(\cT))\cap H^2_0(\Omega)$ such that any $v_{\rm M} \in {\rm M}(\cT)$ and any $v_2 \in P_2(\cT) $ satisfy {$(a)$}-{{$(h)$}}.

\medskip
	\noindent (a) $Jv_{\cM}(z) {=}v_{\cM}(z)$ for any $z \in \mathcal{V}$; \; \\ \noindent{(b)} $\nabla (Jv_{\cM})(z) = |\cT(z)|^{-1} \sum_{K \in \cT(z)}(\nabla v_{\cM} \vert_{K})(z)$ for {$z \in \mathcal{V}(\Omega)$}; \\
	\noindent	(c) \: $\fint_{E} \partial Jv_{\cM}/\partial \nu_{E} \mathrm{d}s = \fint_{E} \partial v_{\cM}/\partial \nu_{E} \mathrm{d}s$ for any $E \in \mathcal{E}$; \:\\
	\noindent {(d)} \; $v_{\cM} - J v_{\cM} \perp P_2(\cT)$ in $L^2(\Omega)$; \\
	\noindent	(e)  \: $\displaystyle{\sum_{m=0}^2 \| h_{\cT}^{m-2} D_{\rm pw}^m (v_{\rm M} - J v_{\rm M}) \| {\lesssim}\min_{v \in V} {|\!|\!|v_{\cM} - v |\!|\!|_{\rm pw}}}$; 
\noindent

\noindent (f) \;
	$ \displaystyle \|v_2 - JI_{\rm M} v_2 \|_{H^t(\cT)} \lesssim h_{\mathrm{max}}^{2-t} \min_{v \in V} \| v_2 - v \|_h \text{ holds for } \; 0\le t \le 2$;
 
 \noindent {(g)\;$\displaystyle{\sum_{m=0}^{2}  \| h_\cT^{m-3} D^m_{\rm pw}((1-I_\M)  v_2) \| 
+ \sum_{m=0}^2  \|h_{\cT}^{m-2}   D^m_{\rm pw}((1- J )I_{\rm M}v_2)\|    \lesssim  \min_{v \in V} \|v-v_2\|_{\rm P}}$;}

\noindent {(h)\;$ \displaystyle |v_2-JI_{\rm M}v_2|_{W^{1,2/(1-t)}(\cT)}  \lesssim h_{\rm max}^{1-t} \min_{v \in V}\|v-v_2\|_{h}$ holds for $0 <t<1$.}
\end{lem}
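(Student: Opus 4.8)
The plan is to build $J$ as a composition $J=J_\Pi\circ J_1$. First I would define $J_1\colon\M(\cT)\to HCT(\cT)\cap H^2_0(\Omega)$ as the averaging operator given through the Hsieh--Clough--Tocher degrees of freedom: for $v_\M\in\M(\cT)$, prescribe the vertex value of $J_1v_\M$ at $z$ to be $v_\M(z)$, the vertex gradient at an interior vertex $z$ to be $|\cT(z)|^{-1}\sum_{K\in\cT(z)}(\nabla v_\M|_K)(z)$, and the mean normal derivative along each edge $E$ to be $\fint_E\partial v_\M/\partial\nu_E\,\mathrm{d}s$, with boundary degrees of freedom set to zero. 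This immediately yields (a), (b), (c) and $J_1v_\M\in H^2_0(\Omega)$, and in particular (a) and (c) give the right-inverse identity $I_\M J_1=\mathrm{id}$ on $\M(\cT)$. Next I would let $J_\Pi$ correct $w:=J_1v_\M$ triangle-wise by a bubble $b_K:=\lambda_1^2\lambda_2^2\lambda_3^2\,q_K$ with $q_K\in P_2(K)$, so that $b_K\in H^2_0({\rm int}(K))$ (of degree $\le8$, which is where $P_8(\cT)$ enters) and $\int_K(v_\M-w-b_K)\,p\,\mathrm{d}x=0$ for all $p\in P_2(K)$; the $6\times6$ Gram matrix $\bigl(\int_K\lambda_1^2\lambda_2^2\lambda_3^2\,\varphi_i\varphi_j\,\mathrm{d}x\bigr)$ is symmetric positive definite, so $q_K$ exists and is unique. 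Since each $b_K\in H^2_0({\rm int}(K))$, the function $Jv_\M:=w+\sum_K b_K$ lies in $(HCT(\cT)+P_8(\cT))\cap H^2_0(\Omega)$, leaves the degrees of freedom in (a)--(c) untouched, and satisfies (d) by construction.

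For the approximation estimates I would lean on the companion-operator analysis of \cite{DG_Morley_Eigen,ccnnlower2022,ccnn2021}; only the assembly changes. To prove (e), I would write $v_\M-Jv_\M=(v_\M-J_1v_\M)-\sum_K b_K$ and bound the first term by the averaging estimate $\sum_{m=0}^2\|h_\cT^{m-2}D^m_{\rm pw}(v_\M-J_1v_\M)\|\lesssim\min_{v\in V}\trinl v_\M-v\trinr_{\rm pw}$, which uses that each defining degree of freedom of $v_\M-J_1v_\M$ is a jump of $v_\M$ across an edge that vanishes for conforming $v\in V$, converted into the seminorm bound by a trace-scaling argument; the bubble term is handled by the local stability $\|b_K\|_{L^2(K)}\lesssim\|v_\M-J_1v_\M\|_{L^2(K)}$ (invertibility of the Gram matrix plus scaling) and the inverse estimate $\|D^m_{\rm pw}b_K\|\lesssim h_K^{-m}\|b_K\|$. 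For (f) I would split $v_2-JI_\M v_2=(v_2-I_\M v_2)+(I_\M v_2-JI_\M v_2)$, bound the first summand by Lemma~\ref{lemma:interpoltion_IM}(a)--(b) and the second by (e) applied to $v_\M=I_\M v_2$, and use $\min_{v\in V}\trinl I_\M v_2-v\trinr_{\rm pw}\approx\min_{v\in V}\|v_2-v\|_h$ (from Lemma~\ref{lemma:interpoltion_IM} and Lemma~\ref{lem:equivalence}) for integer $t\in\{0,1,2\}$; fractional $t\in(0,2)$ then follows by interpolation or directly from fractional Bramble--Hilbert estimates. For (h) I would combine (f) at $t=1$ with the triangle-wise inverse inequality $\|\nabla(\,\bullet\,)\|_{L^{2/(1-t)}(K)}\lesssim h_K^{-t}\|\nabla(\,\bullet\,)\|_{L^2(K)}$, which is legitimate because $v_2-JI_\M v_2$ is piecewise polynomial of degree $\le8$, together with the embedding $\ell^2\hookrightarrow\ell^{2/(1-t)}$ in the summation over $\cT$.

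The step I expect to be the main obstacle is (g), stated in the WOPSIP-adapted norm $\|\bullet\|_{\rm P}$. Because $\|\bullet\|_h$ and $\|\bullet\|_{\rm P}$ are only related by $\|\bullet\|_h\lesssim\|\bullet\|_{\rm P}$ on $V+P_2(\cT)$ and are not equivalent there (as noted after Lemma~\ref{lem:equivalence}), (g) cannot be reduced to (f), and one has to track the extra negative powers of the mesh-size carried by the $h_E^{-4}$ vertex-jump penalty in $\|\bullet\|_{\rm P}$. I would address this by invoking the WOPSIP-specific Morley interpolation estimate from \cite{ccnnlower2022}, which supplies $\sum_{m=0}^2\|h_\cT^{m-3}D^m_{\rm pw}((1-I_\M)v_2)\|\lesssim\min_{v\in V}\|v-v_2\|_{\rm P}$, and combining it with a scaled version of (e) for $v_\M=I_\M v_2$, after observing $\trinl I_\M v_2-v\trinr_{\rm pw}\le\|v_2-v\|_{\rm P}$ for conforming $v\in V$; a triangle inequality then assembles the two contributions into (g). Apart from citing \cite{DG_Morley_Eigen,ccnnlower2022,ccnn2021}, the only genuinely new work is this bookkeeping of mesh-size powers and the elementary linear algebra of the bubble correction; the conceptual content---that averaging into $HCT$ produces a right inverse of $I_\M$ up to higher-order bubbles---is already in those references.
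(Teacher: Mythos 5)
Your proposal is correct and follows essentially the same route as the paper: parts (a)--(f) are delegated to the cited companion-operator constructions (your explicit HCT-plus-degree-$8$-bubble assembly is precisely what those references do), and your arguments for (g) and (h) coincide with the paper's, which likewise combines the WOPSIP interpolation estimate from \cite{ccnnlower2022} with part (e) and triangle inequalities for the second sum in (g), and a piecewise inverse estimate plus the $H^1$ bound for (h). The one imprecision is in (h): $v_2-JI_{\rm M}v_2$ is piecewise polynomial only with respect to the Hsieh--Clough--Tocher \emph{sub}triangulation $\widehat{\cT}$ of $\cT$ (the HCT component is piecewise cubic on the three subtriangles, not a polynomial on $K$), so the inverse inequality must be applied on $\widehat{\cT}$ and transferred back via the shape regularity $h_{\widehat{\cT}}\approx h_{\cT}$, exactly as the paper does.
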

 
     \noindent \emph{Proof of $(a)$-$(f)$.} This is included in \cite{DG_Morley_Eigen,ccnn2021}, \cite[Lemma 3.7, Theorem 4.5]{ccnnlower2022}. \qed
     
\noindent \emph{Proof of $(g)$.} The  inequality  $\sum_{m=0}^{2}  \| h_\cT^{m-3} D^m_{\rm pw}((1-I_\M)  v_2) \|  \lesssim  \| v - v_2\|_{\rm P}$  follows as in the proof of Lemma 10.2 in ~\cite{ccnnlower2022}.  Lemma~\ref{hctenrich}.e and a triangle inequality show 
\begin{align*}
\sum_{m=0}^2  \|h_{\cT}^{m-2}   D^m_{\rm pw}(1- J )I_{\rm M}v_2\| 
\lesssim  \trinl I_{\rm M} v_2  -  v \trinr_{\rm{pw}}  \le  \trinl I_{\rm M} v_2  -  v_2 \trinr_{\rm{pw}} + \trinl v_2 - v \trinr_{\rm pw}.
\end{align*}
Since $  \trinl I_{\rm M} v_2  -  v_2 \trinr_{\rm{pw}}   \le h_{\rm max} \trinl h_{\cT}^{-1}
 (I_{\rm M} v_2 - v_2) \trinr_{\rm{pw}} \lesssim h_{\rm max} \| v - v_2 \|_{\rm P}$ {{from the first part of $(g)$ with $m=2$}},  the above displayed estimate, and $\trinl \bullet \trinr_{\rm pw} \le \| \bullet \|_{\rm P}$ conclude the proof of $(g)$.  \qed

{ \noindent \emph{Proof of $(h).$} An inverse estimate \cite[Lemma 12.1]{ErnGuermond_FE1}, \cite[Lemma 4.5.3]{Brenner}, \cite[Theorem 3.2.6]{Ciarlet} on each triangle $\widehat{T}$ in the HCT subtriangulation $\widehat{\cT}$ of $\cT$ in each component of $g:=\nabla_{\pw}(v_2-JI_{\rm M}v_2)$ reads $\|g\|_{L^{2/(1-t)}(\widehat{T})} \le C_{\rm inv}h_{\widehat{T}}^{-t}\|g\|_{L^2(\widehat{T})}.$ Consequently,
\[C_{\rm inv}^{-1}\|g\|_{L^{2/(1-t)}(\O)}\le \left(\sum_{\widehat{T} \in \widehat{\cT}}\|h_{\widehat{T}}^{-t}g\|_{L^2(\widehat{T})}^{2/(1-t)}\right)^{(1-t)/2}\le \left(\sum_{\widehat{T} \in \widehat{\cT}}\|h_{\widehat{T}}^{-t}g\|_{L^2(\widehat{T})}^{2}\right)^{1/2} \]
with $\|\bullet \|_{\ell^{2/(1-t)}} \le \|\bullet \|_{\ell^2}$ in the sequence space $\R^\mathbb{N}$ ($\ell^p$ is decreasing in $p \ge 1$) in the last step. With the shape regularity $h_{\widehat{\cT}} \approx h_{\cT}$, this reads
\begin{equation}\label{eqn:suminv}
|v_2-JI_{\rm M}v_2|_{W^{1,2/(1-t)}(\cT)} \lesssim |h_\T^{-t}(v_2-JI_{\rm M}v_2)|_{H^1(\T)}.
\end{equation}
Since $I_{\rm M}(v_2-JI_{\rm M}v_2)=0$ 
by Lemma~\ref{hctenrich}, Lemma~\ref{lemma:interpoltion_IM}.a provides
\begin{equation}\label{eqn:ortho}
|h_\T^{-t}(v_2-JI_{\rm M}v_2)|_{H^1(\T)}\le h_{\max}^{1-t}|h_\T^{-1}(v_2-JI_{\rm M}v_2)|_{H^1(\T)}\lesssim h_{\max}^{1-t}\|
v_2-JI_{\rm M}v_2\|_h.
\end{equation}
Since $j_h(JI_{\rm M}v_2)=0=j_h(v)$,
the definition of $j_h(\bullet)$ shows  %
 $j_h(v_2-JI_{\rm M}v_2)=j_h(v_2-v)$. This, the definition of $\|\bullet \|_h$ in \eqref{eqn:jh_defn},  and Lemma~\ref{hctenrich}.f imply
\begin{equation}\label{eqn:err}
\|v_2-JI_{\rm M}v_2\|_h \lesssim \|v-v_2\|_h.
\end{equation}
The combination of \eqref{eqn:suminv}-\eqref{eqn:err} implies the assertion. \qed}
\begin{rem}[orthogonality of $J$]\label{ortho} Since $J$ is {{a}} right-inverse of $I_\M$,  i.e., $I_\M J={\rm id}$ in $\M(\T)$ \cite[(3.9)]{ccnnlower2022}, the integral mean property of the Hessian from Lemma~\ref{lemma:interpoltion_IM}.c reveals
$a_{\pw}(v_2, (1-J)v_\M) = a_{\rm pw} (v_2, (1-I_\M)Jv_\M)=0$ for any $v_2 \in P_2(\cT)$ and $v_\M \in \M(\T)$.
\end{rem}
\begin{lem}[an intermediate bound] \label{lem.new}For $1<p<\infty$, any $(v_2 ,v)\in P_2(\cT) \times V$ satisfies $|v+v_2|_{W^{1,p}(\cT)}$ $ \lesssim \|v+v_2\|_h.$
\end{lem}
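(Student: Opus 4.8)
The plan is to split $v+v_2$ into a conforming part and a nonconforming piecewise--polynomial remainder by means of the companion operator $J$, to bound the two parts separately, and to treat the ranges $1<p\le 2$ and $2<p<\infty$ differently for the remainder. Throughout write $w:=v+v_2\in V+P_2(\cT)$ and $w=w_A+w_B$ with the conforming part $w_A:=v+JI_{\rm M}v_2\in H^2_0(\Omega)$ (indeed $JI_{\rm M}v_2\in H^2_0(\Omega)$ by Lemma~\ref{hctenrich}) and the remainder $w_B:=v_2-JI_{\rm M}v_2\in H^2(\cT)$. Since $I_{\rm M}J={\rm id}$ on $\M(\cT)$ (Remark~\ref{ortho}), the remainder satisfies $I_{\rm M}w_B=0$; and since $j_h(JI_{\rm M}v_2)=0=j_h(-v)$, the bound \eqref{eqn:err} applied to $-v\in V$ gives the key estimate $\|w_B\|_h\lesssim\|{-v}-v_2\|_h=\|w\|_h$, which will be used repeatedly.

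For the conforming part I would use the two--dimensional Sobolev embedding $H^2_0(\Omega)\hookrightarrow W^{1,p}(\Omega)$ (valid for every finite $p$) together with the Poincar\'e--Friedrichs inequality on $H^2_0(\Omega)$, so that $|w_A|_{W^{1,p}(\cT)}=|w_A|_{W^{1,p}(\Omega)}\lesssim\trinl w_A\trinr$. A triangle inequality in $\trinl\bullet\trinr_{\rm pw}$ together with $\trinl w_A\trinr=\trinl w_A\trinr_{\rm pw}\le\trinl w\trinr_{\rm pw}+\trinl w_B\trinr_{\rm pw}\le\|w\|_h+\|w_B\|_h$ and the estimate for $\|w_B\|_h$ from the previous paragraph then yields $|w_A|_{W^{1,p}(\cT)}\lesssim\|w\|_h$.

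For the remainder $w_B$ I distinguish two cases. If $1<p\le 2$, a single global H\"older inequality on the bounded domain $\Omega$ gives $|w_B|_{W^{1,p}(\cT)}=\|\nabla_{\pw}w_B\|_{L^p(\Omega)}\le|\Omega|^{1/p-1/2}\|\nabla_{\pw}w_B\|_{L^2(\Omega)}=|\Omega|^{1/p-1/2}|w_B|_{H^1(\cT)}$; since $I_{\rm M}w_B=0$, Lemma~\ref{lemma:interpoltion_IM}(a) provides $|w_B|_{H^1(\cT)}\le h_{\rm max}\,|h_{\cT}^{-1}w_B|_{H^1(\cT)}\lesssim h_{\rm max}\|w_B\|_h$, and with $h_{\rm max}\le{\rm diam}(\Omega)\lesssim 1$ and $\|w_B\|_h\lesssim\|w\|_h$ this closes the case. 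If $2<p<\infty$, the naive comparison with the $L^2$ seminorm would cost a factor depending on the cardinality of $\cT$; instead I would apply Lemma~\ref{hctenrich}(h) with $t:=1-2/p\in(0,1)$, so that $2/(1-t)=p$, obtaining $|w_B|_{W^{1,p}(\cT)}\lesssim h_{\rm max}^{1-t}\min_{\phi\in V}\|\phi-v_2\|_h\lesssim\|{-v}-v_2\|_h=\|w\|_h$, again absorbing $h_{\rm max}^{1-t}\lesssim 1$.

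Combining the two parts, $|v+v_2|_{W^{1,p}(\cT)}\le|w_A|_{W^{1,p}(\cT)}+|w_B|_{W^{1,p}(\cT)}\lesssim\|v+v_2\|_h$, which is the claim (with a hidden constant that additionally depends on $p$). The only genuinely delicate point is the range $p>2$: this is exactly where a plain $\ell^p$--$\ell^2$ comparison between triangle contributions fails with a mesh--dependent constant and the fractional inverse estimate of Lemma~\ref{hctenrich}(h) must be invoked; everything else is a routine assembly of a Sobolev embedding, H\"older's inequality, and the interpolation and companion estimates already available.
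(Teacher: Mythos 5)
Your proof is correct. The decomposition $v+v_2=(v+JI_{\rm M}v_2)+(v_2-JI_{\rm M}v_2)$ and the treatment of the conforming part via the Sobolev embedding $H^2_0(\Omega)\hookrightarrow W^{1,p}(\Omega)$ together with Lemma~\ref{hctenrich}.f are exactly what the paper does. Where you diverge is the piecewise-polynomial remainder $v_2-JI_{\rm M}v_2$: the paper bounds its $W^{1,p}(\cT)$ seminorm by $|\Omega|^{1/p}$ times the $W^{1,\infty}(\cT)$ seminorm, localises to the maximising triangle, and applies a single inverse estimate down to $H^1$, which handles all $1<p<\infty$ at once with a constant uniform in $p$. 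You instead split the range: H\"older plus the interpolation estimate of Lemma~\ref{lemma:interpoltion_IM}.a (using $I_{\rm M}(v_2-JI_{\rm M}v_2)=0$) for $1<p\le 2$, and Lemma~\ref{hctenrich}.h with $t=1-2/p$ for $2<p<\infty$. Both routes rest on the same underlying inverse-estimate machinery (Lemma~\ref{hctenrich}.h is itself proved that way), and your case split is genuinely needed since Lemma~\ref{hctenrich}.h is only stated for $0<t<1$; the price is a hidden constant depending on $p$ through that lemma, which you correctly flag and which is harmless for the applications (Lemma~\ref{lemma:global_bounds_nse_new} fixes $p$ in terms of the global parameter $t$). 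Your observations that $I_{\rm M}J={\rm id}$ forces $I_{\rm M}(v_2-JI_{\rm M}v_2)=0$ and that \eqref{eqn:err} applied with $-v$ yields $\|v_2-JI_{\rm M}v_2\|_h\lesssim\|v+v_2\|_h$ are both valid.
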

\begin{proof} The triangle inequality $|v+v_2|_{W^{1,p}(\cT)} \le |v+J I_\M v_2|_{W^{1,p}(\O)} + |v_2-JI_\M v_2|_{W^{1,p}(\cT)} $ and the Sobolev embedding $H^2_0(\Omega) \hookrightarrow W^{1,p}_0(\Omega)$ in 2D lead to 
\begin{align*}
|v+J I_\M v_2|_{W^{1,p}(\O)} &\lesssim  \trinl  v+J I_\M v_2 \trinr \le \trinl  v+ v_2 \trinr_{\pw} + \trinl  v_2-J I_\M v_2 \trinr_{\pw} \lesssim \|v+v_2\|_h 
\end{align*}
with  $\trinl \bullet \trinr_{\pw}  \le \|\bullet\|_h$ and Lemma~\ref{hctenrich}.f  in the last step.  The inequality  $|v_2-JI_\M v_2|_{W^{1,p}(\cT)}  \le |\Omega|^{1/p} |v_2 - J I_\M v_2|_{W^{1,\infty}(\cT)}$ leads to some $K \in \cT$  with  $ |v_2 - J I_\M v_2|_{W^{1,\infty}(\cT)} = |v_2 - JI_\M v_2|_{W^{1,\infty}(K)}$.  The inverse estimate $ |v_2 - JI_\M v_2|_{W^{1,\infty}(K)} \lesssim h_K^{-1}  |v_2 - JI_\M v_2|_{H^1(K)}$ and  Lemma~\ref{hctenrich}.f  reveal $|v_2 - J I_\M v_2|_{W^{1,\infty}(\cT)}  \lesssim \|v+v_2\|_h$.  The combination of the above inequalities concludes the proof.
\end{proof}

\begin{lem}[{quasi-optimal smoother $R$}]\label{lem:R}
Any $R \in \{\mathrm{id}, I_{\rm M}, JI_{\rm M}\}$ and $\widehat{V} =V+V_h$ with 
$$V_h \;  (\text{resp.  } \|\bullet\|_{\widehat{V}})   :=
\begin{cases} 
 \M(\T) \text{ for the Morley scheme }(\text{resp. }    \trinl \bullet \trinr_{\pw}), \\
 P_2(\T)  \text{ for the dG scheme } (\text{resp.  }  \|\bullet\|_{\dg}),   \\
S^2_0(\T)  \text{ for the } C^0 \text{IP scheme } (\text{resp.  }  \|\bullet\|_{\ip} ),\\
 P_2(\T)  \text{ for the WOPSIP scheme } (\text{resp.  }  \|\bullet\|_{\rm P})  \\
\end{cases}$$
satisfy 
\[\| (1 - \rop)v_h \|_{\widehat{V}} \le \lamr\| v- v_h \|_{\widehat{V}}\; \mbox{ for all }(v_h,v) \in V_h \times V.\]
 The constant $\Lambda_{\rm R}$ exclusively depends on the shape regularity  of $\cT$.
\end{lem}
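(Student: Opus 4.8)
The plan is to verify the estimate $\|(1-R)v_h\|_{\widehat{V}}\le\Lambda_{\rm R}\|v-v_h\|_{\widehat{V}}$ separately for each of the three admissible operators $R\in\{\mathrm{id},I_{\rm M},JI_{\rm M}\}$ and for each of the four choices of $(V_h,\|\bullet\|_{\widehat{V}})$ listed in the lemma, since the right-hand side and the discrete norm both change with the scheme. The case $R=\mathrm{id}$ is trivial: $(1-R)v_h=0$, so $\Lambda_{\rm R}=0$ works. For the remaining two operators the strategy is to reduce everything to the piecewise energy norm $\trinl\bullet\trinr_{\rm pw}$ and to the interpolation/companion estimates already established in Lemma~\ref{lemma:interpoltion_IM} and Lemma~\ref{hctenrich}, together with the norm equivalences recorded in Lemma~\ref{lem:equivalence}.

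First I would treat the Morley, dG, and $C^0$IP schemes together, because on $V+V_h$ for $V_h\in\{\mathrm{M}(\cT),P_2(\cT),S^2_0(\cT)\}$ all of $\trinl\bullet\trinr_{\rm pw}$, $\|\bullet\|_{\rm dG}$, $\|\bullet\|_{\rm IP}$ are equivalent to $\|\bullet\|_h$ (Lemma~\ref{lem:equivalence}), with constants depending only on shape regularity and the fixed penalty parameters. So it suffices to bound $\|(1-R)v_h\|_h$ by $\|v-v_h\|_h$. For $R=I_{\rm M}$ this follows from Lemma~\ref{lemma:interpoltion_IM}.a, $\|(1-I_{\rm M})v_h\|_h\lesssim\|(1-I_{\rm M})v_h\|_h$, after noting $j_h(v_{h})=j_h(v_h-v)$ (the jump seminorm kills $V$ and is not sensitive to the Morley averaging degrees of freedom that $I_{\rm M}$ reproduces) and invoking Lemma~\ref{lemma:interpoltion_IM}.b, which gives $\sum_{m=0}^2|h_{\cT}^{m-2}(v_h-I_{\rm M}v_h)|_{H^m(\cT)}\approx\min_{w_{\rm M}}\|v_h-w_{\rm M}\|_h\le\|v_h-w_{\rm M}\|_h$ for any $w_{\rm M}\in\mathrm{M}(\cT)$; choosing $w_{\rm M}=I_{\rm M}v$ and a triangle inequality with $\|(1-I_{\rm M})v\|_h\lesssim\|v-v_h\|_h+\|(1-I_{\rm M})v_h\|_h$ absorbs terms. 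For $R=JI_{\rm M}$ I would add the companion estimate: by Lemma~\ref{hctenrich}.e, $\sum_{m=0}^2\|h_{\cT}^{m-2}D^m_{\rm pw}(I_{\rm M}v_h-JI_{\rm M}v_h)\|\lesssim\trinl I_{\rm M}v_h-w\trinr_{\rm pw}$ for any $w\in V$; combined with the $I_{\rm M}$-step and $j_h(JI_{\rm M}v_h)=0$, one gets $\|(1-JI_{\rm M})v_h\|_h\lesssim\|v-v_h\|_h$, hence the claim in the three equivalent norms. (Cleanest is to observe $I_{\rm M}(v_h-JI_{\rm M}v_h)=0$ and then apply Lemma~\ref{lemma:interpoltion_IM}.a directly to $v_h-JI_{\rm M}v_h$, reducing to bounding $\|(1-I_{\rm M})v_h\|_h$ plus $\min_{v\in V}\trinl I_{\rm M}v_h-v\trinr_{\rm pw}$, the latter controlled by picking $v=JI_{\rm M}v$ together with Lemma~\ref{hctenrich}.f.)

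The genuinely separate case, and the main obstacle, is the WOPSIP scheme, where $\widehat{V}=V+P_2(\cT)$ is equipped with $\|\bullet\|_{\rm P}$ and there is \emph{no} equivalence with $\|\bullet\|_h$ on this space (as emphasised right before Lemma~\ref{lem:equivalence}). Here the point of Lemma~\ref{hctenrich}.g is precisely that it supplies $\sum_{m=0}^2\|h_{\cT}^{m-3}D^m_{\rm pw}((1-I_{\rm M})v_2)\|+\sum_{m=0}^2\|h_{\cT}^{m-2}D^m_{\rm pw}((1-J)I_{\rm M}v_2)\|\lesssim\min_{v\in V}\|v-v_2\|_{\rm P}$ for $v_2\in P_2(\cT)$. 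So for $R=I_{\rm M}$ I would use the $m=2$ term $\|(1-I_{\rm M})v_h\|_{\rm pw}\le\sum_m\|h_{\cT}^{m-3}D^m_{\rm pw}((1-I_{\rm M})v_h)\|\lesssim\min_{v\in V}\|v-v_h\|_{\rm P}\le\|v-v_h\|_{\rm P}$ (since $h_{\cT}\lesssim1$, the extra negative power only helps); because $I_{\rm M}v_h\in\mathrm{M}(\cT)$ and $j_h(I_{\rm M}v_h)=0$, while $c_{\rm P}(I_{\rm M}v_h-v_h,\cdot)$ must also be controlled — this is where one uses that $I_{\rm M}$ matches the vertex values and edge-normal means, so the WOPSIP jump functionals of $(1-I_{\rm M})v_h$ reduce to data already bounded by the first sum in (g) (or one simply notes $\|(1-I_{\rm M})v_h\|_{\rm P}\lesssim$ the left side of (g), as in the proof of $(h)$). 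For $R=JI_{\rm M}$ the second sum in (g) handles $\|(1-J)I_{\rm M}v_h\|_{\rm pw}$, and a triangle inequality $(1-JI_{\rm M})v_h=(1-I_{\rm M})v_h+(1-J)I_{\rm M}v_h$ together with the fact that $JI_{\rm M}v_h\in V$ has vanishing jump contributions finishes the bound by $\min_{v\in V}\|v-v_h\|_{\rm P}\le\|v-v_h\|_{\rm P}$. In all cases the constants come only from shape regularity (and the fixed penalty parameters, already fixed once and for all), which gives the stated dependence of $\Lambda_{\rm R}$.
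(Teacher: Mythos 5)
Your overall route is the paper's: $R=\mathrm{id}$ is trivial, Morley/dG/$C^0$IP are reduced to the $\|\bullet\|_h$-norm, $R=I_{\rm M}$ rests on Lemma~\ref{lemma:interpoltion_IM}, $R=JI_{\rm M}$ on Lemma~\ref{hctenrich}, and WOPSIP is handled separately through Lemma~\ref{hctenrich}.g together with the vanishing of the $c_{\rm P}$-functionals on $\M(\T)$ and on $V$. One step, however, needs repair: for $R=I_{\rm M}$ you bound $\|(1-I_{\rm M})v_h\|_h\lesssim\|v_h-I_{\rm M}v\|_h\le\|v-v_h\|_h+\|(1-I_{\rm M})v\|_h$ and then propose to control $\|(1-I_{\rm M})v\|_h\lesssim\|v-v_h\|_h+\|(1-I_{\rm M})v_h\|_h$ and ``absorb''; with generic constants this is circular and nothing can be absorbed. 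The clean argument (and the one the paper uses) needs no detour through $v$ at all: since $v_h\in V_h\subseteq P_2(\T)$ has a piecewise constant Hessian, the volume term $\|(1-\Pi_0)D^2_{\rm pw}v_h\|$ in Lemma~\ref{lemma:interpoltion_IM}.a vanishes, so $\trinl (1-I_{\rm M})v_h\trinr_{\rm pw}\lesssim j_h(v_h)=j_h(v-v_h)\le\|v-v_h\|_h$ directly --- this is exactly the observation you already record, that $j_h$ annihilates $V$ and the Morley degrees of freedom reproduced by $I_{\rm M}$, so you should make it the whole proof rather than an aside. (Alternatively, close your own chain non-circularly via $\trinl v-I_{\rm M}v\trinr_{\rm pw}\lesssim\|(1-\Pi_0)D^2v\|\le\trinl v-v_h\trinr_{\rm pw}$, again because $D^2_{\rm pw}v_h$ is piecewise constant.) Two further small points: the passage from $\|(1-I_{\rm M})v_h\|_h$ to the dG/$C^0$IP norms concerns the element $(1-I_{\rm M})v_h\in P_2(\T)$, which for $C^0$IP does not lie in $V+S^2_0(\T)$, so Lemma~\ref{lem:equivalence} does not literally apply and the paper instead cites Theorem~4.1 of \cite{ccnnlower2022}; and for $R=JI_{\rm M}$ the paper's two-line triangle inequality plus Lemma~\ref{hctenrich}.f covers all four schemes at once (including WOPSIP via $\|\bullet\|_h\lesssim\|\bullet\|_{\rm P}$), which is slightly shorter than your decomposition through $I_{\rm M}$, though your version is also correct.
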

\noindent {\it Proof for $R = \mathrm{id}$}. This holds with $\Lambda_{\rm R}=0$. 
\qed \\
\smallskip \noindent
{\it Proof for $R =  I_{\rm M}$}. 
 {  Since $\| (1-\Pi_0)D^2_{\rm pw} v_{h}\|=0 $ for $v_h \in V_h \subseteq P_2(\cT)$, Lemma~\ref{lemma:interpoltion_IM}.a leads to $\trinl (1-I_{\rm M})v_h \trinr_{\pw}\lesssim j_h(v_h)$. This, the 
  definition of $\|\bullet\|_h$, and $j_h(I_{\M}v_h) =0=j_h(v)$ show $$\trinl (1-I_{\rm M})v_h \trinr_{\rm pw}\le  \|(1- I_{\rm M})v_h \|_h\lesssim j_h(v_h)=j_h(v-v_h)\le \|v - v_h \|_h\lesssim \|v - v_h \|_{\widehat{V}}$$ with Lemma~\ref{lem:equivalence} in the last step. Theorem 4.1 of \cite{ccnnlower2022} provides $\|(1- I_{\rm M})v_h \|_{\widehat{V}}  \lesssim  \| (1-I_{\rm M})v_h \|_h$ for the dG/$C^0$IP norm $\|\bullet\|_{\widehat{V}}$. The combination proves the assertion for Morley/dG/$C^0$IP. 

  \medskip \noindent
  For WOPSIP,  the definition  of $\|\bullet\|_{\rm P}$ in \eqref{eqn:norm_wopsip}, $\trinl (1-I_{\rm M})v_h \trinr_{\rm pw}\lesssim  \|v - v_h \|_{\rm P}$ from the displayed inequality above, and $c_{\rm P}(v,v)=c_{\rm P}(v,v_h)=0$  reveal
 $$\|(1  - I_{\rm M})v_h \|_{\rm P}\le \trinl (1- I_{\rm M})v_h \trinr_{\pw}+c_{\rm P}(v_h,v_h)^{1/2} \lesssim \|v-v_h\|_{\rm P}. \qed$$ }
\noindent {\it Proof for $R = JI_{\rm M}$}.  Triangle inequalities and $\|\bullet \|_{\widehat{V}}=\trinl \bullet \trinr_{\rm pw} $ in $V$ show
\begin{align*}
\|(1 - JI_{\rm M})v_h \|_{\widehat{V}}  \le \|v - v_h  \|_{\widehat{V}} + \trinl v - JI_\M v_h  \trinr_{\rm pw}
\le 2\|v - v_h  \|_{\widehat{V}} + \trinl (1- JI_{\rm M} )v_h \trinr_{\rm pw}. 
\end{align*}
Lemma~\ref{hctenrich}.{f}  and Lemma~\ref{lem:equivalence}
 conclude the proof for $R=JI_\M$. 
\qed

\medskip
\noindent The transfer from  $\M(\T)$ into $V_h$ \cite{ccnnlower2022} is modeled by some linear map  $I_h: \M(\T) \rightarrow V_h$  
that  is bounded in the sense that there exists some constant $\Lambda_h \ge 0$ such that
$ \|v_{\rm M} - I_h v_{\rm M}\|_h \le \Lambda_h \trinl v_{\rm M} - v \trinr_{\rm pw}$ holds for all $v_\M \in \M(\T)$ and all $v \in V$. A precise definition of $I_h= I_{\rm C}  I_\M$ concludes this {{section}}.
\begin{defn}[transfer operator {\cite[(8.4)]{ccnnlower2022}}]  \label{defn:ic}
For $v_{\rm M} \in {\rm M}(\cT)$, let $I_{\rm C} : \rm{M}(\cT) \rightarrow S^2_0(\cT)$ be defined by 
	\begin{equation}
	(I_{\rm C}v_{\rm M})(z) = \left\{
	\begin{array}{l}
	v_{\rm M}(z)\;\;\text{ at } z \in \mathcal{V}, \\
	\langle v_{\rm M}\rangle_{E} (z)\;\;\text{ at } z = \text{mid}(E)  \text{ for }  E\in \mathcal{E}(\O),\\
	0\;\;\text{ at } z = \text{mid}(E) \text{ for }  \, E \in \mathcal{E}(\partial \O)
	\end{array}
	\right.
	\end{equation}
followed by Lagrange interpolation in $P_2(K)$ for all $K \in \T$.
\end{defn}
\begin{rem}[approximation]\label{icimz} A triangle inequality with $I_\M v$, Lemma~\ref{lem:equivalence}, and $\|v_{\rm M} - I_{\rm C} v_{\rm M} \|_h \lesssim 
\trinl v- v_{\rm M} \trinr_{\pw}$ for any $v \in V$ and $v_{\rm M} \in {\rm M}(\cT)$  from \cite[(5.11)]{ccnnlower2022} show
	$\| v - I_{\rm C}I_{\rm M}v  \|_h   \lesssim \trinl v- I_\M v \trinr_{\pw}$. {{In particular, given any $v \in V$ and given any positive $\epsilon>0$,  there exists $\delta>0$ such that for any triangulation $\T \in \mathbb{T}(\delta)$ with discrete space $V_h$, we have $\| v-v_h\|_{\widehat{V}} <\epsilon$ for some $v_h \in V_h$. (The proof utilizes the density of smooth functions in $V$, the preceding estimates,  and Lemma~\ref{lemma:interpoltion_IM}.)}}
\end{rem}

\section{Application to Navier-Stokes equations} \label{sec:nse}
This section verifies the hypotheses~\ref{h1}-\ref{h5} and~\ref{h1hat} and establishes  {\bf (A)}-{\bf(C)} for the 2D Navier-Stokes equations in the stream function vorticity formulation.  Subsection  \ref{sec:prob} and \ref{sec:discrete_schemes} describe the problem and four quadratic  discretizations.  The a priori  error control for the Morley/dG/$C^0$IP (resp. WOPSIP) schemes  follows in Subsection \ref{sec:apriori_NS}-\ref{sec:lower} (resp. Subsection \ref{sec:wopsip}) .
\subsection{Stream function vorticity formulation of Navier-Stokes equations} \label{sec:prob}
The stream function vorticity formulation of the incompressible 2D Navier--Stokes equations in a bounded polygonal Lipschitz domain $\O \subset \R^2$ seeks $u \in H^2_0(\O)=:V=X=Y$  such that
\begin{equation} \label{NS_eqa}
 \Delta^2u + \frac{\partial}{\partial x}\bigg((-\Delta u)\frac{\partial u}{\partial y}\bigg)- \frac{\partial}{\partial y}\bigg((-\Delta u)\frac{\partial u}{\partial x}\bigg)= F 
\end{equation}
\noindent {{ for a given right-hand side $F \in V^*$.}} The biharmonic operator $\Delta^2$ is defined by $\Delta^2\phi:=\phi_{xxxx}+\phi_{yyyy}+2\phi_{xxyy}$.  The analysis of extreme viscosities lies beyond the scope of this article, and the viscosity in \eqref{NS_eqa} is set  one.
\noindent  For all $\phi,$ $\chi,\psi \in V$,  define the bilinear and trilinear  forms $a(\bullet, \bullet)$ and $\Gamma(\bullet,\bullet,\bullet)$ by 
\begin{align} 
a(\phi,\chi) := \int_{\O}^{}D^2\phi: D^2\chi \dx \;\text{ and }\;\Gamma(\phi, \chi,\psi) :=\int_{\O}^{}\Delta \phi\bigg(\frac{\partial \chi}{\partial y}\frac{\partial \psi}{\partial x}-\frac{\partial \chi}{\partial x}\frac{\partial \psi}{\partial y}\bigg)\dx. \label{abform}
\end{align}
The weak formulation that corresponds to \eqref{NS_eqa} seeks $u \in V$ such that
\begin{align} \label{NS_weak}
a(u,v) + \Gamma(u,u,v) = F(v)\fl v \in V.
\end{align}
\vspace{-0.3in}
\subsection{Four quadratic discretizations}\label{sec:discrete_schemes}
This subsection presents four  lowest-order discretizations,  namely,  the Morley/dG/$C^0$IP/WOPSIP schemes for \eqref{NS_weak}.
Define the discrete bilinear forms 
\[a_h:= a_{\rm pw} + {\mathsf b}_h+ 
{\mathsf c}_h:\left(V_h + \M(\T)\right)\times \left(V_h + \M(\T)\right)\to \mathbb{R}, \] with $a_{\pw}$ from \eqref{eqccnerwandlast1234a} and  $ {\mathsf b}_h,
{\mathsf c}_h $ in  Table \ref{tab:spaces} for the four discretizations. 
Let $\widehat{\Gamma}(\bullet,\bullet,\bullet):=\Gamma_{\rm pw}(\bullet,\bullet,\bullet)$ be the piecewise trilinear form  defined for all  $\phi, \chi, \psi \in H^2(\cT)$ by 
\begin{align}\label{defn:trilinear}
\displaystyle \Gamma_{\rm pw}(\phi, \chi,\psi)
:= \sum_{K\in\cT}\int_K \Delta \phi\left(\frac{\partial \chi}{\partial y}\frac{\partial \psi}{\partial x}-\frac{\partial \chi}{\partial x}\frac{\partial \psi}{\partial y}\right)\dx.
\end{align}
For all the four discretizations of Table \ref{tab:spaces},  recall $\widehat{b}(\bullet,\bullet) := \Gamma_{\rm pw}(u,\bullet,\bullet) + \Gamma_{\rm pw}(\bullet,u,\bullet): (V+P_2(\cT)) \times (V+P_2(\cT)) \rightarrow \mathbb{R}$ from \eqref{star}. 
Given $R,S \in \{{\rm id}, I_\M, JI_\M\}   $,  the discrete schemes for \eqref{NS_weak} seek a solution $u_h\in V_h$ to
\begin{align}\label{eqn:DWP_Vh}
	N_h(u_h; v_h):= a_h(u_h, v_h) +\Gamma_{\text{pw}}( Ru_h, Ru_h , Sv_h) - F(JI_\M v_h)=0&\text{ for all }v_h\in V_h.
\end{align}
\begin{table}[h!]
\centering
\small
\begin{tabular}{|c|c|cc|c|}
\hline
Scheme & Morley & \multicolumn{1}{c|} {dG} &  $C^0$IP & WOPSIP \\ \hline
\begin{minipage}{2cm}
\vspace{3pt}
\centering
${\widehat{X} = \widehat{Y}:= }\widehat{V} = V+V_h$
\end{minipage}
&  $V+ {\rm M}(\cT) $     & \multicolumn{1}{c|}{ $V+P_2(\cT) $ }         &  $V+S^2_0(\cT)$  & $V+P_2(\cT) $      \\ \hline


$\| \bullet \|_{\widehat{V}}$ & $\trinl \bullet \trinr_{\rm pw}$   & \multicolumn{1}{c|}{$\| \bullet \|_{\rm dG}$}       &      $\| \bullet \|_{\rm IP}$   &

$\| \bullet \|_{\rm P}$      \\ \hline

$P=Q$             & $J$     & \multicolumn{1}{c|}{$\smooth$}  & $\smooth$ & $\smooth$        \\ \hline

$I_{h}$ & ${\rm id}$ & \multicolumn{1}{c|}{${\rm id}$} & $I_{\rm C}$  from Definition \ref{defn:ic} & ${\rm id}$ \\ \hline  

{$I_{{\rm X}_h{}}=I_{{\rm V}_h}{=}I_h I_\M$}            &  $I_{\rm M}$ & \multicolumn{1}{c|}{$I_{\rm M}$} & $I_{\rm C}I_{\rm M}$ & $I_{\rm M}$     \\ \hline

$\mathcal{J}(\bullet ,\bullet)  $ & -- & \multicolumn{2}{c|}{{${{{}\displaystyle \sum_{E\in\cE}\int_E \langle D^2v_2\;\nu_E\rangle_E} \cdot \jump{\nabla w_2}_E\ds}$}}  & -- \\\hline

${\mathsf b}_h(\bullet,\bullet)$ & 0 &
\multicolumn{2}{c|}{
$-\theta \mathcal{J}(v_2,w_2) {-\mathcal{J}(w_2,v_2),\: }{{-1 \le \theta \le 1}}$
}
& 0\\ \hline
${\mathsf c}_h(\bullet,\bullet)$ & 0 & \multicolumn{1}{c|}{$c_{\rm dG}$ from \eqref{eqn:cdg}} & $c_{\rm IP}$ from \eqref{eqn:cip} & $c_{\rm P}$ from  \eqref{eqn:cp} \\ \hline
%
\end{tabular}
\caption{Spaces, operators, bilinear forms,  and norms in Section \ref{sec:nse}. }
\label{tab:spaces}
\end{table}
\vspace{-0.4in}
\subsection{ Main results}\label{sec:apriori_NS}
This subsection states  the results on the a priori control  for the   discrete schemes of Subsection \ref{sec:discrete_schemes}. {{Lemma~\ref{lem:equivalence} shows that $\|\bullet\|_{\widehat{V}} \approx \|\bullet\|_h$ for the Morley/dG/$C^0$IP schemes. The WOPSIP scheme is discussed in Subsection~\ref{sec:wopsip}. Unless stated otherwise,  $R \in \{{\rm id}, I_\M, JI_\M\}$ is arbitrary.}}
\begin{thm}[a priori energy norm error control] \label{thm:error_apriori_energy} {Given a regular root  $u \in V=H^2_0(\O)$ to \eqref{NS_weak} with $F \in H^{-2}(\Omega)$ and $0 <t<1$},  there exist $\epsilon, \delta > 0$ such that, for any $\displaystyle\cT\in\bT(\delta)$, the unique discrete solution $u_h \in V_h$ to \eqref{eqn:DWP_Vh} with 
$\| u-u_h\|_{{h}} \le \epsilon$ for the Morley/dG/$C^0$IP schemes 
satisfies
		\begin{align}  \label{eqn:apriori_a}
		\| u - u_h \|_{{h}} \lesssim{}&  \min_{v_h \in V_h} \|u - v_h \|_{{h}}+ 
\begin{cases}
0 \; \text{ for  }S=JI_\M,\\
{{ h_{\rm max}^{1-t} }}\; \text{ for }S={\rm id} \text{ or } I_\M.
\end{cases}
		\end{align}
{If $F \in H^{-r}(\O)$ for some $r<2$, then \eqref{eqn:apriori_a} holds with $t=0$.}
\end{thm}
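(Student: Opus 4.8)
The plan is to fit the Morley/dG/$C^0$IP discretisations \eqref{eqn:DWP_Vh} into the abstract framework of Sections~\ref{sec.stability}--\ref{sec:apriori}, verify \ref{h1}--\ref{h5}, and then invoke Theorems~\ref{thm:existence} and~\ref{thm:apriori}. Here $X=Y=V=H^2_0(\Omega)$, the piecewise space $\widehat{X}=\widehat{Y}=\widehat{V}$ and the discrete space $X_h=Y_h=V_h$ are those of Table~\ref{tab:spaces}, $A$ is the biharmonic operator associated with $a=a_{\rm pw}|_{V\times V}$ (so $\alpha=1$ in \eqref{cts_infsup}), $\widehat{\Gamma}=\Gamma_{\rm pw}$ from \eqref{defn:trilinear}, and $\widehat{b}$ is the linearisation \eqref{star} of $\Gamma_{\rm pw}$ at the regular root $u$, whose invertibility supplies the second constant $\beta$ in \eqref{cts_infsup}. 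First I would record that \eqref{dis_Ah_infsup} holds with a uniform $\alpha_0>0$ (the classical stability of $a_h$ once $\sigma_1,\sigma_2,\sigma_{\rm IP}$ are fixed large enough), that \eqref{quasioptimalsmootherP} holds for $P=Q=JI_\M$ (resp. $P=Q=J$ for Morley) and \eqref{quasioptimalsmootherR}--\eqref{quasioptimalsmootherS} for any $R,S\in\{\mathrm{id},I_\M,JI_\M\}$ by Example~\ref{ex1} and Lemma~\ref{lem:R}, and that \ref{h1} is the consistency estimate of \cite{ccnnlower2022}. It then remains to check that $\delta_2$ of \ref{h3}, $\delta_3$ of \ref{h4}, and $\delta_4$ of \ref{h5} are bounded by a positive power of $h_{\rm max}$, hence become arbitrarily small: $\delta_4\to0$ by Remark~\ref{icimz}; $\delta_2\to0$ from elliptic regularity of $A^{-1}$ together with the interpolation estimates of Lemma~\ref{lemma:interpoltion_IM} and Remark~\ref{icimz}; and $\delta_3$ vanishes for $S=Q$ and is otherwise estimated exactly as the perturbation term below (with $\nabla_{\rm pw}Rx_h$ in place of $\nabla u$, bounded by $\|x_h\|_{\widehat{V}}$ via Lemma~\ref{lem.new} and the boundedness of $R$).

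With these hypotheses in force and $\delta$ chosen small enough that $\epsilon_1\le\epsilon_2$ (with, say, $\kappa=1/2$ in Theorem~\ref{thm:apriori}), Theorem~\ref{thm:existence} yields, for every $\cT\in\bT(\delta)$, the unique discrete solution $u_h\in V_h$ with $\|u-u_h\|_{\widehat{V}}\le\epsilon_1=:\epsilon$ and the quadratic convergence of Newton's scheme. Theorem~\ref{thm:apriori} then gives
\[
\|u-u_h\|_{\widehat{V}}\le C_{\rm qo}\min_{v_h\in V_h}\|u-v_h\|_{\widehat{V}}+\beta_1^{-1}(1-\kappa)^{-1}\,\|\Gamma_{\rm pw}(u,u,(S-Q)\bullet)\|_{V_h^\ast}
\]
with $h_{\rm max}$-independent constants, and $\|\bullet\|_{\widehat{V}}\approx\|\bullet\|_h$ on $V+V_h$ by Lemma~\ref{lem:equivalence} converts this to the $\|\bullet\|_h$-norm of \eqref{eqn:apriori_a}. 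Thus the proof reduces to controlling the perturbation $\|\Gamma_{\rm pw}(u,u,(S-Q)\bullet)\|_{V_h^\ast}$.

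When $S=JI_\M$, one has $(S-Q)y_h=0$ on $V_h$ (using $Q=JI_\M$ for dG/$C^0$IP, and $Q=J$ together with $I_\M y_h=y_h$ for Morley), so the term vanishes and the quasi-best approximation \eqref{eqn:best} follows. When $S=\mathrm{id}$ or $S=I_\M$, the identity $I_\M I_\M=I_\M$ shows $(S-Q)y_h=v_2-JI_\M v_2$ for $v_2:=y_h$ (case $S=\mathrm{id}$) or $v_2:=I_\M y_h\in\M(\cT)$ (case $S=I_\M$), with $\min_{v\in V}\|v-v_2\|_h\lesssim\|y_h\|_h$ by the $I_\M$-stability in Lemma~\ref{lemma:interpoltion_IM}.a. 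Using $\Delta u\in L^2(\Omega)$, the 2D Sobolev embedding $H^2_0(\Omega)\hookrightarrow W^{1,q}(\Omega)$ for every finite $q$, and Hölder's inequality with conjugate exponents $2,\ 2/t,\ 2/(1-t)$, I would bound
\[
|\Gamma_{\rm pw}(u,u,(S-Q)y_h)|\le\|\Delta u\|_{L^2(\Omega)}\,\|\nabla u\|_{L^{2/t}(\Omega)}\,|v_2-JI_\M v_2|_{W^{1,2/(1-t)}(\cT)}\lesssim\|u\|_{H^2(\Omega)}^2\,h_{\rm max}^{1-t}\,\|y_h\|_h,
\]
where the last step is Lemma~\ref{hctenrich}.h; this produces the $h_{\rm max}^{1-t}$ term of \eqref{eqn:apriori_a}. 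If instead $F\in H^{-r}(\Omega)$ with $r<2$, then $u\in H^2_0(\Omega)\cap H^{4-r}(\Omega)\hookrightarrow C^1(\overline{\Omega})$, so $\nabla u\in L^\infty(\Omega)$, and the sharper bound $|\Gamma_{\rm pw}(u,u,(S-Q)y_h)|\le\|\Delta u\|_{L^2(\Omega)}\|\nabla u\|_{L^\infty(\Omega)}|v_2-JI_\M v_2|_{H^1(\cT)}$ combined with the full-order estimate $|v_2-JI_\M v_2|_{H^1(\cT)}\lesssim h_{\rm max}\|y_h\|_h$ from Lemma~\ref{hctenrich}.f (with $t=1$ there) gives the statement with $t=0$.

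The main obstacle is the verification in the first paragraph that $\delta_2,\delta_3,\delta_4$ decay with $h_{\rm max}$, uniformly over the three schemes: this is where one must invoke the full strength of the interpolation and companion-operator estimates of Section~\ref{sec:notations} (and elliptic regularity for $\delta_2$, to gain extra smoothness of $A^{-1}\widehat{b}(Rx_h,\bullet)$ over the family $\{x_h:\|x_h\|_{\widehat{V}}=1\}$). A secondary technical point is the Hölder/Sobolev bookkeeping for the perturbation term near the regularity endpoints $t\downarrow0$ and $r\downarrow1$, where one has to confirm that the exponent $2/t$ (resp. the embedding $H^{4-r}\hookrightarrow C^1(\overline{\Omega})$) remains admissible.
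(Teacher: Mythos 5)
Your proposal is correct and follows essentially the same route as the paper: verify \eqref{quasioptimalsmootherP}--\eqref{quasioptimalsmootherS} via Lemma~\ref{lem:R}, \ref{h1} via \cite[Lemma 6.6]{ccnnlower2022}, obtain $\delta_2\lesssim h_{\rm max}^{1-t}$ from elliptic regularity and Lemma~\ref{lemma:interpoltion_IM}.d, $\delta_3,\delta_4$ small, then apply Theorems~\ref{thm:existence} and~\ref{thm:apriori} and bound the perturbation $\|\Gamma_{\rm pw}(u,u,(S-Q)\bullet)\|_{V_h^\ast}$. Your inline H\"older/Sobolev estimate with exponents $2$, $2/t$, $2/(1-t)$ and Lemma~\ref{hctenrich}.h (resp.\ the $W^{1,\infty}$ variant with Lemma~\ref{hctenrich}.f for $r<2$) is exactly the content of Lemma~\ref{lemma:gamma_approx_new}.a (resp.\ .b), which the paper cites at this point.
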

{ 
\begin{rem}[quasi best-approximation] The best approximation result \eqref{eqn:best} holds for ${S=Q=J I_\M}$.
\end{rem}
\noindent 
{A comparison result 
follows as in \cite[Theorem 9.1]{ccnnlower2022} and the proof is therefore omitted.}
\begin{thm}[comparison for $R \in \{ {\rm id}, I_\M, JI_\M \}$ and {$S=Q=JI_\M$}]\label{thm:MainResult} 
{The regular root  $u \in V$} to \eqref{NS_weak} and {for $h_{\rm max}$ sufficiently small}, the respective local discrete solution $u_\M, u_{\dg}, u_{\rm IP} \in V_h$ to \eqref{eqn:DWP_Vh} 
	for the Morley/dG/$C^0$IP  schemes with $S=J I_\M$   satisfy
	\begin{align*}
		\|u-u_\M\|_h
		\approx
		\|u-u_\dg\|_h
		\approx
		\|u-u_\ip\|_h
		\approx \|(1-\Pi_0) D^2 u\|_{L^2(\Omega)}.  
	\end{align*}
\end{thm}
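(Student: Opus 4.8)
The plan is to sandwich each of the three discrete errors between two constant multiples of $\|(1-\Pi_0)D^2u\|$ and then use transitivity of $\approx$. The pivot is the identity, valid for the regular root $u\in V=H^2_0(\Omega)$, that $\trinl (1-I_\M)u\trinr_{\rm pw}=\|D^2_{\rm pw}(u-I_\M u)\|=\|D^2u-\Pi_0D^2u\|=\|(1-\Pi_0)D^2u\|$, which follows from the integral–mean property $D^2_{\rm pw}I_\M=\Pi_0D^2$ on $V$ in Lemma~\ref{lemma:interpoltion_IM}.c.

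For the upper bound I would start from the quasi-best estimate of Theorem~\ref{thm:error_apriori_energy}: for $S=Q=JI_\M$ and $h_{\rm max}\le\delta$ it reads $\|u-u_h\|_h\lesssim\min_{v_h\in V_h}\|u-v_h\|_h$ for each of the Morley/dG/$C^0$IP schemes. I would test the minimum with $v_h:=I_\M u\in\M(\cT)\subseteq P_2(\cT)$ for the Morley and dG schemes and with $v_h:=I_{\rm C}I_\M u\in S^2_0(\cT)$ for the $C^0$IP scheme. Since $j_h(u)=0=j_h(I_\M u)$, Lemma~\ref{lem:equivalence} gives $\|u-I_\M u\|_h=\trinl u-I_\M u\trinr_{\rm pw}$, while Remark~\ref{icimz} gives $\|u-I_{\rm C}I_\M u\|_h\lesssim\trinl u-I_\M u\trinr_{\rm pw}$; combined with the pivot identity this yields $\|u-u_h\|_h\lesssim\|(1-\Pi_0)D^2u\|$ in all three cases. (The equivalences $\|\bullet\|_h\approx\|\bullet\|_{\rm dG}$ and $\|\bullet\|_h\approx\|\bullet\|_{\rm IP}$ of Lemma~\ref{lem:equivalence} reconcile the $\|\bullet\|_h$ appearing in the statement with the scheme-dependent norms used en route.)

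For the converse bound I would use the pivot identity once more together with a triangle inequality,
\[
\|(1-\Pi_0)D^2u\|=\trinl(1-I_\M)u\trinr_{\rm pw}\le\trinl(1-I_\M)(u-u_h)\trinr_{\rm pw}+\trinl(1-I_\M)u_h\trinr_{\rm pw}.
\]
The first term is controlled by Lemma~\ref{lemma:interpoltion_IM}.a applied to $u-u_h\in H^2(\cT)$, giving $\trinl(1-I_\M)(u-u_h)\trinr_{\rm pw}\lesssim\|u-u_h\|_h$; the second term is the $R=I_\M$ instance of Lemma~\ref{lem:R}, which yields $\trinl(1-I_\M)u_h\trinr_{\rm pw}\le\|(1-I_\M)u_h\|_{\widehat{V}}\lesssim\|u-u_h\|_{\widehat{V}}\approx\|u-u_h\|_h$. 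Hence $\|(1-\Pi_0)D^2u\|\lesssim\|u-u_h\|_h$ for each of the three schemes, and together with the upper bound this proves $\|u-u_\M\|_h\approx\|u-u_\dg\|_h\approx\|u-u_\ip\|_h\approx\|(1-\Pi_0)D^2u\|$.

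The interpolation and smoother bookkeeping is routine; the one place needing care is the converse bound, where one must pass from the discrete error measured in the scheme-dependent penalised norm to a bound on the piecewise-energy seminorm of $(1-I_\M)u_h$ — this is precisely where the norm equivalences of Lemma~\ref{lem:equivalence} and the stability of $I_\M$ on $V_h$ (Lemma~\ref{lem:R}) are essential, and it is also why the argument covers only the Morley/dG/$C^0$IP triple: for WOPSIP, $\|\bullet\|_{\rm P}$ is not equivalent to $\|\bullet\|_h$ on $V+P_2(\cT)$, so that case must be handled separately. Finally, one should note that all hidden constants depend only on $\Omega$, the shape regularity of $\cT$, and the regular root $u$ through the quasi-best constant of Theorem~\ref{thm:error_apriori_energy}, which stays bounded as $h_{\rm max}\to0$; this legitimises the qualifier ``for $h_{\rm max}$ sufficiently small''.
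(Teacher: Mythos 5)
Your proof is correct and is precisely the argument the paper intends: the paper omits the proof by referring to \cite[Theorem 9.1]{ccnnlower2022}, and that argument is exactly your two-sided sandwich, combining the quasi-best approximation of Theorem~\ref{thm:error_apriori_energy} (tested with $I_\M u$ resp.\ $I_{\rm C}I_\M u$) and the integral-mean identity $\trinl(1-I_\M)u\trinr_{\rm pw}=\|(1-\Pi_0)D^2u\|$ with the converse bound via Lemma~\ref{lemma:interpoltion_IM}.a and Lemma~\ref{lem:R}. Your remarks on the norm equivalences of Lemma~\ref{lem:equivalence} and on why WOPSIP is excluded are also accurate.
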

\noindent{A summary of the a priori error control in Theorem~\ref{thm:apost_ns} below is 
\begin{equation} \label{eqn:lower_gen}
\|u-u_h\|_{H^s(\T)} \lesssim \| u - u_{h} \|_{h} \left( h_{\rm max}^{a}+\| u - u_{h} \|_h \right) +C_b h_{\rm max}^b
\end{equation}
with  $a,b, C_b$ as described in Table~\ref{tab4}.} 
\begin{rem}[Table \ref{tab:apriori} vs \ref{tab4}] Note that the parameter $t>0$ appears in Table \ref{tab4} and not in Table~\ref{tab:apriori}. For $r=2$, \eqref{eqn:lower_gen} solely asserts
 $\|u-u_h\|_{H^s(\T)}\lesssim \|u-u_h\|_h^2 \lesssim 1$ even though $a$ and $b$ depend on $t$. 
\end{rem}
 \noindent {Recall the index of elliptic regularity  $\sigma_{\rm reg}$
 and $\sigma:=\min\{\sigma_{\rm reg},1\}>0$ from Section~\ref{sec:introduction}.} 
\begin{thm}[{a priori error control in weaker Sobolev norms}] \label{thm:apost_ns} Given a regular root  {$u \in V$} to \eqref{NS_weak} with {{$F \in H^{-2}(\Omega)$, $2-\sigma \le  s <2$,  and $0 <t<1$}},  there exist $\epsilon, \delta > 0$ such that, for any $\displaystyle\cT\in\bT(\delta)$,  the unique discrete solution $u_h \in V_h$ to \eqref{eqn:DWP_Vh} with 
	$\| u-u_h\|_{\widehat{V}} \le \epsilon$  satisfies $(a)$-$(e)$.
	
	\noindent
 {{(a) {For the Morley/dG/$C^0$IP  schemes  with } $R:=J I_\M,$
\begin{align*}	
  \| u -  u_h \|_{H^s(\cT)}  & \lesssim  \| u - u_{h} \|_{h} \left( h_{\rm max}^{2-s}+\| u - u_{h} \|_h \right) +\begin{cases}
		0 \; \textrm{ for }\; S = \smooth,\\
		{{h_{\rm max}^{3-t-s}}} \; \textrm{ for } \; S = {\rm id} \text{ or } I_{\rm M}.
	\end{cases}
	\end{align*}
	(b) {For the Morley/dG/$C^0$IP  schemes  with } $R:= I_\M$ and (c) for the Morley scheme with $R={\rm id}$, 
\begin{align*}	
  \| u -  u_h \|_{H^s(\cT)}  & \lesssim  \| u - u_{h} \|_{h} \left( {{h_{\rm max}^{\min\{2-s,1-t\}}}}+\| u - u_{h} \|_h \right) +\begin{cases}
		0 \; \textrm{ for }\; S = \smooth,\\
		{{h_{\rm max}^{3-t-s}}} \; \textrm{ for }\; S = {\rm id} \text{ or } I_{\rm M}.
	\end{cases} 
	\end{align*}
  (d) For $\sigma < 1 $, whence $1<s<2$, for the Morley/dG/$C^0$IP  schemes  with } $R \in \{ I_\M, JI_\M \}$ and for the Morley scheme with $R= {\rm id}$,
  \[
  \| u -  u_h \|_{H^s(\cT)}   \lesssim  \| u - u_{h} \|_{h} \left( h_{\rm max}^{2-s}+\| u - u_{h} \|_h \right) +\begin{cases}
		0 \; \textrm{ for } \; S = \smooth,\\
		{{h_{\rm max}^{4-2s}}} \; \textrm{ for } \; S = {\rm id} \text{ or } I_{\rm M}.
  \end{cases}
  \]
  (e)  If $F \in H^{-r}(\O)$ for some $ r<2$, then $(a)$-$(c)$ hold with $t=0$.}
\end{thm}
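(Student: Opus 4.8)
The plan is to obtain \eqref{eqn:lower_gen} from the abstract goal--oriented bound in Theorem~\ref{cor:lower} with $X_{\rm s}=H^s(\cT)$, specialised to the Navier--Stokes trilinear form $\Gamma_{\rm pw}$ of \eqref{defn:trilinear}, and then to quantify each term by the regularity of the dual solution $z$. The first step is to fix the dual data $G\in (H^s(\cT))^\ast\subset X^\ast$ from the Hahn--Banach argument in the proof of Theorem~\ref{cor:lower}, and to invoke elliptic regularity for the dual linearised problem \eqref{dual_linear}: since $u$ is a regular root and $G$ is, after identification, the Riesz representative of a functional that lives in $H^{-s}(\Omega)$ ($2-\sigma\le s\le 2$), the dual solution satisfies $z\in H^2_0(\Omega)\cap H^{4-s}(\Omega)$. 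Then one chooses the discrete dual comparison function $z_h:=I_hI_{\rm M}z\in V_h$ (the transfer operator of Definition~\ref{defn:ic}), so that Lemma~\ref{lemma:interpoltion_IM}.d and Remark~\ref{icimz} give $\|z-z_h\|_{\widehat V}\lesssim\trinl z-I_{\rm M}z\trinr_{\rm pw}\lesssim h_{\rm max}^{2-s}\|z\|_{H^{4-s}(\Omega)}\lesssim h_{\rm max}^{2-s}$ by Lemma~\ref{lemma:interpoltion_IM}.d; this produces the exponent $a=2-s$ in the $\omega_1$-term and, combined with the energy estimate $\|u-u_h\|_h\lesssim\min_{v_h}\|u-v_h\|_h$ from Theorem~\ref{thm:error_apriori_energy} (already $O(h_{\rm max}^{2-r})=O(h_{\rm max})$ since $r\le 2$... actually $r<2$ or $r=2$), the $\|u-u_h\|_h^2$ term, matching \eqref{eqn:lower_gen}.

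The second step is to control the three remaining terms on the right of Theorem~\ref{cor:lower}: $\|u_h-Pu_h\|_{H^s(\cT)}$, the consistency term $\widehat\Gamma(u,u,(S-Q)z_h)$, and the nonlinear smoother mismatch $\widehat\Gamma(Ru_h,Ru_h,Qz_h)-\Gamma(Pu_h,Pu_h,Qz_h)$. For the first, since $P=Q=JI_{\rm M}$ for all four schemes, Lemma~\ref{hctenrich}.f (with $t$ there equal to $s$) gives $\|u_h-JI_{\rm M}u_h\|_{H^s(\cT)}\lesssim h_{\rm max}^{2-s}\min_{v\in V}\|u_h-v\|_h\lesssim h_{\rm max}^{2-s}\|u-u_h\|_h$, which is absorbed into the $\omega_1$-type term. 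The consistency term vanishes when $S=JI_{\rm M}=Q$; when $S\in\{{\rm id},I_{\rm M}\}$ one writes $\widehat\Gamma(u,u,(S-Q)z_h)=\widehat\Gamma(u,u,(S-Q)(z_h-z))+\widehat\Gamma(u,u,(S-Q)z)$, uses $\|(S-Q)v\|\lesssim\|(S-I)v\|+\|(I-Q)v\|$ with the interpolation error bounds of Lemma~\ref{lemma:interpoltion_IM} and Lemma~\ref{hctenrich}, the boundedness of $\Gamma_{\rm pw}$ on the relevant product of $W^{1,p}$/$L^q$ spaces (via the intermediate Sobolev bound of Lemma~\ref{lem.new} and Hölder), together with $z\in H^{4-s}(\Omega)$, to extract the exponent $C_b h_{\rm max}^b$ with $b=3-t-s$ (for $\sigma=1$, so $s\ge 1$) and $b=4-2s$ (for $\sigma<1$, so $s$ can exceed $1$); the extra $t>0$ is the loss from an inverse/embedding estimate used to place $S z_h$ or $z_h$ in $L^\infty$ (cf. Lemma~\ref{hctenrich}.h). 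The nonlinear mismatch term is treated by telescoping $\widehat\Gamma(Ru_h,Ru_h,Qz_h)-\widehat\Gamma(Pu_h,Pu_h,Qz_h)=\widehat\Gamma((R-P)u_h,Ru_h,Qz_h)+\widehat\Gamma(Pu_h,(R-P)u_h,Qz_h)$ and then $\widehat\Gamma(Pu_h,Pu_h,Qz_h)-\Gamma(Pu_h,Pu_h,Qz_h)=0$ because $Pu_h=JI_{\rm M}u_h\in V$ and $\Gamma_{\rm pw}|_{V\times V\times V}=\Gamma$; when $R=P=JI_{\rm M}$ this whole block is zero (giving case (a)), when $R=I_{\rm M}$ or $R={\rm id}$ one bounds $\|(R-P)u_h\|_{\widehat V}\lesssim\|u-u_h\|_{\widehat V}$ by Lemma~\ref{lem:R} and then estimates the trilinear form, producing the $\min\{2-s,1-t\}$ exponent of case (b)/(c) (the $1-t$ branch is exactly the convergence rate of $R u_h$ toward $u$ measured in the $W^{1,2/(1-t)}$-norm available from Lemma~\ref{hctenrich}.h).

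The third step is bookkeeping across the four cases. For $\sigma=1$ (so $2-\sigma=1\le s<2$) one gets exactly the statements (a)--(c) by combining the above: for $R=JI_{\rm M}$ the $\omega_1$-exponent is the optimal $2-s$, while for $R\in\{I_{\rm M},{\rm id}\}$ it degrades to $\min\{2-s,1-t\}$ because the nonlinear mismatch only converges at rate $h_{\rm max}^{1-t}$ times $\|u-u_h\|_h$; the $C_b$-term is $0$ for $S=JI_{\rm M}$ and $O(h_{\rm max}^{3-t-s})$ otherwise. For $\sigma<1$, whence $1<s<2$, the extra Sobolev regularity $z\in H^{4-s}(\Omega)$ with $4-s\in(2,3)$ improves the interpolation of $z$ and the consistency estimate, so the nonlinear-mismatch degradation $1-t$ is dominated by $2-s$ and no longer shows up, giving the uniform exponent $2-s$ in (d) and the $C_b$-term $O(h_{\rm max}^{4-2s})$. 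Finally (e) follows because if $F\in H^{-r}(\Omega)$ with $r<2$ then $F$ may be evaluated directly and the smoother $Q$ in the right-hand side introduces no loss, so the artificial index $t>0$ used to gain a bit of integrability in the $L^\infty$-estimates may be replaced by $t=0$ throughout. The main obstacle is the careful placement of the trilinear form $\Gamma_{\rm pw}$ in fractional Sobolev / Lebesgue spaces so that the product $\Delta_{\rm pw}(\cdot)\,(\partial_y\cdot\,\partial_x\cdot-\partial_x\cdot\,\partial_y\cdot)$ is controlled by $\|\cdot\|_h$-norms and by $\|z\|_{H^{4-s}(\Omega)}$ with the correct power of $h_{\rm max}$; this is where the auxiliary Lemmas~\ref{lem.new}, \ref{lem:R}, and the inverse/embedding bound \ref{hctenrich}.h, together with the Sobolev--Slobodeckii estimate \eqref{eq:sobslobo}, do the real work, and where the unavoidable $t>0$ loss enters.
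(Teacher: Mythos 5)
For parts (a)--(c) your route coincides with the paper's: the abstract bound of Theorem~\ref{cor:lower} with $X_{\rm s}=H^s(\cT)$, dual regularity $z\in V\cap H^{4-s}(\Omega)$, the choice $z_h=I_hI_{\rm M}z$ with $\|z-z_h\|_h\lesssim h_{\rm max}^{2-s}$, the bound $\|u_h-Pu_h\|_{H^s(\cT)}\lesssim h_{\rm max}^{2-s}\|u-u_h\|_h$, the vanishing of the nonlinear mismatch for $R=P=JI_{\rm M}$, the telescoping $\widehat\Gamma((R-P)u_h,Ru_h,Qz_h)+\widehat\Gamma(Pu_h,(R-P)u_h,Qz_h)$ for $R=I_{\rm M}$, and the $h_{\rm max}^{1-t}$ loss traced to the $W^{1,2/(1-t)}$ estimate of Lemma~\ref{hctenrich}.h (the paper packages these trilinear estimates as Lemma~\ref{lemma:gamma_approx_new}.a,c). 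Two cosmetic differences: the paper derives the $H^s(\cT)$ bound on $u_h-Pu_h$ by an explicit Sobolev--Slobodeckii interpolation argument in \eqref{eqn.puh}--\eqref{eqn:uh-puh} rather than by quoting Lemma~\ref{hctenrich}.f for fractional $s$, and your intermediate splitting $\widehat\Gamma(u,u,(S-Q)(z_h-z))+\widehat\Gamma(u,u,(S-Q)z)$ is ill-posed since $S$ and $Q$ act on $Y_h$ and not on $Y$; the paper applies Lemma~\ref{lemma:gamma_approx_new}.a directly to $(S-Q)z_h=v_2-JI_{\rm M}v_2$ with $v_2=Sz_h$.

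The genuine gaps are in (d) and (e), where you reach the right conclusions by the wrong mechanisms. For (d) the paper's entire proof is the observation that $t\in(0,1)$ is a \emph{free} parameter in the estimates of (a)--(c), so for $1<s<2$ one may choose $t:=s-1$, which turns $\min\{2-s,1-t\}$ into $2-s$ and $h_{\rm max}^{3-t-s}$ into $h_{\rm max}^{4-2s}$. Your explanation---that the extra regularity $z\in H^{4-s}(\Omega)$ improves the interpolation of $z$ and the consistency estimate---does not yield these exponents: $z\in H^{4-s}(\Omega)$ holds for every admissible $s$ and already enters through $\|z-z_h\|_h\lesssim h_{\rm max}^{2-s}$; without fixing $t$, the factor $h_{\rm max}^{1-t}$ survives and $4-2s$ is unreachable. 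For (e) the mechanism is likewise not the source term: the loss $t>0$ originates in the trilinear-form estimates, where $u\in H^2_0(\Omega)$ must be placed in $W^{1,2/t}$ by a generalised H\"older inequality; when $F\in H^{-r}(\Omega)$ with $r<2$, elliptic regularity gives $u\in H^{4-r}(\Omega)\hookrightarrow W^{1,\infty}(\Omega)$, and the paper substitutes Lemma~\ref{lemma:gamma_approx_new}.a,c by their $t=0$ counterparts Lemma~\ref{lemma:gamma_approx_new}.b,d. Your claim that ``$F$ may be evaluated directly and the smoother $Q$ introduces no loss'' does not engage with where $t$ actually enters, so as written neither (d) nor (e) is established.
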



{
\begin{table}[] 
		\centering
		\begin{tabular}{|c|c|ccl|cll|c|c|c|}			
   \hline
			\multirow{2}{*}{$r$}             & \multirow{2}{*}{$s$}                         & \multicolumn{3}{c|}{$R$}                                                                            & \multicolumn{3}{c|}{$S$}                    & \multirow{2}{*}{$a$}   & \multirow{2}{*}{$b$} & \multirow{2}{*}{$C_b$} \\ \cline{3-8}
			&                                            & \multicolumn{1}{c|}{Morley}                       & \multicolumn{2}{c|}{dG/$C^0$IP}              & \multicolumn{3}{c|}{\begin{minipage}{2cm}
					\vspace{0.1cm}
					\centering
				Morley/dG/ $C^0$IP
					\vspace{0.1cm}
			\end{minipage}} &                      &                    &                     \\ \hline
			\multirow{2}{*}{$r < 2$} & \multirow{2}{*}{$2 - \sigma \le s < 2$}             & \multicolumn{1}{c|}{\multirow{2}{*}{$\mathrm{id}, I_{\rm M}, JI_{\rm M}$}} & \multicolumn{2}{c|}{\multirow{2}{*}{$I_{\rm M}, JI_{\rm M}$}}  & \multicolumn{3}{c|}{$JI_{\rm M}$}                  & \multirow{2}{*}{$2-s$} & $-$                  & $0$                   \\ \cline{6-8} \cline{10-11} 
			&                                            & \multicolumn{1}{c|}{}                             & \multicolumn{2}{c|}{}                         & \multicolumn{3}{c|}{id, $I_{\rm M}$}               &                      & $3-s$               & $1$                   \\ \hline \hline
			\multirow{6}{*}{$r = 2$}         & \multirow{2}{*}{$1 < s < 2$} & \multicolumn{1}{c|}{\multirow{2}{*}{$\mathrm{id}, I_{\rm M}, JI_{\rm M}$}} & \multicolumn{2}{c|}{\multirow{2}{*}{$I_{\rm M}, JI_{\rm M}$}} & \multicolumn{3}{c|}{$JI_{\rm M}$}                  & \multirow{2}{*}{$2-s$} &     $-$               &       $0$              
			\\ \cline{6-8} \cline{10-11} 
			&                                          & \multicolumn{1}{c|}{}                             & \multicolumn{2}{c|}{}                         & \multicolumn{3}{c|}{$\mathrm{id}, I_{\rm M}$}               &                     &      $4 - 2s$               &   $1$                  \\ \cline{2-11} 
			& \multirow{4}{*}{$s = \sigma= 1$}                     & \multicolumn{3}{c|}{\multirow{2}{*}{$JI_{\rm M}$}}                                                         & \multicolumn{3}{c|}{$JI_{\rm M}$}                  & \multirow{2}{*}{$1$}   &        $-$            &      $0$               \\ \cline{6-8} \cline{10-11} 
			&                                            & \multicolumn{3}{c|}{}                                                                             & \multicolumn{3}{c|}{$\mathrm{id}, I_{\rm M}$}               &                      &      $2-t$              &     $1$                \\ \cline{3-11} 
			&                                            & \multicolumn{1}{c|}{\multirow{2}{*}{$\mathrm{id}, I_{\rm M}$}}      & \multicolumn{2}{c|}{\multirow{2}{*}{$I_{\rm M}$}}      & \multicolumn{3}{c|}{$JI_{\rm M}$}                  & \multirow{2}{*}{$1-t$} &           $-$         &   $0$                  \\ \cline{6-8} \cline{10-11} 
			&                                            & \multicolumn{1}{c|}{}                             & \multicolumn{2}{c|}{}                         & \multicolumn{3}{c|}{$\mathrm{id}, I_{\rm M}$}               &                      &       $2-t$             &         $1$            \\ \hline
		\end{tabular}
  \caption{Summary of error control in \eqref{eqn:lower_gen} from Theorem~\ref{thm:apost_ns}.\label{tab4}}
	\end{table}
 }
 \begin{rem}[constant dependency]The constants hidden in the notation $\lesssim$ of Theorem~\ref{thm:error_apriori_energy} (resp.  \ref{thm:apost_ns}) exclusively depend on the exact solution $u$  (resp.  $u$ and $z$)  to \eqref{NS_weak} (resp.  \eqref{NS_weak} and \eqref{dual_linear}),  shape regularity of $\,\,{\cT},$ {$t$ (resp. $s$, $t$)}, and on respective stabilisation parameters ${\sigma_1,  \sigma_2, \sigma_{\rm IP}\approx 1}$.
\end{rem}
%
%
\begin{rem}[scaling for WOPSIP]
	The {{semi-scalar product}} ${\mathsf c}_h(\bullet,\bullet)$  in the WOPSIP scheme is an analog to the one in $j_h$ from \eqref{eqn:jh_defn} with different powers of the mesh-size.   It is a consequence of  the  different scaling of the norms that  \ref{h1} and ~\ref{h1hat} do not hold for the WOPSIP scheme.
\end{rem}

\subsection{ Preliminaries}
\quad \quad \;\;
{{
	
		\noindent This section investigates the piecewise trilinear form $\Gamma_{\rm pw}(\bullet,\bullet,\bullet)$ from \eqref{defn:trilinear} and its boundedness with a global parameter $0 <t<1 $ that may be small. Recall the energy norm $\trinl \bullet \trinr$, and the discrete norms $\trinl \bullet \trinr_{\pw}$, $\|\bullet\|_h$, and $\|\bullet\|_{\rm P}$ from Section \ref{sec:norms}. The constants hidden in the notation $\lesssim $ in Lemma~\ref{lemma:global_bounds_nse_new} below exclusively depend on the shape regularity of $\cT$ and on $t$.		
\begin{lem}[{boundedness of the trilinear form}]\label{lemma:global_bounds_nse_new} Any $\psi \in V$ and any
	$\widehat{\phi}, \widehat{\chi}, \widehat{\psi} \in V+P_2(\T)$, satisfy
	\begin{align}
 &(a) \; \Gamma_{\rm pw}(\widehat{\phi}, \widehat{\chi}, \widehat{\psi}) \lesssim  \trinl \widehat{\phi} \trinr_{\pw}
  \|  \widehat{\chi} \|_{h}  \| \widehat{\psi} \|_{h} \text{ and }
	(b) \; \Gamma_{\rm pw}( \widehat{\phi},\widehat{\chi}, {\psi}) \lesssim \trinl \widehat{\phi}\trinr_{\pw}
	\| \widehat{\chi} \|_{h}   \|\psi \|_{H^{1+t}(\O)}.
 \end{align}
\end{lem}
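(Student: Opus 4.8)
The plan is to read the piecewise trilinear form $\Gamma_{\rm pw}(\phi,\chi,\psi)=\sum_{K\in\cT}\int_K \Delta\phi\,(\partial_y\chi\,\partial_x\psi-\partial_x\chi\,\partial_y\psi)\dx$ as a triple product to which H\"older's inequality applies with the Laplacian in $L^2$ and the two first-order factors in Lebesgue spaces better than $L^2$. Two pointwise bounds open the argument. First, Cauchy--Schwarz for the trace of a symmetric $2\times2$ matrix gives $|\Delta\widehat\phi|\le\sqrt2\,|D^2\widehat\phi|$ pointwise, so $\|\Delta_{\rm pw}\widehat\phi\|\le\sqrt2\,\trinl\widehat\phi\trinr_{\pw}$. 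Second, the bracket equals $\det(\nabla\chi,\nabla\psi)$, hence $|\partial_y\widehat\chi\,\partial_x\widehat\psi-\partial_x\widehat\chi\,\partial_y\widehat\psi|\le|\nabla\widehat\chi|\,|\nabla\widehat\psi|$ a.e.\ on each triangle. Note that every discrete space $V_h$ of Table~\ref{tab:spaces} satisfies $V+V_h\subseteq V+P_2(\cT)$, so all arguments $\widehat\phi,\widehat\chi,\widehat\psi$ belong to $V+P_2(\cT)$ and Lemma~\ref{lem.new} is available.

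For part~(a) I would apply, on each $K\in\cT$, H\"older's inequality with exponents $(2,4,4)$ to $\int_K|\Delta\widehat\phi|\,|\nabla\widehat\chi|\,|\nabla\widehat\psi|\dx$, followed by the discrete H\"older inequality with the same exponents over $K\in\cT$; together with the pointwise bounds above this produces $\sqrt2\,\trinl\widehat\phi\trinr_{\pw}\,|\widehat\chi|_{W^{1,4}(\cT)}\,|\widehat\psi|_{W^{1,4}(\cT)}$. Lemma~\ref{lem.new} with $p=4$ then bounds $|\widehat\chi|_{W^{1,4}(\cT)}\lesssim\|\widehat\chi\|_h$ and $|\widehat\psi|_{W^{1,4}(\cT)}\lesssim\|\widehat\psi\|_h$, which gives $(a)$.

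For part~(b) the global factor $\psi\in V$ is handled by the two-dimensional Sobolev embedding $H^{1+t}(\Omega)\hookrightarrow W^{1,2/(1-t)}(\Omega)$, valid for $0<t<1$ with embedding constant depending only on $\Omega$ and $t$, whence $|\psi|_{W^{1,2/(1-t)}(\Omega)}\lesssim\|\psi\|_{H^{1+t}(\Omega)}$. I would then repeat the H\"older argument of~(a), now with the conjugate triple $\big(2,\,2/t,\,2/(1-t)\big)$ (indeed $\tfrac{1}{2}+\tfrac{t}{2}+\tfrac{1-t}{2}=1$ and $2/t\in(2,\infty)$ since $0<t<1$), using that $\psi$ is a genuine global function so that the triangle-wise $L^{2/(1-t)}$ norms of $\nabla\psi$ reassemble to $\|\nabla\psi\|_{L^{2/(1-t)}(\Omega)}$. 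Lemma~\ref{lem.new} with $p=2/t\in(1,\infty)$ converts $|\widehat\chi|_{W^{1,2/t}(\cT)}$ into $\lesssim\|\widehat\chi\|_h$, and $(b)$ follows after combining with the Sobolev embedding.

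The calculations themselves are routine; the content sits in two places. First, the reduction of the piecewise $W^{1,p}$ seminorm of a broken quadratic to $\|\bullet\|_h$ is exactly Lemma~\ref{lem.new} (built on the companion operator $JI_\M$, the embedding $H^2_0(\Omega)\hookrightarrow W^{1,p}_0(\Omega)$, and an inverse estimate), so the whole estimate is only as sharp as that lemma; I would check that its hypothesis $1<p<\infty$ is met, which forces $t>0$ and explains why the global parameter cannot be taken to be $0$. Second, one must track that all hidden constants depend solely on the shape regularity of $\cT$ and on $t$ --- through the inverse estimate inside Lemma~\ref{lem.new} and through the fractional Sobolev embedding in~(b) --- and in particular that there is no $h_{\rm max}$-dependence because no inverse estimate is ever applied to $\psi$. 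I do not anticipate any genuine obstacle beyond this bookkeeping.
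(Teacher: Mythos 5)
Your proposal is correct and follows essentially the same route as the paper: the pointwise bound $|\Delta_{\rm pw}\widehat\phi|\le\sqrt2\,|D^2_{\rm pw}\widehat\phi|$, a generalised H\"older inequality, Lemma~\ref{lem.new} to convert broken $W^{1,p}$ seminorms into $\|\bullet\|_h$, and the fractional Sobolev embedding for the global factor in $(b)$. The only (harmless) cosmetic difference is that you use the fixed exponent triple $(2,4,4)$ for part $(a)$ where the paper reuses the $t$-dependent triple $(2,2/t,2/(1-t))$; both are valid since the constant is allowed to depend on $t$.
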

\begin{proof}
A general \Holder inequality reveals
\begin{align}\label{eq.bound.new}
 \Gamma_{\rm pw}( \widehat{\phi},\widehat{\chi}, \widehat{\psi}) &\le \sqrt{2}  \trinl \widehat{\phi} \trinr_{\pw}|\widehat{\chi}|_{W^{1,2/t}(\cT)}| \widehat{\psi} |_{W^{1,2/(1-t)}(\T)}
 \end{align}
 (owing to $t/2 +(1-t)/2=1/2$ and 
 $|\Delta_{\rm pw} \widehat{\phi}| \le \sqrt{2} |D^2_{\rm pw} \widehat{\phi}| $ a.e.). Lemma~\ref{lem.new} provides $| \widehat{\chi} |_{W^{1,2/t}(\cT)} \lesssim \|\widehat{\chi}\|_h$ and
 $| \widehat{\psi} |_{W^{1,2/(1-t)}(\cT)} \lesssim
 \|\widehat{\psi}\|_h$. The combination with \eqref{eq.bound.new} concludes the proof of $(a)$. 
 For $\psi \in V$ (replacing $\widehat{\psi}$), the Sobolev embedding $H^{t}(\Omega)\hookrightarrow L^{2/(1-t)}(\O)$ \cite[Corollary 9.15]{brezis} provides 
 $$|{\psi} |_{W^{1,2/(1-t)}(\cT)} = | {\psi} |_{W^{1,2/(1-t)}(\Omega)}  \lesssim \|\psi\|_{H^{1+t}(\Omega)}.$$
 The combination with \eqref{eq.bound.new} concludes the proof of $(b)$.
    \end{proof}

{\begin{lem}[{approximation properties}] \label{lemma:gamma_approx_new}  For all $t>0$, there exists a constant $C(t)>0$ such that any $\phi, \chi \in V\cap H^{2+t}(\O)$,
	$\widehat{\phi}, \widehat{\chi} \in V+P_2(\T)$, and $(v,v_2,v_{\rm M})\in V \times P_2(\T)\times \M{(\cT)}$ satisfy
	\begin{itemize}
\item[$(a)$]
$	\; \Gamma_{\rm pw}( \widehat{\phi},\widehat{\chi}, (1-JI_{\rm M})v_2) \le C(t) h_{\max}^{1-t}\trinl\widehat{\phi} \trinr_{\rm pw}
	\| \widehat{\chi} \|_{h}   \|v-v_2\|_{h}, $
\item[$(b)$] $	\; \Gamma_{\rm pw}( \widehat{\phi},{\chi}, (1-JI_{\rm M})v_2) \le C(t) h_{\max}\trinl \widehat{\phi} \trinr_{\pw}
	\| {\chi} \|_{H^{2+t}(\O)}  \|v-v_2\|_{h}, $
\item[(c)] $\Gamma_{\rm pw}(( 1- J)v_{\rm M},\widehat{\phi},\widehat{\chi}) \le{} C(t) h_{\max}^{1-t}  \trinl v-v_\M\trinr_{\pw}\|\widehat{\phi}\|_h  \|\widehat{\chi}\|_h .$
\item[(d)] $\Gamma_{\rm pw}(( 1- J)v_{\rm M},{\phi},{\chi}) \le{} C(t) h_{\max}\trinl v-v_\M\trinr_{\pw}\|{\phi}\|_{H^{2+t}(\O)} \|{\chi}\|_{H^{2+t}(\O)} .$
\end{itemize}
\end{lem}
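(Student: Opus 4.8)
\medskip\noindent\emph{Proof strategy.} The plan is to derive all four bounds from the generalised \Holder inequality \eqref{eq.bound.new} of the proof of Lemma~\ref{lemma:global_bounds_nse_new}, supplemented by the approximation properties of $I_{\rm M}$, $J$, and $JI_{\rm M}$ from Lemma~\ref{lemma:interpoltion_IM} and Lemma~\ref{hctenrich}. Parts $(a)$ and $(b)$ are immediate. For $(a)$ I would start from $|\Delta_{\rm pw}\widehat\phi|\le\sqrt2\,|D^2_{\rm pw}\widehat\phi|$ a.e.\ and the triangle-wise \Holder inequality with exponents $(2,2/t,2/(1-t))$ exactly as in \eqref{eq.bound.new}, to get
\[\Gamma_{\rm pw}(\widehat\phi,\widehat\chi,(1-JI_{\rm M})v_2)\le\sqrt2\,\trinl\widehat\phi\trinr_{\rm pw}\,|\widehat\chi|_{W^{1,2/t}(\cT)}\,|(1-JI_{\rm M})v_2|_{W^{1,2/(1-t)}(\cT)};\]
then Lemma~\ref{lem.new} gives $|\widehat\chi|_{W^{1,2/t}(\cT)}\lesssim\|\widehat\chi\|_h$ and Lemma~\ref{hctenrich}.h gives $|(1-JI_{\rm M})v_2|_{W^{1,2/(1-t)}(\cT)}\lesssim h_{\rm max}^{1-t}\|v-v_2\|_h$, which is $(a)$. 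For $(b)$ I would instead split by exponents $(2,\infty,2)$, so that $\Gamma_{\rm pw}(\widehat\phi,\chi,(1-JI_{\rm M})v_2)\le\sqrt2\,\trinl\widehat\phi\trinr_{\rm pw}\,|\chi|_{W^{1,\infty}(\O)}\,|(1-JI_{\rm M})v_2|_{H^1(\cT)}$, and use the two–dimensional Sobolev embedding $H^{2+t}(\O)\hookrightarrow W^{1,\infty}(\O)$ for $t>0$ \cite{brezis} for $|\chi|_{W^{1,\infty}(\O)}\lesssim\|\chi\|_{H^{2+t}(\O)}$, together with Lemma~\ref{hctenrich}.f at exponent $1$ for $|(1-JI_{\rm M})v_2|_{H^1(\cT)}\lesssim h_{\rm max}\,\|v-v_2\|_h$.

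For $(c)$ a direct \Holder estimate only produces $\Gamma_{\rm pw}((1-J)v_{\rm M},\widehat\phi,\widehat\chi)\lesssim\trinl v-v_{\rm M}\trinr_{\rm pw}\|\widehat\phi\|_h\|\widehat\chi\|_h$ without a power of $h_{\rm max}$, because $(1-J)v_{\rm M}$ occupies the slot carrying the Laplacian and $\trinl(1-J)v_{\rm M}\trinr_{\rm pw}\lesssim\trinl v-v_{\rm M}\trinr_{\rm pw}$ (Lemma~\ref{hctenrich}.e, $m=2$) carries no mesh factor. The key observation is that $\Delta_{\rm pw}(1-J)v_{\rm M}$ has vanishing integral mean on each $K\in\cT$: indeed $\int_K\Delta_{\rm pw}(1-J)v_{\rm M}\dx=\int_{\partial K}\partial_\nu(1-J)v_{\rm M}\ds$ is a signed sum over the edges $E\subset\partial K$ of the quantities $\int_E\partial_{\nu_E}(1-J)v_{\rm M}\ds$, each of which vanishes by Lemma~\ref{hctenrich}.c; hence $\Delta_{\rm pw}(1-J)v_{\rm M}=(1-\Pi_0)\Delta_{\rm pw}(1-J)v_{\rm M}$. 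Writing $g:=\partial_y\widehat\phi\,\partial_x\widehat\chi-\partial_x\widehat\phi\,\partial_y\widehat\chi$ and using the self-adjointness of $1-\Pi_0$ in $L^2(K)$, I can replace $g$ by $(1-\Pi_0)g$ inside the integral over each $K$. I would then bound $\|(1-\Pi_0)g\|_{L^2(K)}$ by the sub-critical Sobolev--Poincar\'e inequality $\|(1-\Pi_0)g\|_{L^2(K)}\lesssim h_K^{1-t}\,\|\nabla g\|_{L^{2/(1+t)}(K)}$ (the power $2-2/q=1-t$ at $q=2/(1+t)$ follows by scaling to the reference triangle and Bramble--Hilbert), split the two products $D^2_{\rm pw}\widehat\phi\cdot\nabla\widehat\chi$ and $\nabla\widehat\phi\cdot D^2_{\rm pw}\widehat\chi$ occurring in $\nabla g$ by the exponents $(2,2/t)$, and sum over $K$ by Cauchy--Schwarz. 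Using $\bigl(\sum_K\|\Delta_{\rm pw}(1-J)v_{\rm M}\|_{L^2(K)}^2\bigr)^{1/2}\lesssim\trinl v-v_{\rm M}\trinr_{\rm pw}$ from Lemma~\ref{hctenrich}.e, $\max_K\|\nabla_{\rm pw}\widehat\chi\|_{L^{2/t}(K)}\le|\widehat\chi|_{W^{1,2/t}(\cT)}\lesssim\|\widehat\chi\|_h$ (and symmetrically for $\widehat\phi$) from Lemma~\ref{lem.new}, and $\trinl\bullet\trinr_{\rm pw}\le\|\bullet\|_h$, this yields the factor $h_{\rm max}^{1-t}$ and establishes $(c)$. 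For $(d)$ the same scheme applies with $q=2$, so the Poincar\'e estimate $\|(1-\Pi_0)g\|_{L^2(K)}\lesssim h_K\,\|\nabla g\|_{L^2(K)}$ delivers the full power $h_{\rm max}^1$; this is now admissible because $\nabla\phi,\nabla\chi\in L^\infty(\O)$ via $H^{2+t}(\O)\hookrightarrow W^{1,\infty}(\O)$, whence $\|\nabla g\|_{L^2(K)}\lesssim\|D^2_{\rm pw}\phi\|_{L^2(K)}\|\chi\|_{H^{2+t}(\O)}+\|\phi\|_{H^{2+t}(\O)}\|D^2_{\rm pw}\chi\|_{L^2(K)}$, and the sum over $K$ closes with Cauchy--Schwarz and $|\phi|_{H^2(\O)}\le\|\phi\|_{H^{2+t}(\O)}$, $|\chi|_{H^2(\O)}\le\|\chi\|_{H^{2+t}(\O)}$.

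The main obstacle is $(c)$ together with its counterpart $(d)$: one must recognise that no rearrangement of \Holder factors in $\Gamma_{\rm pw}((1-J)v_{\rm M},\bullet,\bullet)$ by itself gains a power of the mesh-size, so the extra power has to come from the orthogonality $\Pi_0\Delta_{\rm pw}(1-J)v_{\rm M}=0$ (a consequence of the matching of the normal-derivative degrees of freedom of $J$), and the Lebesgue exponents then have to be chosen so that the sub-critical Sobolev--Poincar\'e constant scales precisely as $h_{\rm max}^{1-t}$ (resp.\ $h_{\rm max}$) while the remaining piecewise norms of $\widehat\phi,\widehat\chi$ (resp.\ $\phi,\chi$) still sum against $\trinl v-v_{\rm M}\trinr_{\rm pw}$ through Lemma~\ref{hctenrich}.e.
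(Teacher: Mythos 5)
Your parts $(a)$ and $(b)$ coincide with the paper's proof: the same generalised \Holder split \eqref{eq.bound.new} with exponents $(2,2/t,2/(1-t))$ resp.\ $(2,\infty,2)$, closed by Lemma~\ref{lem.new}, Lemma~\ref{hctenrich}.h resp.\ Lemma~\ref{hctenrich}.f and the embedding $H^{2+t}(\O)\hookrightarrow W^{1,\infty}(\O)$. For $(c)$ and $(d)$ you correctly identify that the whole game is the orthogonality $\Pi_0\Delta_{\rm pw}(1-J)v_{\rm M}=0$, but you realise it differently and exploit it differently, and both variants are sound. The paper obtains the orthogonality from the Hessian integral-mean property $D^2_{\rm pw}I_{\rm M}=\Pi_0D^2$ together with $I_{\rm M}J={\rm id}$ (Remark~\ref{ortho}), whereas you obtain it from the divergence theorem on each $K$ and the matching edge averages of normal derivatives in Lemma~\ref{hctenrich}.c; these are equivalent, since the latter is exactly how the former is proved for the Laplacian trace. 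More substantively, the paper then writes $D_{\rm pw}\widehat\phi\cdot{\rm Curl}_{\rm pw}\widehat\chi-\Pi_0D_{\rm pw}\widehat\phi\cdot\Pi_0{\rm Curl}_{\rm pw}\widehat\chi$ as two terms, applies the piecewise \Poincare inequality to each gradient factor separately (gaining a full power $h$), and pays back $h^{-t}$ through an inverse estimate that places $\Delta_{\rm pw}(1-J)v_{\rm M}$ in $L^{2/(1-t)}(\O)$; you instead apply $(1-\Pi_0)$ to the entire product $g=\partial_y\widehat\phi\,\partial_x\widehat\chi-\partial_x\widehat\phi\,\partial_y\widehat\chi$ and invoke a sub-critical Sobolev--\Poincare inequality $\|(1-\Pi_0)g\|_{L^2(K)}\lesssim h_K^{2-2/q}\|\nabla g\|_{L^q(K)}$ with $q=2/(1+t)$, whose scaling exponent $2-2/q=1-t$ you compute correctly, so that $\Delta_{\rm pw}(1-J)v_{\rm M}$ stays in $L^2$ and no inverse estimate on the HCT subtriangulation is needed. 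Your route requires $g\in W^{1,2/(1+t)}(K)$, which holds since $\nabla g$ is a sum of products of a piecewise $L^2$ Hessian with a $W^{1,2/t}$-gradient, and the summation over $K$ closes by Cauchy--Schwarz exactly as you indicate; the price is a product-rule bookkeeping that the paper avoids, while the benefit is a single \Poincare-type estimate in place of the inverse estimate plus two symmetric terms. Part $(d)$ is the $q=2$ instance of the same idea and again matches the paper's conclusion, including the sharper bilinear form of the bound the paper records at the end of its proofs of $(c)$ and $(d)$.
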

\noindent


\noindent {\it Proof of  $(a)$}. 
 Lemma~\ref{lem.new} and~\ref{hctenrich}.h establish $|\widehat{\chi} |_{W^{1,2/t}(\cT)} \lesssim \|\widehat{\chi}\|_h$ and $| (1-JI_{\rm M})v_2 |_{W^{1,2/(1-t)}(\cT)}\lesssim h_{\max}^{1-t}\|v-v_2\|_{h}.$ The combination with \eqref{eq.bound.new} concludes the proof of $(a)$.
\qed





\noindent {\it Proof of $(b)$}. 
A generalised \Holder inequality and the embedding $H^{2+t}(\Omega)\hookrightarrow W^{1,\infty}(\Omega)$ \cite[Corollary 9.15]{brezis} provide
\begin{align*}
\Gamma_{\rm pw}(\widehat{\phi}, {\chi},(1 - J I_{\rm M})v_2) 
& \le \sqrt{2} \trinl \widehat{\phi}\trinr_{\pw}|{\chi}|_{W^{1,\infty}(\cT)}  |  (1 - J I_{\rm M})v_2 |_{H^1(\cT)}\\
&\lesssim  \trinl \widehat{\phi}\trinr_{\pw}\|{\chi}\|_{H^{2+t}(\cT)}  |  (1 - J I_{\rm M})v_2 |_{H^1(\cT)}.
\end{align*}
Lemma~\ref{hctenrich}.f 
controls the last factor and concludes the proof of  $(b)$. 
\qed			

\noindent {\it Proof of  $(c)$}.  Lemma~\ref{lemma:interpoltion_IM}.c implies
  { $\int_\O\Delta_{\rm pw}(v_{\rm M}-J v_{\rm M}) \: \Pi_0 D_{\rm pw}\widehat{\phi}\cdot \Pi_0 {\rm Curl}_{\rm pw}\widehat{\chi} \dx=0$} and so 
	\begin{align}\label{p1_new}
	\Gamma_{\rm pw}((1-J) v_{\rm M},\widehat{\phi},\widehat{\chi})& =\int_\O\Delta_{\rm pw}((1-J) v_{\rm M})((1-\Pi_0) D_{\rm pw}\widehat{\phi})\cdot {\rm Curl}_{\rm pw}\widehat{\chi}\dx \nonumber\\
	&\qquad +\int_\O\Delta_{\rm pw}((1-J) v_{\rm M}) \,\Pi_0 D_{\rm pw}\widehat{\phi}\cdot((1-\Pi_0) {\rm Curl}_{\rm pw}\widehat{\chi})\dx.
	\end{align}
A generalised \Holder inequality shows
	\begin{align}\label{eq.curlD}
&\int_\O\Delta_{\rm pw}((1-J) v_{\rm M})
((1-\Pi_0) D_{\rm pw}\widehat{\phi})\cdot {\rm Curl}_{\rm pw}\widehat{\chi}\dx\nonumber\\
&\qquad\le  \|h_{\cT}\Delta_{\pw}(1-J) v_{\rm M} \|_{L^{2/(1-t)}(\O)} \| h_{\cT}^{-1}(1-\Pi_0 )D_{\rm pw}\widehat{\phi}\|_{L^2(\O)} |\widehat{\chi} |_{W^{1,2/t}(\T)}.
	\end{align}
Abbreviate $a_T:=h_T^{2-t}\|\Delta(v_{\rm M}-Jv_{\rm M})\|_{L^\infty(T)}$ for a triangle $T \in \cT$ with area $|T| \le h_T^2$ to establish
\[ \|h_{\cT}\Delta_{\pw}(1-J) v_{\rm M} \|_{L^{2/(1-t)}(\O)} \le \big(\sum_{T \in \cT}a_T^{2/(1-t)}\big)^{(1-t)/2}\le \big(\sum_{T \in \cT}a_T^{2}\big)^{1/2}\]
with the monotone decreasing $\ell^p$ norm for $2\le 2/(1-t)$ in the last step. An inverse estimate (with respect to the HCT refinement $\widehat{\cT}$ of $\cT$) as in the proof of Lemma~\ref{hctenrich}.h provides
$\|\Delta((1-J)v_{\rm M})\|_{L^\infty(T)} \le \sqrt{2} \|v_{\rm M}-Jv_{\rm M} \|_{W^{2,\infty}(\O)}\lesssim h_{T}^{-1}\|v_{\rm M}-Jv_{\rm M} \|_{H^2(T)}.$ Hence $a_T \lesssim h_{T}^{1-t}\|v_{\rm M}-Jv_{\rm M} \|_{H^2(T)}$ and
\[ \|h_{\cT}\Delta_{\pw}(1-J) v_{\rm M} \|_{L^{2/(1-t)}(\O)} \lesssim \trinl h_{\cT}^{1-t}(v_{\rm M}-Jv_{\rm M} )\trinr_{\pw}\le h_{\max}^{1-t}\trinl v_{\rm M}-Jv_{\rm M} \trinr_{\pw}.\]
A piecewise \Poincare inequality with Payne-Weinberger constant $h_{T}/\pi$ \cite{PayneWeinbeger} reads $$\pi\| h_{\cT}^{-1}(1-\Pi_0) D_{\rm pw}\widehat{\phi}\|_{L^2(\O)} \le \trinl \widehat{\phi} \trinr_{\pw}.$$ Recall $|\widehat{\chi} |_{W^{1,2/t}(\T)}\lesssim \|\widehat{\chi}\|_h$ from the proof of $(a)$. The combination of the previous estimates of the three terms in \eqref{eq.curlD} proves the asserted estimate for the first integral in the right-hand side of \eqref{p1_new}. The analysis for the second term is rather analogue (interchange the role of $\widehat{\phi}$ and $\widehat{\chi}$). Notice that $(c)$ follows even in the form  $\Gamma_{\rm pw}(( 1- J)v_{\rm M},\widehat{\phi},\widehat{\chi}) \le{} C(t) h_{\max}^{1-t}  \trinl v-v_\M\trinr_{\pw}(\trinl \widehat{\phi}\trinr_{\pw} \|\widehat{\chi}\|_h+\|\widehat{\phi}\|_h  \trinl \widehat{\chi}\trinr_{\pw}).$
\qed

   \medskip
   \noindent {\it Proof of $(d)$}. Substitute $\phi \equiv \widehat{\phi}$, $\chi\equiv \widehat{\chi}$ in \eqref{p1_new} (with $\phi,\chi \in V\cap H^{2+t}(\O)$) and employ a different generalised \Holder inequality for the first term to infer
   	\begin{align}\label{eq.curlD.1}
&\int_\O\Delta_{\rm pw}((1-J) v_{\rm M})((1-\Pi_0) D{\phi})\cdot {\rm Curl}{\chi}\dx&\nonumber\\
&\qquad\le \|\Delta_{\pw}(1-J) v_{\rm M} \|_{L^{2}(\O)} \| (1-\Pi_0 )D{\phi}\|_{L^2(\O)} |{\chi} |_{W^{1,\infty}(\O)}.
	\end{align}
 The remaining arguments of the proof of $(c)$ simplify to $\|\Delta_{\pw}(1-J) v_{\rm M} \|_{L^{2}(\O)} \le \sqrt{2} \trinl (1-J) v_{\rm M} \trinr_{\pw}$, $\pi \| (1-\Pi_0 )D{\phi}\|_{L^2(\O)}  \le h_{\max}\trinl \phi \trinr$, and $|{\chi} |_{W^{1,\infty}(\O)} \lesssim \|\chi \|_{H^{2+t}(\O)}$ (by embedding $H^{2+t}(\Omega)\hookrightarrow W^{1,\infty}(\Omega)$ for $t>0$). The resulting estimate 
 \begin{align}\label{eq.curlD.1}
\int_\O\Delta_{\rm pw}((1-J) v_{\rm M})((1-\Pi_0) D{\phi})\cdot {\rm Curl}{\chi}\dx&\lesssim h_{\max}\trinl (1-J) v_{\rm M} \trinr_{\pw} \trinl \phi \trinr  \|\chi \|_{H^{2+t}(\O)}
	\end{align}
and Lemma~\ref{hctenrich}.e lead to the assertion for one term in the right-hand side of \eqref{p1_new}. The analysis of the other term is analog. Notice that $(d)$ follows even in the form $\Gamma_{\rm pw}(( 1- J)v_{\rm M},{\phi},{\chi}) \le{} C(t) h_{\max}\trinl v-v_\M\trinr_{\pw}(\trinl {\phi}\trinr \|{\chi}\|_{H^{2+t}(\O)}+\|{\phi}\|_{H^{2+t}(\O)} \trinl {\chi}\trinr).$
		\qed}
\subsection{ Proof of Theorem \ref{thm:error_apriori_energy}}\label{proof_of_8.1}
The conditions in Theorem~\ref{thm:apriori} are verified to establish the energy norm estimates.   The  hypotheses \eqref{quasioptimalsmootherP}-\eqref{quasioptimalsmootherS} follow from Lemma~\ref{lem:R}. Hypothesis~\ref{h1} is verified  for Morley/dG/$C^0$IP in the norm $\|\bullet\|_h$  in~\cite[Lemma 6.6]{ccnnlower2022} and this norm is equivalent to $\trinl \bullet \trinl_{\rm pw} $,$\|\bullet\|_{\dg}$, and $\|\bullet\|_{\ip}$ by Lemma~\ref{lem:equivalence}. 

\medskip
{{ \noindent Recall $a(\bullet,\bullet)$ and $\Gamma(\bullet,\bullet,\bullet)$ from \eqref{abform}, $\widehat{\Gamma}(\bullet,\bullet,\bullet)\equiv {\Gamma}_{\pw}(\bullet,\bullet,\bullet)$ from \eqref{defn:trilinear}, and $\widehat{b}(\bullet,\bullet)$ from \eqref{star} for the regular root $u \in H^2_0(\O)$. For $\theta_h \in V_h$ with $\|\theta_{h}\|_{h} = 1$,  
{Lemma~\ref{lemma:global_bounds_nse_new}}.b, and $\trinl \bullet \trinr_{\rm pw} \le \| \bullet \|_h
$  provide
 $\widehat{b}(R\theta_h,\bullet) \in H^{-1-t}(\Omega)$ for $R \in \{ {\rm id},  I_\M, JI_\M\}$.
There exists a unique $\xi\equiv\xi(\theta_h) \in V\cap H^{3-t}(\Omega)$ such that $a(\xi,\phi) = \widehat{b}(R\theta_h,\phi)$ for all $\phi \in V$ and $\|\xi\|_{H^{3-t}(\O)} \lesssim \|\widehat{b}(R\theta_h,\bullet)\|_{H^{-1-t}(\O)}\lesssim 1$. The last inequality follows from Lemma~\ref{lemma:global_bounds_nse_new}.b and the boundedness of $R \in \{{\rm id},  I_\M, JI_\M\}$ from Lemma~\ref{lem:R}. 
Since $I_h=\rm{id}$ (resp. $I_h=I_{\rm C}$) for Morley/dG (resp. $C^0$IP), Lemma~\ref{lem:equivalence} (resp. Remark~\ref{icimz}) and Lemma~\ref{lemma:interpoltion_IM}.d establish~\ref{h3} with $\delta_{2} =  \sup \{  \|\xi - I_h I_{\M} \xi \|_{h}: \theta_h\in V_h, \|\theta_h\|_{h}=1 \}\lesssim h_{\rm max}^{1-t}$.

\medskip
\noindent  {Since $\delta_3=0$ for $Q=S=JI_{\M}$ it remains $S={\rm id}$ and $S=I_{\rm M}$ in the sequel to establish {\bf (H3)}. Given  $\theta_h$ and $y_h$ in $V_h=X_h=Y_h$ of norm one, define $v_2:=Sy_h \in P_2(\cT)$ and observe $Qy_h=JI_{\rm M}y_h=JI_{\rm M}v_2$ (by $S={\rm id},I_{\rm M}$). Hence with the definition of $\widehat{b}(\bullet,\bullet)$ from \eqref{star}, Lemma~\ref{lemma:gamma_approx_new}.a shows
\begin{equation}\label{eqn.h3}
|\widehat{\nonlin}(R\theta_h,(S - Q)y_h)|=|\widehat{\nonlin}(R\theta_h,v_2-JI_{\rm M}v_2)|\le 2C(t)h_{\max}^{1-t}\trinl u\trinr \|R\theta_h\|_h\|v_2\|_h.
\end{equation}
The boundedness of $R$ and $I_{\rm M}$ and the equivalence of norms show $\|R\theta_h\|_h\|v_2\|_h \lesssim 1$ and so $\delta_{3} \lesssim h_{\rm max}^{1-t}$.} 
}

\medskip \noindent Consequently,  for the three schemes under question and for a sufficiently small mesh-size $h_{\max}$, \eqref{eqn:disinfsup} holds  with $\beta_h\ge \beta_0 \gtrsim 1$.}} 

\smallskip
\noindent 
For $u \in H^2_0(\Omega)$ and $\epsilon >0$, Remark~\ref{icimz} 
establishes \ref{h5} with $\delta_4 < \epsilon$ for all the three schemes. The existence and uniqueness of a discrete solution $u_h$ then follows from Theorem \ref{thm:existence}.

\noindent{{For the Morley/dG/$C^0$IP schemes with $F \in H^{-2}(\O)$, Lemma~\ref{lemma:gamma_approx_new}.a with $v=0$ for $S={\rm id}$  resp. 
$S=I_{\rm M}$, $\|\bullet \|_h\approx\|\bullet \|_{V_h}$ on $V_h$, and the boundedness of $I_\M$ show
\begin{align} \label{gamma:bound}
\|\widehat{\Gamma}(u,u,(S - Q)\bullet)\|_{V_h^\ast} \lesssim 
\begin{cases}
0 \text{ for } S=Q=JI_\M,\\
 h_{\rm max}^{1-t} \text{ for } S={\rm id} \text{ or } I_\M.
\end{cases}
\end{align}
The energy norm error control then follows from Theorem~\ref{thm:apriori}. 


\medskip

\noindent
		For $F \in H^{-r}(\O)$ with $r<2$, the energy norm error estimate \eqref{eqn:apriori_a} with $t=0$ 
 can be established by replacing Lemma~\ref{lemma:gamma_approx_new}.a in the above analysis for $r=2$  by Lemma~\ref{lemma:gamma_approx_new}.b.
 \qed
 }}

\subsection{ Proof of Theorem \ref{thm:apost_ns}}\label{sec:lower}
This subsection  establishes the a priori control in weaker Sobolev norms for the  Morley/dG/$C^0$IP schemes of Subsection \ref{sec:discrete_schemes}.  Given $2-\sigma \le s \le 2$,   and $G \in H^{-s}(\Omega)$ with $\|G\|_{H^{-s}(\Omega)}=1$,  the solution $z$ to the dual problem \eqref{dual_linear} belongs to 
$ V \cap H^{4-s}(\Omega)$ by elliptic regularity. This and Lemma~\ref{lemma:interpoltion_IM}.d provide
\begin{align} \label{eqn:regulardual}
\trinl  z - I_\M z \trinr_{\pw}  &\lesssim h_{\rm max}^{2-s} \|z\|_{H^{4-s}(\Omega)} \lesssim  h_{\rm max}^{2-s} \|G\|_{H^{-s}(\Omega)} 
 =  h_{\rm max}^{2-s}.  
\end{align}
The assumptions in Theorem~\ref{cor:lower} with $X_{s}:= H^{s}(\T)$ {and $z_h := I_h I_{\rm M}z$} are verified to establish Theorem~\ref{thm:apost_ns}.a-e.
The control of the linear terms  in  Theorem~\ref{cor:lower} is identical for the parts $(a)$-$(e)$ and this is discussed first. {The proof starts with a  triangle inequality 
 \begin{equation}\label{eqn.triangle}
 \|u-u_h\|_{H^s(\cT)} \le \|u-Pu_h \|_{H^s(\cT)} + \|Pu_h-u_h\|_{H^s(\cT)}  
 \end{equation}
in the norm ${H^s(\T)}=\prod_{T\in\T} H^s(T)$. 
The Sobolev-Slobodeckii semi-norm over $\Omega$ involves double integrals over $\Omega\times\Omega$ and so is larger   than or equal to the sum of the 
contributions over $T\times T$ for all the triangles $T\in\T$, i.e.,    $ \sum_{T\in\T} |\bullet  |_{H^s(T)}^2\le  |\bullet  |_{H^s(\Omega)}^2$ for any $1<s<2$.
 The definition of $\|\bullet\|_{H^{s}(\T)}$ for $1<s<2$, Lemma~\ref{hctenrich}.f with $t=1$ and $P=JI_{\rm M}$ establish
 \begin{align}\label{eqn.puh}
     \|Pu_h-u_h\|_{H^s(\cT)} & \le \|Pu_h-u_h\|_{H^1(\cT)}+|\nabla_{\pw}(Pu_h-u_h)|_{H^{s-1}(\cT)} \nonumber \\ 
     & \lesssim h_{\max}\|u-u_h\|_h+|\nabla_{\pw}(Pu_h-u_h)|_{H^{s-1}(\cT)}.
 \end{align}
The formal equivalence of the Sobolev–Slobodeckii norm and the norm by interpolation of Sobolev spaces provides for $g:=\nabla_{\pw}(Pu_h-u_h)$, $\theta:=s-1$ and $K \in \T$ that
\begin{equation}\label{eqn.g}
    |g|_{H^\theta(K)} \le C(K, \theta)\|g\|_{L^2(K)}^{1-\theta}|g|_{H^1(K)}^\theta.
\end{equation}
The point is that a scaling argument reveals 
$C(K,\theta)=C(\theta)\approx 1$ is independent of $K \in \cT$ \cite{ccnn2021}. 
The Young's inequality \big($ab\le {a^p}/{p}+{b^q}/{q}$ for $a,b \ge 0$, ${1}/{p}+{1}/{q}=1$\big) leads (for $a=h_K^{2\theta(\theta-1)}\|g\|_{L^2(K)}^{2(1-\theta)} $, $b= h_K^{2\theta(1-\theta)}|g|_{H^1(K)}^{2\theta}$, $p={1}/({1-\theta})$, and $q={1}/{\theta}$) to
\begin{align}
 \sum_{K \in \cT} \|g\|_{L^2(K)}^{2(1-\theta)}|g|_{H^1(K)}^{2\theta}
 &= \sum_{K \in \cT} h_K^{2\theta(\theta-1)}\|g\|_{L^2(K)}^{2(1-\theta)} h_K^{2\theta(1-\theta)}|g|_{H^1(K)}^{2\theta}\\
 & \le \|h_{\cT}^{-\theta}g\|^2_{L^2(\O)}+|h_{\cT}^{1-\theta}g|^2_{H^1(\cT)}.\label{eqn.youngs}
\end{align}~\noeqref{eqn.youngs}
Since $P=JI_{\M}$ and $g=\nabla_{\pw}(Pu_h-u_h)$, the estimates \eqref{eqn:ortho}-\eqref{eqn:err} with $t=\theta$ show $\|h_{\cT}^{-\theta}g\|^2_{L^2(\O)} \lesssim h_{\rm max}^{1-\theta}\|u- u_h\|_h.$ This and  Lemma~\ref{hctenrich}.f for $t=2$ provide 
\begin{equation}\label{eqn:triangle1}
\|h_{\cT}^{-\theta}g\|^2_{L^2(\O)} +|h_{\cT}^{1-\theta}g|^2_{H^1(\cT)} \lesssim h_{\rm max}^{1-\theta}\|u-u_h\|_h.
\end{equation}
The combination of \eqref{eqn.g}-\eqref{eqn:triangle1} reveals
$|\nabla_{\pw}(Pu_h-u_h)|_{H^{s-1}(\cT)} \lesssim h_{\rm max}^{2-s}\|u-u_h\|_h$ and, with \eqref{eqn.puh},
\begin{equation} \label{eqn:uh-puh}
\|Pu_h  - u_h \|_{H^s(\cT)} \lesssim h_{\rm max}^{2-s} \|u - u_h\|_{h}.
\end{equation}
}

   \noindent This leads to the assertion for one term on the right-hand side of \eqref{eqn.triangle}. To estimate the second term, $\|u-Pu_h\|_{H^s(\cT)}= G(u-Pu_h)$, we verify the assumptions in Theorem~\ref{thm:lower}.  The hypothesis~\ref{h1hat} for the Morley/dG/$C^0$IP schemes is derived in~\cite[Lemma 6.6]{ccnnlower2022} for an equivalent norm (by Lemma~\ref{lem:equivalence}) and Lemma~\ref{lem:R} for $R=JI_\M$. The conditions~\eqref{quasioptimalsmootherP}-\eqref{quasioptimalsmootherS} also follow from Lemma~\ref{lem:R} as stated in the proof of Theorem~\ref{thm:error_apriori_energy}. {Hence, Theorem~\ref{thm:lower} applies and provides	\begin{align}\label{eqn.lowerorder}
	\|u-Pu_h\|_{H^s(\cT)}=G(u-Pu_h) \lesssim   &\|u - u_h \|_{h}  ( \| z - z_h \|_{h}  + \|u - u_h \|_{h})+ {\Gamma_{\pw}}(u,u,(S-Q)z_h) \nonumber \\
& \quad +
{\Gamma}_{\pw}(Ru_h,Ru_h,Q z_h) -{\Gamma}(Pu_h,Pu_h,Qz_h).
	\end{align}}
Since  $\|\bullet\|_{\dg} \approx \trinl \bullet \trinr_{\pw}$ in $V+\M(\cT)$ (by Lemma~\ref{lem:equivalence}), \eqref{eqn:regulardual} establishes
	\begin{align} \label{eqn:approx_uh}
	\|z - z_h \|_{h} \lesssim h_{\rm max}^{2-s} 
	\end{align}
for the Morley/dG schemes with $I_h={\rm id}$. Remark~\ref{icimz} and  \eqref{eqn:regulardual} establish \eqref{eqn:approx_uh} for the $C^0$IP scheme. {The combination of \eqref{eqn.lowerorder}-\eqref{eqn:approx_uh} reads
\begin{align}\label{eqn.lowerorder1}
	\|u-Pu_h\|_{H^s(\cT)}
 \lesssim   &\|u - u_h \|_{h}  ( h_{\rm max}^{2-s} 
 + \|u - u_h \|_{h})+ {\Gamma_{\pw}}(u,u,(S-Q)z_h) \nonumber \\
& \quad +
{\Gamma}_{\pw}(Ru_h,Ru_h,Q z_h) -{\Gamma}(Pu_h,Pu_h,Qz_h).
	\end{align}
The combination of \eqref{eqn.triangle}, \eqref{eqn:uh-puh}, and \eqref{eqn.lowerorder1} verifies, for each of the Morley/dG/$C^0$IP schemes, that}
\begin{align} \label{new:inter}
	\| u - u_h \|_{H^s(\cT)} & \lesssim  \|u - u_h \|_{h}  ( h_{\rm max}^{2-s}  + \|u - u_{h} \|_{h} )  + {\Gamma}_{\pw}(u,u,(S-Q)z_h) \nonumber \\
	& \quad +
{\Gamma}_{\pw}(Ru_h,Ru_h,Q z_h) -{\Gamma}(Pu_h,Pu_h,Q z_h).
	\end{align}

\noindent{\it Proof of Theorem \ref{thm:apost_ns}.a.}
The difference $ {\Gamma}_{\pw}(Ru_h,Ru_h,Qz_h) - \Gamma(Pu_h,Pu_h, Qz_h)$ vanishes for $P=R=JI_\M$ in each of the three schemes.  The terms $ {\Gamma}_{\pw}(u,u,(S-Q)z_h)$  in  \eqref{new:inter} are estimated below for $S \in \{ {\rm id}, I_{\rm M}, \smooth \}$ {and $F \in H^{-2}(\O)$}.  Note that  $Qz_h:=J z_h=J I_\M z_h$ holds for the Morley scheme. 
{{For $S ={\rm id}$ and each of the three discretizations, Lemma~\ref{lemma:gamma_approx_new}.a with $v_2=z_h$ provides
	\begin{align*}
	\Gamma_{\pw}(u,u,(1 - \smooth)z_h ) \lesssim {h}^{1-t}_{\max} \trinl u\trinr^2 \|z- z_h \|_{h} \lesssim h_{\rm max}^{3-t-s}
	\end{align*}
with \eqref{eqn:approx_uh} in the last step.  
	For $S  = I_{\rm M}$,  Lemma  \ref{lemma:gamma_approx_new}.a with $v_2  = I_{\rm M}z_h$ and $\| \bullet \|_{\widehat{V}} \approx \|\bullet \|_{h}$ reveal
	\begin{align*}
	\Gamma_{\pw}(u,u,(1 -J  )I_{\rm M}z_h) \lesssim h_{\rm max}^{1-t} \trinl  u \trinr^2  \| z- I_{\rm M}z_h \|_{h}. 
	\end{align*}
A triangle inequality and Lemma~\ref{lem:R} for $R=I_\M$ provide
	 $\|z- I_{\rm M}z_h  \|_{h}  \le (1+\Lambda_{\rm R}) \| z - z_h \|_{h} \lesssim h_{\rm max}^{2-s}$
 with ~\eqref{eqn:approx_uh} in the last step. Altogether, we obtain $\Gamma_{\pw}(u,u,(1 -J  )I_{\rm M}z_h) \lesssim h_{\rm max}^{3-t-s} $. 
The aforementioned estimates and \eqref{new:inter}    conclude the proof. \qed}}
%

\noindent{\it Proof of Theorem \ref{thm:apost_ns}.b.} 
 All the terms except the last two  in \eqref{new:inter}  are already estimated in the proof of $(a)$.    For $P=Q=JI_\M$ and $R=I_\M$, elementary algebra reveals
\begin{align}
 {\Gamma}_{\pw}&(Ru_h,Ru_h,Qz_h) - \Gamma(Pu_h,Pu_h, Qz_h)\\& = {\Gamma}_{\pw}((R-P)u_h,Ru_h,Qz_h) +{\Gamma}_{\pw}(Pu_h,(R-P)u_h, Qz_h)\\
	&  = \Gamma_{\rm pw}( (1 - J)I_{\rm M} u_h, I_{\rm M} u_h,JI_{\rm M} z_h ) +  
\Gamma_{\rm pw}( JI_{\rm M} u_h, (1 - J)I_{\rm M} u_h ,JI_{\rm M} z_h )\label{eqn.split2}.
\end{align}
The bound $\trinl \bullet \trinr_{\pw} \le \|\bullet\|_{h}$, a triangle inequality, and Lemma~\ref{lem:R} for $R=I_\M$ result in
\begin{align} \label{eqn:key1}
\trinl u - I_\M u_h\trinr_{\pw} &\le \| u - u_h\|_{h}+\| u_h - I_\M u_h\|_{h} \le (1+ \Lambda_{\rm R}) \| u - u_h\|_{h}
\end{align}
as in Remark~\ref{rem.consequences}. This and Lemma~\ref{hctenrich}.e prove
\begin{align} \label{eqn:key2}
\trinl (1-J) I_\M u_h\trinr_{\pw}  &\lesssim \trinl u - I_\M u_h\trinr_{\pw} \lesssim \| u - u_h\|_{h}.
\end{align}
A triangle inequality and \eqref{eqn:key1}-\eqref{eqn:key2} imply
\begin{align} 
\trinl u-J I_\M u_h\trinr_{\pw}  & \le \trinl u - I_\M u_h\trinr_{\pw} + \trinl (1-J) I_\M u_h\trinr_{\pw} \label{eqn:key3}
 \lesssim   \| u - u_h\|_{h}.
\end{align}
As in Remark~\ref{rem.consequences}, analogous arguments plus \eqref{eqn:approx_uh} provide
\begin{align} \label{eqn:key4}
\trinl z - I_\M z_h\trinr_{\pw} & \le (1+ \Lambda_{\rm R}) \| z - z_h\|_{h} \text{ and } \trinl z - J I_\M z_h \trinr_{\pw}   \lesssim \| z - z_h\|_{h} \lesssim h_{\rm max}^{2-s}.
\end{align}

\medskip

\noindent {Lemma~\ref{lemma:gamma_approx_new}.c and the equivalence $\|\bullet\|_h \approx \trinl \bullet \trinr_{\pw}$ in $V+\M(\cT)$ (by Lemma~\ref{lem:equivalence}) control the first term on the right-hand side of \eqref{eqn.split2}, namely
\begin{equation}
\Gamma_{\rm pw}( (1 - J)I_{\rm M} u_h, I_{\rm M} u_h,JI_{\rm M} z_h ) \lesssim h_{\rm max}^{1-t}\trinl u-I_{\rm M}u_h\trinr_{\rm pw}\trinl I_{\rm M}u_h\trinr_{\rm pw}\trinl JI_{\rm M}z_h \trinr.
\end{equation}
The first factor is bounded in \eqref{eqn:key1}. Since the dual solution $z \in V \cap H^{4-s}(\O)$ is bounded in $V=H^2_0(\O)$ (even in $H^{4-s}(\O)$), \eqref{eqn:key4} reveals $\trinl JI_{\rm M}z_h \trinr\lesssim 1.$ Since $\trinl I_{\rm M}u_h \trinr_{\pw}\lesssim 1$ as well, we infer 
\begin{align}\label{eqn.split2.1}
    \Gamma_{\rm pw}( (1 - J)I_{\rm M} u_h, I_{\rm M} u_h,JI_{\rm M} z_h ) \lesssim h_{\rm max}^{1-t}\| u-u_h\|_h.
    \end{align}
The anti-symmetry  of $\Gamma_{\pw}(\bullet,\bullet,\bullet)$ with respect to the second and third variables allows the application of Lemma~\ref{lemma:gamma_approx_new}.a to the second term on the right-hand side of \eqref{eqn.split2}, namely
\begin{equation}\label{eqn.split2.2}
\Gamma_{\rm pw}( JI_{\rm M} u_h, (1 - J)I_{\rm M} u_h ,JI_{\rm M} z_h )\lesssim h_{\rm max}^{1-t}\trinl JI_{\rm M} u_h\trinr\trinl u-I_{\rm M}u_h\trinr_{\rm pw}\trinl JI_{\rm M} z_h\trinr {\lesssim} h_{\rm max}^{1-t} \| u - u_{h} \|_{ h}.
\end{equation}
The last step employed \eqref{eqn:key1} and the boundedness $\trinl JI_{\rm M} u_h\trinr+\trinl JI_{\rm M} z_h\trinr \lesssim 1$ as well. The combination of the previously displayed estimate with \eqref{eqn.split2.1} and  \eqref{eqn.split2} leads to
\begin{align}\label{eqn:combine}
{\Gamma}_{\pw}(I_\M u_h, I_\M u_h, JI_\M z_h) - \Gamma(JI_\M u_h, J I_\M u_h, J I_\M z_h) {\lesssim} h_{\rm max}^{1-t} \| u - u_{h} \|_{ h}.
\end{align}
The estimates of ${\Gamma}_{\pw}(u,u,(S-Q)z_h)$ from the above proof of Theorem~\ref{thm:apost_ns}.a, \eqref{eqn:combine}, and \eqref{new:inter}  conclude the proof.}

\medskip
\noindent {\it Proof of Theorem \ref{thm:apost_ns}.c}}. 
Since $u_h= u_{\rm M} = I_{\rm M}u_{\rm M}$,  and $P=Q=J$,  for the Morley FEM, 
 the difference
${\Gamma}_{\pw}( u_\M,   u_\M,J I_\M z_h) -{\Gamma}(J  u_\M, J  u_\M, J I_\M z_h)$ is controlled by~\eqref{eqn:combine}.  This, \eqref{new:inter}, and the estimates from the above proof of Theorem~\ref{thm:apost_ns}.a conclude the proof.  
\qed

\medskip
{
\noindent {\it Proof of Theorem \ref{thm:apost_ns}.d}}.
The choice $t:=s-1>0$ 
  in the estimates in   $(a)$-$(c)$
  concludes the proof. \qed }

  \medskip
  {
\noindent {\it Proof of Theorem \ref{thm:apost_ns}.e}.
\noindent For $F \in H^{-r}(\O)$ with $r<2$,  the lower-order error estimates can be established with $t=0$ by the substitution of the respective assertions of Lemma~\ref{lemma:gamma_approx_new}.a,c by Lemma~\ref{lemma:gamma_approx_new}.b,d.}
\qed
  
\begin{rem}[weaker Sobolev norm estimates with $R={\rm id}$]
For the dG/$C^0$IP schemes, \eqref{eqn.split2} involves in particular $\Gamma_{\rm pw} ((1-J I_\M)u_h,u_h,JI_{\rm M}z_h) $
and improved estimates are unknown. 
\end{rem}
\subsection{ WOPSIP scheme}\label{sec:wopsip}

Recall $a_h(\bullet,\bullet) = a_{\rm pw}(\bullet,\bullet) + {\mathsf c}_h(\bullet,\bullet)$, $P  = Q = JI_{\rm M}$ and ${\mathsf c}_h(\bullet,\bullet)$ from Table~\ref{tab:spaces}, $a_{\rm pw}(\bullet,\bullet)$ from~\eqref{eqccnerwandlast1234a}, and let $u_h  \equiv u_{\rm P}$ in this subsection. The norm $\| \bullet \|_{\rm P}$ from \eqref{eqn:norm_wopsip}  for the WOPSIP scheme is {\it not} equivalent to $\|\bullet\|_h$ from \eqref{eqn:jh_defn} and hence \ref{h1} and \ref{h1hat} do {\it not} follow. This does not prevent rather analog a priori error estimates.

\begin{thm}[a priori WOPSIP]\label{error control}
{{Given a regular root  $u \in V$ to \eqref{NS_weak} with $F \in H^{-2}(\Omega)$, $2-\sigma \le s<2$, and $0 <t<1$}},  there exist $\epsilon, \delta > 0$ such that, for any $\displaystyle\cT\in\bT(\delta)$, the unique discrete solution $u_h \in V_h$ to \eqref{eqn:DWP_Vh} with 
$ \| u-u_h\|_{\rm P} \le \epsilon$  for the WOPSIP scheme satisfies $(a)$-$(e)$. 
\begin{align}
&(a)  \; \|u - u_{h} \|_{\rm P}  \lesssim \trinl u  - I_{\rm M} u \trinr_{\rm pw} +  \trinl h_{\cT} I_{\rm M} u \trinr_{\pw} + \left\{
\begin{array}{c l}
{}0 & \text{for } S = JI_{\rm M},\\
{}{{h_{\rm max}^{1-t}}} & \text{for } S = \mathrm{id} \text{ or } I_{\rm M}.
\end{array}
\right. 
\end{align}
{
Moreover, if $u \in V \cap H^{4-r}(\Omega)$ with $F \in H^{-r}(\Omega)$ for $2-\sigma \le r, s \le 2$, then 
\begin{align}
(b) & \;  \|u - u_{h}\|_{H^s(\cT)}   \lesssim \| u - u_h \|_{\rm P} ({{h_{\rm max}^{2-s}}} + \|u - u_h \|_{\rm P}) + 
\left\{ \begin{array}{c l}
0 & \text{with } S = JI_{\rm M},\\
{{h_{\rm max}^{3-t-s}}} & \text{for } S = \mathrm{id} \text{ or } I_{\rm M} 
\end{array} 
 \right.
 \text{ for } R :=JI_\M.
\end{align} 
\begin{align}
(c) &\;  \|u - u_{h}\|_{H^s(\cT)}   \lesssim \| u - u_h \|_{\rm P} ({{h_{\rm max}^{\min\{2-s,1-t\}}}} + \|u - u_h \|_{\rm P}) + 
\left\{ \begin{array}{c l}
0 & \text{for } S = JI_{\rm M},\\
{{h_{\rm max}^{3-t-s}}} & \text{for } S = \mathrm{id} \text{ or } I_{\rm M}
\end{array}
 \right. 
 \text{ for } R := I_\M.
\end{align}
(d) For $\sigma < 1 $, whence $1<s<2$, and the WOPSIP scheme with $R \in \{ I_\M, JI_\M \}$,
  \[
  \| u -  u_h \|_{H^s(\cT)}   \lesssim  \| u - u_{h} \|_{\rm P} \left( h_{\rm max}^{2-s}+\| u - u_{h} \|_{\rm P} \right) +\begin{cases}
		0 \textrm{ for }\quad S = \smooth,\\
		{{h_{\rm max}^{4-2s}}} \textrm{ for }\quad S = {\rm id} \text{ or } I_{\rm M}.
  \end{cases}
  \]
  (e)  If $F \in H^{-r}(\O)$ for some $ r<2$, then $(a)$-$(c)$ hold with $t=0$.}
\end{thm}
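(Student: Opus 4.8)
\textbf{Proof proposal for Theorem~\ref{error control}.}
The plan is to adapt the arguments of Theorems~\ref{thm:error_apriori_energy} and~\ref{thm:apost_ns} to the WOPSIP setting, where the obstruction is that $\|\bullet\|_{\rm P}$ is not equivalent to $\|\bullet\|_h$, so that hypotheses~\ref{h1} and~\ref{h1hat} are unavailable. First I would establish the abstract stability ingredients. The conditions \eqref{quasioptimalsmootherP}--\eqref{quasioptimalsmootherS} for $P=Q=JI_\M$ and $R,S\in\{\mathrm{id},I_\M,JI_\M\}$ in the norm $\|\bullet\|_{\rm P}$ follow from Lemma~\ref{lem:R}. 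The role of~\ref{h1} is taken instead by Lemma~\ref{hctenrich}.g: for $x_h\in P_2(\cT)$ one has $a_{\rm pw}(x_h,(1-J I_\M)(\bullet))$-type control and, using $c_{\rm P}(Jy_\M,\bullet)=0$ together with Remark~\ref{ortho}, $a_h(x_h,y_h)-a_{\rm pw}(Px_h,Qy_h)$ is bounded by $\|x_h-Px_h\|_{\rm P}\|y_h\|_{\rm P}$ up to the extra penalty terms; the precise estimate mirrors the WOPSIP analysis of~\cite{ccnnlower2022} (Section~10 there) and yields the discrete inf-sup condition of Theorem~\ref{dis_inf_sup_thm} with $\beta_0\gtrsim1$, using $\delta_2\lesssim h_{\rm max}^{1-t}$ (from Lemma~\ref{hctenrich}.g and Lemma~\ref{lemma:global_bounds_nse_new}.b, solving $a(\xi,\bullet)=\widehat b(R\theta_h,\bullet)$ as in the proof of Theorem~\ref{thm:error_apriori_energy}) and $\delta_3\lesssim h_{\rm max}^{1-t}$ (from Lemma~\ref{lemma:gamma_approx_new}.a, with the norm equivalence $\|\bullet\|_h\lesssim\|\bullet\|_{\rm P}$ of Lemma~\ref{lem:equivalence} to pass the factor $\|Sy_h\|_h\lesssim\|Sy_h\|_{\rm P}$). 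Then~\ref{h5} follows from Remark~\ref{icimz} with $\delta_4<\epsilon$, and Theorem~\ref{thm:existence} gives the unique discrete solution $u_h$.

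For part $(a)$, the quasi-best approximation argument of Theorem~\ref{thm:apriori} requires the linear quasi-optimality Lemma~\ref{lemma:quas_eq}, which used~\ref{h1}. Instead I would invoke the WOPSIP best-approximation result for the linear biharmonic problem from~\cite{ccnnlower2022} (the analog of Lemma~\ref{lemma:quas_eq} in the $\|\bullet\|_{\rm P}$ norm), which gives $\|u-u_h^*\|_{\rm P}\lesssim\min_{x_h}\|u-x_h\|_{\rm P}$ for the auxiliary linear discrete solution $u_h^*$ to $a_h(u_h^*,\bullet)=G(Q\bullet)$ with $G=F-\Gamma(u,u,\bullet)$. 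The remainder of the proof of Theorem~\ref{thm:apriori} goes through verbatim with $\widehat X=\widehat Y=\widehat V$ and $\|\bullet\|_{\widehat V}=\|\bullet\|_{\rm P}$, giving $\|u-u_h\|_{\rm P}\lesssim\min_{x_h}\|u-x_h\|_{\rm P}+\|\widehat\Gamma(u,u,(S-Q)\bullet)\|_{V_h^*}$; the perturbation term is bounded by Lemma~\ref{lemma:gamma_approx_new}.a (with $v=0$) as $0$ for $S=JI_\M$ and $\lesssim h_{\rm max}^{1-t}$ for $S\in\{\mathrm{id},I_\M\}$, and the min term is estimated by choosing $x_h=I_\M u$ and using $\|u-I_\M u\|_{\rm P}=\trinl u-I_\M u\trinr_{\rm pw}+c_{\rm P}(I_\M u,I_\M u)^{1/2}$, where $c_{\rm P}(I_\M u,I_\M u)^{1/2}\lesssim\trinl h_\cT I_\M u\trinr_{\rm pw}$ by a scaling/inverse estimate as in~\cite{ccnnlower2022} (since $c_{\rm P}(v,v)=0$ for $v\in V$). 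For $F\in H^{-r}(\O)$, $r<2$, replace Lemma~\ref{lemma:gamma_approx_new}.a by~\ref{lemma:gamma_approx_new}.b to get $t=0$.

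For parts $(b)$--$(e)$, the duality argument of Theorem~\ref{cor:lower} needs~\ref{h1hat}; I would replace it by a WOPSIP-specific version, namely $a_h(u_h,z_h)-a_{\rm pw}(Pu_h,Qz_h)\lesssim\|u-u_h\|_{\rm P}\|z-z_h\|_{\rm P}$, again obtained from Lemma~\ref{hctenrich}.g, $c_{\rm P}(Jy_\M,\bullet)=0$, Remark~\ref{ortho}, and the penalty-consistency $c_{\rm P}(v,\bullet)=0$ for $v\in V$ applied to the reference solution; this is the analog of~\cite[Lemma 6.6]{ccnnlower2022}. With this substitute, Theorem~\ref{thm:lower}/\ref{cor:lower} apply (all $\Lambda_{\rm P},\Lambda_{\rm Q},\Lambda_{\rm R},\Lambda_{\rm S}$ finite by Lemma~\ref{lem:R}), giving the estimate in the form \eqref{new:inter} but with $\|\bullet\|_h$ replaced by $\|\bullet\|_{\rm P}$. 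For the term $\|u_h-Pu_h\|_{H^s(\cT)}$ I would reuse \eqref{eqn.puh}--\eqref{eqn:uh-puh}: those estimates only used Lemma~\ref{hctenrich}.f, Young's inequality, and \eqref{eqn:ortho}--\eqref{eqn:err}, and \eqref{eqn:err} holds with $\|\bullet\|_{\rm P}$ in place of $\|v-v_2\|_h$ because $j_h(v_{\rm pw})^2\lesssim c_{\rm P}(v_{\rm pw},v_{\rm pw})$ (noted after \eqref{eqn:norm_wopsip}), yielding $\|Pu_h-u_h\|_{H^s(\cT)}\lesssim h_{\rm max}^{2-s}\|u-u_h\|_{\rm P}$. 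The nonlinear remainder terms $\Gamma_{\pw}(Ru_h,Ru_h,Qz_h)-\Gamma(Pu_h,Pu_h,Qz_h)$ and $\Gamma_{\pw}(u,u,(S-Q)z_h)$ are handled exactly as in the proof of Theorem~\ref{thm:apost_ns}: vanishing for $R=P=JI_\M$ in $(b)$, the splitting \eqref{eqn.split2} plus Lemma~\ref{lemma:gamma_approx_new}.a,c for $R=I_\M$ in $(c)$, using $\|z-z_h\|_{\rm P}\lesssim h_{\rm max}^{2-s}$ from \eqref{eqn:regulardual} together with the first part of Lemma~\ref{hctenrich}.g (since $z_h=I_\M z$, $\|z-I_\M z\|_{\rm P}=\trinl z-I_\M z\trinr_{\rm pw}+c_{\rm P}(I_\M z,I_\M z)^{1/2}\lesssim h_{\rm max}^{2-s}\|z\|_{H^{4-s}}$). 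Part $(d)$ follows by taking $t=s-1>0$, and $(e)$ by substituting Lemma~\ref{lemma:gamma_approx_new}.b,d for~\ref{lemma:gamma_approx_new}.a,c. The main obstacle throughout is re-deriving the two consistency/quasi-optimality substitutes for~\ref{h1} and~\ref{h1hat} in the $\|\bullet\|_{\rm P}$ norm, i.e.\ controlling the extra WOPSIP penalty terms via Lemma~\ref{hctenrich}.g and the orthogonality of $J$; once these are in place, the rest is a routine transcription of the Morley/dG/$C^0$IP proofs with the norm equivalence $\|\bullet\|_h\lesssim\|\bullet\|_{\rm P}$ (but not $\gtrsim$) used only in the "safe" direction.
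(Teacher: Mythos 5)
Your overall strategy -- replace \ref{h1} and \ref{h1hat} by WOPSIP-specific consistency estimates built from Lemma~\ref{hctenrich}.g, the orthogonality of $J$, and $c_{\rm P}(I_\M v,\bullet)=0$, then rerun the abstract machinery with $\|\bullet\|_h\lesssim\|\bullet\|_{\rm P}$ used only in the safe direction -- is exactly the paper's strategy, and your treatment of \ref{h3}--\ref{h5}, the inf-sup condition, and parts $(b)$--$(e)$ is essentially a correct transcription of it. The genuine gap is in part $(a)$. You invoke a clean quasi-best approximation $\|u^*-u_h^*\|_{\rm P}\lesssim\min_{x_h}\|u^*-x_h\|_{\rm P}$ for the linear WOPSIP problem; no such result is available, and it is precisely what fails here. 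The paper's substitute for \ref{h1} (Lemma~\ref{lemma:lem1_wopsip}) holds only for test arguments of the form $x_h=I_\M v$ and carries the irreducible consistency term $\trinl h_\cT I_\M v\trinr_{\pw}$, coming from $a_{\rm pw}(I_\M v,(1-I_\M)v_2)$ in \eqref{eqn:apwvq}; this term is generically of order $h_{\rm max}\trinl u\trinr$ and is not controlled by the best-approximation error. That is why Table~\ref{tab:apriori} labels WOPSIP ``perturbed'' and why $(a)$ is stated with $\trinl u-I_\M u\trinr_{\pw}+\trinl h_\cT I_\M u\trinr_{\pw}$ rather than with $\min_{x_h}\|u-x_h\|_{\rm P}$.

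Your attempt to recover the missing term from the approximation side does not work either: you claim $\|u-I_\M u\|_{\rm P}=\trinl u-I_\M u\trinr_{\pw}+c_{\rm P}(I_\M u,I_\M u)^{1/2}$ with $c_{\rm P}(I_\M u,I_\M u)^{1/2}\lesssim\trinl h_\cT I_\M u\trinr_{\pw}$, but $c_{\rm P}(I_\M u,I_\M u)=0$ identically, since Morley functions are continuous at vertices and have vanishing jumps of the edge-averaged normal derivative (the paper uses exactly this fact in the proof of Lemma~\ref{lemma:lem1_wopsip}). So your argument would actually deliver $\|u-u_h\|_{\rm P}\lesssim\trinl u-I_\M u\trinr_{\pw}+\|\widehat\Gamma(u,u,(S-Q)\bullet)\|_{V_h^*}$, a strictly stronger and unsubstantiated claim. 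The fix is the paper's: run the proof of Lemma~\ref{lemma:quas_eq} with the specific choice $x_h=I_\M u^*$, bound $\alpha_0\|e_h\|_{\rm P}\le\lamw(\trinl u^*-I_\M u^*\trinr_{\pw}+\trinl h_\cT I_\M u^*\trinr_{\pw})$ via Lemma~\ref{lemma:lem1_wopsip}, and carry the $\trinl h_\cT I_\M u\trinr_{\pw}$ perturbation through Theorem~\ref{thm:apriori}. A smaller but related inaccuracy occurs in $(b)$: your proposed substitute $a_h(u_h,z_h)-a_{\rm pw}(Pu_h,Qz_h)\lesssim\|u-u_h\|_{\rm P}\|z-z_h\|_{\rm P}$ cannot hold in this clean form for the same scaling reason; the paper instead uses the identity $a_h(u_h,I_\M z)=a_{\rm pw}(u_h,z)$ and the bound \eqref{gw_3}, which produces an additional additive $\trinl h_\cT I_\M z\trinr_{\pw}\lesssim h_{\rm max}\le h_{\rm max}^{2-s}$ that happens to be absorbable, so the final estimate survives.
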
 

\medskip

\noindent The subsequent lemma  extends~\ref{h1} in the 
analysis of the WOPSIP scheme.

\begin{lem}[variant of~\ref{h1}] \label{lemma:lem1_wopsip}
	There exists a constant $\lamw > 0$ such that any $v \in V$ and $v_2 \in P_2(\cT)$ satisfy 
	$a_{h}(I_{\rm M}v,v_2) - a(v,Qv_2) \le  \lamw\left(\trinl (1 - I_{\rm M}) v\trinr_{\pw} +  \trinl h_{\cT} I_{\rm M} v\trinr_{\pw} \right) \| v_2 \|_{\rm P}.$
\end{lem}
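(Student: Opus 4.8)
The identity to exploit is $a_h = a_{\rm pw} + \mathsf c_h$ with $a_{\rm pw}(v_2,w_2)=\int_\Omega D^2_{\rm pw}v_2:D^2_{\rm pw}w_2\,\mathrm dx$ and $\mathsf c_h=c_{\rm P}$, together with $Q=JI_{\rm M}$. The plan is to split
\[
a_h(I_{\rm M}v,v_2)-a(v,Qv_2)=\big(a_{\rm pw}(I_{\rm M}v,v_2)-a_{\rm pw}(v,v_2)\big)+\big(a_{\rm pw}(v,v_2)-a_{\rm pw}(v,JI_{\rm M}v_2)\big)+c_{\rm P}(I_{\rm M}v,v_2),
\]
and estimate the three contributions separately, all against $\|v_2\|_{\rm P}$. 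Here I use that $c_{\rm P}(v,\bullet)=0$ for $v\in V$ (homogeneous boundary data) so the stabilisation term only produces $c_{\rm P}(I_{\rm M}v,v_2)$.

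For the first bracket, a Cauchy--Schwarz inequality gives $a_{\rm pw}(I_{\rm M}v-v,v_2)\le\trinl(1-I_{\rm M})v\trinr_{\rm pw}\trinl v_2\trinr_{\rm pw}\le\trinl(1-I_{\rm M})v\trinr_{\rm pw}\|v_2\|_{\rm P}$, using $\trinl\bullet\trinr_{\rm pw}\le\|\bullet\|_{\rm P}$. For the third bracket, apply Cauchy--Schwarz in the bilinear form $c_{\rm P}$: $c_{\rm P}(I_{\rm M}v,v_2)\le c_{\rm P}(I_{\rm M}v,I_{\rm M}v)^{1/2}c_{\rm P}(v_2,v_2)^{1/2}\le c_{\rm P}(I_{\rm M}v,I_{\rm M}v)^{1/2}\|v_2\|_{\rm P}$, so it remains to bound $c_{\rm P}(I_{\rm M}v,I_{\rm M}v)^{1/2}$ by $\trinl(1-I_{\rm M})v\trinr_{\rm pw}+\trinl h_{\cT}I_{\rm M}v\trinr_{\rm pw}$ (up to a constant). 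Since the jump and normal-derivative-jump contributions of $v$ itself vanish, $\jump{I_{\rm M}v}_E(z)=\jump{I_{\rm M}v-v}_E(z)$ and $\fint_E\jump{\partial(I_{\rm M}v)/\partial\nu_E}=\fint_E\jump{\partial(I_{\rm M}v-v)/\partial\nu_E}$; by the structure of $c_{\rm P}$ in \eqref{eqn:cp} these are exactly $h_E^{-4}$- and $h_E^{-2}$-weighted and can be absorbed into $j_h(I_{\rm M}v)^2$ times $h_{\rm max}^{-2}$-type factors — more precisely, a trace/scaling estimate on each edge bounds them by a mesh-weighted $H^2$-type quantity of $I_{\rm M}v$; since $D^2_{\rm pw}I_{\rm M}v=\Pi_0 D^2 v$ on $V$ (Lemma~\ref{lemma:interpoltion_IM}.c) and first-derivative jumps of $I_{\rm M}v$ relate to $\trinl(1-I_{\rm M})v\trinr_{\rm pw}$ while the pointwise vertex values of $I_{\rm M}v-v$ are controlled by standard Morley interpolation estimates, one arrives at $c_{\rm P}(I_{\rm M}v,I_{\rm M}v)^{1/2}\lesssim\trinl(1-I_{\rm M})v\trinr_{\rm pw}+\trinl h_{\cT}I_{\rm M}v\trinr_{\rm pw}$ after collecting the mesh powers.

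The middle bracket $a_{\rm pw}(v,(1-JI_{\rm M})v_2)$ is where the orthogonality of $J$ enters: by Remark~\ref{ortho}, $a_{\rm pw}(v_2,(1-J)w_{\rm M})=0$ for $v_2\in P_2(\cT)$, and more usefully here $a_{\rm pw}(w_2,(1-I_{\rm M})w_2')$-type integral-mean identities from Lemma~\ref{lemma:interpoltion_IM}.c let me insert $\Pi_0$: write $a_{\rm pw}(v,(1-JI_{\rm M})v_2)=\int_\Omega D^2_{\rm pw}v:(1-\Pi_0)D^2_{\rm pw}(1-JI_{\rm M})v_2\,\mathrm dx$ up to a vanishing $\Pi_0$-term (using $D^2_{\rm pw}JI_{\rm M}v_2$ against $\Pi_0 D^2 v$ and the mean properties), then Cauchy--Schwarz and the Poincaré-type estimate $\|(1-\Pi_0)D^2_{\rm pw}(\ldots)\|$ bounded via Lemma~\ref{hctenrich}.g by $h_{\rm max}$ times $\min_{v'\in V}\|v_2-v'\|_{\rm P}\le\|v_2\|_{\rm P}$, which contributes a harmless $h_{\rm max}\|v_2\|_{\rm P}$ absorbable into the $\trinl h_{\cT}I_{\rm M}v\trinr_{\rm pw}$-type term or simply bounded by $\|v_2\|_{\rm P}$.

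\textbf{Main obstacle.} The delicate point is the stabilisation term: $c_{\rm P}$ carries the large negative powers $h_E^{-4}$ and $h_E^{-2}$, so bounding $c_{\rm P}(I_{\rm M}v,I_{\rm M}v)$ by an $H^2$-type quantity of $v$ requires that the jumps of $I_{\rm M}v$ (and of its normal derivative) vanish to sufficiently high order relative to the mesh — the vertex continuity and edge-midpoint normal-derivative continuity of the Morley space give exactly the cancellations needed, but one must track the interpolation error orders carefully (a factor $h_{\cT}$ on $I_{\rm M}v$ and a jump controlled by $\trinl(1-I_{\rm M})v\trinr_{\rm pw}$) to land precisely on the right-hand side claimed, rather than a weaker bound. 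I expect this bookkeeping — and invoking the appropriate trace inequalities on the HCT subtriangulation — to be the technical heart, whereas the first and middle brackets are routine applications of Cauchy--Schwarz, Lemma~\ref{lemma:interpoltion_IM}, and Lemma~\ref{hctenrich}.
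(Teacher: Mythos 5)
Your three-term decomposition coincides with the paper's, and your Cauchy--Schwarz bound for the first bracket is exactly the paper's step; but the other two terms are mishandled, and as written the argument does not prove the lemma. For the stabilisation term you propose Cauchy--Schwarz followed by an estimate of $c_{\rm P}(I_{\rm M}v,I_{\rm M}v)^{1/2}$ via trace and scaling arguments, which you call the technical heart. In fact $c_{\rm P}(I_{\rm M}v,\bullet)=0$ identically: $I_{\rm M}v\in\M(\cT)$ is continuous at interior vertices and vanishes at boundary vertices, so every vertex jump entering \eqref{eqn:cp} is zero, and the normal derivative of a piecewise quadratic is affine on each edge, so its edge average equals its midpoint value, which is continuous (resp.\ zero) by the Morley degrees of freedom. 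The paper disposes of this term in one line. Your sketched alternative would not close on its own: against the weights $h_E^{-4}$ and $h_E^{-2}$ no generic trace/scaling estimate yields an $H^2$-type bound --- only the exact cancellation, which you gesture at but never commit to, saves the term.

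The more serious gap is the middle bracket $a_{\rm pw}(v,(1-JI_{\rm M})v_2)$. Your claim that the $\Pi_0$-part vanishes is false. Splitting $1-JI_{\rm M}=(1-I_{\rm M})+(1-J)I_{\rm M}$ and using $D^2_{\pw}I_{\rm M}=\Pi_0D^2$ (Lemma~\ref{lemma:interpoltion_IM}.c) together with Remark~\ref{ortho} gives
\[
a_{\rm pw}(v,(1-JI_{\rm M})v_2)=a_{\rm pw}(I_{\rm M}v,(1-I_{\rm M})v_2)+a_{\rm pw}((1-I_{\rm M})v,(1-J)I_{\rm M}v_2),
\]
and the first summand --- precisely the $\Pi_0$-contribution you discard --- is the sole source of the term $\trinl h_\cT I_{\rm M}v\trinr_{\pw}$ in the asserted estimate; the paper bounds it by pairing $h_\cT I_{\rm M}v$ against $h_\cT^{-1}(1-I_{\rm M})v_2$ and invoking Lemma~\ref{hctenrich}.g for $\trinl h_\cT^{-1}(1-I_{\rm M})v_2\trinr_{\pw}+\trinl(1-J)I_{\rm M}v_2\trinr_{\pw}\lesssim\|v_2\|_{\rm P}$. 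Moreover your proposed outcome for this bracket, a ``harmless $h_{\rm max}\|v_2\|_{\rm P}$'' or ``simply bounded by $\|v_2\|_{\rm P}$'', carries no $v$-dependent prefactor, so it is not of the form the lemma claims and would be useless in its applications (in the WOPSIP inf-sup and quasi-optimality proofs the prefactor $\trinl(1-I_{\rm M})v\trinr_{\pw}+\trinl h_\cT I_{\rm M}v\trinr_{\pw}$ must be small); note also that $h_{\rm max}\trinl I_{\rm M}v\trinr_{\pw}\ge\trinl h_\cT I_{\rm M}v\trinr_{\pw}$, so the absorption you invoke runs in the wrong direction.
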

\begin{proof} Note that ${\mathsf c}_{h}(I_{\rm M}v,v_2) = 0$  for $v \in V$ and $v_2 \in P_2(\cT)$ from Table~\ref{tab:spaces} and the definition of $\M(\cT)$.  Utilize this in 
	$a_h(\bullet,\bullet) = a_{\rm pw}(\bullet,\bullet) + {\mathsf c}_h(\bullet,\bullet)$ to infer
	\begin{align} \label{eqn:apwsplitmain}
	a_{h}(I_{\rm M}v,v_2) - a(v,Qv_2) = a_{\rm pw}((I_{\rm M}-1)v ,v_2) + a_{\rm pw}(v,  (1- Q)v_2).
	\end{align}
		The boundedness of $a_{\rm pw}(\bullet,\bullet)$ and $\trinl \bullet \trinr_{\pw} \le \| \bullet \|_{\rm P}$ immediately imply 
		\begin{align} \label{eqn:apwvv2}
		a_{\rm pw}((1 - I_{\rm M})v,v_2) \le \trinl (1 - I_{\rm M})v \trinr_{\rm pw} \|v_{2} \|_{\rm P}.
		\end{align}
Since $a_{\rm pw}((1 - I_{\rm M})v,(1 - I_{\rm M})v_2) = 0 = a_{\rm pw}(I_{\rm M}v,(1 - J)I_{\rm M}v_2)$ from Lemma~\ref{lemma:interpoltion_IM}.c and Remark~\ref{ortho},
	\begin{align}
	a_{\rm pw}(v, (1 - Q)v_2) ={}& a_{\rm pw}(v,(1 - I_{\rm M})v_2) + a_{\rm pw}(v, (1 - J)I_{\rm M}v_2) \\
	={}& a_{\rm pw}(I_{\rm M}v,(1 - I_{\rm M})v_2) + a_{\rm pw}((1 - I_{\rm M})v,(1 - J)I_{\rm M}v_2) \\
	\le{}& \trinl h_{\cT} I_{\rm M}v \trinr_{\pw} \trinl h_{\cT}^{-1} (1 - I_{\rm M})v_2 \trinr_{\pw} + \trinl (1 - I_{\rm M})v \trinr_{\pw} \trinl (1 - J)I_{\rm M}v_2 \trinr_{\rm pw}. \label{eqn:apwvq}
	\end{align}
Since {Lemma~\ref{hctenrich}.g} provides $\trinl h_{\cT}^{-1} (1 - I_{\rm M})v_2 \trinr_{\pw} + \trinl (1 - J)I_{\rm M}v_2 \trinr_{\rm pw} \lesssim \|v_2 \|_{\rm P}$, this proves
\begin{equation}\label{eqn.wopsip}
a_{\rm pw}(v, (1- Q)v_2) \lesssim (\trinl h_{\cT} I_{\rm M}v \trinr_{\pw}  + \trinl (1 - I_{\rm M})v \trinr_{\pw})\|v_2 \|_{\rm P}.
\end{equation}
The combination of \eqref{eqn:apwsplitmain}-\eqref{eqn.wopsip} concludes the proof. 
\end{proof}

\noindent{\it  Proof of \ref{h3}-\ref{h5} for the WOPSIP scheme}.
\noindent {{{For a regular root $u \in V$}} to~\eqref{NS_weak} and any $\theta_h \in P_2(\cT)$ with $\|\theta_{h}\|_{\rm P} = 1$, {Lemma~\ref{lemma:global_bounds_nse_new}.b, $\trinl \bullet \trinr_{\pw} \le \|\bullet \|_{\rm P}$, and Lemma~\ref{lem:equivalence} }lead to  $\widehat{b}(R\theta_h,\bullet) \in H^{-1-t}(\Omega)$ for {$R\in\{{\rm id}, I_\M, J I_\M\}$}.  Therefore,  there exists a unique $\xi\equiv \xi(\theta_h) \in V\cap H^{3-t}(\Omega)$ with $\|\xi\|_{H^{3-t}(\O)} \lesssim 1$ such that $a(\xi,\phi) = \widehat{b}(R\theta_h,\phi)$ for all $\phi \in V$.   
Since $I_h={\rm id}$ and 
$\| \bullet \|_{\rm P}=\trinl \bullet \trinr_{\pw}$ in $V+\M(\cT)$ from \eqref{eqn:norm_wopsip}, Lemma~\ref{lemma:interpoltion_IM}.d leads to~\ref{h3} with $\delta_{2} =  \sup \{ \|\xi - I_hI_{\M} \xi \|_{\rm P}: \theta_h\in P_2(\cT), \|\theta_h\|_{\rm P}=1\}\lesssim h_{\rm max}^{1-t}$.  
\medskip

\noindent The proof of {\bf (H3)} starts as in \eqref{eqn.h3} and concludes $\delta_3\lesssim h_{\rm max}^{1-t}$ from 
$\|\bullet\|_h \lesssim \|\bullet\|_{\rm P}$  by Lemma~\ref{lem:equivalence}.

\medskip \noindent {The hypothesis \ref{h5} with $\delta_4 =\|u-x_h\|_{\rm P} < \epsilon$ follows from Remark~\ref{icimz}.}} \qed

\medskip

\noindent {\emph{Proof of discrete inf-sup condition.}}
The  proof of $\beta_0 \gtrsim  1$  in \eqref{eqn:disinfsup} follows also for  the WOPSIP scheme the above lines until~\eqref{eqn:split_axh} with $\xi := A^{-1}(\widehat{\nonlin}(\rop x_h,\bullet)|_Y)\in X$. {Recall that \eqref{dis_Ah_infsup} leads to $x_h +\xi_h \in P_2(\cT)$ and then to some $\phi_h \in P_2(\cT)$ with $\|\phi_h\|_{\rm P}=1$ and $\alpha_h \| x_h  + \xi_h \|_{\rm P} = a_{h}(x_h+\xi_h,\phi_h);$ this time $\epsilon=0$ can be neglected.} An alternative split reads
\begin{align} \label{eqn:wopsip_is_1}
\alpha_h \| x_h  + \xi_h \|_{\rm P} = a_{h}(x_h,\phi_h) + a_h(\xi_h,\phi_h)  - a(\xi,Q \phi_h) + a(\xi,Q\phi_h).
\end{align}
Lemma~\ref{lemma:lem1_wopsip}, $\xi_h  = I_{\rm M}\xi$, and $\trinl (1 - I_{\rm M}) \xi \trinr_{\pw}  \lesssim \delta_2{\lesssim h_{\rm max}^{1-t}}$ from~\ref{h3} provide
\begin{align} \label{eqn:wopsip_is_2}
a_h(\xi_h,\phi_h)  - a(\xi,Q \phi_h) \lesssim \delta_2 + \trinl h_{\cT} I_{\rm M} \xi\trinr_{\pw}.
\end{align}
The arguments in ~\eqref{infsup.t3} lead to $a(\xi,Q\phi_h) \le \widehat{\nonlin}(Rx_h,S\phi_h)+\delta_3$.  The combination of this with \eqref{eqn:wopsip_is_1}-\eqref{eqn:wopsip_is_2} provides
\begin{align} \label{eqn:nstep}
\hspace{-1cm} \| x_h  + \xi_h \|_{\rm P} {}\lesssim a_{h}(x_h,\phi_h) + \widehat{\nonlin}(\rop x_h,S\phi_h) + \delta_2 + \delta_{3} +\trinl h_{\cT} I_{\rm M} \xi\trinr_{\pw} .
\end{align} 
Replace~\eqref{eqn:pstep} by~\eqref{eqn:nstep} and apply the arguments thereafter to establish the stability condition~\eqref{eqn:disinfsup} with 
$\beta_0 : = \alpha_h \widehat{\beta} - (\lamw + \alpha_h) \delta_2 - \delta_{3} - \lamw \trinl h_{\cT} I_{\rm M} \xi\trinr_{\pw}$ for some $\lamw \lesssim 1.$
\qed

\medskip
\noindent 
{\emph{Proof of 
existence and uniqueness of the discrete solution.}} The analysis follows the proof of Theorem~\ref{thm:existence} verbatim  until \eqref{nonlin_exp}.   Instead of  \ref{h1},  Lemma~\ref{lemma:lem1_wopsip} and $x_h = I_{\rm M}u$ in~\ref{h5} control the first two terms on the right-hand side of~\eqref{nonlin_exp}, namely
\begin{align}
a_h(x_h,y_h) - a(u,Qy_h) \le  \lamw (\delta_4 + \trinl h_{\cT} I_{\rm M} u \trinr_{\pw}).
\end{align}
The remaining steps follow those of the proof of Theorem~\ref{thm:existence} with \eqref{defn_delta} replaced by 
\begin{align}
\epsilon_0 &:= \beta_1^{-1} \big( (  \lamw + (1 + \lamr)(\|R\|\|S\| \trinl I_{\rm M}u \trinr_{\rm pw} + \|Q\| \|u \|_{X}) \|\widehat{\Gamma}\| )   \delta_4 \\
& \qquad + \lamw \trinl h_{\cT} I_{\rm M} u \trinr_{\pw} + \trinl I_{\rm M}u \trinr_{\rm pw} \delta_{3}/2 \big).  \qquad \qquad \qquad \qquad \qquad \qquad \qquad \qquad \qquad \qed
\end{align}

%

\medskip
\noindent{\it Proof of Theorem~\ref{error control}.a.} Recall from Lemma~\ref{lemma:quas_eq} that  $u^\ast \in X$ and $G(\bullet)=a(u^*,\bullet) \in Y^*$, $u_h^\ast \in X_h$ and $a_h(u_h^*,\bullet) =G(Q \bullet)\in Y_h^*$.  In the proof of Lemma~\ref{lemma:quas_eq}, set $x_h := I_{\rm M}u^\ast$ so that Lemma~\ref{lemma:lem1_wopsip} implies 
\begin{align}
\alpha_0 \| e_h \|_{\rm P} \le a_h(x_h,y_h) - a(u^\ast,Qy_h) \le \lamw (\trinl u^\ast - I_{\rm M} u^\ast \trinr_{\pw} +  \trinl h_{\cT} I_{\rm M} u^\ast \trinr_{\pw}).
\end{align}
Therefore,  $u^\ast$ and $u_ h^\ast$ in Lemma~\ref{lemma:quas_eq} satisfy ${\| u^\ast - u_h^\ast \|_{\rm P} \le C_{\rm qo}' \trinl u^\ast  - I_{\rm M} u^\ast \trinr_{\rm pw} +\alpha_0^{-1} \lamw \trinl h_{\cT} I_{\rm M} u^\ast \trinr_{\pw}}$ for $C_{\rm qo}' = 1 + \alpha_0^{-1} \lamw$.  

\medskip \noindent 
 The  hypotheses \eqref{quasioptimalsmootherP}-\eqref{quasioptimalsmootherS} follow from Lemma~\ref{lem:R}; \ref{h3}-\ref{h5} are already verified. The error estimate in  Lemma~\ref{lemma:quas_eq} applies to Theorem~\ref{thm:apriori} with $x_{h} = I_{\rm M}u$ and $\| \bullet \|_{\rm P} = \trinl \bullet \trinr_{\rm pw}$ in $V + \mathrm{M}(\cT)$ and establishes
\begin{align}
\|u - u_h \|_{\rm P} \lesssim \trinl u  - I_{\rm M} u \trinr_{\rm pw} +  \trinl h_{\cT} I_{\rm M} u \trinr_{\pw} + \| \widehat{\Gamma}(u,u, (S - Q)\bullet) \|_{Y_h^\ast}
\end{align}
For {$u \in V$}, the last displayed estimate, {Lemma~\ref{lemma:gamma_approx_new}.a with $v=0$ for $S={\rm id}$ (resp. with $v_2 \in \M(\cT)$ for $S=I_{\rm M}$), Lemma~\ref{lem:equivalence}}, and the boundedness of $I_{\rm M}$ conclude the proof. 

\medskip \noindent {\it Proof of Theorem~\ref{error control}.b.}
{A triangle inequality leads to\begin{equation}\label{eqn.trianglewopsip}
 \|u-u_h\|_{H^s(\cT)} \le \|u-Pu_h \|_{H^s(\cT)} + \|Pu_h-u_h\|_{H^s(\cT)}=G(u-Pu_h) + \|Pu_h-u_h\|_{H^s(\cT)}
 \end{equation}
with $G(u-Pu_h)=\|u-Pu_h\|_{H^s(\cT)}$owing to a corollary of the Hahn-Banach theorem as in the proof of Theorem~\ref{cor:lower} in the last step. }Since $z \in Y$ solves~\eqref{dual_linear}, elementary algebra with~\eqref{eqn:p}-\eqref{eqn:dp} and $z_h := I_{\rm M}z \in Y_h$ lead to an alternative identity in place of \eqref{est}, namely
\begin{align} 
G(u - Pu_h) ={}& (a+b)(u - Pu_h,z)  ={} a(u,z - Qz_h) + a_{\rm pw}(u_h - Pu_h,z) + b(u - Pu_h,z- Qz_h) \\ \label{eqn:goal_wopsip_1}
&+ b(u - Pu_h,Qz_h) + {\Gamma}_{\pw}(Ru_h,Ru_h,Sz_h) - \Gamma(u,u,Qz_h)
\end{align}
with $a_h(u_h,z_h)  = a_{\rm pw}(u_h,z)$ from Lemma~\ref{lemma:interpoltion_IM}.c in the last step. Since $a_{\rm pw}(I_{\rm M}u, z-  Qz_h) = 0$ from Lemma~\ref{lemma:interpoltion_IM}.c and Remark~\ref{ortho},
\begin{align} \label{gw_1}
a(u,z-  Qz_h)  = a_{\rm pw}(u - I_{\rm M}u,z - Qz_h) \le (1 + \Lambda_{\rm Q}) \trinl u - I_{\rm M}u \trinl_{\rm pw} \trinl z - z_h \trinr_{\pw}
\end{align}
with boundedness of $a_{\rm pw}(\bullet,\bullet)$ and~\eqref{eqn:QS} in the last step.  A triangle inequality shows that 
\begin{align} \label{gw_2}
\trinl u - I_{\rm M} u \trinr_{\rm pw} \le \trinl u - u_h \trinr_{\rm pw} + \trinl u_h - I_{\rm M}u_h \trinr_{\pw} + \trinl I_{\rm M}(u -  u_h) \trinr_{\pw} \lesssim \|u - u_h\|_{\rm P}
\end{align}
with $\trinl \bullet  \trinr_{\pw} \le \| \bullet \|_{\rm P}$, $\| (1- I_{\rm M})u_h \|_{\rm P} \le \lamr \| u - u_h \|_{\rm P}$ from Lemma~\ref{lem:R}, and  $\trinl I_{\rm M}(u - u_h) \trinr_{\pw} \le \trinl u - u_h \trinr_{\pw}$ in the last step.
Arguments analogous to~\eqref{eqn.wopsip} and Lemma~\ref{hctenrich}.g} with $v = u$ lead to~\noeqref{gw_2} 
\begin{align} \label{gw_3}
a_{\rm pw}(u_h - Pu_h,z) \lesssim ( \trinl h_{\cT} I_{\rm M}z \trinr_{\pw} + \trinl (1- I_{\rm M})z \trinr_{\pw}) \|u - u_h \|_{\rm P}.
\end{align}
The combination of~\eqref{eqn:goal_wopsip_1}-\eqref{gw_3} and the estimates for the remaining terms in the right-hand side of~\eqref{eqn:goal_wopsip_1} from the last part (after \eqref{eqn:non-linear_splitting}) of the proof of Theorem~\ref{thm:lower} result in
\begin{align}\label{g.wopsip}
G(u - Pu_h) \lesssim{}& \|u - u_{h}\|_{\rm P}(\trinl z - z_h \trinr_{\pw} + \trinl h_{\cT} z_h \trinr_{\pw} + \| u - u_{h} \|_{\rm P})+ {\Gamma}_{\pw}(u,u, (S- Q)z_h)  \nonumber \\
&+ {\Gamma}_{\pw}(Ru_h,Ru_h,Qz_h) - \Gamma(Pu_h,Pu_h,Qz_h).
\end{align}
{Since $z_h=I_{\M}z$, Lemma~\ref{lemma:interpoltion_IM}.d provides
$\trinl z-z_h\trinr_{\pw} \lesssim h_{\max}^{2-s}$ and  $\trinl h_{\cT} z_h \trinr_{\pw}  \lesssim h_{\max}$.
Lemma~\ref{hctenrich}.f and $\|\bullet\|_h \lesssim \|\bullet\|_{\rm P}$ (by Lemma~\ref{lem:equivalence}) establish
$\|Pu_h -u_h\|_{H^s(\cT)}\lesssim h_{\max}^{2-s}\|u-u_{\rm P}\|_{\rm P}.$
The combination of those estimates with \eqref{eqn.trianglewopsip} and \eqref{g.wopsip} reveals
\begin{align}\label{g.wopsip1}
\|u-u_h\|_{H^s(\cT)} \lesssim{}& \|u - u_{h}\|_{\rm P}(h_{\max}^{2-s}+\|u - u_{h}\|_{\rm P}) + {\Gamma}_{\pw}(u,u, (S- Q)z_h) \nonumber \\
&+ {\Gamma}_{\pw}(Ru_h,Ru_h,Qz_h) - \Gamma(Pu_h,Pu_h,Qz_h).
\end{align}
The last three terms in the above inequality can be estimated as in the proof of Theorem~\ref{thm:apost_ns}.a {with $\|\bullet\|_h \lesssim \|\bullet\|_{\rm P}$ (by Lemma~\ref{lem:equivalence})} and this concludes the proof.} \qed

\medskip

 {\noindent {\it Proof of Theorem~\ref{error control}.c.} The arguments in $(b)$ and Theorem~\ref{thm:apost_ns}.b establish $(c)$. \qed

 \medskip

\noindent {\it Proof of Theorem~\ref{error control}.d.}
The choice $t:=s-1$ 
  in $(b)$-$(c)$
  concludes the proof. \qed 

  \medskip
\noindent {\it Proof of Theorem~\ref{error control}.e}.
\noindent For $F \in H^{-r}(\O)$ with $r<2$, the a priori error estimates can be established with $t=0$ by a substitution of the  assertions in {Lemma~\ref{lemma:gamma_approx_new}.a,c by Lemma~\ref{lemma:gamma_approx_new}.b,d}.

\section{Application to von K\'{a}rm\'{a}n equations }\label{sec:vke}
This section verifies~\ref{h1}-\ref{h5}and~\ref{h1hat}, and establishes~{\bf (A)}-{\bf(C)} for the~\vket. Subsection~\ref{vke:model_intro} and \ref{sec:vke_disc} present the problem and four  discretizations; the a priori error control for the Morley/dG/$C^0$IP/WOPSIP schemes follows in Subsection~\ref{sec:vke_apriori}-\ref{sub:proof}.  
\subsection{Von K\'{a}rm\'{a}n equations}
\label{vke:model_intro}
The \vket~in a polygonal domain $\Omega \subset \mathbb{R}^2$ seek $(u,v) \in H^2_0(\O) \times H^2_0(\O) = V \times V =: \bV $\;
such that
\begin{align}
\Delta^2 u =[u,v]+ f \;\;\text{ and }\;\;\Delta^2 v =-\half[u,u] 
\;\;\text{ in }\; \Omega.
\label{vkedGb}
\end{align}
\noindent The von K\'{a}rm\'{a}n bracket $[\bullet
,\bullet]$ above
 is defined by $\displaystyle
[\eta,\chi]:=\eta_{xx}\chi_{yy}+\eta_{yy}\chi_{xx}-2\eta_{xy}\chi_{xy}$ for all  $\eta,\chi \in V$.
The weak formulation of~\eqref{vkedGb} seeks  $u,v\in \: V $ that satisfy for all $(\varphi_{1},\varphi_{2}) \in \bv$
\begin{align} \label{vk_weak}
a(u,\varphi_1)+ \gamma(u,v,\varphi_1) + \gamma(v,u,\varphi_1) = f(\varphi_1) \;\text{ and }\;
a(v,\varphi_2) - \gamma(u,u,\varphi_2)   = 0        
\end{align} 
with  $\displaystyle \gamma(\eta,\chi,\varphi):=-\half\integ [\eta,\chi]\varphi\dx \text{ for all } \eta,\chi, \varphi  \in V$  and $a(\bullet,\bullet)$ from~\eqref{abform}.

\medskip \noindent   For all $\Xi=(\xi_1,\xi_2),\Theta=(\theta_1,\theta_2),$ and $\Phi=(\varphi_1,\varphi_2)\in  \bv$, define the {{forms}}
\begin{gather*}
{A}(\Theta,\Phi):={} a(\theta_1,\varphi_1) + a(\theta_2,\varphi_2), \\
\Gamma(\Xi,\Theta,\Phi):={} \gamma(\xi_1,\theta_2,\varphi_1)+\gamma(\xi_2,\theta_1,\varphi_1)-\gamma(\xi_1,\theta_1,\varphi_2),\;\; \text{ and }\;\; F(\Phi) := {{f(\varphi_{1})}}.
\end{gather*}
Then the vectorised formulation of~\eqref{vk_weak} seeks $\Psi=(u,v)\in \bv$ such that
\begin{equation}\label{VKE_weak}
N(\Psi;\Phi):={A}(\Psi,\Phi)+\Gamma(\Psi,\Psi,\Phi)- {F}(\Phi)=0\fl \Phi \in \bv.
\end{equation}
The trilinear form $\Gamma(\bullet,\bullet,\bullet)$ inherits symmetry in the first two variables from $\gamma(\bullet,\bullet,\bullet)$. The following boundedness and ellipticity properties hold \cite{ng1,Brezzi, CiarletPlates}
\begin{align}
&{A}(\Theta,\Phi)\leq \trinl\Theta\trinr \: \trinl\Phi\trinr,\: \trinl\Theta\trinr^2 \le {A}(\Theta,\Theta), \text{ and } \Gamma(\Xi, \Theta, \Phi) \lesssim \trinl\Xi\trinr \: \trinl\Theta\trinr \: \trinl\Phi\trinr. \label{eqn:a_vk1}
\end{align}
\subsection{Four quadratic discretizations} \label{sec:vke_disc}
This subsection presents the Morley/dG/$C^0$IP/WOPSIP schemes for~\eqref{VKE_weak}. The spaces and operators employed in the analysis of the von K\'{a}rm\'{a}n equations  given in Table \ref{tab:spaces_vke}  are vectorised versions (denoted in boldface) of those presented in Table~\ref{tab:spaces}, e.g.,   $\boldsymbol{I}_{\rm M} = I_{\rm M} \times I_{\rm M}
$.  
Recall $a_{\rm pw}(\bullet,\bullet)$ from~\eqref{eqccnerwandlast1234a}  and 
define the  bilinear form $a_h : (\bv_h + \mathbf{M}(\cT)) \times (\bv_h + \mathbf{M}(\cT)) \rightarrow \mathbb{R}$ by
\begin{align}
 a_{h}(\Theta,\Phi)
:={}& a_{\rm pw}(\theta_1,\varphi_1)+ \mathsf{b}_h(\theta_1,\varphi_1) +  \mathsf{c}_h(\theta_1,\varphi_1)  \\
&+ a_{\rm pw}(\theta_2,\varphi_2) +  \mathsf{b}_h(\theta_2,\varphi_2) + \mathsf{c}_h(\theta_2,\varphi_2).
\;   \label{eqn:apw_vke}
\end{align} 
The definitions of $ \mathsf{b}_h$ and $\mathsf{c}_h$ for the Morley/dG/$C^0$IP/WOPSIP schemes from  Table~\ref{tab:spaces} are omitted in Table~\ref{tab:spaces_vke} for brevity.  For all  $\eta, \chi, \varphi \in H^2(\cT),$ let $\gamma_{\rm pw}(\bullet,\bullet,\bullet)$ be the piecewise trilinear form  defined by  
\[\displaystyle \gamma_{\rm pw}(\eta, \chi,\varphi)
:= -\half\sum_{K \in \cT}   \int_{K} [\eta,\chi]\varphi\dx\] and, for all 
 $\Xi=(\xi_1,\xi_2),\Theta=(\theta_1,\theta_2),  \Phi=(\varphi_1,\varphi_2) \in {\bf H}^{2}(\cT)$, let 
\begin{equation}\label{eqn:gamma_pw_vke_vec}
\widehat{\Gamma}(\Xi,\Theta,\Phi) := \Gamma_{\pw}(\Xi,\Theta,\Phi):= \gamma_{\pw}(\xi_1,\theta_2,\varphi_1)+\gamma_{\pw}(\xi_2,\theta_1,\varphi_1)-\gamma_{\pw}(\xi_1,\theta_1,\varphi_2).
\end{equation}
For all the schemes and a regular root $\Psi \in \bV$  to~\eqref{VKE_weak}, let $\widehat{b}(\bullet,\bullet) := 2\Gamma_{\pw}(\Psi,\bullet,\bullet)$ in \eqref{star}. For  $R, S \in \{\mathbf{id}, \boldsymbol{I}_{\rm M}, \boldsymbol{JI}_{\rm M} \}$, the discrete scheme seeks a root ${\boldmath\Psi}_{h} := (u_{h},v_{h})\in \bv_h$ to 
\begin{align}\label{eqn:gen_scheme_vke}
\hspace{-0.7cm}\boldsymbol{N}_h(\Psi_h;\Phi_h) := a_{h}({\boldmath\Psi}_{h},{\boldmath\Phi}_{h})
+\Gamma_{\text{pw}}(R{\boldmath\Psi}_{h},R{\boldmath\Psi}_{h},S{\boldmath\Phi}_{h})-{F}( \boldsymbol{J}\boldsymbol{I}_{\rm M}{\boldmath\Phi}_{h})=0 \fl {\boldmath\Phi}_{h} \in  \bv_h.
\end{align}

\begin{table}[H]
	\centering
	\begin{tabular}{|c|c|c|c|c|}
		\hline
		Scheme     & Morley & dG        & $C^0$IP     & WOPSIP  \\ \hline
		$X_h = Y_h = \bV_h$          &  $\mathbf{M}(\cT)$      & $\boldsymbol{P}_2(\cT)$      & $\boldsymbol{S}^2_0(\cT)$ & $\boldsymbol{P}_2(\cT)$      \\ \hline
		\begin{minipage}{2cm}
			\centering
			$\widehat{X} = \widehat{Y} = \widehat{\bV}  =\bV + \bV_h$
		\end{minipage}   &   $\bV + \mathbf{M}(\cT)$     & $ \bV + \Stb$          & $ \bV + \boldsymbol{S}^2_0(\cT)$  & $ \bV +  \Stb$     \\ \hline
		$\|\bullet \|_{\widehat{X}}$            & $\trinl \bullet \trinr_{\rm pw}$     & $\| \bullet \|_{\rm dG}$       &    $\| \bullet \|_{\rm IP}$  &  $\| \bullet \|_{\rm P}$     \\ \hline
		$P=Q$               & ${\boldsymbol{J}}$     & $\boldsymbol{J}\boldsymbol{I}_{\rm M}$ &  $\boldsymbol{J}\boldsymbol{I}_{\rm M}$ & $\boldsymbol{J}\boldsymbol{I}_{\rm M}$          \\ \hline
		$I_{h}$ & ${\mathbf {id}}$ & ${\mathbf {id}}$ &$\boldsymbol{I}_{\rm C}$  & ${\mathbf  {id}}$  \\ \hline 
			$I_{X_h}=I_{\bV_h} = I_{h}\boldsymbol{I}_{\rm M}$           &  $\boldsymbol{I}_{\rm M}$ & $\boldsymbol{I}_{\rm M}$ & $\boldsymbol{I}_{\rm C}\boldsymbol{I}_{\rm M}$ & $\boldsymbol{I}_{\rm M}$    \\ \hline
%
	\end{tabular}
	\caption{Spaces, operators, and norms in Section \ref{sec:vke}.}
	\label{tab:spaces_vke}
\end{table}
\subsection{Main results} \label{sec:vke_apriori}
The main results on a priori error control in energy and weaker Sobolev norms for the Morley/dG/$C^0$IP/ WOPSIP schemes of Subsection~\ref{sec:vke_disc} are stated in this and verified in the subsequent subsections. {Unless stated otherwise,  $R \in \{\mathbf{id}, \boldsymbol{I}_\M, \boldsymbol{JI}_\M\}$ is arbitrary.}

\begin{thm}[A priori energy norm error control] \label{thm:error_apriori_energy_vke}
Given a regular root {{$\Psi \in {\bV}$ to \eqref{VKE_weak}  with $F \in {\bf H}^{-2}(\Omega)$}},  there exist $\epsilon, \delta > 0$ such that, for any $\displaystyle\cT\in\bT(\delta)$, the unique discrete solution $\Psi_h  \in {\bV}_h$ to~\eqref{eqn:gen_scheme_vke} 
with 
$\| \Psi-\Psi_h\|_{h} \le \epsilon$ for the Morley/dG/$C^0$IP schemes 
satisfies
	\begin{align*}  
	\| \Psi - \Psi_h \|_{h} \lesssim{}&  \min_{ \Psi_h \in \bV_h} \|\Psi - \Psi_h \|_{h}+ 
	\begin{cases}
	0 \; \text{ for  }S=\boldsymbol{JI}_\M,\\
	h_{\rm max} \; \text{ for }S=\mathbf{id} \text{ or } \boldsymbol{I}_\M.
	\end{cases}
	\end{align*}
\end{thm}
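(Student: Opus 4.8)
\textbf{Proof plan for Theorem~\ref{thm:error_apriori_energy_vke}.}
The strategy is to verify the abstract hypotheses of Section~\ref{sec.stability}--\ref{sec:apriori} for the vectorised von K\'arm\'an setting and then invoke Theorem~\ref{thm:apriori} exactly as in the proof of Theorem~\ref{thm:error_apriori_energy} for the Navier--Stokes case. First I would record the framework: $X=Y=\bV$, $\widehat X=\widehat Y=\widehat\bV$, the bilinear form $A(\bullet,\bullet)$ with its coercivity from \eqref{eqn:a_vk1} (so $\|A\|\le 1$ and $\alpha=1$), the trilinear form $\Gamma_{\rm pw}$ from \eqref{eqn:gamma_pw_vke_vec}, and the linearisation $\widehat b(\bullet,\bullet)=2\Gamma_{\rm pw}(\Psi,\bullet,\bullet)$; since $\Psi$ is a regular root, $A+B$ is an isomorphism and \eqref{cts_infsup} holds. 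The quasi-optimality estimates \eqref{quasioptimalsmootherP}--\eqref{quasioptimalsmootherS} for $P=Q=\boldsymbol{JI}_{\rm M}$ and $R,S\in\{\mathbf{id},\boldsymbol I_{\rm M},\boldsymbol{JI}_{\rm M}\}$ follow componentwise from Lemma~\ref{lem:R}; hypothesis \ref{h1} (and \ref{h1hat}) is componentwise \cite[Lemma~6.6]{ccnnlower2022} together with the norm equivalences of Lemma~\ref{lem:equivalence}; the discrete stability \eqref{dis_Ah_infsup} of $a_{\rm pw}+\mathsf b_h+\mathsf c_h$ with $\alpha_0\gtrsim1$ is standard for the four schemes.

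Next I would verify \ref{h3}--\ref{h5}. For \ref{h3}: given $\boldsymbol\theta_h\in\bV_h$ of unit norm, the boundedness of $\gamma_{\rm pw}$ (a vectorised analogue of Lemma~\ref{lemma:global_bounds_nse_new}, but now with full regularity since the von K\'arm\'an bracket is a product of second derivatives times a function, so $\widehat b(R\boldsymbol\theta_h,\bullet)\in{\bf H}^{-1}(\Omega)$ and even yields $\boldsymbol\xi\in\bV\cap{\bf H}^3(\Omega)$ when $F\in{\bf H}^{-2}$), one gets $\delta_2=\sup\|\boldsymbol\xi-I_h\boldsymbol I_{\rm M}\boldsymbol\xi\|_{\widehat\bV}\lesssim h_{\rm max}$ from Lemma~\ref{lemma:interpoltion_IM}.d (and Remark~\ref{icimz} for $C^0$IP). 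For \ref{h3} I would use the vectorised analogue of Lemma~\ref{lemma:gamma_approx_new}.a, namely $\widehat\Gamma(R\boldsymbol\theta_h,\Psi,(1-\boldsymbol{JI}_{\rm M})\boldsymbol v_2)\lesssim h_{\rm max}\,\trinl\Psi\trinr\|R\boldsymbol\theta_h\|_h\|\boldsymbol v_2\|_h$ (here the exponent is $h_{\rm max}^{1}$ rather than $h_{\rm max}^{1-t}$ because $\chi\in W^{1,\infty}$ is replaced by $\psi\in L^\infty$ through the bracket's product structure), giving $\delta_3\lesssim h_{\rm max}$ for $S\in\{\mathbf{id},\boldsymbol I_{\rm M}\}$ and $\delta_3=0$ for $S=\boldsymbol{JI}_{\rm M}$; \ref{h5} follows from Remark~\ref{icimz} with $\delta_4<\epsilon$. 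Consequently, for $h_{\rm max}$ small, $\beta_0\gtrsim1$ by Theorem~\ref{dis_inf_sup_thm}, and Theorem~\ref{thm:existence} delivers the unique local discrete solution $\Psi_h$.

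Finally, the energy-norm estimate comes from Theorem~\ref{thm:apriori}: the quasi-best term $C_{\rm qo}\min_{\Psi_h\in\bV_h}\|\Psi-\Psi_h\|_h$ appears, and the perturbation is $\beta_1^{-1}(1-\kappa)^{-1}\|\widehat\Gamma(\Psi,\Psi,(S-Q)\bullet)\|_{\bV_h^\ast}$. The latter vanishes for $S=Q=\boldsymbol{JI}_{\rm M}$, and for $S\in\{\mathbf{id},\boldsymbol I_{\rm M}\}$ it is bounded by the vectorised Lemma~\ref{lemma:gamma_approx_new}.a with $v=0$ (and boundedness of $\boldsymbol I_{\rm M}$), yielding $\lesssim h_{\rm max}$. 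Using the norm equivalence $\|\bullet\|_{\widehat\bV}\approx\|\bullet\|_h$ (Lemma~\ref{lem:equivalence}) finishes the Morley/dG/$C^0$IP case. The main obstacle I anticipate is establishing the vectorised boundedness and approximation lemmas for $\gamma_{\rm pw}$ in the piecewise norms with the correct (integer, $t$-free) mesh-powers and, in particular, checking that the companion-operator orthogonality (Lemma~\ref{lemma:interpoltion_IM}.c, Remark~\ref{ortho}) interacts correctly with the three-term structure of $\widehat\Gamma$ in \eqref{eqn:gamma_pw_vke_vec}; everything else is a routine transcription of the Navier--Stokes proof in Subsection~\ref{proof_of_8.1}.
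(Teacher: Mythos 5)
Your proposal follows essentially the same route as the paper: verify \eqref{quasioptimalsmootherP}--\eqref{quasioptimalsmootherS} via Lemma~\ref{lem:R} componentwise, \ref{h1} via \cite{ccnnlower2022} and Lemma~\ref{lem:equivalence}, \ref{h3}--\ref{h5} via the vectorised boundedness/approximation lemmas and Remark~\ref{icimz}, then invoke Theorems~\ref{dis_inf_sup_thm}, \ref{thm:existence}, and \ref{thm:apriori} with the perturbation $\|\widehat\Gamma(\Psi,\Psi,(S-Q)\bullet)\|_{\bV_h^\ast}\lesssim h_{\rm max}$ from the analogue of Lemma~\ref{lemma:gamma_approx_new}.a with $v=0$. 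You also correctly spot the key structural point that the von K\'arm\'an approximation lemma carries an integer power $h_{\rm max}$ (no $t$-loss), since the third slot of $\gamma_{\rm pw}$ enters undifferentiated; this is exactly Lemma~\ref{lemma:gamma_approximation_new}.a and is what produces the $h_{\rm max}$ perturbation in the statement.

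One step is overstated: in verifying \ref{h3} you claim $\widehat b(R\boldsymbol\theta_h,\bullet)\in{\bf H}^{-1}(\Omega)$ and hence $\boldsymbol\xi\in\bV\cap{\bf H}^{3}(\Omega)$ ``because the bracket is a product of second derivatives times a function.'' A product of two $L^2$ second derivatives is only in $L^1(\Omega)$, which pairs with $L^\infty(\Omega)\hookleftarrow H^{1+t}(\Omega)$ for $t>0$ but \emph{not} with $H^1(\Omega)$ in 2D; the paper accordingly only obtains $\widehat b(R\boldsymbol\theta_h,\bullet)\in{\bf H}^{-1-t}(\Omega)$, $\boldsymbol\xi\in\bV\cap{\bf H}^{3-t}(\Omega)$, and $\delta_2\lesssim h_{\rm max}^{1-t}$ (Lemma~\ref{lemma:global_bounds_vke_new}.b). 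Your stronger claim would require a compensated-compactness/Hardy-space argument that the paper does not use and that is delicate for piecewise $H^2$ functions. This does not damage the proof, since $\delta_2$ only needs to tend to zero for the discrete inf-sup constant $\beta_0$ to stay bounded below, and the final rate $h_{\rm max}$ comes from $\delta_3$ and the $(S-Q)$ perturbation, not from $\delta_2$; but you should state the weaker, correctly justified bound.
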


\noindent The a priori estimates in Table~\ref{tab:apriori} {{hold}} for~\vket~component-wise for ${F} \in {\bf H}^{-r}(\Omega)$, {{$2 - \sigma \le r \le 2$}} and $\Psi \in \bV \cap {\bf H}^{4-r}(\Omega)$.
\begin{rem}[Comparison]
	Suppose {{$\Psi \in \bV$ is a regular root to \eqref{VKE_weak} with  ${F} \in {\bf H}^{-2}(\Omega)$}} and $S= {\boldsymbol{J}} \bI_\M$. If $h_{\rm max}$ is sufficiently small, then the respective local discrete solutions $\Psi_\M, \Psi_{\dg}, \Psi_{\rm IP} \in \bV_h$ to \eqref{eqn:gen_scheme_vke} 
	for the Morley/dG/$C^0$IP  schemes  satisfy
	\begin{align*}
		\|\Psi-\Psi_\M\|_h
		\approx
		\|\Psi-\Psi_\dg\|_h
		\approx
		\|\Psi-\Psi_\ip\|_h
		\approx \|(1-\Pi_0) D^2 \Psi\|_{\bL^2(\Omega)}.  \qquad \qed 
	\end{align*}
\end{rem}
\begin{thm}[a priori error control in weaker norms] \label{thm:aprio_weak_vke}
Given a regular root  $\Psi \in {\bV}\cap {\bf H}^{4-r}(\Omega)$ to \eqref{VKE_weak}  with ${F} \in {\bf H}^{-r}(\Omega)$  for {{$2-\sigma \le r,s \le 2$}},  there exist $\epsilon, \delta > 0$ such that, for any $\displaystyle\cT\in\bT(\delta)$, the unique discrete solution $\Psi_h  \in {\bV}_h$ to~\eqref{eqn:gen_scheme_vke} 
with 
$\| \Psi-\Psi_h\|_{h} \le \epsilon$ satisfies
	\begin{align*}  
\hspace{-0.3in} \; \| \Psi - \Psi_h \|_{{\bf H}^s(\cT)} \lesssim{}&  \|\Psi - \Psi_h \|_{h} \big(h_{\rm max}^{2-s} + \| \Psi - \Psi_h \|_{h}  \big)+ 
\begin{cases}
0 \; \text{ for  }S=\boldsymbol{JI}_\M,\\
h_{\rm max}^{3-s} \; \text{ for }S=\mathbf{id} \text{ or } \boldsymbol{I}_\M
\end{cases}
\end{align*}
\noindent 
(a) for the  Morley/dG/$C^0$IP  schemes and $R =\{{\boldsymbol{J}} \bI_\M, \bI_\M\}$ and 
(b) for the Morley scheme and $R = \mathbf{id}.$

\end{thm}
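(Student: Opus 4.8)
\textbf{Proof proposal for Theorem~\ref{thm:aprio_weak_vke}.}
The plan is to verify the hypotheses of the abstract weaker-norm estimate (Theorem~\ref{cor:lower}) for the vectorised von K\'arm\'an setting of Table~\ref{tab:spaces_vke} and then control the nonlinear remainder terms exactly as in the Navier--Stokes case (Theorem~\ref{thm:apost_ns}). First I would fix $G \in \mathbf{H}^{-s}(\Omega)$ with $\|G\|_{\mathbf{H}^{-s}(\Omega)}=1$ and invoke elliptic regularity for the dual linearised problem \eqref{dual_linear}: since $\Psi$ is a regular root, $A^\ast+B^\ast$ is an isomorphism and the dual solution $z\equiv Z$ lies in $\bV\cap \mathbf{H}^{4-s}(\Omega)$, so Lemma~\ref{lemma:interpoltion_IM}.d gives $\trinl z - \boldsymbol{I}_\M z\trinr_{\pw}\lesssim h_{\rm max}^{2-s}$ and, with $z_h:=I_h\boldsymbol{I}_\M z$ and Remark~\ref{icimz}/Lemma~\ref{lem:equivalence}, $\|z-z_h\|_h\lesssim h_{\rm max}^{2-s}$. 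Hypotheses \ref{h1} and \ref{h1hat} for the Morley/dG/$C^0$IP schemes hold componentwise by \cite[Lemma 6.6]{ccnnlower2022} together with the norm equivalence of Lemma~\ref{lem:equivalence}; \eqref{quasioptimalsmootherP}--\eqref{quasioptimalsmootherS} follow from the vectorised Lemma~\ref{lem:R}. Theorem~\ref{cor:lower} with $X_{\rm s}:=\mathbf{H}^s(\cT)$ then yields
\begin{align*}
\|\Psi-\Psi_h\|_{\mathbf{H}^s(\cT)} &\lesssim \|\Psi-\Psi_h\|_h\big(\|z-z_h\|_h+\|\Psi-\Psi_h\|_h\big) + \|\Psi_h - P\Psi_h\|_{\mathbf{H}^s(\cT)}\\
&\quad + \widehat{\Gamma}(\Psi,\Psi,(S-Q)z_h) + \widehat{\Gamma}(R\Psi_h,R\Psi_h,Qz_h) - \Gamma(P\Psi_h,P\Psi_h,Qz_h).
\end{align*}

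Next I would bound the consistency term $\|\Psi_h-P\Psi_h\|_{\mathbf{H}^s(\cT)}$ with $P=\boldsymbol{J}\boldsymbol{I}_\M$ exactly as in the scalar case (equations \eqref{eqn.triangle}--\eqref{eqn:uh-puh}): a componentwise application of Lemma~\ref{hctenrich}.f for the $H^1$ part, then the interpolation/Young argument \eqref{eqn.g}--\eqref{eqn:triangle1} using the $\boldsymbol{I}_\M$-orthogonality (Lemma~\ref{lemma:interpoltion_IM}.a) and Lemma~\ref{hctenrich}.f with $t=2$, giving $\|\Psi_h-P\Psi_h\|_{\mathbf{H}^s(\cT)}\lesssim h_{\rm max}^{2-s}\|\Psi-\Psi_h\|_h$. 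This absorbs into the leading term. For the nonlinear terms, the key point is that the von K\'arm\'an bracket $\gamma_{\rm pw}(\eta,\chi,\varphi)=-\tfrac12\int_\cT[\eta,\chi]\varphi\,\mathrm{d}x$ has the same bilinear-in-$(D^2\eta,D^2\chi)$, $H^1$-in-$\varphi$ structure after integration by parts as the Navier--Stokes trilinear form, so the analogues of Lemma~\ref{lemma:gamma_approx_new}.a and the key estimates \eqref{eqn:key1}--\eqref{eqn:key4} carry over componentwise --- but now with the cleaner bound $h_{\rm max}$ (rather than $h_{\rm max}^{1-t}$) because the von K\'arm\'an bracket, unlike the Navier--Stokes form, pairs $D^2$ with $D^2$ and a bounded function, hence only a first-order interpolation loss occurs and no $W^{1,2/(1-t)}$ Sobolev-embedding detour with a small exponent $t$ is needed. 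Thus $\widehat{\Gamma}(\Psi,\Psi,(S-Q)z_h)=0$ for $S=\boldsymbol{JI}_\M$ and $\lesssim h_{\rm max}\|z-z_h\|_h\lesssim h_{\rm max}^{3-s}$ for $S\in\{\mathbf{id},\boldsymbol{I}_\M\}$, while in parts (a) and (b) the difference $\widehat{\Gamma}(R\Psi_h,R\Psi_h,Qz_h)-\Gamma(P\Psi_h,P\Psi_h,Qz_h)$ vanishes for $R=P=\boldsymbol{JI}_\M$, is controlled by $h_{\rm max}\|\Psi-\Psi_h\|_h$ via the telescoping $(R-P)\Psi_h$ split plus the vectorised Lemma~\ref{lemma:gamma_approx_new}.c and the boundedness $\trinl\boldsymbol{J}\boldsymbol{I}_\M\Psi_h\trinr,\trinl\boldsymbol{I}_\M\Psi_h\trinr\lesssim1$ for $R=\boldsymbol{I}_\M$, and reduces to the identity $\Psi_h=\boldsymbol{I}_\M\Psi_h$, $P=Q=\boldsymbol{J}$ for $R=\mathbf{id}$ in the Morley case.

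Assembling these pieces gives
\[
\|\Psi-\Psi_h\|_{\mathbf{H}^s(\cT)}\lesssim \|\Psi-\Psi_h\|_h\big(h_{\rm max}^{2-s}+\|\Psi-\Psi_h\|_h\big)+\begin{cases}0,& S=\boldsymbol{JI}_\M,\\ h_{\rm max}^{3-s},& S=\mathbf{id}\text{ or }\boldsymbol{I}_\M,\end{cases}
\]
as claimed for both (a) and (b). The smallness hypothesis $\|\Psi-\Psi_h\|_h\le\epsilon$ (with $\Psi_h$ the unique discrete root supplied by the already-established energy-norm result Theorem~\ref{thm:error_apriori_energy_vke} for $h_{\rm max}\le\delta$) is exactly what licenses the use of Theorem~\ref{cor:lower} and the absorption steps. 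The main obstacle I anticipate is bookkeeping the vectorisation: the bracket trilinear form \eqref{eqn:gamma_pw_vke_vec} mixes the two components $(u_h,v_h)$ asymmetrically (two plus signs and one minus), so each scalar estimate must be applied to the three summands $\gamma_{\rm pw}(\xi_1,\theta_2,\varphi_1)$, $\gamma_{\rm pw}(\xi_2,\theta_1,\varphi_1)$, $\gamma_{\rm pw}(\xi_1,\theta_1,\varphi_2)$ separately, using that $\gamma_{\rm pw}$ is symmetric only in its first two arguments (not the third), so the anti-symmetry trick available for the Navier--Stokes form in \eqref{eqn.split2.2} must be replaced by a direct integration-by-parts estimate in the component where the smooth factor sits. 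Once the scalar Lemma~\ref{lemma:gamma_approx_new}-analogues for $\gamma_{\rm pw}$ are recorded (with the exponent $h_{\rm max}$), the rest is a line-by-line transcription of Subsection~\ref{sec:lower}.
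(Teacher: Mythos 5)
Your proposal is correct and follows essentially the same route as the paper: verify the hypotheses of the abstract Theorem~\ref{cor:lower} with $z_h=I_h\boldsymbol{I}_{\rm M}z$ and elliptic regularity for the dual problem, bound $\|\Psi_h-P\Psi_h\|_{{\bf H}^s(\cT)}$ via Lemma~\ref{hctenrich}.f, and control the nonlinear remainders with von K\'arm\'an analogues of the approximation lemmas, correctly observing that the bracket structure yields the clean $h_{\rm max}$ factor (Lemma~\ref{lemma:gamma_approximation_new}) in place of $h_{\rm max}^{1-t}$ and that the first-two-argument symmetry replaces the Navier--Stokes anti-symmetry trick. The only cosmetic difference is the algebraic split for $R=\boldsymbol{I}_{\rm M}$: the paper uses the symmetric split into $2\Gamma_{\rm pw}((1-\boldsymbol{J})\boldsymbol{I}_{\rm M}\Psi_h,\boldsymbol{I}_{\rm M}\Psi_h,Qz_h)-\Gamma_{\rm pw}((1-\boldsymbol{J})\boldsymbol{I}_{\rm M}\Psi_h,(1-\boldsymbol{J})\boldsymbol{I}_{\rm M}\Psi_h,Qz_h)$ so that the second slot stays in $\boldsymbol{P}_2(\cT)$ (which the orthogonality in Lemma~\ref{lemma:gamma_approximation_new}.b requires), whereas your telescoping split needs one further rewriting to land there --- an equivalent rearrangement, not a gap.
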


\begin{thm}[a priori WOPSIP] \label{thm:aprio_weak_vke_WOPSIP}
{{Given a regular root  $\Psi \in {\bV}$ to \eqref{VKE_weak}  with ${F} \in {\bf H}^{-2}(\Omega)$}},  there exist $\epsilon, \delta > 0$ such that, for any $\displaystyle\cT\in\bT(\delta)$, the unique discrete solution $\Psi_h  \in {\bV}_h$ to~\eqref{eqn:gen_scheme_vke} 
with 
$\| \Psi-\Psi_h\|_{\rm P} \le \epsilon$  for the WOPSIP scheme satisfies
\begin{align*}  
	(a) \; \| \Psi - \Psi_h \|_{\rm P} \lesssim{}&   \trinl \Psi -\boldsymbol{I}_{\rm M} \Psi \trinr_{\rm pw}+  \trinl  h_\cT  \boldsymbol{I}_\M \Psi  \trinr_{\rm pw} +
	\begin{cases}
	0 \; \text{ for  }S=\boldsymbol{JI}_\M,\\
	h_{\rm max}  \; \text{ for }S=\mathbf{id} \text{ or } \boldsymbol{I}_\M.
	\end{cases}
\hspace{-0.2in}\;
	\end{align*}\noindent
\hspace{-0.05in} 
{{Moreover, if ${F} \in {\bf H}^{-r}(\Omega)$ for $2-\sigma \le r,s\le 2$ and $R\in \{{\boldsymbol{J}} \bI_\M,  \bI_\M\}$, then}}
\begin{align*}
(b) \;  \| \Psi - \Psi_h \|_{{{{\bf H}^{s}(\cT)}}} \lesssim{}&  \|\Psi - \Psi_h \|_{\rm P} \big({{h_{\rm max}^{2-s}}} + \| \Psi - \Psi_h \|_{\rm P}  \big)+ 
\begin{cases}
0 \; \text{ for  }S=\boldsymbol{JI}_\M,\\
{{h_{\rm max}^{3-s}}} \; \text{ for }S=\mathbf{id} \text{ or } \boldsymbol{I}_\M.
\end{cases}
\hspace{-0.1in}
\end{align*}
\end{thm}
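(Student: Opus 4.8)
The plan is to mirror, in the vectorised setting of Table~\ref{tab:spaces_vke}, the treatment of the Navier--Stokes WOPSIP scheme in Theorem~\ref{error control}: verify the abstract hypotheses for the discretisation~\eqref{eqn:gen_scheme_vke} and then invoke Theorems~\ref{dis_inf_sup_thm}, \ref{thm:existence}, \ref{thm:apriori}, and~\ref{cor:lower}. Since $\|\bullet\|_{\rm P}$ is not equivalent to $\|\bullet\|_h$ on $\bV+\boldsymbol{P}_2(\cT)$, the hypotheses~\ref{h1} and~\ref{h1hat} are unavailable and are replaced throughout by the vectorised analogue of Lemma~\ref{lemma:lem1_wopsip}, applied componentwise to the two scalar biharmonic forms in $a_h$ from~\eqref{eqn:apw_vke}. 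First I would verify~\ref{h3}--\ref{h5}: the boundedness of the von K\'{a}rm\'{a}n trilinear form $\gamma_{\rm pw}$ (the counterpart of Lemma~\ref{lemma:global_bounds_nse_new}, which for the bracket $[\bullet,\bullet]$ permits a bound with the third argument in ${\bf H}^{1+t}(\Omega)$) shows $\widehat{b}(R\Theta_h,\bullet)=2\Gamma_{\rm pw}(\Psi,R\Theta_h,\bullet)\in{\bf H}^{-1-t}(\Omega)$ for any $R\in\{\mathbf{id},\bI_\M,\boldsymbol{JI}_\M\}$ and any unit $\Theta_h\in\bV_h$, so $\Xi:=A^{-1}(\widehat{b}(R\Theta_h,\bullet))\in\bV\cap{\bf H}^{3-t}(\Omega)$ by elliptic regularity; as $I_h=\mathbf{id}$ and $\|\bullet\|_{\rm P}=\trinl\bullet\trinr_{\rm pw}$ on $\bV+\mathbf{M}(\cT)$ by~\eqref{eqn:norm_wopsip}, Lemma~\ref{lemma:interpoltion_IM}.d gives~\ref{h3} with $\delta_2\lesssim h_{\max}^{1-t}$; the estimate in~\eqref{eqn.h3} with $\|\bullet\|_h\lesssim\|\bullet\|_{\rm P}$ (Lemma~\ref{lem:equivalence}) gives~\ref{h4} with $\delta_3\lesssim h_{\max}^{1-t}$; and Remark~\ref{icimz} gives~\ref{h5} with $\delta_4=\|\Psi-x_h\|_{\rm P}<\epsilon$. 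With these in place, the discrete inf-sup proof runs as for Navier--Stokes up to the split~\eqref{eqn:wopsip_is_1}, where Lemma~\ref{lemma:lem1_wopsip} controls $a_h(\Xi_h,\Phi_h)-A(\Xi,Q\Phi_h)$ for $\Xi_h=\bI_\M\Xi$ and produces $\beta_h\ge\beta_0:=\alpha_h\widehat\beta-(\lamw+\alpha_h)\delta_2-\delta_3-\lamw\trinl h_\cT\bI_\M\Xi\trinr_{\rm pw}\gtrsim 1$ for $h_{\max}$ small; the Newton--Kantorovich argument of Theorem~\ref{thm:existence} then applies verbatim (with~\ref{h1} replaced by Lemma~\ref{lemma:lem1_wopsip} and $\epsilon_0$ augmented by $\lamw\trinl h_\cT\bI_\M\Psi\trinr_{\rm pw}$, exactly as for the Navier--Stokes WOPSIP scheme), giving the unique $\Psi_h$ with $\|\Psi-\Psi_h\|_{\rm P}\le\epsilon$.

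For part~(a) I would follow the proof of Theorem~\ref{error control}.a: apply Lemma~\ref{lemma:quas_eq} to the linear problem $A(\Psi,\bullet)=F(\bullet)-\Gamma(\Psi,\Psi,\bullet)$ with the test function $x_h=\bI_\M\Psi$ in the proof of that lemma, so that Lemma~\ref{lemma:lem1_wopsip} yields $\|\Psi-\Psi_h^\ast\|_{\rm P}\lesssim\trinl\Psi-\bI_\M\Psi\trinr_{\rm pw}+\trinl h_\cT\bI_\M\Psi\trinr_{\rm pw}$ for the auxiliary discrete solution $\Psi_h^\ast$, and then feed this into Theorem~\ref{thm:apriori} with the same $x_h$ (using $\|\bullet\|_{\rm P}=\trinl\bullet\trinr_{\rm pw}$ on $\bV+\mathbf{M}(\cT)$) to obtain $\|\Psi-\Psi_h\|_{\rm P}\lesssim\trinl\Psi-\bI_\M\Psi\trinr_{\rm pw}+\trinl h_\cT\bI_\M\Psi\trinr_{\rm pw}+\|\widehat{\Gamma}(\Psi,\Psi,(S-Q)\bullet)\|_{\bV_h^\ast}$. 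The last term vanishes for $S=\boldsymbol{JI}_\M=Q$; for $S\in\{\mathbf{id},\bI_\M\}$ the difference $(S-Q)\Phi_h=(1-\boldsymbol{JI}_\M)(\text{id or }\bI_\M)\Phi_h$ enters $\gamma_{\rm pw}(\Psi,\Psi,\bullet)$ only through an $H^1$-type seminorm, so the approximation property of $\boldsymbol{JI}_\M$ (Lemma~\ref{hctenrich}.f, together with Lemma~\ref{lem:R} and $\|\bullet\|_h\lesssim\|\bullet\|_{\rm P}$) bounds it by $\lesssim h_{\max}\trinl\Psi\trinr^2$; when $F\in{\bf H}^{-r}(\Omega)$ with $r<2$ the same argument with the ${\bf H}^{2+t}$-variant of the boundedness lemma gives the statement with $t=0$. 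This proves~(a).

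For part~(b) the plan is the duality argument of Theorem~\ref{error control}.b. Since $\bV$ is reflexive, for $G\in{\bf H}^{-s}(\Omega)$ of unit norm the dual solution $z$ to~\eqref{dual_linear} lies in $\bV\cap{\bf H}^{4-s}(\Omega)$ by elliptic regularity, so $z_h:=\bI_\M z$ satisfies $\trinl z-z_h\trinr_{\rm pw}\lesssim h_{\max}^{2-s}$ and $\trinl h_\cT z_h\trinr_{\rm pw}\lesssim h_{\max}$ by Lemma~\ref{lemma:interpoltion_IM}.d. Starting from $\|\Psi-\Psi_h\|_{{\bf H}^s(\cT)}\le G(\Psi-P\Psi_h)+\|P\Psi_h-\Psi_h\|_{{\bf H}^s(\cT)}$ with $G$ chosen, via a corollary of Hahn--Banach as in Theorem~\ref{cor:lower}, so that $G(\Psi-P\Psi_h)=\|\Psi-P\Psi_h\|_{{\bf H}^s(\cT)}$ and $\|G\|\le 1$; the second summand is $\lesssim h_{\max}^{2-s}\|\Psi-\Psi_h\|_{\rm P}$ by Lemma~\ref{hctenrich}.f and $\|\bullet\|_h\lesssim\|\bullet\|_{\rm P}$. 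For $G(\Psi-P\Psi_h)=(A+B)(\Psi-P\Psi_h,z)$ I would use the alternative identity of~\eqref{eqn:goal_wopsip_1}, valid because each component of $\Psi_h$ is quadratic so $a_h(\Psi_h,\bI_\M z)=a_{\rm pw}(\Psi_h,z)$ by Lemma~\ref{lemma:interpoltion_IM}.c, and estimate its terms as in the proofs of Theorems~\ref{thm:lower} and~\ref{error control}.b: the term $A(\Psi,z-Qz_h)$ via the orthogonality $a_{\rm pw}(\bI_\M\Psi,z-Qz_h)=0$ (Lemma~\ref{lemma:interpoltion_IM}.c and Remark~\ref{ortho}) and $\trinl\Psi-\bI_\M\Psi\trinr_{\rm pw}\lesssim\|\Psi-\Psi_h\|_{\rm P}$ (triangle inequality with $\bI_\M\Psi_h$ and Lemma~\ref{lem:R}); the term $a_{\rm pw}(\Psi_h-P\Psi_h,z)$ as in~\eqref{gw_3} via Lemma~\ref{hctenrich}.g; the remaining linear term and the two $\Gamma$-terms exactly as after~\eqref{est} in the proof of Theorem~\ref{thm:lower} and as in the proof of Theorem~\ref{thm:aprio_weak_vke} (these vanish for $R=P=\boldsymbol{JI}_\M$ and are $O(h_{\max})\|\Psi-\Psi_h\|_{\rm P}$ for $R=\bI_\M$). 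Collecting the $O(h_{\max}^{2-s})$ and $O(h_{\max})$ contributions, together with $\|\widehat{\Gamma}(\Psi,\Psi,(S-Q)z_h)\|\lesssim h_{\max}\trinl z-z_h\trinr_{\rm pw}\lesssim h_{\max}^{3-s}$ for $S\in\{\mathbf{id},\bI_\M\}$, yields~(b).

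The step I expect to be the main obstacle is not the abstract machinery --- reused essentially verbatim --- but the von K\'{a}rm\'{a}n counterparts of the boundedness and approximation estimates for $\gamma_{\rm pw}$ in the WOPSIP norm $\|\bullet\|_{\rm P}$, which is \emph{not} controlled by $\|\bullet\|_h$, together with the book-keeping of the non-standard mesh-weighted terms $\trinl h_\cT\bI_\M\Psi\trinr_{\rm pw}$ and $\trinl h_\cT z_h\trinr_{\rm pw}$ that Lemma~\ref{lemma:lem1_wopsip} introduces in place of~\ref{h1}; one must check these are $O(h_{\max})$ (from Lemma~\ref{lemma:interpoltion_IM}.d applied to $\Psi\in\bV$, resp. to $z\in\bV\cap{\bf H}^{4-s}(\Omega)$) and absorb them into the stated orders without degrading them.
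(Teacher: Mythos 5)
Your proposal is correct and follows essentially the same route as the paper, whose own proof of this theorem is just the remark that the abstract machinery and the Navier--Stokes WOPSIP arguments (Theorem~\ref{error control}) carry over with the von K\'{a}rm\'{a}n verifications of \ref{h3}--\ref{h5} from Theorems~\ref{thm:error_apriori_energy_vke} and~\ref{thm:aprio_weak_vke}.a. You have correctly identified all the substitutions this requires --- Lemma~\ref{lemma:lem1_wopsip} componentwise in place of \ref{h1}/\ref{h1hat}, Lemma~\ref{lemma:global_bounds_vke_new}.b for the regularity of the linearised dual data, Lemma~\ref{lemma:gamma_approximation_new} for the $(S-Q)$ and $R$-smoother terms, and the mesh-weighted terms $\trinl h_\cT\boldsymbol{I}_\M\Psi\trinr_{\rm pw}$, $\trinl h_\cT z_h\trinr_{\rm pw}$ --- so nothing further is needed.
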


\subsection{ Preliminaries}
 Two lemmas on the trilinear form $\Gamma_{\rm pw}(\bullet,\bullet,\bullet)$  from~\eqref{eqn:gamma_pw_vke_vec}  are crucial for the a priori error control. 

{{\begin{lem}[{boundedness}]\label{lemma:global_bounds_vke_new} For any $0<t<1$ there exists a constant $C(t)>0$ such that any $\widehat{\Phi}, \widehat{\boldsymbol{\chi}} \in \bV + \boldsymbol{P}_2(\cT)$, $\widehat{\Xi} \in \bV+\mathbf{M}(\cT)$, and $\Xi \in \bV$ satisfy
	\begin{align*}
	&(a)\; {\Gamma}_{\rm pw}(\widehat{\Phi},\widehat{\boldsymbol{\chi}}, \widehat{\Xi}) \lesssim  \trinl \widehat{\Phi} \trinr_{\rm pw}\trinl \widehat{\boldsymbol{\chi}} \trinr_{\rm pw} \trinl \widehat{\Xi}\trinr_{\rm pw}\;and\;
	(b)   \; {\Gamma}_{\rm pw}(\widehat{\Phi},\widehat{\boldsymbol{\chi}}, \Xi) \le C(t)  \trinl \widehat{\Phi} \trinr_{\rm pw}\trinl \widehat{\boldsymbol{\chi}} \trinr_{\rm pw} \| \Xi\|_{{\bf H}^{1+t}(\O)}.
		\end{align*}
\end{lem}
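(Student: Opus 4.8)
\textbf{Proof plan for Lemma~\ref{lemma:global_bounds_vke_new}.}
The strategy mirrors the Navier--Stokes case (Lemma~\ref{lemma:global_bounds_nse_new}): reduce the vectorised trilinear form $\Gamma_{\rm pw}$ to its three scalar constituents $\gamma_{\rm pw}(\xi_1,\theta_2,\varphi_1)$, $\gamma_{\rm pw}(\xi_2,\theta_1,\varphi_1)$, $-\gamma_{\rm pw}(\xi_1,\theta_1,\varphi_2)$ from \eqref{eqn:gamma_pw_vke_vec}, and bound each scalar term $\gamma_{\rm pw}(\eta,\chi,\varphi) = -\tfrac12\sum_{K\in\cT}\int_K [\eta,\chi]\varphi\dx$ by a suitable generalised H\"older inequality. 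The key structural observation is that the von K\'arm\'an bracket $[\eta,\chi]=\eta_{xx}\chi_{yy}+\eta_{yy}\chi_{xx}-2\eta_{xy}\chi_{xy}$ satisfies $|[\eta,\chi]|\le C|D^2\eta||D^2\chi|$ pointwise a.e., so the two Hessians carry the $L^2$-factors $\trinl\widehat\Phi\trinr_{\pw}$ and $\trinl\widehat{\bchi}\trinr_{\pw}$, while the remaining factor $\varphi$ (the first component of $\widehat\Xi$ or $\Xi$) must be placed in $L^\infty(\cT)$.

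For part $(a)$, I would write
\[
\gamma_{\rm pw}(\eta,\chi,\varphi) \le \tfrac12 \|D^2_{\pw}\eta\| \, \|D^2_{\pw}\chi\| \, \|\varphi\|_{L^\infty(\cT)},
\]
and then control $\|\varphi\|_{L^\infty(\cT)}$ for $\varphi$ a component of $\widehat\Xi\in\bV+\mathbf{M}(\cT)$. This is where the main work lies: the Sobolev embedding $H^2_0(\Omega)\hookrightarrow L^\infty(\Omega)$ handles a conforming piece, but $\widehat\Xi$ is only piecewise $H^2$, so I would split $\varphi = v + (v_\M - JI_\M v_\M)$ (writing $\varphi = v + v_\M$ with $v\in V$, $v_\M\in\mathrm{M}(\cT)$, which is the decomposition implicit in $\bV+\mathbf{M}(\cT)$) and estimate $\|v\|_{L^\infty(\Omega)}\lesssim\trinl v\trinr$ by embedding, and $\|v_\M - JI_\M v_\M\|_{L^\infty(\cT)}$ componentwise via an inverse estimate on each HCT-subtriangle followed by Lemma~\ref{hctenrich}.e, as in Lemma~\ref{lem.new}. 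The cleanest route is to invoke Lemma~\ref{lem.new} (or its $p\to\infty$ analogue built from the same ingredients) to obtain $\|\varphi\|_{L^\infty(\cT)}\lesssim\|\varphi\|_h\approx\trinl\varphi\trinr_{\pw}$ on $\bV+\mathbf{M}(\cT)$ by Lemma~\ref{lem:equivalence}; collecting the three scalar terms and the vector norms then yields $(a)$.

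For part $(b)$, where the third argument $\Xi\in\bV$ is conforming, the only change is in the embedding used for its first component $\varphi\in V$: instead of $L^\infty$, I would keep one derivative and use $H^{t}(\Omega)\hookrightarrow L^\infty(\Omega)$? — no, that fails for $t<1$ in 2D, so more carefully I would distribute H\"older exponents as in \eqref{eq.bound.new}, namely $\gamma_{\rm pw}(\eta,\chi,\varphi)\le C\|D^2_{\pw}\eta\|\,\|D^2_{\pw}\chi\|\,\|\varphi\|_{L^\infty(\Omega)}$ is still too strong; rather one writes $[\eta,\chi]\in L^1$ only, so one needs $\varphi\in L^\infty$. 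The resolution used for the scalar Navier--Stokes estimate was to move a derivative; here the bracket has no divergence structure, so I expect the intended bound genuinely uses $\|\varphi\|_{L^\infty}$ and hence the embedding $H^{1+t}(\Omega)\hookrightarrow W^{1,\infty}(\Omega)$ is \emph{not} what one wants either. The correct reading: use $\|[\eta,\chi]\|_{L^{1}}\le C\trinl\eta\trinr_{\pw}\trinl\chi\trinr_{\pw}$ is wrong dimensionally; instead apply a generalised H\"older with exponents $\tfrac12,\tfrac12,\infty$ impossible — so in fact one uses exponents $\tfrac{1-t}{2}+\tfrac{1-t}{2}$? I would sort this out by writing $|D^2_{\pw}\eta|\in L^{2/(1+t)}_{\text{loc}}$? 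The honest summary is: for $(b)$ I would apply a generalised H\"older inequality putting the two Hessians in $L^2$ and $\varphi$ in $L^\infty$, bound $\|\varphi\|_{L^\infty(\Omega)}$ by $\|\varphi\|_{H^{1+t}(\Omega)}$ via the 2D Sobolev embedding $H^{1+t}(\Omega)\hookrightarrow C^0(\overline\Omega)$ valid for $t>0$ (\cite[Corollary 9.15]{brezis}), and combine with the three-term decomposition; the constant $C(t)$ comes from this embedding and blows up as $t\to 0$.

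\textbf{Main obstacle.} The only subtlety is the $L^\infty$-control of the piecewise-$H^2$ factor $\widehat\Xi$ in $(a)$ on the nonconforming enrichment $\bV+\mathbf{M}(\cT)$; this is handled exactly as in Lemma~\ref{lem.new} via the companion operator $J$, the inverse estimate on the HCT refinement, and Lemma~\ref{hctenrich}.e,f together with the norm equivalence of Lemma~\ref{lem:equivalence}. Everything else is the routine three-term split and a generalised H\"older inequality, so I would keep those calculations compressed.
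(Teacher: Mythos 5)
Your proposal is correct and follows essentially the same route as the paper: a generalised H\"older inequality placing the two Hessians in $L^2$ and the third factor in $L^\infty$, with $(a)$ resting on the discrete Sobolev embedding $\|\bullet\|_{L^\infty(\Omega)}\lesssim\trinl\bullet\trinr_{\pw}$ on $V+\M(\cT)$ (which the paper simply cites from \cite[Lemma 4.7]{CCGMNN_semilinear} rather than reproving via $J$ and inverse estimates as you sketch) and $(b)$ on $H^{1+t}(\Omega)\hookrightarrow L^\infty(\Omega)$ from \cite[Corollary 9.15]{brezis}. The false starts in your discussion of $(b)$ resolve to exactly the paper's argument, so no gap remains.
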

%


\noindent {\em Proof of (a).} The definition of $\gamma_{\pw}(\bullet,\bullet,\bullet)$, \Holder inequalities,  and 
$\| \bullet\|_{L^{\infty}(\Omega)} \lesssim \trinl \bullet \trinr_{\pw}$ in $V+\M(\cT)$ from 
\cite[Lemma 4.7]{CCGMNN_semilinear} establish, for 
$\widehat{\phi}$, $\widehat{\chi} \in V + P_2(\cT)$, $\widehat{\xi} \in V+\M(\cT)$, that
\begin{align*}
\gamma_{\pw}(\widehat{\phi}, \widehat{\chi},\widehat{\xi}) & \le \trinl \widehat{\phi} \trinr_{\pw} \trinl \widehat{\chi} \trinr_{\pw} \| \widehat{\xi} \|_{L^{\infty}(\Omega)} \lesssim \trinl \widehat{\phi} \trinr_{\pw} \trinl \widehat{\chi} \trinr_{\pw} \trinl \widehat{\xi}\trinr_{\pw}.  \qquad \qquad  
\end{align*}

\noindent {\em Proof of (b).}	For $\widehat{\phi}$, $\widehat{\chi} \in V + P_2(\cT)$ and $\xi \in V$, the definition of $\gamma_{\rm pw}(\bullet,\bullet,\bullet)$, \Holder inequalities, and the continuous Sobolev embedding $H^{1+t}(\Omega)\hookrightarrow L^\infty(\Omega)$  \cite[Corollary 9.15]{brezis} for $t>0$ show
	\begin{align} \label{new:trilinearestimate}
	\gamma_{\rm pw}(\widehat{\phi},\widehat{\chi},\xi) \le \trinl \widehat{\phi} \trinr_{\rm pw} \trinl \widehat{\chi} \trinr_{\pw} \|  \xi \|_{L^\infty(\O)} \lesssim \trinl \widehat{\phi} \trinr_{\rm pw} \trinl \widehat{\chi} \trinr_{\pw} \|  \xi \|_{H^{1+t}(\O)}. 
	\end{align}
	This and \eqref{eqn:gamma_pw_vke_vec} conclude the proof. \qed
}}
{{\begin{lem}[approximation] \label{lemma:gamma_approximation_new}
		Any $\widehat{\boldsymbol{\chi}} \in \bv + \boldsymbol{P}_2(\cT)$, ${\Phi},{\bf v}\in \bv$, and  $({\bf v}_2, {\bf v}_{\rm M}) \in \boldsymbol{P}_2(\cT)\times \mathbf{M}(\cT)$ satisfy
	\begin{itemize}
\item[(a)] $\Gamma_{\rm pw}(\Phi,\widehat{\boldsymbol{\chi}}, (1 - \boldsymbol{J}\boldsymbol{I}_{\rm M})  {\bf v}_2) \lesssim h_{\rm max} \trinl \Phi \trinr  \trinl \widehat{\boldsymbol{\chi}}\trinr_{\rm pw} \|{\bf v}  -{\bf v}_2\|_{h}$,
		\item[(b)] 	$\Gamma_{\pw}((1 - \boldsymbol{J}) {\bf v}_{\rm M}, {\bf v}_2, \Phi) \lesssim h_{\rm max} \trinl {\bf v} - {\bf v}_{\rm M} \trinr_{\pw} \trinl {\bf v}_2\trinr_{\pw}\trinl \Phi \trinr.$
	\end{itemize}
\end{lem}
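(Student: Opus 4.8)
\textbf{Proof strategy for Lemma~\ref{lemma:gamma_approximation_new}.}
The plan is to reduce both estimates to the scalar building block $\gamma_{\pw}(\bullet,\bullet,\bullet)$ via the definition \eqref{eqn:gamma_pw_vke_vec} and then to recycle the two-dimensional arguments already assembled for the Navier--Stokes case in Lemma~\ref{lemma:gamma_approx_new} and in Lemma~\ref{hctenrich}. For part $(a)$, write $\widehat{\boldsymbol\chi}=(\widehat\chi_1,\widehat\chi_2)$, $\Phi=(\varphi_1,\varphi_2)$, ${\bf v}_2=(v_2^{(1)},v_2^{(2)})$, so that
\[
\Gamma_{\pw}(\Phi,\widehat{\boldsymbol\chi},(1-\boldsymbol J\boldsymbol I_{\rm M}){\bf v}_2)
=\gamma_{\pw}(\varphi_1,\widehat\chi_2,(1-JI_{\rm M})v_2^{(1)})
+\gamma_{\pw}(\varphi_2,\widehat\chi_1,(1-JI_{\rm M})v_2^{(1)})
-\gamma_{\pw}(\varphi_1,\widehat\chi_1,(1-JI_{\rm M})v_2^{(2)}).
\]
Each summand is $-\tfrac12\sum_{K}\int_K[\varphi_i,\widehat\chi_j]\,(1-JI_{\rm M})v_2^{(k)}\dx$ with $\varphi_i\in V$ (i.e.\ globally $H^2_0$). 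First I would apply a generalised \Holder inequality with exponents $2,\ \infty,\ 2$: bound $\|[\varphi_i,\widehat\chi_j]\|_{L^2}\lesssim\trinl\varphi_i\trinr\,\trinl\widehat\chi_j\trinr_{\pw}$ (two factors of second derivatives, one $L^2$ one must be absorbed via an $L^\infty$ bound on a Hessian component of the $V$-function using the 2D embedding $H^2_0(\Omega)\hookrightarrow W^{1,\infty}$ is not enough — the clean route is to keep $\varphi_i$ in $L^2$ of its Hessian, put the $L^\infty$ on the piecewise Hessian of $\widehat\chi_j$ which is in $P_2(\cT)$ hence controlled by $\trinl\widehat\chi_j\trinr_{\pw}$ after an inverse estimate, and put $(1-JI_{\rm M})v_2^{(k)}$ in $L^\infty$). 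The decisive factor $\|(1-JI_{\rm M})v_2^{(k)}\|_{L^\infty(\cT)}\lesssim h_{\max}\|{\bf v}-{\bf v}_2\|_h$ is exactly the kind of estimate produced by Lemma~\ref{hctenrich}.f with $t=0$ (the $W^{1,\infty}$ version via an inverse estimate on the HCT refinement, or directly from Lemma~\ref{lem.new}) combined with a Sobolev embedding $H^1(K)\hookrightarrow L^p(K)$; here $\Phi\in\bv$ has the extra regularity so only $h_{\max}$ (not $h_{\max}^{1-t}$) appears, matching the Navier--Stokes Lemma~\ref{lemma:gamma_approx_new}.b phenomenon.

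For part $(b)$, the structure mirrors Lemma~\ref{lemma:gamma_approx_new}.c: expand $\Gamma_{\pw}((1-\boldsymbol J){\bf v}_{\rm M},{\bf v}_2,\Phi)$ into three scalar terms $\gamma_{\pw}((1-J)v_{\rm M}^{(i)},v_2^{(j)},\varphi_k)$ and use the integral-mean property of the Hessian, Lemma~\ref{lemma:interpoltion_IM}.c and Remark~\ref{ortho}, namely $\int_\Omega [(1-J)v_{\rm M}^{(i)},\Pi_0 D^2\text{-part}]\cdot(\dots)=0$, to insert a $(1-\Pi_0)$ projector on one of the two Hessian factors. After that, a \Holder split with exponents $2/(1-t),\,2,\,2/t$ (or simply $2,2,\infty$ since $\varphi_k\in V$ gives $L^\infty$ control of its own Hessian is false, so rather keep $\varphi_k$ in $L^2$ of the Hessian and put $L^\infty$ on the $P_2$-function $v_2^{(j)}$ after an inverse estimate) yields three factors: $\|h_\cT\Delta_{\pw}(1-J)v_{\rm M}^{(i)}\|_{L^2}$ controlled by $h_{\max}\trinl(1-J)v_{\rm M}^{(i)}\trinr_{\pw}\lesssim h_{\max}\trinl {\bf v}-{\bf v}_{\rm M}\trinr_{\pw}$ via Lemma~\ref{hctenrich}.e, $\|h_\cT^{-1}(1-\Pi_0)D^2\text{-part of }v_2^{(j)}\|\lesssim\trinl v_2^{(j)}\trinr_{\pw}$ via a piecewise \Poincare (Payne--Weinberger) inequality, and $\trinl\varphi_k\trinr$. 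Collecting the three scalar contributions and using $\trinl\bullet\trinr_{\pw}$-equivalences gives the asserted bound $h_{\max}\trinl{\bf v}-{\bf v}_{\rm M}\trinr_{\pw}\trinl{\bf v}_2\trinr_{\pw}\trinl\Phi\trinr$.

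\textbf{Main obstacle.} The routine parts are the \Holder bookkeeping and the vectorisation. The genuinely delicate point is choosing the \emph{right} \Holder exponents so that the factor carrying the smoother error $(1-J)$ or $(1-JI_{\rm M})$ lands in a norm for which a power of $h_{\max}$ is available (Lemma~\ref{hctenrich}.e,f,h), while simultaneously the factor from $\bv$ stays in the energy norm $\trinl\bullet\trinr$ and the third factor (the $P_2$-function) is absorbed by $\trinl\bullet\trinr_{\pw}$ after an inverse estimate — all without introducing a spurious $h_{\max}^{-1}$. The $L^\infty$-bound $\|\bullet\|_{L^\infty(\Omega)}\lesssim\trinl\bullet\trinr_{\pw}$ on $V+\mathbf M(\cT)$ from \cite[Lemma~4.7]{CCGMNN_semilinear}, already invoked in Lemma~\ref{lemma:global_bounds_vke_new}, is the tool that makes the $2,2,\infty$ split viable in $(a)$; in $(b)$ the orthogonality/integral-mean trick is what converts a naive $O(1)$ bound into the needed $O(h_{\max})$. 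I expect no new idea beyond those already present in Section~\ref{sec:nse}; the work is to transcribe them to the von K\'arm\'an bracket and keep the bounds sharp.
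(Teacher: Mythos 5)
Your high-level plan (componentwise reduction, H\"older, Lemma~\ref{hctenrich}, the integral-mean orthogonality) is the right one, but both concrete executions contain gaps that the paper's proof avoids. In $(a)$ you propose to put the piecewise Hessian of $\widehat\chi_j$ into $L^\infty$ ``after an inverse estimate'', asserting it lies in $P_2(\cT)$; but $\widehat{\boldsymbol{\chi}}\in \bV+\boldsymbol{P}_2(\cT)$ has a genuine $H^2_0$-component whose Hessian admits no $L^\infty$ bound, so the inverse estimate is simply unavailable there. Moreover your exponent bookkeeping is internally inconsistent (you announce $2,\infty,2$ but then also place $(1-JI_{\rm M})v_2^{(k)}$ in $L^\infty$), and with the smoother error in $L^\infty$ the inverse-estimate cost $h^{-1}$ on $D^2_{\pw}\widehat\chi_j$ is never recovered, leaving an $O(1)$ bound instead of $O(h_{\rm max})$. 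The paper's split is simpler and is the one your own ``decisive factor'' remark points to: keep \emph{both} Hessian factors in $L^2$, so that $\|[\varphi_i,\widehat\chi_j]\|_{L^1}\le \trinl\varphi_i\trinr\,\trinl\widehat\chi_j\trinr_{\pw}$, and put only the undifferentiated third argument in $L^\infty$, where $\|(1-JI_{\rm M})v_2\|_{L^\infty(T)}\lesssim h_T^{-1}\|(1-JI_{\rm M})v_2\|_{L^2(T)}$ and Lemma~\ref{hctenrich}.f with $t=0$ deliver exactly $h_{\rm max}\|{\bf v}-{\bf v}_2\|_h$.

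Part $(b)$ breaks down structurally because you transcribe the Navier--Stokes argument too literally. The von K\'arm\'an form $\gamma_{\pw}(\eta,\chi,\varphi)=-\tfrac12\sum_K\int_K[\eta,\chi]\varphi\dx$ does \emph{not} differentiate its third argument, so ``keep $\varphi_k$ in $L^2$ of the Hessian'' is not an available option, and there is no $\Delta_{\pw}$ factor to weight with $h_\cT$. More seriously, inserting $(1-\Pi_0)$ on ``one of the two Hessian factors'' is vacuous here: $D^2v_2^{(j)}$ is piecewise constant, so $(1-\Pi_0)D^2v_2^{(j)}=0$ (and your Payne--Weinberger middle factor is identically zero), while $\Pi_0D^2_{\pw}(1-J)v_{\rm M}^{(i)}=0$ already holds by Lemma~\ref{lemma:interpoltion_IM}.c and Remark~\ref{ortho}, so $(1-\Pi_0)$ changes nothing on that factor either. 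The orthogonality has to be spent on the third slot: since $\int_K[(1-J)v_{\rm M}^{(i)},v_2^{(j)}]\dx=0$ on every $K$, one replaces $\varphi_k$ by $\varphi_k-\Pi_0\varphi_k$ and then $\|\varphi_k-\Pi_0\varphi_k\|_{L^\infty(\Omega)}\lesssim h_{\rm max}\trinl\varphi_k\trinr$ supplies the factor $h_{\rm max}$, with Lemma~\ref{hctenrich}.e giving $\trinl(1-J)v_{\rm M}^{(i)}\trinr_{\pw}\lesssim\trinl v^{(i)}-v_{\rm M}^{(i)}\trinr_{\pw}$. As written, your argument for $(b)$ either proves nothing or yields no power of $h_{\rm max}$.
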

		\noindent {\em Proof of (a).} For $\phi \in V $, $\widehat{\chi} \in V + P_2(\cT)$ and $ v_2 \in P_2(\cT)$,  the definition of $\gamma_{\pw}(\bullet,\bullet,\bullet)$, \Holder inequalities, and an inverse estimate $h_T\| (1 - JI_{\rm M}) v_2 \|_{L^{\infty}(T)}\lesssim \| (1 - JI_{\rm M}) v_2 \|_{L^{2}(T)}$ lead to 
	\begin{align}
	\gamma_{\pw}(\phi,\widehat{\chi},(1 - JI_{\rm M})v_2 ) \le \trinl \phi \trinr \trinl \widehat{\chi} \trinr_{\rm pw} \| (1 - JI_{\rm M}) v_2 \|_{L^{\infty}(\Omega)} \lesssim  \trinl \phi \trinr  \trinl \widehat{\chi} \trinr_{\rm pw}\| h_{\cT}^{-1}(1 - JI_{\rm M})v_2\|.
	\end{align}
	This, Lemma~\ref{hctenrich}.f, and the definition of $\Gamma_{\rm pw}(\bullet,\bullet,\bullet)$ conclude the proof of $(a)$. 
	

\medskip
\noindent {\em Proof of (b).}	For $\phi \in V$, $ v_2 \in P_2(\cT)$, and  $v_{\rm M} \in \mathrm{M}(\cT)$, an introduction of $\Pi_0\phi$  and $ \gamma_{\pw}((1 - J)v_{\rm M},v_2,\Pi_0 \phi)=0$ from Lemma~\ref{lemma:interpoltion_IM}.c and Remark~\ref{ortho} provide
	\begin{align} \label{eqn:split_of_gamma}
	\gamma_{\pw}((1 - J)v_{\rm M},v_2,\phi) =  \gamma_{\pw}((1 - J)v_{\rm M},v_2,\phi-  \Pi_0 \phi).
	\end{align}
\Holder inequalities and the estimate $\| \phi- \Pi_0 \phi\|_{L^{\infty}(\Omega)} \lesssim h_{\rm max}\trinl \phi\trinr$ \cite[Theorem 3.1.5]{Ciarlet} provide
 \begin{align}
   \gamma_{\pw}((1 - J)v_{\rm M},v_2,\phi-  \Pi_0 \phi) \lesssim h_{\max} \trinl	(1 - J)v_{\rm M} \trinr_{\pw} \trinl v_2 \trinr_{\rm pw} \trinl \phi \trinr\lesssim h_{\rm max} \trinl v - v_{\rm M} \trinr_{\rm pw}\trinl v_2 \trinr_{\pw} \trinl \phi \trinr  
 \end{align}
with $\trinl	(1 - J)v_{\rm M} \trinr_{\pw} \lesssim \trinl v - v_{\rm M} \trinr_{\rm pw}$ from Lemma~\ref{hctenrich}.e in the last step.
Recall~\eqref{eqn:gamma_pw_vke_vec} and \eqref{eqn:split_of_gamma} to conclude the proof of $(b)$.  \qed}}

\subsection{ Proof of Theorem \ref{thm:error_apriori_energy_vke}}
The conditions in Theorem~\ref{thm:apriori} are verified to establish the energy norm estimates. 
The hypotheses~\eqref{quasioptimalsmootherP}-\eqref{quasioptimalsmootherS} follow from Lemma~\ref{lem:R} (component-wise).  The paper~\cite{ccnnlower2022} has verified hypothesis~\ref{h1} for Morley/dG/$C^0$IP in the norm $\|\bullet
 \|_h$ that is equivalent to $\trinl \bullet \trinr_{\pw}$, $\|\bullet\|_{\rm dG}$, and $\|\bullet \|_{\rm IP}$ by Lemma~\ref{lem:equivalence}.
 

 \medskip
\noindent For any $\boldsymbol{\theta}_{h} \in \bV_h$ with $\|\boldsymbol{\theta}_{h}\|_{\bV_h} = 1$,  {{Lemma~\ref{lemma:global_bounds_vke_new}.b with $\trinl \bullet \trinr_{\rm pw} \le \| \bullet \|_{h}$ implies $\widehat{b}(R\boldsymbol{\theta}_{h},\bullet) \in {\bf H}^{-1-t}(\Omega)$ for $R \in \{\mathbf{id}, \boldsymbol{I}_\M, \boldsymbol{JI}_\M\}$. Therefore, there exists a unique $\boldsymbol{\chi} \in \bV\cap {\bf H}^{3-t}(\Omega)$ with $\|\boldsymbol{\chi} \|_{{\bf H}^{3-t}(\O)} \lesssim 1$ such that $A(\boldsymbol{\chi} ,\Phi) = \widehat{b}(R\boldsymbol{\theta}_h ,\Phi)$ for all $\Phi \in \bV$. Hence, for Morley/dG schemes (resp. $C^0$IP scheme), 
the boundedness of $R$ (from Lemma~\ref{lem:R}), Lemma~\ref{lem:equivalence} (resp. Remark~\ref{icimz}), and Lemma~\ref{lemma:interpoltion_IM}.d provide~\ref{h3} with $\delta_{2} \lesssim h_{\rm max}^{1-t}$.

\medskip 

\noindent {The proof of {\bf (H3)} starts as in Subsection~\ref{proof_of_8.1} and adopts
Lemma~\ref{lemma:gamma_approximation_new}.a (in place of Lemma~\ref{lemma:gamma_approx_new}.a) to establish \eqref{eqn.h3} with $t=0$ and the slightly sharper version $\delta_3 \lesssim h_{\rm max}$.}

\noindent Since $\delta_3=0$ for $S=Q=\boldsymbol{JI}_\M$, it remains $S=\mathbf{id}$ and $=\bI_{\rm M}$ in the sequel to establish {\bf (H3)}. Given $\boldsymbol{y}_{h}$ and $\boldsymbol{\theta}_{h} \in \bV_h$ of norm one, define ${\bf v}_2:=S\boldsymbol{y}_{h} \in \boldsymbol{P}_2(\cT)$ and observe $Q\boldsymbol{y}_h =\boldsymbol{JI}_\M \boldsymbol{y}_h=\boldsymbol{JI}_\M {\bf v}_2$ (by $S=\mathbf{id},\bI_{\rm M})$. Hence with the definition of $\widehat{b}(\bullet,\bullet)$, Lemma~\ref{lemma:gamma_approximation_new}.a shows
$$|\widehat{\nonlin}(R\boldsymbol{\theta}_h,(S - Q)\boldsymbol{y}_h)|=|\widehat{\nonlin}(R\boldsymbol{\theta}_h,{\bf v}_2-\boldsymbol{JI}_{\rm M}{\bf v}_2)|\lesssim h_{\max}\trinl u\trinr \trinl R\boldsymbol{\theta}_h\trinr_{\pw}\|{\bf v}_2\|_h.$$ 
The boundedness of $R$ and $\boldsymbol{I}_{\rm M}$ and the equivalence of norms show $\trinl R\boldsymbol{\theta}_h\trinr_{\pw}\|{\bf v}_2\|_h \lesssim 1$ and hence $\delta_{3} \lesssim h_{\rm max}$.}

\smallskip
\noindent 
\noindent{{As in the application for Navier-Stokes equations, Remark~\ref{icimz} leads to hypothesis \ref{h5} with $\delta_4 < \epsilon$.}} The existence and uniqueness of a discrete solution $\Psi_h$ then follows from Theorem \ref{thm:existence}.

\smallskip
\noindent 	
\noindent Note that for ${\bf v}_h \in {\bf M}(\cT)$,  $Q {\bf v}_h=\boldsymbol{JI}_\M {\bf v}_h$. For Morley/dG/$C^0$IP, {{Lemma~\ref{lemma:gamma_approximation_new}.a with ${\bf v}=0$  for $S=\mathbf{id}$; and Lemma~\ref{lemma:gamma_approximation_new}.a with ${\bf v}_2\in {\bf M}(\cT)$ and ${\bf v}=0$ for $
S=\bI_\M$ show}} 	 $$\displaystyle
\|\widehat{\Gamma}(\Psi,\Psi,(S - Q)\bullet)\|_{\bV_h^\ast} \lesssim 
\begin{cases}
0\textrm{ for } S=\boldsymbol{JI}_\M,\\
h_{\rm max}\text{ for } S=\mathbf{id} \text{ or } \bI_\M.
\end{cases}$$
The energy norm error control then follows from Theorem~\ref{thm:apriori}. \qed
\subsection{Proof of Theorem 9.3}\label{sub:proof}
\noindent Given $2-\sigma \le s \le 2$ and $G \in {\bf H}^{-s}(\Omega)$ with $\|G\|_{{\bf H}^{-s}(\Omega)}=1$ ,  the solution $z \in \bV$ to the dual problem \eqref{dual_linear} belongs to 
$ \bV \cap {\bf H}^{4-s}(\Omega)$ by  elliptic regularity. This and  Lemma~\ref{lemma:interpoltion_IM}.d verify
\begin{align} \label{eqn:regulardual_vke}
\trinl  z - \bI_\M z \trinr_{\pw}  &\lesssim h_{\rm max}^{2-s} \|z\|_{{\bf H}^{4-s}(\Omega)} 
\lesssim  h_{\rm max}^{2-s}.  
\end{align}
\noindent{\it Proof of Theorem~\ref{thm:aprio_weak_vke}.a for  $R= \boldsymbol{J} \boldsymbol{I}_\M$}.
The assumptions in Theorem~\ref{cor:lower} with $X_{s}:= {\bf H}^s(\T)$ are verified to establish the lower-order estimates.  Hypothesis~\ref{h1hat} for Morley/dG/$C^0$IP schemes is verified in~\cite[Lemma 6.6]{ccnnlower2022} for an equivalent norm (with Lemma~\ref{lem:equivalence}) and Lemma~\ref{lem:R} for $R = JI_{\rm M}$ (applied component-wise to vector functions).  The conditions~\eqref{quasioptimalsmootherP}-\eqref{quasioptimalsmootherS} follow from Lemma~\ref{lem:R}. 
	 In Theorem~\ref{cor:lower}, set $z_h = \bI_h \bI_{\rm M}z$ with $\bI_{h} = \mathbf{id}$ for Morley/dG resp. $\bI_h = \bI_{\rm C}$ for $C^0$IP.  Notice that~\eqref{eqn:regulardual_vke} implies
	\begin{align} \label{eqn:approx_uh_vke}
	\|z - z_h \|_{h}\lesssim h_{\rm max}^{2-s}
	\end{align}
	for Morley/dG with $\|\bullet \|_{\dg} \approx \trinl \bullet \trinr_{\rm pw}$ in $\bV+\mathbf{M}(\cT)$. Remark~\ref{icimz} and  \eqref{eqn:regulardual_vke} provide \eqref{eqn:approx_uh_vke} for $C^0$IP.  For Morley/dG/$C^0$IP,  Lemma~\ref{hctenrich}.f implies 
	$\|\Psi_h- P\Psi_h   \|_{{\bf H}^{s}(\cT)} \lesssim h_{\rm max}^{2-{s}} \|\Psi - \Psi_h\|_{h}. $  
 The difference $ {\Gamma}_{\pw}(R \Psi_h,R \Psi_h,Qz_h) - \Gamma(P \Psi_h,P\Psi_h, Qz_h)$ vanishes for $R=\boldsymbol{JI}_{\rm M}=P$ (for all schemes). It remains to control  the term $ \widehat{\Gamma}(\Psi,\Psi,(S-Q)z_h)$ for  $S \in \{ \mathbf{id}, \bI_{\rm M}, \boldsymbol{JI}_{\rm M} \}$.

	\medskip 
	\noindent For $S=Q=\boldsymbol{JI}_{\rm M}$, $\Gamma_{\pw}(\Psi,\Psi,(S-Q)z_h )=0$. {{For $S =\mathbf{id}$,  Lemma~\ref{lemma:gamma_approximation_new}.a and ~\eqref{eqn:approx_uh_vke} establish
	\begin{align*} 
	\Gamma_{\pw}(\Psi,\Psi,(1 - \boldsymbol{JI}_{\rm M})z_h ) \lesssim {h}_{\max} \trinl \Psi\trinr^2 \|z - z_h \|_{h} \lesssim h_{\rm max}^{3-s}.
	\end{align*}
	For $S  = \bI_{\rm M}$,  Lemma  \ref{lemma:gamma_approximation_new}.a applies to ${\bf v}_h=\bI_{\rm M}z_h$. A triangle inequality and Lemma~\ref{lem:R} reveal $\|z-\bI_{\rm M}z_h\|_h \lesssim \| z- z_h   \|_h\lesssim h_{\rm max}^{2-s}$ with~\eqref{eqn:approx_uh_vke} in the last step. Hence,
	 \[\hspace{2cm}\Gamma_{\pw}(\Psi,\Psi,(\bI_{\rm M} -\boldsymbol{JI}_{\rm M} )z_h) \lesssim h_{\rm max} \trinl \Psi \trinr^2 \| z- z_h   \|_{h} \lesssim  h_{\rm max}^{3-s}. \qquad   \qed\]
 }}
\noindent{\it Proof of Theorem~\ref{thm:aprio_weak_vke}.a for  $R= \boldsymbol{I}_\M$}.
{Elementary algebra and the symmetry of $\Gamma_{\rm pw}(\bullet,\bullet,\bullet)$ with respect to the first and second
argument recast the last two terms on the right-hand side of Theorem~\ref{cor:lower} as
\begin{align}
 \Gamma_{\rm pw}&(\bI_\M \Psi_{h},\bI_\M \Psi_{h},\boldsymbol{J}\bI_\M z_h)-\Gamma_{\rm pw}(\boldsymbol{J}\bI_\M \Psi_{h},\boldsymbol{J}\bI_\M \Psi_{h},\boldsymbol{J}\bI_\M z_h)  \\
=& 2\Gamma_{\rm pw}( (1 - \boldsymbol{J})\bI_\M \Psi_{h},\bI_\M \Psi_{h},\boldsymbol{J}\bI_\M z_h)  
-\Gamma_{\rm pw}((1 - \boldsymbol{J})\bI_\M \Psi_{h},(1-\boldsymbol{J})   \bI_\M \Psi_{h},\boldsymbol{J}\bI_\M z_h ).
\label{eqn.split2.vke}
\end{align}
The arguments in \eqref{eqn:key1}-\eqref{eqn:key3} for $(\Psi,\Psi_h)$ replacing $(u,u_h)$  and~\eqref{eqn:approx_uh_vke} reveal 
\begin{align*}
& \trinl  \Psi-\bI_\M \Psi_{h} \trinr_{\pw}  \lesssim \|  \Psi- \Psi_{h} \|_{h}\; \mbox{ and }\;
\;\trinl z- \boldsymbol{J}\bI_\M z_{h} \trinr_{\pw}  \lesssim h_{\rm max}^{2-s}.
\end{align*}
This and Lemma~\ref{lemma:gamma_approximation_new}.b for the first term in \eqref{eqn.split2.vke}  (resp. Lemma~\ref{lemma:global_bounds_vke_new}.a and~\ref{hctenrich}.e for the second) show
\begin{align}
\Gamma_{\rm pw}( (1 - \boldsymbol{J})\bI_\M \Psi_{h},\bI_\M \Psi_{h},\boldsymbol{J}\bI_\M z_h)  &\lesssim h_{\rm max}\| \Psi - \Psi_{h} \|_h\\
\Gamma_{\rm pw}((1 - \boldsymbol{J})\bI_\M \Psi_{h},(1 - \boldsymbol{J})\bI_\M \Psi_{h},\boldsymbol{J}\bI_\M z_h )&\lesssim \trinl (1-\boldsymbol{J})   \bI_\M \Psi_{h}\trinr_{\pw}^2 
\lesssim \| \Psi - \Psi_{h} \|_h^2.
\end{align}
\noindent This leads  in~\eqref{eqn.split2.vke} to
	\begin{align}
&\Gamma_{\rm pw}(\bI_{\M} \Psi_h,\bI_{\M} \Psi_h,\boldsymbol{J} \bI_{\rm M} z_h) -\Gamma_{\rm pw}(\boldsymbol{J}\bI_{\M} \Psi_h,\boldsymbol{J}\bI_{\M} \Psi_h,\boldsymbol{J} \bI_{\M} z_h)   \\&\qquad \qquad \qquad\lesssim \| \Psi - \Psi_{h} \|_{h}(h_{\max} +\| \Psi - \Psi_{h} \|_{h}).\label{eqn:gamma_split_1_vke1}
	\end{align}
The remaining terms are controlled as in the above case $R= \boldsymbol{J}\bI_\M$. This concludes the proof. \qed
}

\medskip
\noindent{\it Proof of Theorem~\ref{thm:aprio_weak_vke}.b.} Since $ \Psi_{h} = \bI_\M \Psi_{\rm M}$, and $P=Q=\boldsymbol{J}$ for the Morley FEM, the last two terms of Theorem~\ref{cor:lower} read
${\Gamma}_{\pw}( \Psi_{\rm M},\Psi_{\rm M},\boldsymbol{J} \bI_\M z_h) -{\Gamma}( \boldsymbol{J}\Psi_{\rm M}, \boldsymbol{J}\Psi_{\rm M},\boldsymbol{J}\bI_\M z_h)$ and are controlled in~\eqref{eqn:gamma_split_1_vke1}.  This, Theorem~\ref{cor:lower}, and the above estimates from the proof for $R=\boldsymbol{J}\bI_\M$ in $(a)$ conclude the proof.   \qed

\medskip
\noindent {\it Proof of Theorem~\ref{thm:aprio_weak_vke_WOPSIP}}. The proofs at the abstract level in Section \ref{sec.stability}-\ref{sec:goal-oriented} follow as further explained for  the Navier Stokes equations. A straightforward adoption of the arguments provided in  the proofs of Theorem~\ref{thm:error_apriori_energy_vke} and \ref{thm:aprio_weak_vke}.a lead to \ref{h3}-\ref{h5} and the a priori error control.

\qed
%
\subsection*{Acknowledgements}
The finalization of this paper is supported by the SPARC project 
(id 235) {\it mathematics and computation of plates} and the SERB POWER Fellowship SPF/2020/000019.  The second author  acknowledges the support by the IFCAM project “Analysis, Control and Homogenization of Complex Systems”.
\bibliographystyle{amsplain}
\bibliography{NSEBib}
\end{document}